 \newtheorem{thm}{Theorem}[section]
 \newtheorem{cor}[thm]{Corollary}
 \newtheorem{prop}[thm]{Proposition}
 \theoremstyle{definition}
 \newtheorem{defn}[thm]{Definition}
 \theoremstyle{remark}
 \newtheorem{rem}[thm]{Remark}
 \newtheorem{ejem}[thm]{Example}
 \newtheorem{ejems}[thm]{Examples}
\numberwithin{equation}{subsection}
\newcommand{\OO}{{\mathcal O}}
\newcommand{\M}{{\mathcal M}}
\newcommand{\A}{{\mathcal A}}
\newcommand{\Sc}{{\mathfrak S}}
\newcommand{\U}{{\mathcal U}}
\newcommand{\Bc}{{\mathcal B}}
\newcommand{\HH}{{\mathcal H}}
\newcommand{\K}{{\mathcal K}}
\newcommand{\Nc}{{\mathcal N}}
\newcommand{\ZZ}{{\mathbb Z}}
\newcommand{\Lc}{{\mathcal L}}
\newcommand{\V}{{\mathcal V}}
\newcommand{\T}{{\mathcal T}}
\newcommand{\LL}{{\mathbb L}}
\newcommand{\RR}{{\mathbb R}}
\DeclareMathOperator{\ilim}{\underset\to\lim}
\DeclareMathOperator{\Aut}{{Aut}}
\DeclareMathOperator{\Spec}{{Spec}}
\DeclareMathOperator{\Coker}{{Coker}}
\newcommand{\pp}{{\mathfrak p}}
\newcommand{\wh}{\widehat}
\newcommand{\punto}{{\displaystyle \cdot}}
\newcommand{\proda}[2][]{\underset{#2}{\overset{#1}{\prod}}}
\newcommand{\suma}[2][]{\underset{#2}{\overset{#1}{\sum}}}
\newcommand{\enumera}{\begin{enumerate}}
\newcommand{\eenumera}{\end{enumerate}}
\newcommand{\C}{{\mathcal C}}
\DeclareMathOperator{\Hom}{{Hom}}
\DeclareMathOperator{\di}{{d}}
\DeclareMathOperator{\Id}{{Id}}
\begin{document}

\title[Ringed Finite Spaces]
 {Ringed Finite Spaces}

\author{ Fernando Sancho de Salas}
\address{Departamento de Matem\'{a}ticas and Instituto Universitario de Física Fundamental y Matemáticas (IUFFyM), Universidad de Salamanca,
Plaza de la Merced 1-4, 37008 Salamanca, Spain}

\email{fsancho@usal.es}

\subjclass[2010]{14XX, 55XX, 05XX, 06XX}

\keywords{Finite spaces, quasi-coherent modules}

\thanks {The  author was supported by research project MTM2009-07289 (MEC)}

%\thanks{The author was also supported in part by the Research
% Council of Slovenia.}

%\commby{Daniel J. Rudolph}

%%% ----------------------------------------------------------------------

\begin{abstract} A ringed finite space is a ringed space whose underlying topological space is finite. The category of ringed finite spaces contains, fully faithfully, the category of finite topological spaces and the category of affine schemes. Any ringed space, endowed with a finite open covering, produces a ringed finite space. We make a study of the homotopy of ringed finite spaces (that extends the homotopy of finite topological spaces) and a study of the quasi-coherent sheaves on a ringed finite space. This leads to set and develop basic notions on ringed finite spaces and morphisms, as being affine, schematic, semi-separated, etc, focusing on its cohomological properties. Finally, we see how to embed the category of quasi-compact and quasi-separated schemes in a localization of the category of finite ringed spaces.

\end{abstract}

%%% ----------------------------------------------------------------------
\maketitle
%%% ----------------------------------------------------------------------

\section*{Introduction}

A ringed finite space is a ringed space $(X,\OO_X)$ whose underlying topological space $X$ is finite, i.e. it is a finite topological space endowed with a sheaf of (commutative with unit) rings. It is well known (since Alexandroff) that a finite topological space is equivalent to a finite preordered set, i.e. giving a topology on a finite set is equivalent to giving a preorder relation. Giving a sheaf of rings $\OO_X$ on a finite topological space is equivalent to give, for each point $p\in X$, a ring $\OO_p$, and for each $p\leq q$ a morphism of rings $r_{pq}\colon \OO_p\to\OO_q$, satisfying the obvious relations ($r_{pp}=\Id$ for any $p$ and $r_{ql}\circ r_{pq}=r_{pl}$ for any $p\leq q\leq l$). The category of ringed finite spaces   is a full  subcategory of the category of ringed spaces and it contains (fully faithfully) the category of finite topological spaces (that we shall refer to as ``the topological case'') and the category of affine schemes (see Examples \ref{ejemplos}, (1) and (2)). If $(S,\OO_S)$ is an arbitrary ringed space (a topological space, a differentiable manifold, a scheme, etc) and we take a finite covering $\U=\{ U_1,\dots,U_n\}$ by open subsets, there is a natural associated ringed finite space $(X,\OO_X)$ and a morphism of ringed spaces $S\to X$ (see Examples \ref{ejemplos}, (3)). A natural question arise: which properties or structures of $S$ are determined by $X$? For example: do $S$ and $X$ have the same fundamental group, or the same category of quasi-coherent modules?  A particular interesting case is when $S$ is a quasi-compact and quasi-separated scheme and $\U$ is a (locally affine) finite covering by affine open subschemes; this case will be refered to as the ``algebro-goemetric case''. One of the aims of this paper is to show that the category of ringed finite spaces is a good framework where topological methods (those of finite topological spaces), from one side, and algebro-geometric ones, from the other side, are connected.

In any ringed space $(S,\OO_S)$ one has the notions of $\OO_S$-modules (sheaves of $\OO_S$-modules), quasi-coherent $\OO_S$-modules, coherent modules, etc. In section 2 we make an elementary study of quasi-coherent modules on a ringed finite space $(X,\OO_X)$. The main result is Theorem \ref{qc}. In the topological case, a quasi-coherent module is a locally constant sheaf (of abelian groups), i.e. a representation of the fundamental group in an abelian group; in the algebro-geometric case (i.e. $(S,\OO_S)$ is a quasi-compact and quasi-separated scheme and $\pi\colon S\to X$ is the morphism to the finite ringed space associated to a (locally affine) finite covering of $S$), the inverse image functor $\pi^*$ yields an equivalence between the category of quasi-coherent modules on $X$ and the category of quasi-coherent sheaves on $S$ (see Theorem \ref{schemes} and its topological analog Theorem \ref{fin-sp-assoc-top}). In \cite{EstradaEnochs} it is proved that the category of quasi-coherent sheaves on a quasi-compact and quasi-separated scheme $S$ is equivalent to the category of quasi-coherent $R$-modules, where $R$ is a ring representation of a finite quiver $\V$. Our point of view is that the quiver $\V$ may be thought of as a finite topological space $X$ and the representation $R$ as a sheaf of rings $\OO_X$. The advantage is that the equivalence between quasi-coherent modules is obtained from a geometric morphism $\pi\colon S\to X$. For example, we prove that this equivalence preserves cohomology: the cohomology of a quasi-coherent sheaf on $S$ coincides with the cohomology of the associated quasi-coherent sheaf on $X$ (see Example \ref{coh-schemes} and its topological analog).

In section 3 we make an elementary study of the homotopy of ringed finite spaces. We see how the homotopy relation of continous maps between finite topological spaces can be generalized to morphisms between ringed finite spaces in such a way that Stone's classification (\cite{Stong}) of finite topological  spaces  (via minimal topological spaces)  can be generalized to ringed finite spaces (Theorem \ref{homotopic-classification}). An important fact is that the category of quasi-coherent sheaves on a ringed finite space is an homotopical invariant: two homotopically equivalent ringed finite spaces have equivalent categories of quasi-coherent sheaves (Theorem \ref{homotinvariance}).

A great part of the paper deals with a particular case of ringed finite spaces: By a {\it finite space} we mean a ringed finite space $(X,\OO_X)$ such that the morphisms $r_{pq}\colon \OO_p\to\OO_q$ are flat. Let us justify the importance of this condition. Under this flatness assumption, the category of quasi-coherent modules on $X$ is an abelian subcategory of the abelian category of all $\OO_X$-modules. If $\OO_X$ is a sheaf of noetherian rings (i.e., $\OO_p$ is noetherian for any $p\in X$), then this flatness condition is equivalent to say that the structure sheaf $\OO_X$ is coherent. From the point of view of integral functors, the flatness assumption allows to define integral functors between the derived categories of quasi-coherent sheaves (see Corollary \ref{integral-functors}). Finally, our main examples (i.e. the ``topological case'' and the ``algebro-geometric case'') satisfy this flatness condition.

Section 4 is devoted to the study of the main cohomological properties of quasi-coherent sheaves on  finite spaces. The main results are Theorem \ref{qc-of-proj}, that studies the behaviour of quasi-coherence under projections, and Theorem  \ref{graphic}, that studies the cohomological structure of the graphic of a morphism; these results are essential for the rest of the paper.

In section 5 we make an elementary study of {\it affine} finite spaces. While in section 2 (Homotopy) the topological case was the guide, now is the algebro-geometric case. Our notion of affine space (resp. quasi-affine, Serre-affine space) is inspired in the algebro-geometric case, i.e., in the characterization of an affine scheme by its quasi-coherent modules. So it is a  natural notion for an algebro-geometer. In the topological case, a finite space is affine if and only if it is homotopically trivial. Hence the concept of affine finite space unifies the concepts of an affine scheme and that of a homotopically trivial finite topological space. The relative notion of affineness is also developed, i.e. the notion of an affine morphism.

Sections 6 and 7 are devoted to the study of those finite spaces and morphisms which have a good behavior with respect to quasi-coherent sheaves, where ``good'' means that they share the most relevant properties of quasi-coherent sheaves on (quasi-compact and quasi-separated) schemes. Let $(X,\OO_X)$ be a finite space and $\delta\colon X\to X\times X$ the diagonal morphism.  We say that $X$ is {\it schematic} if $\RR\delta_*\OO_X$ is quasi-coherent (i.e. $R^i\delta_*\OO_X$ is quasi-coherent for any $i$). The name is due to the fact that the ''algebro-goemetric'' case is the main example of a schematic finite space. The schematic condition is equivalent to the following property: for any open subset $j\colon U\hookrightarrow X$, $\RR j_*\OO_{X\vert U}$ is quasi-coherent, i.e. the open inclusions $j\colon U\hookrightarrow X$ have a good behavior with respect quasi-coherence. It turns out that, if $X$ is schematic, then $\RR j_*\Nc$ is quasi-coherent for any quasi-coherent module $\Nc$ on $U$. In particular, quasi-coherent modules have the extension property (see Theorem \ref{extension}).
A more restrictive notion is that of a semi-separated finite space (which is the analog of a semi-separated scheme). They are defined as those schematic spaces such that $R^i\delta_*\OO_X=0$ for $i>0$. We prove that this is equivalent to say that the diagonal morphism $\delta$ is affine. As it happens with schemes, being schematic is a local question  (but being semi-separated is not) and every schematic affine space is semi-separated.  Section 7 is devoted to schematic morphisms: Let $f\colon X\to Y$ be a morphism between finite spaces and $\Gamma\colon X\to X\times Y$ its graphic. We say that $f$ is schematic if $\RR\Gamma_*\OO_X$ is quasi-coherent. We prove that, if $f$ is schematic, then $\RR f_*\M$ is quasi-coherent for any quasi-coherent module $\M$ on $X$, and the converse is also true if $X$ is schematic (Theorem \ref{sch-preserv-qc2}). The local structure of schematic spaces and morphisms, their behavior  under direct products or compositions,  their structure and properties in the affine case, Stein's factorization and other questions are also treated in sections 6 and 7. To end with these sections we see an important fact: though the category of ringed finite spaces has fibered products,  the category of finite spaces has not; however, we prove that the category of schematic spaces and schematic morphisms has fibered products (Theorem \ref{fiberedproduct}).

Section 8 deals with the problem of embedding, fully faithfully, the category of quasi-compact and quasi-separated schemes into the category of schematic spaces and schematic morphisms. To each  quasi-compact and quasi-separated scheme, we can associate   an schematic space by the election of a finite (locally affine) affine covering. One can recover the initial scheme by the well known gluing technique of schemes. Of course the associated finite space depends on the election of the covering, since the finite spaces associated to different coverings of the same scheme are not isomorphic (not even homotopic). To avoid this problem (in other words, to identify the finite spaces associated to different coverings of the same scheme), we have to localize the category of schematic spaces by a certain class of morphisms which we have called qc-isomorphisms. A qc-isomorphism is an schematic and affine morphism $f\colon X\to Y$ such that $f_*\OO_X=\OO_Y$. The main result is Theorem \ref{embedding-schemes}, that states that there is a fully faithful functor from the category of quasi-compact and quasi-separated schemes to the localization of the category of schematic spaces (and schematic morphisms) by qc-isomorphisms. Moreover, this functor yields an equivalence between the category of affine schemes and the (localized) category of affine schematic spaces (Theorem \ref{affinescheme-affinespace}). The main ingredient is Grothendieck's faithfully flat descent (\cite{Grothendieck}), for the proof of Proposition \ref{S(affine)}.

Many of the results and techniques of this paper are  generalizable to Alexandroff spaces (those topological spaces where each point has a minimal open subset containing it), or finite quivers. Instead of dealing with the greatest possible generality, we have preferred to restrict ourselves to finite spaces, as a guiding and fruitful model for other more general situations.

This paper is dedicated to the beloved memory of Prof. Juan Bautista Sancho Guimer{\'a}. I learned from him most of mathematics I know, in particular the use of finite topological spaces in algebraic geometry.

\section{Preliminaries}

In this section we recall elementary facts about finite topological spaces and ringed spaces. The reader may consult \cite{Barmak} for the results on finite topological spaces and \cite{GrothendieckDieudonne} for  ringed spaces.

\subsection{Finite topological spaces}
\medskip

\begin{defn} A finite topological space  is a topological space with a finite number of points.
\end{defn}

Let $X$ be a finite topological space. For each $p\in X$, we shall denote by  $U_p$  the minimum open subset containing  $p$, i.e., the intersection of all the open subsets containing $p$. These $U_p$ form  a minimal base of open subsets.

\begin{defn} A finite preordered set is a finite set with a reflexive and transitive relation (denoted by $\leq$).
\end{defn}

\begin{thm} There is an equivalence between finite topological spaces and finite preordered sets.

\end{thm}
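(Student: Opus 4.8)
The plan is to establish the equivalence (indeed, an isomorphism) of categories by constructing explicit functors in both directions and checking they are mutually inverse. First I would describe the two constructions at the level of objects. Given a finite topological space $X$, define a relation on its points by declaring $p \leq q$ whenever $q \in \overline{\{p\}}$ (equivalently, $p \in U_q$, where $U_q$ is the minimal open set containing $q$ introduced above); reflexivity and transitivity are immediate from the corresponding properties of closure, so this yields a finite preordered set. Conversely, given a finite preordered set $(X, \leq)$, declare a subset $U \subseteq X$ to be open if it is downward-closed (i.e. $q \in U$ and $p \leq q$ imply $p \in U$); one checks arbitrary unions and intersections of downward-closed sets are downward-closed, so this is a topology, and since $X$ is finite every set is actually a finite intersection, so there is no difficulty about arbitrary intersections.

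Next I would verify that these two assignments are inverse to each other on objects. Starting from a topology, passing to the preorder and back: the open sets of the reconstructed topology are exactly the downward-closed sets for $\leq$, and one shows these coincide with the original open sets — a set is open iff it contains $U_q$ for each of its points $q$, and $U_q = \{p : p \leq q\}$ is precisely the principal down-set. Starting from a preorder, passing to the topology and back reproduces the same relation because $p \leq q$ iff $p$ lies in every open (down-closed) set containing $q$ iff $p$ lies in the minimal such set, which is the principal down-set of $q$. The only mild subtlety is bookkeeping the direction of the order versus the choice "open = up-set or down-set"; I would fix the convention once (say, open = down-set, so that $U_p$ corresponds to $\{q : q \leq p\}$) and stick with it.

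Then I would treat morphisms. A map $f \colon X \to Y$ between finite topological spaces is continuous iff it is monotone for the associated preorders: continuity means $f^{-1}(U)$ is open for every open $U$, and using the minimal open sets this reduces to the pointwise condition $f(U_p) \subseteq U_{f(p)}$ for all $p$, which unwinds exactly to "$q \leq p \Rightarrow f(q) \leq f(p)$". Conversely a monotone map pulls back down-sets to down-sets, hence is continuous. Since the underlying maps of sets are unchanged by the functors, composition and identities are visibly preserved, so we indeed get a pair of functors that are mutually inverse, proving the equivalence (an isomorphism of categories, in fact).

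I do not expect any serious obstacle here; this is a classical result of Alexandroff and the proof is essentially a sequence of routine verifications. The one point deserving care — and the closest thing to a "hard part" — is the reduction of topological conditions (openness of a set, continuity of a map) to the pointwise combinatorial conditions via the minimal open sets $U_p$; this step genuinely uses finiteness (or, more generally, the Alexandroff property that arbitrary intersections of opens are open), and it is where one must be slightly careful, since for an infinite space the minimal open set $U_p$ need not exist and the equivalence breaks down.
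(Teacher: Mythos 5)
Your proof is correct and follows essentially the same route as the paper: define the preorder from the specialization/closure relation, recover the topology from the order via principal down-sets (minimal open sets), and check the two constructions are mutually inverse, with continuity corresponding to monotonicity. The only difference is that your convention for $\leq$ is the reverse of the paper's ($p\leq q$ iff $p\in\bar q$ there, so that $U_p$ is the \emph{up}-set of $p$), which you correctly flag as a harmless bookkeeping choice; the paper itself notes in a remark that its convention is dual to Barmak's.
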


\begin{proof} If $X$ is a finite topological space, we define the relation: $$p\leq q\quad\text{iff}\quad p\in \bar q \quad (\text{i.e., if } q\in U_p) $$
Conversely, if $X$ is a finite preordered set, we define the following topology on  $X$: the closure of a point $p$ is $\bar p=\{ q\in X: q\leq p\}$.
\end{proof}

\begin{rem} \begin{enumerate}
\item The preorder relation defined above does not coincide with that of \cite{Barmak}, by with its inverse. In other words, the topology associated to a preorder that we have defined above is the dual topology that the one considered in op.cit.
\item If $X$ is a finite topological space, then $U_p=\{ q\in X: p\leq q\}$. Hence $X$ has a minimum $p$ if and only if $X=U_p$.
\end{enumerate}
\end{rem}

A map  $f\colon X\to X'$ between finite topological spaces is continuous if and only if it is monotone: for any $p\leq q$, $f(p)\leq f(q)$.

\begin{prop} A finite topological space is  $T_0$ (i.e., different points have different closures) if and only if the relation $\leq$ is antisimetric, i.e., $X$ is a partially ordered finite set (a finite poset).
\end{prop}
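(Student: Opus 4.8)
The plan is to carry everything over to the language of finite preordered sets via the equivalence just established, where the closure of a point is $\bar p = \{q \in X : q \leq p\}$, and then to observe that the two conditions are essentially reformulations of one another. The only preliminary fact I would isolate is that the closure operation is monotone for the preorder: if $p \leq q$ then $\bar p \subseteq \bar q$, because $r \leq p$ together with $p \leq q$ gives $r \leq q$ by transitivity; and of course $p \in \bar p$ by reflexivity.

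For the implication ``$T_0$ $\Rightarrow$ antisymmetric'', I would take $p, q \in X$ with $p \leq q$ and $q \leq p$. Applying the monotonicity observation in both directions gives $\bar p \subseteq \bar q$ and $\bar q \subseteq \bar p$, hence $\bar p = \bar q$; since $X$ is $T_0$, two points with the same closure coincide, so $p = q$ and $\leq$ is antisymmetric. Conversely, assuming $\leq$ antisymmetric, suppose $\bar p = \bar q$. From $p \in \bar p = \bar q$ we read off $p \leq q$, and from $q \in \bar q = \bar p$ we read off $q \leq p$; antisymmetry then forces $p = q$, so distinct points have distinct closures and $X$ is $T_0$.

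There is no genuine obstacle here: the statement is a direct dictionary translation once the correspondence between the topology and the preorder is in place. The only point requiring a little care is to use consistently the description of closures coming from the preorder convention fixed in the Remark above (the one dual to \cite{Barmak}), so that ``$p \in \bar q$'' really does mean ``$p \leq q$'' and not its opposite.
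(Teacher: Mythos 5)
Your proof is correct: the monotonicity of closures under the preorder and the reflexivity $p\in\bar p$ are exactly the two observations needed, and you have been careful to use the paper's convention $\bar p=\{q: q\leq p\}$ consistently. The paper states this proposition without proof, treating it as the standard dictionary translation between the $T_0$ axiom and antisymmetry, which is precisely the argument you give.
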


Any topological space $S$ has an associated $T_0$-topological space, $T_0(S):=$  the quotient of $S$ by the equivalence relation $p\sim q$ iff $\bar p = \bar{q}$. The quotient map $S\to T_0(S)$  is universal for morphisms from $S$ to $T_0$-spaces.

\medskip
\noindent{\it\ \ Dimension}. The dimension of a finite topological space is the maximum of the lengths of the chains of irreducible closed subsets. Equivalently, it is the maximum of the lengths of the  chains of points  $x_0<x_1<\cdots <x_n$.

\medskip
\begin{ejem}\label{covering}{\bf (Finite topological space associated to a finite covering)}. Let $S$ be a topological space and let   $\U=\{U_1,\dots,U_n\}$ be a finite open covering of $S$. Let us denote by $S_\U$ the set $S$ with the (finite) topology generated by $\U$ (i.e., an open subset is a finite union of finite intersections of the $U_i's$). For each $s\in S$, let us denote by $U_s$ the intersection of the $U_i$ containing $s$. These $U_s$ are a basis of the topological space $S_\U$.     Let $X=T_0(S_\U)$ be the associated $T_0$-space. This is a finite $T_0$-topological space, and we have a continuous map $\pi\colon S\to X$, $s\mapsto [s]$. For each $p=[s]\in X$, one has that $\pi^{-1}(U_{[s]})=U_s$. We shall say that $X$ is the  finite topological space associated to the topological space $S$ and the finite covering $\U$.

This construction is functorial in $\U$: Let $\U'=\{ U'_j\}$ be another  finite covering of $S$ and $\pi'\colon S\to X'$ its associated finite topological space. If $\U'$ is thinner than $\U$ (i.e., for each $s\in S$, $U'_s\subseteq U_s$), then one has a continuous map $X'\to X$ and a commutative diagram $$\xymatrix{ & S\ar[rd]^\pi \ar[ld]_{\pi'} & \\ X'\ar[rr] & & X.}$$

Functoriality on $S$: Let $f\colon S'\to S$ a continuous map, $\U$ a finite covering of $S$ and $\U'$ a finite covering of $S'$ that is thinner than $f^{-1}(\U)$. If $\pi\colon S\to X$ and $\pi'\colon S'\to X$ are the associated finite spaces, one has a continuous map $X'\to X$ and a commutative diagram
\[\xymatrix{ S'\ar[r]^f\ar[d]_{\pi'} & S\ar[d]^\pi\\ X'\ar[r] & X.
}\]

A more intrinsic construction of the finite topological space associated to a finite covering is given by spectral methods (see \cite{TeresaSancho}); indeed, let $\T_\U$ be the topology generated by $\U$ and $\T_S$ the topology of $S$. Then $X=\Spec\T_\U$ and the morphism $S\to X$ corresponds to the inclusion $\T_\U\hookrightarrow \T_S$. In general, for any distributive lattice $B$ and any topological space $S$ there is a bijective correspondence between morphisms (of distributive lattices) $B\to\T_S$ and continuous maps $S\to\Spec B$. In fact, given a continuous map $f\colon S\to\Spec B$, it induces a morphism $f^{-1}\colon \T_{\Spec B}\to \T_S$ whose composition with the natural inclusion $B\hookrightarrow \T_{\Spec B}$, gives the correspondent morphism $B\to\T_S$. For any finite topological space $X$ one has a canonical homeomorphism $T_0(X)=\Spec\T_X$

These spectral methods are only used in the proof of Theorem \ref{embedding-schemes}.

\end{ejem}

%\begin{defn} Una {\it cadena} de un conjunto parcialmente ordenado es un subconjunto de elementos que son comparables dos a dos. Una {\it anticadena} es un subconjunto de elementos no comparables dos a dos.
%\end{defn}
%
%\begin{rem} Los abiertos de un espacio topológico finito se corresponden con los subconjuntos {\it ascendentes} (un subconjunto $U$ de un conjunto parcialmente ordenado $X$ se dice ascendente si para todo $x\in U$ y todo $y\geq x$, se cumple que $y\in U$). Análogamente, los cerrados se corresponden con los subconjuntos {\it descendentes}. Si $X$ es $T_0$, los abiertos de $X$ se corresponden con las anticadenas de $X$.
%\end{rem}

\subsection{Generalities on ringed spaces}
\medskip

\begin{defn} A ringed space is a pair $(X,\OO)$, where $X$ is a topological space and  $\OO$ is a sheaf of (commutative with unit) rings on  $X$. A morphism or ringed spaces $(X,\OO)\to (X',\OO')$ is a pair $(f,f^\#)$, where $f\colon X\to X'$ is a continuous map and $f^\#\colon \OO'\to f_*\OO'$ is a morphism of sheaves of rings (equivalently, a morphism of sheaves of rings $f^{-1}\OO'\to \OO$).
\end{defn}

\begin{defn} Let $\M$ be an $\OO$-module (a sheaf of $\OO$-modules). We say that $\M$ is {\it quasi-coherent} if for each $x\in X$ there exist an open neighborhood  $U$ of $x$ and an exact sequence
\[ \OO_{\vert U}^I \to \OO_{\vert U}^J\to\M_{\vert U}\to 0\] with $I,J$ arbitrary sets of indexes. Briefly speaking, $\M$ is quasi-coherent if it is locally a cokernel of free modules. We say that $\M$ is an $\OO$-module {\it of finite type} if, for each  $x\in X$, there exist an open neighborhood $U$ and an epimorphism
\[ \OO_{\vert U}^n\to\M_{\vert U}\to 0,\] i.e., $\M$ is locally a quotient of a finite free module. We say that $\M$ is an  $\OO$-module {\it of finite presentation} if, for each point $x\in X$, there exist an open neighborhood  $U$ and an exact sequence
\[ \OO_{\vert U}^m\to \OO_{\vert U}^n\to\M_{\vert U}\to 0.\] That is, $\M$ is locally a cokernel of finite free modules. Finally, we say that    $\M$ is {\it coherent} if it is of finite type and for any open subset $U$ and any morphism $\OO_{\vert U}^n\to\M_{\vert U}$, the kernel is an $\OO_{\vert U}$-module of finite type. In other words, $\M$ is coherent if it is of finite type and for every open subset $U$  any submodule of finite type of   $\M_{\vert U}$ is of finite presentation.
\end{defn}

Let $f\colon X\to Y$ a morphism of ringed spaces. If $\M$ is a quasi-coherent (resp. of finite type) module  on $Y$, then $f^*\M$ is a quasi-coherent (resp. of finite type) module on $X$.

Let $f\colon \M\to\Nc$ be a morphism of $\OO$-modules. If $\M$ and $\Nc$ are quasi-coherent, the cokernel $\Coker f$ is quasi-coherent too, but the kernel may fail to be quasi-coherent.

Direct sums and direct limits of quasi-coherent modules are quasi-coherent. The tensor product of two quasi-coherent modules is also quasi-coherent.

\section{Ringed finite spaces}

Let $X$ be a finite topological space. Recall that we have  a preorder relation \[ p\leq q \Leftrightarrow p\in \bar q \Leftrightarrow U_q\subseteq U_p\]

 Giving a sheaf $F$ of abelian groups (resp. rings, etc) on $X$ is equivalent to giving the following data:

 - An abelian group  (resp. a ring, etc) $F_p$ for each $p\in X$.

 - A morphism of groups (resp. rings, etc) $r_{pq}\colon F_p\to F_q$ for each $p\leq q$, satisfying: $r_{pp}=\Id$ for any $p$, and $r_{qr}\circ r_{pq}=r_{pr}$ for any $p\leq q\leq r$. These $r_{pq}$ are called {\it restriction morphisms}.

Indeed, if $F$ is a sheaf on $X$, then  $F_p$ is the stalk of $F$ at $p$, and it coincides with the sections of $F$ on $U_p$. That is
\[ F_p=\text{ stalk of } F \text{ at } p = \text{ sections of } F \text{ on } U_p:=F(U_p)\]
The morphisms $F_p\to F_q$ are just the restriction morphisms  $F(U_p)\to F(U_q)$.

\begin{ejem} Given a group $G$, the constant sheaf $G$ on $X$ is given by the data: $G_p=G$ for any $p\in X$, and $r_{pq}=\Id$ for any $p\leq q$.
\end{ejem}

\begin{defn} A {\it ringed finite space} is a ringed space $(X,\OO )$ such that  $X$ is a finite topological space.
\end{defn}

By the previous consideration, one has a ring $\OO_p$ for each $p\in X$, and a morphism of rings $r_{pq}\colon \OO_p\to\OO_q$ for each $p\leq q$, such that $r_{pp}=\Id$ for any $p\in X$ and  $r_{ql}\circ r_{pq}=r_{pl}$ for any $p\leq q\leq l$.
\medskip

 Giving a morphism of ringed spaces $(X,\OO)\to (X',\OO')$ between two ringed finite spaces, is equivalent to giving:

-  a continuous (i.e. monotone) map $f\colon X\to X'$,

 -  for each  $p\in X$, a ring homomorphism  $f^\#_p\colon \OO'_{f(p)}\to \OO_p$, such that, for any  $p\leq q$, the diagram (denote $p' =f(p), q'=f(q)$)
\[ \xymatrix{ \OO'_{p'} \ar[r]^{f^\#_{p}} \ar[d]_{r_{p'q'}} & \OO_{p}\ar[d]^{r_{pq}}\\ \OO'_{q'} \ar[r]^{f^\#_{q}}   & \OO_{q}}\] is commutative. We shall denote by $\Hom(X,Y)$ the set of morphisms of ringed spaces between two ringed spaces $X$ and $Y$.

\begin{ejems}\label{ejemplos} \item[$\,\,$(1)] {\it Punctual ringed spaces}. A ringed finite space is called punctual if the underlying topological space has only one element.  The sheaf of rings is then just a ring. We shall denote by  $(*,A)$ the ringed finite space with topological space $\{*\}$ and ring $A$. Giving a morphism of ringed spaces  $(X,\OO)\to (*,A)$ is equivalent to giving a ring homomorphism $A\to \OO(X)$. In particular, the category of punctual ringed spaces is equivalent to the (dual) category of rings, i.e., the category of affine schemes. In other words, the category of affine schemes is a full subcategory of the category of ringed finite spaces, precisely the full subcategory of punctual ringed finite spaces.

Any ringed space $(X,\OO)$ has an associated punctual ringed space $(*,\OO(X))$ and a morphism or ringed spaces $\pi\colon (X,\OO)\to (*,\OO(X))$ which is universal for morphisms from $(X,\OO)$ to punctual spaces. In other words, the inclusion functor
\[i\colon \{\text{Punctual ringed spaces}\} \hookrightarrow \{\text{Ringed spaces}\}\] has a left adjoint: $(X,\OO)\mapsto (*,\OO(X))$. For any $\OO(X)$-module $M$, $\pi^*M$ is a quasi-coherent module on $X$. We sometimes denote $\widetilde M:=\pi^*M$. If $X\to Y$ is a morphism of ringed spaces and $A\to B$ is the induced morphism between the global sections of $\OO_Y$ and $\OO_X$, then, for any $A$-module $M$ one has that $f^*\widetilde M = \widetilde{M\otimes_AB}$.
\medskip

\item[$\,\,$(2)] {\it Finite topological spaces}. Any finite topological space $X$ may be considered as a ringed finite space, taking
the constant sheaf $\ZZ$ as the sheaf of rings. If  $X$ and $Y$ are two finite topological spaces, then giving a morphism of ringed spaces  $(X,\ZZ)\to (Y,\ZZ)$ is just giving a continuous map $X\to Y$. Therefore the category of finite topological spaces is a full subcategory of the category of ringed finite spaces. The (fully faithful) inclusion functor
\[ \aligned \{\text{Finite topological spaces}\} &\hookrightarrow \{\text{Ringed finite spaces} \}\\ X &\mapsto (X,\ZZ)\endaligned\] has a left adjoint, that maps a ringed finite space $(X,\OO)$ to $X$. Of course, this can be done more generally, removing the finiteness hypothesis: the category of topological spaces is a full subcategory of the category of ringed spaces (sending $X$ to $(X,\ZZ)$), and this inclusion has a left adjoint: $(X,\OO)\mapsto X$.
\medskip

\item[$\,\,$(3)] Let $(S,\OO_S)$ be a ringed space (a scheme, a differentiable manifold, an analytic space, ...).
Let $\U=\{U_1,\dots,U_n\}$ be a finite open covering of $S$. Let $X$ be the finite topological space associated to $S$ and $\U$, and $\pi\colon S\to X$ the natural continuous map  (Example \ref{covering}). We have then a sheaf of rings on $X$, namely  $\OO:=\pi_*\OO_S$, so that $\pi\colon (S,\OO_S)\to (X,\OO)$ is a morphism of ringed spaces. We shall say that $(X,\OO)$ is the  {\it ringed finite space associated to the ringed space $S$ and the finite covering $\U$}. This construction is functorial on $\U$ and on $S$, as in Example \ref{covering}.

\medskip

\item[$\,\,$(4)] Let $(S,\OO_S)$ be a quasi-compact and quasi-separated scheme. It is not difficult to prove that  one can find an
affine covering $\U=\{U_1,\dots,U_n\}$ such that, for any $s\in S$, the intersection $U_s = \underset{s\in U_i}\cap U_i$ is affine (we say that $\U$ is locally affine). If $X$ is the finite topological space associated to $S$ and $\U$, the continuous map $\pi\colon S\to X$ satisfies that $\pi^{-1}(U_x)$ is affine for any $x\in X$. Conversely, if $S$ is a scheme and there exist a finite topological space $X$ and a continuous map $f\colon S\to X$ such that $f^{-1}(U_x)$ is affine for any $x\in X$, then $S$ is quasi-compact and quasi-separated.

\medskip
\item[$\,\,$(5)] Let $(X,\OO)$ be a ringed finite space. For each $p\in X$ let us denote $S_p$ the affine scheme $S_p=\Spec\OO_p$. For
each $p\leq q$, we have a morphism of schemes $S_q\to S_p$, induced by the ring homomorphism $\OO_p\to \OO_q$. We shall define
\[ \Sc (X):=\underset{p\in X}\ilim S_p\] where $\ilim$ is the direct limit (in the category of ringed spaces). More precisely: for each $p\leq q$,
let us denote $S_{pq}=S_q$. We have morphisms $S_{pq}\to S_q$ (the identity) and $S_{pq}\to S_p$ (taking spectra in the morphism $r_{pq}\colon \OO_p\to \OO_q$). We have then morphisms
\[ \underset{p\leq q}\coprod S_{pq}  \aligned \,_{\longrightarrow} \\ \,^{\longrightarrow} \endaligned \, \underset{p}\coprod S_p\] and we define the ringed space $\Sc (X)$ as the cokernel. That is, $\Sc (X)$ is the cokernel topological space, and $\OO_{\Sc (X)}$ is the sheaf of rings defined by: for any open subset $V$ of $\Sc (X)$, we define $\OO_{\Sc (X)}(V)$ as the kernel of
\[ \underset p\prod \OO_{S_p}(V_p)   \aligned \,_{\longrightarrow} \\ \,^{\longrightarrow} \endaligned \, \underset{p\leq q}\prod \OO_{S_{pq}}(V_{pq})\] where $V_p$ (resp. $V_{pq}$) is the preimage of $V$ under the natural map $S_p\to \Sc(X)$ (resp. $S_{pq}\to \Sc (X)$).
 In particular, $X$ and $\Sc (X)$ have the same global functions, i.e., $\OO_{\Sc (X)}(\Sc (X))=\OO_X(X)$. We say that $\Sc (X)$ is {\it the ringed space obtained by gluing the affine schemes $S_p$ along the schemes $S_{pq}$.} By definition of a cokernel (or a direct limit), for any ringed space $(T,\OO_T)$, the sequence
\[ \Hom(\Sc (X),T)\to \underset {p\in X}\prod \Hom(S_p,T)\aligned \,_{\longrightarrow} \\ \,^{\longrightarrow} \endaligned \, \underset{p\leq q}\prod \Hom (S_{pq},T)\] is exact.

If $X$ has a minimum $p$, the natural morphism $S_p\to \Sc (X)$ is an isomorphism. If, for any $p\leq q$, the morphism $S_q\to S_p$  is an open immersion, then $\Sc (X)$ is a (quasi-compact and quasi-separated) scheme. If $X$ is the ringed finite space associated to a (locally affine) finite covering $\U$ of a quasi-compact and quasi-separated scheme $S$, then $\Sc (X)=S$.

This construction is functorial: if $f\colon X'\to X$ is a morphism between ringed finite spaces, it induces a morphism $\Sc(f)\colon \Sc (X')\to \Sc (X)$. In particular,  for any ringed finite space $X$, the natural morphism $X\to (*,A)$ (with $ A=\OO(X)$) induces a morphism
\[ \Sc (X)\to \Spec A.\]
\end{ejems}

\subsection{Fibered products}

Let $X\to S$ and $Y\to S$ be morphisms between ringed finite spaces. The fibered product $X\times_SY$ is the ringed finite space whose underlying topological space is the ordinary fibered product of topological spaces (in other words it is the fibered product set with the preorder given by $(x,y)\leq (x',y')$ iff $x\leq x'$ and $y\leq y'$) and whose sheaf of rings is: if $(x,y)$ is an element of $X\times_SY$ and $s\in S$ is the image of $x$ and $y$ in $S$, then
\[\OO_{(x,y)}=\OO_x\otimes_{\OO_s}\OO_y\] and the morphisms $\OO_{(x,y)}\to \OO_{(x',y')}$ for each $(x,y)\leq (x',y')$ are the obvious ones. For any $(x,y)\in X\times_SY$, one has that $U_{(x,y)}=U_x\times_{U_s}U_y$, with $s$ the image of $x$ and $y$ in $S$.

One has natural morphisms $\pi_X\colon X\times_SY\to X$ and $\pi_Y\colon X\times_SY\to Y$, such that
\[ \aligned \Hom_S(T,X\times_SY)&\to \Hom_S(T,X)\times \Hom_S(T,Y)\\ f&\mapsto (\pi_X\circ f,\pi_Y\circ f)\endaligned\] is bijective.

When $S$ is a punctual space, $S=\{ *,k\}$, the fibered product will be donoted by $X\times_kY$ or simply by $X\times Y$ when $k$ is understood (or irrelevant).   The underlying topological space is the cartesian product $X\times Y$ and the sheaf of rings is given by $\OO_{(x,y)}=\OO_x\otimes_k\OO_y$.

If $f\colon X\to Y$ is a morphism of ringed finite spaces over $k$, the graphic $\Gamma_f\colon X\to X\times_kY$ is the morphism of ringed spaces corresponding to the pair of morphisms $\Id\colon X\to X$ and $f\colon X\to Y$.  Explicitly, it is  given by the continuous map $X\to X\times_kY$, $x\mapsto (x,f(x))$, and by the ring homomorphisms $\OO_x\otimes_k \OO_{f(x)}\to \OO_x$ induced by the identity $\OO_x\to\OO_x$ and by the morphisms $\OO_{f(x)}\to\OO_x$ associated with $f\colon X\to Y$.

More generally, if $X$ and $Y$ are ringed finite spaces over a ring finite space $S$ and $f\colon X\to Y$ is a morphism over $S$, the graphic of $f$ is the morphism $\Gamma_f\colon X\to X\times_SY$ corresponding to the pair of morphisms $\Id\colon X\to X$ and $f\colon X\to Y$.

\subsection{Quasi-coherent modules}

 Let $\M$ be a sheaf of  $\OO$-modules on a ringed finite space $(X,\OO)$. Thus, for each $p\in X$, $\M_p$ is an $\OO_p$-module and for each  $p\leq q$ one has a morphism of $\OO_p$-modules $\M_p\to\M_q$, hence a morphism of  $\OO_q$-modules
\[\M_p\otimes_{\OO_p}\OO_q\to\M_q\]

\begin{rem} From the natural isomorphisms \[\Hom_{\OO_{\vert U_p}}(\OO_{\vert U_p},\M_{\vert U_p})=\Gamma(U_p,\M)=\M_p=\Hom_{\OO_p}(\OO_p,\M_p)\] it follows that, in order to define a morphism of sheaves of modules $\OO_{\vert U_p}\to\M_{\vert U_p}$ it suffices to define a morphism of $\OO_p$-modules $\OO_p\to \M_p$ and this latter is obtained from the former by taking the stalk at $p$.
\end{rem}

\begin{thm}\label{qc} An $\OO$-module $\M$ is quasi-coherent if and only if for any  $p\leq q$ the morphism
\[\M_p\otimes_{\OO_p}\OO_q\to\M_q\]
is an isomorphism.
\end{thm}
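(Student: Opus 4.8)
The plan is to reduce both implications to statements about modules over the rings $\OO_p$, using two elementary facts about a finite space $X$: (a) a sequence of $\OO$-modules is exact if and only if it is exact on every stalk (in particular a morphism of $\OO$-modules is an epimorphism iff it is so on stalks); and (b) by the Remark preceding the statement, applied in each coordinate, for the minimal open set $U_p$ one has $\Hom_{\OO_{\vert U_p}}(\OO_{\vert U_p}^J,\M_{\vert U_p})=\Hom_{\OO_p}(\OO_p^J,\M_p)$, the bijection being ``take the stalk at $p$''. Here $\OO^J$ denotes the free module $\bigoplus_J\OO$, whose stalk at any $q$ is $\bigoplus_J\OO_q=\OO_q^J$ since stalks commute with arbitrary direct sums; in particular $\OO_p^J\otimes_{\OO_p}\OO_q=\OO_q^J$.

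For the ``only if'' direction, let $\M$ be quasi-coherent and take $p\le q$, so $q\in U_p$. Restricting to $U_p$ the presentation furnished by quasi-coherence, we obtain an exact sequence of sheaves $\OO_{\vert U_p}^I\to\OO_{\vert U_p}^J\to\M_{\vert U_p}\to 0$. Its stalk at $p$ is a presentation $\OO_p^I\to\OO_p^J\to\M_p\to0$ of $\OO_p$-modules, and its stalk at $q$ is an exact sequence $\OO_q^I\to\OO_q^J\to\M_q\to0$. On the other hand, applying the right-exact functor $-\otimes_{\OO_p}\OO_q$ to the presentation of $\M_p$ produces an exact sequence $\OO_q^I\to\OO_q^J\to\M_p\otimes_{\OO_p}\OO_q\to0$ which, via the restriction morphisms $r_{pq}$, maps to the previous one by the identity on $\OO_q^I$ and $\OO_q^J$. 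Hence the induced map of cokernels, which is exactly the canonical morphism $\M_p\otimes_{\OO_p}\OO_q\to\M_q$, is an isomorphism.

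For the converse, assume $\M_p\otimes_{\OO_p}\OO_q\xrightarrow{\ \sim\ }\M_q$ for every $p\le q$. Since the $U_p$ form an open covering of $X$, it suffices to construct, for each $p$, an exact sequence $\OO_{\vert U_p}^I\to\OO_{\vert U_p}^J\to\M_{\vert U_p}\to0$. Pick a presentation $\OO_p^I\xrightarrow{\ \phi\ }\OO_p^J\xrightarrow{\ \psi\ }\M_p\to0$ of $\M_p$ as an $\OO_p$-module. By fact (b), $\phi$ and $\psi$ are the stalks at $p$ of uniquely determined morphisms of sheaves $\tilde\phi\colon\OO_{\vert U_p}^I\to\OO_{\vert U_p}^J$ and $\tilde\psi\colon\OO_{\vert U_p}^J\to\M_{\vert U_p}$; explicitly, the stalk of $\tilde\psi$ at $q\in U_p$ is the composite $\OO_q^J=\OO_p^J\otimes_{\OO_p}\OO_q\xrightarrow{\psi\otimes\mathrm{id}}\M_p\otimes_{\OO_p}\OO_q\xrightarrow{\ \sim\ }\M_q$, and likewise for $\tilde\phi$. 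By fact (a) the exactness of $\OO_{\vert U_p}^I\xrightarrow{\tilde\phi}\OO_{\vert U_p}^J\xrightarrow{\tilde\psi}\M_{\vert U_p}\to0$ can be checked stalkwise, and at each $q\in U_p$ this sequence is precisely the one obtained from $\OO_p^I\xrightarrow{\phi}\OO_p^J\xrightarrow{\psi}\M_p\to0$ by applying the right-exact functor $-\otimes_{\OO_p}\OO_q$ together with the isomorphism $\M_p\otimes_{\OO_p}\OO_q\cong\M_q$; hence it is exact. Therefore $\M$ is quasi-coherent.

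The individual steps are routine; the points I would be most careful about are the bookkeeping with arbitrary index sets (that tensor products and stalks commute with arbitrary direct sums, so that the Hom-identification of the Remark and the equality $\OO_p^J\otimes_{\OO_p}\OO_q=\OO_q^J$ persist for free modules on infinite sets) and the stalkwise characterization of exactness on a finite space. It is worth stressing that only right-exactness of $-\otimes_{\OO_p}\OO_q$ is used — consistent with quasi-coherence asking merely for a presentation by free modules — so no flatness hypothesis on the $r_{pq}$ enters.
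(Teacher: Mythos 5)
Your proof is correct and follows essentially the same route as the paper's: for the direct implication, restrict a presentation to $U_p$, compare its stalk at $q$ with the stalk at $p$ tensored by $\OO_q$; for the converse, lift a presentation of $\M_p$ to a morphism of sheaves on $U_p$ via the Hom-identification of the preceding Remark and check exactness stalkwise, using only right-exactness of the tensor product. The extra care you take with infinite index sets and the identification of the cokernel map is a welcome elaboration of steps the paper leaves implicit, but it is not a different argument.
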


\begin{proof} If $\M$ is quasi-coherent, for each point $p$ one has an exact sequence:
\[ \OO_{\vert U_p}^I\to \OO_{\vert U_p}^J \to \M_{\vert U_p} \to 0.\] Taking the stalk at $q\geq p$, one obtains an exact sequence
\[ \OO_q^I\to \OO_q^J \to \M_q \to 0\] On the other hand, tensoring the exact sequence at $p$  by $\otimes_{\OO_p}\OO_q$, yields an exact sequence
\[ \OO_q^I\to \OO_q^J \to \M_p\otimes_{\OO_p}\OO_q \to 0.\] Conclusion follows.

Assume now that  $\M_p\otimes_{\OO_p}\OO_q\to\M_q$ is an isomorphism for any $p\geq q$. We solve $\M_p$ by free $\OO_p$-modules:
\[ \OO_p^I\to \OO_p^J \to \M_p \to 0.\]
 We have then morphisms $\OO_{\vert U_p}^I\to \OO_{\vert U_p}^J \to \M_{\vert U_p}\to 0$. In order to see that this sequence is exact, it suffices to take the stalk at $q\geq p$. Now, the  sequence obtained at  $q$ coincides with the one obtained at $p$ (which is exact) after tensoring by $\otimes_{\OO_p}\OO_q$, hence it is exact.
\end{proof}

\begin{ejem} Let $(X,\OO)$ be a ringed finite space, $A=\OO(X)$ and $\pi\colon (X,\OO)\to (*,A)$ the natural morphism. We know that for any $A$-module $M$, $\widetilde M:=\pi^*M$ is a quasi-coherent module on $X$. The explicit stalkwise description of $\widetilde M$ is given by: $(\widetilde M)_x=M\otimes_A\OO_x$.
\end{ejem}

\begin{cor}\label{corqc} Let $X$ be a ringed finite space with a minimum and $A=\Gamma(X,\OO)$. Then the functors
\[\aligned \{\text{Quasi-coherent $\OO$-modules} \} & \overset{\longrightarrow}\leftarrow \{ \text{$A$-modules}\} \\ \M &\to \Gamma(X,\M) \\ \widetilde M &\leftarrow M \endaligned \]
are mutually inverse.
\end{cor}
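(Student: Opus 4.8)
The plan is to show that the two functors $\M \mapsto \Gamma(X,\M)$ and $M \mapsto \widetilde M = \pi^*M$ are mutually inverse equivalences, where $\pi\colon X \to (*,A)$ is the natural morphism. Let $p_0$ denote the minimum of $X$, so that $X = U_{p_0}$ and hence $\Gamma(X,\OO) = \OO_{p_0} = A$; more generally $\Gamma(X,\M) = \M_{p_0}$ for any sheaf $\M$, since global sections coincide with the stalk at the minimum. First I would unwind $\widetilde M$ stalkwise: by the preceding example, $(\widetilde M)_x = M\otimes_A \OO_x$, and in particular $(\widetilde M)_{p_0} = M\otimes_A \OO_{p_0} = M\otimes_A A = M$. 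Therefore $\Gamma(X,\widetilde M) = (\widetilde M)_{p_0} = M$ naturally in $M$, which gives one of the two composites being the identity.

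For the other composite, start with a quasi-coherent $\OO$-module $\M$ and set $M := \Gamma(X,\M) = \M_{p_0}$. I must produce a natural isomorphism $\widetilde M \xrightarrow{\sim} \M$. There is a canonical morphism: the counit of the adjunction $\pi^* \dashv \pi_*$ gives $\pi^*\pi_*\M \to \M$, i.e. $\widetilde M \to \M$. To check it is an isomorphism it suffices to check on stalks at each $x\in X$. On the left the stalk is $(\widetilde M)_x = M\otimes_A \OO_x = \M_{p_0}\otimes_{\OO_{p_0}}\OO_x$; on the right it is $\M_x$; and the map is precisely the restriction morphism $\M_{p_0}\otimes_{\OO_{p_0}}\OO_x \to \M_x$ associated to $p_0 \leq x$ (which holds for every $x$ since $p_0$ is the minimum). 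By Theorem \ref{qc}, quasi-coherence of $\M$ says exactly that $\M_p \otimes_{\OO_p}\OO_q \to \M_q$ is an isomorphism for all $p\leq q$; applying this with $p = p_0$, $q = x$ shows the stalk map is an isomorphism for every $x$. Hence $\widetilde M \to \M$ is an isomorphism of $\OO$-modules, natural in $\M$.

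Finally I would record that both natural transformations are compatible with the module structure and with morphisms, so that they are genuinely natural isomorphisms of functors, and note that the essential image of $\widetilde{(-)}$ lands in quasi-coherent modules (already known from the cited example) while $\Gamma(X,-)$ obviously lands in $A$-modules; together with the two isomorphisms above this establishes the equivalence. The only mild subtlety — the ``main obstacle'' such as it is — is identifying the counit $\pi^*\pi_*\M \to \M$ stalkwise with the restriction morphism $\M_{p_0}\otimes_{\OO_{p_0}}\OO_x \to \M_x$; once that identification is made, Theorem \ref{qc} does all the work. Everything else is formal bookkeeping about stalks at the minimum point, so I would not belabor it.
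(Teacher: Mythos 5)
Your argument is correct and follows the same route as the paper: identify global sections with the stalk at the minimum $p_0$, observe $(\widetilde M)_x = M\otimes_A\OO_x$, and invoke Theorem \ref{qc} to see that the counit $\widetilde{\Gamma(X,\M)}\to\M$ is a stalkwise isomorphism. The paper's proof is just a terser statement of exactly this, so no further comparison is needed.
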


\begin{proof} Let $p$ be the minimum of $X$. Then $U_p=X$ and for any sheaf $F$ on $X$, $F_p=\Gamma(X,F)$.
If $\M$ is a quasi-coherent module, then for any $x\in X$, $\M_x=\M_p\otimes_{\OO_p}\OO_x$. That is, $\M$ is univocally determined by its stalk at $p$, i.e., by its global sections.
\end{proof}

This corollary is a particular case of the invariance of the category of quasi-coherent mo\-du\-les under homotopies (see Theorem \ref{homotinvariance}), because any ringed finite space with a minimum $p$ is contractible to $p$ (Remark \ref{contractible}).

\begin{thm}  $\M$ is an $\OO$-module of finite type if and only if:

 - for each  $p\in X$, $\M_p$ is an $\OO_p$-module of finite type,

 - for any  $p\leq q$ the morphism \[\M_p\otimes_{\OO_p}\OO_q\to\M_q\] is surjective.
\end{thm}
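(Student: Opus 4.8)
The plan is to mimic the proof of Theorem \ref{qc}, replacing ``arbitrary free modules'' by ``finite free modules'' and ``isomorphism'' by ``surjection''. First I would prove the ``only if'' direction. Suppose $\M$ is of finite type. Fix $p\in X$; by definition there is an open neighborhood $U$ of $p$ and an epimorphism $\OO_{\vert U}^n\to \M_{\vert U}\to 0$. Since $U_p$ is the minimal open set containing $p$ and $U$ is open with $p\in U$, we have $U_p\subseteq U$, so restricting to $U_p$ gives an epimorphism $\OO_{\vert U_p}^n\to\M_{\vert U_p}\to 0$. Taking the stalk at $p$ yields $\OO_p^n\to\M_p\to 0$, so $\M_p$ is of finite type. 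Moreover, for $q\geq p$ (i.e.\ $q\in U_p$), taking the stalk at $q$ of the same epimorphism over $U_p$ gives an epimorphism $\OO_q^n\to\M_q\to 0$; on the other hand, tensoring the exact sequence $\OO_p^n\to\M_p\to 0$ by $\otimes_{\OO_p}\OO_q$ gives $\OO_q^n\to\M_p\otimes_{\OO_p}\OO_q\to 0$. Comparing the two presentations (the map $\OO_q^n\to\M_q$ factors through $\M_p\otimes_{\OO_p}\OO_q\to\M_q$), surjectivity of $\OO_q^n\to\M_q$ forces $\M_p\otimes_{\OO_p}\OO_q\to\M_q$ to be surjective.

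For the ``if'' direction, assume $\M_p$ is of finite type over $\OO_p$ for each $p$ and that $\M_p\otimes_{\OO_p}\OO_q\to\M_q$ is surjective for all $p\leq q$. Fix $p$ and choose a finite presentation-start $\OO_p^n\to\M_p\to 0$ of $\OO_p$-modules, i.e.\ a surjection. By the Remark preceding Theorem \ref{qc}, this morphism $\OO_p^n\to\M_p$ comes from a morphism of sheaves $\OO_{\vert U_p}^n\to\M_{\vert U_p}$ (apply the Remark to each of the $n$ factors). I claim this morphism of sheaves is surjective. To check surjectivity of a morphism of sheaves on a finite space it suffices to check it on stalks, i.e.\ at each $q\in U_p$, equivalently at each $q\geq p$. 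The stalk at $q$ of $\OO_{\vert U_p}^n\to\M_{\vert U_p}$ is obtained from the stalk at $p$ by applying $\otimes_{\OO_p}\OO_q$: indeed $(\OO_{\vert U_p}^n)_q=\OO_q^n=\OO_p^n\otimes_{\OO_p}\OO_q$ and the map $\OO_q^n\to\M_q$ is the composite $\OO_q^n=\OO_p^n\otimes_{\OO_p}\OO_q\to\M_p\otimes_{\OO_p}\OO_q\to\M_q$, where the first arrow is the original surjection tensored by $\OO_q$ (hence surjective, right-exactness of tensor) and the second is surjective by hypothesis. The composite of two surjections is surjective, so $\OO_{\vert U_p}^n\to\M_{\vert U_p}$ is an epimorphism of $\OO_{\vert U_p}$-modules, giving the required local epimorphism at $p$. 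Since $p$ was arbitrary, $\M$ is of finite type.

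The only slightly delicate point is the identification, in both directions, of the stalk at $q$ of the sheaf morphism $\OO_{\vert U_p}^n\to\M_{\vert U_p}$ with the base-changed morphism $\OO_p^n\otimes_{\OO_p}\OO_q\to\M_p\otimes_{\OO_p}\OO_q\to\M_q$; this is exactly the same compatibility used in the proof of Theorem \ref{qc} and follows from the fact that taking the stalk at $q\geq p$ of an $\OO_{\vert U_p}$-module $\Fc$ is $\Fc_p\otimes_{\OO_p}\OO_q$ when $\Fc$ is free, together with the functoriality of ``stalk at $q$'' and the Remark that morphisms $\OO_{\vert U_p}^n\to\M_{\vert U_p}$ are determined by their stalk at $p$. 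I do not expect any real obstacle: the argument is a routine adaptation of Theorem \ref{qc}, the main thing to keep straight being that ``finite'' ($n<\infty$) is preserved throughout and that one uses right-exactness of $\otimes$ rather than its failure of left-exactness.
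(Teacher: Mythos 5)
Your proposal is correct and follows essentially the same route as the paper: for the ``only if'' direction you restrict the given epimorphism to $U_p$ and compare the stalk at $q$ with the base change $\M_p\otimes_{\OO_p}\OO_q$ of the stalk at $p$ (the paper's commutative diagram), and for the ``if'' direction you induce a sheaf morphism $\OO_{\vert U_p}^n\to\M_{\vert U_p}$ from a surjection $\OO_p^n\to\M_p$ and verify surjectivity stalkwise via the composite $\OO_q^n\to\M_p\otimes_{\OO_p}\OO_q\to\M_q$. The extra care you take in identifying the stalk at $q$ with the base-changed map is exactly the compatibility the paper uses implicitly, so there is nothing to add.
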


\begin{proof} If $\M$ is of finite type, for each  $p$ one has an epimorphism  $\OO_{\vert U_p}^n\to\M_{\vert U_p}\to 0$. Taking, on the one hand,  the stalk at  $p$ tensored by  $\otimes_{\OO_p}\OO_q$, and on the other hand the stalk at $q$, one obtains a commutative diagram
\[\xymatrix {\OO_q^n \ar[r]\ar[d] & \M_p \otimes_{\OO_p}\OO_q \ar[r]\ar[d] & 0 \\ \OO_q^n \ar[r]  & \M_q  \ar[r]  & 0}\]  and one concludes. Conversely, assume that $\M_p$ is of finite type and  $\M_p\otimes_{\OO_p}\OO_q\to\M_q$ is surjective. One has an epimorphism $\OO_p^n\to\M_p\to 0$, that induces a morphism  $\OO_{\vert U_p}^n\to \M_{\vert U_p}$. This is an epimorphism because it is so at the stalk at any  $q\in U_p$.
\end{proof}

\begin{rem} Let $\M$ be an $\OO$-module on a ringed finite space. Arguing as in the latter theorem, one proves that  $ \M_p\otimes_{\OO_p}\OO_q\to\M_q$ is surjective for any $p\leq q$ if and only if $\M$ is locally a quotient of a free module (i.e., for each $p\in X$ there exist an open neighborhood $U$ of $p$ and an epimorphism $\OO_{\vert U}^I\to \M_{\vert U}\to 0$, for some  set of indexes $I$).
\end{rem}

\begin{thm}\label{coherent} An $\OO$-module $\M$ is coherent if and only if it is of finite type and it satisfies:

For each $p$, every sub-$\OO_p$-module of finite type $N$ of $\M_p$ is of finite presentation and, for any $q\geq p$, the natural morphism  $N\otimes_{\OO_p}\OO_q\to M_q$ is injective.

%\begin{enumerate} \item Para todo $p$, todo sub-$\OO_p$-módulo finito generado de $\M_p$ es de presentación finita.
%\item Para todo $p\leq q$ y todo sub-$\OO_p$-módulo finito $N$ de $\M_p$, $N\otimes_{\OO_p}\OO_q$ es un submódulo de $\M_q$.
%\end{enumerate}
\end{thm}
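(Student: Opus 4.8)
The plan is to reduce everything to the minimal open subsets $U_p$ and then to invoke the preceding theorem characterizing $\OO$-modules of finite type. First I would note that ``coherent'' is a condition that restricts to open subsets and is checked locally, and that every open $U$ is the union of the minimal opens $U_x$, $x\in U$, with $(\Ker\phi)_{\vert U_x}=\Ker(\phi_{\vert U_x})$. Hence $\M$ is coherent iff it is of finite type and, for every $p$ and every morphism of $\OO_{\vert U_p}$-modules $\phi\colon\OO_{\vert U_p}^n\to\M_{\vert U_p}$, the sheaf $\K:=\Ker\phi$ is of finite type on $U_p$. By the Remark preceding Theorem \ref{qc}, such a $\phi$ is the same as an $\OO_p$-linear map $\psi\colon\OO_p^n\to\M_p$, and its stalk at $q\geq p$ is $\phi_q=\lambda_q\circ(\psi\otimes_{\OO_p}\OO_q)\colon\OO_q^n\to\M_q$, where $\lambda_q\colon\M_p\otimes_{\OO_p}\OO_q\to\M_q$ is the canonical morphism.

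The key middle step is to make $\K$ explicit stalkwise. Put $N=\Ima\psi\subseteq\M_p$, so $\psi$ factors as $\OO_p^n\twoheadrightarrow N\hookrightarrow\M_p$ and $0\to\K_p\to\OO_p^n\to N\to0$ is exact. Right-exactness of $\otimes_{\OO_p}\OO_q$ gives $\K_p\otimes_{\OO_p}\OO_q\overset{a}\to\OO_q^n\overset{b}\to N\otimes_{\OO_p}\OO_q\to0$ with $\Ima a=\Ker b$, and $\phi_q$ factors as $\OO_q^n\overset{b}\to N\otimes_{\OO_p}\OO_q\overset{c}\to\M_q$, where $c$ is exactly the natural morphism $N\otimes_{\OO_p}\OO_q\to\M_q$ appearing in the statement. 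Since $b$ is onto, $\K_q=\Ker\phi_q=b^{-1}(\Ker c)$, so $\K_q=\Ker b=\Ima a$ precisely when $c$ is injective. Finally, by the finite-type criterion of the previous theorem applied on $U_p$, $\K$ is of finite type iff each $\K_q$ is of finite type over $\OO_q$ and $\K_q\otimes_{\OO_q}\OO_r\to\K_r$ is surjective for $p\leq q\leq r$.

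Both implications then drop out. For the ``if'' direction: finite presentation of $N$ (via Schanuel's lemma it does not depend on the chosen surjection $\OO_p^n\twoheadrightarrow N$) gives that $\K_p$ is of finite type; injectivity of $c$ gives $\K_q=\Ima a$, a quotient of $\K_p\otimes_{\OO_p}\OO_q$, hence of finite type; and since $\K_p\otimes_{\OO_p}\OO_r\to\K_r$ is onto (it realizes $\K_r$ as an image) and factors through $\K_q\otimes_{\OO_q}\OO_r$, the needed surjectivity holds, so $\K$ is of finite type and $\M$ is coherent. For the ``only if'' direction: given a finite-type $N\subseteq\M_p$, choose $\psi\colon\OO_p^n\twoheadrightarrow N$; coherence makes $\K$ of finite type, so $\K_p$ is of finite type and $0\to\K_p\to\OO_p^n\to N\to0$ exhibits $N$ as finitely presented, while surjectivity of $\K_p\otimes_{\OO_p}\OO_q\to\K_q$ forces $\K_q=\Ima a=b^{-1}(\Ker c)$, i.e. $\Ker c=0$, the injectivity of the natural morphism.

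The main obstacle I expect is the bookkeeping in the middle step: identifying the stalk $\K_q$ of the kernel sheaf, checking that the map $N\otimes_{\OO_p}\OO_q\to\M_q$ in the statement is literally the map $c$ obtained by factoring $\phi_q$, and verifying that the restriction morphisms of $\K$ match the tensor-product identifications so that the finite-type criterion of the previous theorem can be applied. Everything else is either the definition of coherence, the finite-type theorem, or a routine use of right-exactness of the tensor product.
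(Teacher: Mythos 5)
Your proposal is correct and follows essentially the same route as the paper: both arguments reduce to a surjection $\OO_p^n\twoheadrightarrow N$ with kernel sheaf $\K$ on $U_p$, compare the right-exact sequence $\K_p\otimes_{\OO_p}\OO_q\to\OO_q^n\to N\otimes_{\OO_p}\OO_q\to 0$ with $0\to\K_q\to\OO_q^n\to\M_q$, and observe that injectivity of $N\otimes_{\OO_p}\OO_q\to\M_q$ is equivalent to surjectivity of $\K_p\otimes_{\OO_p}\OO_q\to\K_q$, then invoke the stalkwise finite-type criterion. Your version merely makes explicit some bookkeeping the paper leaves implicit (the factorization $\phi_q=c\circ b$, the Schanuel-type independence of the presentation, the reduction of coherence to the minimal opens), all of which is sound.
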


\begin{proof} Let $\M$ be a coherent module. By definition, it is of finite type. Let $N$ be a submodule of finite type of  $\M_p$.  $N$ is the image of a morphism  $\OO_p^n\to \M_p$. This defines a morphism  $\OO_{\vert U_p}^n\to\M_{\vert U_p}$, whose kernel  $\K$ is of finite type because  $\M$ is coherent. Taking the the stalk at  $p$ one concludes that $N$ is of finite presentation. Moreover one has an exact sequence  $0\to \K_p\to\OO_p^n\to N\to 0$ and for any $q\geq p$ an exact sequence  $  \K_p \otimes_{\OO_p}\OO_q\to\OO_q^n\to N \otimes_{\OO_p}\OO_q\to 0$ and a commutative diagram
\[ \xymatrix{ & \K_p\otimes_{\OO_p}\OO_q \ar[r]\ar[d] & \OO_q^n \ar[r]\ar[d] & N \otimes_{\OO_p}\OO_q \ar[r]\ar[d] & 0
\\ 0\ar[r] &\K_q \ar[r]  & \OO_q^n \ar[r]  & \M_q   &  }  \] The surjectivity of $K_p\otimes_{\OO_p}\OO_q\to \K_q$ implies the injectivity of  $N\otimes_{\OO_p}\OO_q \to\M_q$.

Assume now that $\M$ is a module of finite type satisfying  the conditions. Let $U$ be an open subset and  $\OO_{\vert U}^n\to\M_{\vert U}$ a morphism, whose kernel is denoted by  $\K$. We have to prove that  $\K$ is of finite type. For each $p\in U$, the image, $N$, of $\OO_p^n\to\M_p$ is of finite presentation, hence  $\K_p$ is of finite type. For each $q\geq p$ we have a commutative diagram
\[ \xymatrix{ & \K_p\otimes_{\OO_p}\OO_q \ar[r]\ar[d] & \OO_q^n \ar[r]\ar[d] & N \otimes_{\OO_p}\OO_q \ar[r]\ar[d] & 0
\\ 0\ar[r] &\K_q \ar[r]  & \OO_q^n \ar[r]  & \M_q   &  }  \] Now, the injectivity of $N\otimes_{\OO_p}\OO_q \to\M_q$ implies the injectivity of  $K_p\otimes_{\OO_p}\OO_q\to \K_q$. Hence $\K$ is of finite type and  $\M$ is coherent.
\end{proof}

\begin{thm} $\OO$ is coherent if and only if:
\begin{enumerate} \item For each $p$, any finitely generated ideal of  $\OO_p$ is of finite presentation.
\item For any $p\leq q$, the morphism $\OO_p\to\OO_q$ is flat.
\end{enumerate}
\end{thm}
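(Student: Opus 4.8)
The plan is to specialize Theorem \ref{coherent} to the structure sheaf itself, $\M = \OO$, and then reinterpret the two resulting conditions via the classical local criterion for flatness.

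First I would note that $\OO$ is automatically an $\OO$-module of finite type: for each $p$ the identity map $\OO_{\vert U_p} \to \OO_{\vert U_p}$ exhibits it locally as a quotient of the free module of rank one. So Theorem \ref{coherent} applies and tells us that $\OO$ is coherent if and only if, for every $p \in X$, each sub-$\OO_p$-module of finite type $N$ of $\OO_p$ is of finite presentation and, for every $q \geq p$, the natural morphism $N \otimes_{\OO_p} \OO_q \to \OO_q$ is injective. The key observation is that the sub-$\OO_p$-modules of finite type of $\OO_p$ (regarded as a module over itself) are exactly the finitely generated ideals of $\OO_p$, so the ``finite presentation'' clause is literally condition (1).

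It then remains to show that, for a fixed pair $p \leq q$, the statement ``$N \otimes_{\OO_p} \OO_q \to \OO_q$ is injective for every finitely generated ideal $N$ of $\OO_p$'' is equivalent to the flatness of $\OO_p \to \OO_q$. This is precisely the standard ideal-theoretic characterization of flatness: an $A$-module $B$ is flat if and only if $I \otimes_A B \to B$ is injective for every finitely generated ideal $I \subseteq A$ (equivalently, $\mathrm{Tor}_1^A(A/I, B) = 0$ for all such $I$). Applying it with $A = \OO_p$ and $B = \OO_q$, and letting $p \leq q$ vary, turns the injectivity clause of Theorem \ref{coherent} into condition (2), completing both implications.

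I do not anticipate a genuine obstacle: the argument is essentially a dictionary translation, with the only external ingredient being the flatness criterion quoted above. The one point requiring a little care is simply to keep the quantifiers straight --- condition (1) is ``per point $p$'', the flatness statements are ``per edge $p \leq q$'', and both are produced by the single pointwise-and-edgewise condition of Theorem \ref{coherent}.
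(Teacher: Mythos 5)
Your proposal is correct and is exactly the paper's argument: the author's proof consists of the single line ``It is a consequence of the previous theorem and the ideal criterium of flatness,'' which is precisely your specialization of Theorem \ref{coherent} to $\M=\OO$ combined with the criterion that $\OO_p\to\OO_q$ is flat if and only if $I\otimes_{\OO_p}\OO_q\to\OO_q$ is injective for every finitely generated ideal $I$. Your write-up just makes explicit the bookkeeping (finite-type submodules of $\OO_p$ are the finitely generated ideals, and $\OO$ is trivially of finite type) that the paper leaves implicit.
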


\begin{proof} It is a consequence of the previous theorem and the ideal criterium of flatness.
\end{proof}

\begin{cor} Let $(X,\OO )$ be a ringed finite space of noetherian rings (i.e,  $\OO_p$ is a noetherian ring for any $p\in X$). Then $\OO$ is coherent if and only if for any $p\leq q$ the morphism $\OO_p\to\OO_q$ is flat.
\end{cor}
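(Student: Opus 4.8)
The plan is to read this off directly from the preceding theorem, which characterizes coherence of $\OO$ by the conjunction of two conditions: (1) for each $p$, every finitely generated ideal of $\OO_p$ is of finite presentation, and (2) for any $p\leq q$, the restriction morphism $\OO_p\to\OO_q$ is flat. Since (2) is precisely the condition appearing in the statement, the whole content of the corollary is that, under the noetherian hypothesis, condition (1) is automatic; the equivalence then follows by pure logic.

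So the first and only substantive step is to observe that over a noetherian ring every finitely generated module is of finite presentation, applied to ideals. Concretely, given a finitely generated ideal $I\subseteq\OO_p$, choose a surjection $\OO_p^n\to I\to 0$. Its kernel is a submodule of $\OO_p^n$, hence finitely generated because $\OO_p$ is noetherian, so one obtains a presentation $\OO_p^m\to\OO_p^n\to I\to 0$. This establishes (1) for every ringed finite space of noetherian rings, with no further hypothesis.

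Combining the two observations finishes the argument: if $\OO$ is coherent, then in particular (2) holds by the previous theorem; conversely, if (2) holds, then (1) holds by the noetherian remark above, so both conditions of the previous theorem are met and $\OO$ is coherent. I do not expect any real obstacle here — the proof is a one-line reduction to the earlier theorem together with the elementary fact about noetherian rings, and the only point worth stating carefully is that (1) needs nothing beyond noetherianity of each $\OO_p$.
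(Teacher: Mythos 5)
Your proof is correct and matches the paper's (implicit) argument: the corollary is stated as an immediate consequence of the preceding theorem, and the only content is exactly your observation that over a noetherian ring every finitely generated ideal is finitely presented, which makes condition (1) automatic.
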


\subsection{Finite spaces}

\begin{defn} A {\it finite space}  is a ringed finite space $(X,\OO)$ such that for any $p\leq q$ the morphism $\OO_p\to\OO_q$ is flat.
\end{defn}

Any open subset of a finite space is a finite space. The product of two finite spaces is a finite space.

\begin{prop} Let $(X,\OO)$ be a  finite space. Then, the kernel of any morphism between quasi-coherent $\OO$-modules is also quasi-coherent. Moreover, if
\[ 0\to\M'\to\M\to\M''\to 0\] is an exact sequence of  $\OO$-modules and two of them are quasi-coherent, then the third is quasi-coherent too. In conclusion, the category of quasi-coherent $\OO$-modules on a finite space is an abelian subcategory of the category of $\OO$-modules.
\end{prop}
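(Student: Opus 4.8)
The plan is to reduce everything to the stalkwise criterion of Theorem \ref{qc}, using two facts about a finite space $(X,\OO)$: the restriction morphisms $r_{pq}\colon \OO_p\to\OO_q$ are flat, and the stalk functor $F\mapsto F_p$ (which here is just $F\mapsto F(U_p)$) is exact on sheaves.

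First I would treat the kernel. Let $f\colon \M\to\Nc$ be a morphism of quasi-coherent $\OO$-modules and $\K=\Ker f$. Since taking stalks is exact, $\K_p=\Ker(f_p\colon\M_p\to\Nc_p)$ for every $p$, so we have an exact sequence $0\to\K_p\to\M_p\to\Nc_p$. For $p\leq q$, apply $-\otimes_{\OO_p}\OO_q$; by flatness of $r_{pq}$ the sequence $0\to\K_p\otimes_{\OO_p}\OO_q\to\M_p\otimes_{\OO_p}\OO_q\to\Nc_p\otimes_{\OO_p}\OO_q$ is still exact. Using the quasi-coherence of $\M$ and $\Nc$ to identify the last two terms with $\M_q$ and $\Nc_q$ (the identifications being compatible with $f$, hence carrying the map to $f_q$), one gets $\K_p\otimes_{\OO_p}\OO_q=\Ker(f_q)=\K_q$, and this isomorphism is the natural morphism $\K_p\otimes_{\OO_p}\OO_q\to\K_q$. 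By Theorem \ref{qc}, $\K$ is quasi-coherent.

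Next the short exact sequence $0\to\M'\to\M\to\M''\to0$. Taking stalks at each $p$ (exact) gives an exact sequence $0\to\M'_p\to\M_p\to\M''_p\to0$, and applying $-\otimes_{\OO_p}\OO_q$ for $p\leq q$ keeps it exact by flatness, so for each $p\leq q$ we obtain a commutative diagram with exact rows
\[
\xymatrix{
0\ar[r] & \M'_p\otimes_{\OO_p}\OO_q\ar[r]\ar[d] & \M_p\otimes_{\OO_p}\OO_q\ar[r]\ar[d] & \M''_p\otimes_{\OO_p}\OO_q\ar[r]\ar[d] & 0\\
0\ar[r] & \M'_q\ar[r] & \M_q\ar[r] & \M''_q\ar[r] & 0
}
\]
in which the vertical arrows are the natural maps. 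If two of $\M',\M,\M''$ are quasi-coherent, the two corresponding vertical maps are isomorphisms for all $p\leq q$, and the five lemma forces the third to be an isomorphism as well; by Theorem \ref{qc} the remaining module is quasi-coherent. (Flatness is what guarantees the top row is exact in all positions, which is exactly what is needed for the five lemma; in the case where $\M'$ and $\M''$ are the quasi-coherent ones it is the left-exactness of the top row that is decisive.)

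Finally, for the last assertion: the zero module is quasi-coherent, direct sums of quasi-coherent modules are quasi-coherent and cokernels of morphisms of quasi-coherent modules are quasi-coherent (both recalled in Section 1), and kernels are quasi-coherent by the first part. Hence kernels, cokernels and finite biproducts, computed in the category of $\OO$-modules, of quasi-coherent modules are again quasi-coherent, so the full subcategory of quasi-coherent $\OO$-modules is abelian and the inclusion into $\OO$-modules is exact. I do not expect a genuine obstacle here: the only points requiring care are that the flatness hypothesis is precisely what makes the tensored sequences exact, and the routine verification that the isomorphisms produced coincide with the natural restriction-type morphisms so that Theorem \ref{qc} is applicable.
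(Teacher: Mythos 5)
Your proposal is correct and is precisely the elaboration of the paper's own (one-line) proof, which simply says the result "follows easily from Theorem \ref{qc} and the flatness assumption": you use the stalkwise criterion, the exactness of $F\mapsto F_p$ on a finite space, flatness of $\OO_p\to\OO_q$ to preserve exactness after tensoring, and the five lemma. No gaps.
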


\begin{proof} It follows easily from Theorem \ref{qc} and the flatness assumption.
\end{proof}

\begin{ejems} \begin{enumerate} \item Any finite topological space $X$ is a  finite space (with $\OO=\ZZ$), since the restrictions morphisms are the identity.
\item If $X$ is the ringed finite space associated to a (locally affine) finite affine covering of a quasi-compact and quasi-separated scheme $S$ (see Examples \ref{ejemplos}.3 and \ref{ejemplos}.4), then $X$ is a finite space. This follows from the following fact: if $V\subset U$ is an inclusion between two affine open subsets, the restriction morphism $\OO_S(U)\to\OO_S(V)$ is flat.
\end{enumerate}
\end{ejems}

\begin{thm}\label{schemes} Let $S$ be a quasi-compact and quasi-separated scheme and   $\U=\{ U_1,\dots, U_n\}$ a (locally affine) finite  covering by affine open subschemes. Let $(X,\OO)$ be the finite space associated to $S$ and $\U$,   and $\pi\colon S\to X$ the natural morphism of ringed spaces (see Examples \ref{schemes}, (3) and (4)). One has:

1. For any quasi-coherent   $\OO_S$-module $\M$, $\pi_*\M$ is a quasi-coherent $\OO$-module.

2. The functors $\pi^*$ and $\pi_*$ establish an equivalence between the category of quasi-coherent  $\OO_S$-modules and the category of quasi-coherent   $\OO$-modules.

Moreover, for any open subset $U$ of $X$, the morphism $\pi^{-1}(U)\to U$ satisfies 1. and 2.
\end{thm}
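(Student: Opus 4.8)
The plan is to reduce everything to the affine/punctual situation, where Corollary \ref{corqc} applies, and then glue. For part 1, I would first observe that quasi-coherence is a local question on $X$: by Theorem \ref{qc} it suffices to check, for each $p\leq q$ in $X$, that the natural map $(\pi_*\M)_p\otimes_{\OO_p}\OO_q\to(\pi_*\M)_q$ is an isomorphism. Now $(\pi_*\M)_p=\Gamma(U_p,\pi_*\M)=\Gamma(\pi^{-1}(U_p),\M)$, and by Example \ref{ejemplos}(4) the open set $\pi^{-1}(U_p)$ is affine, say $\Spec\OO_p$ (indeed $\OO_p=(\pi_*\OO_S)_p=\OO_S(\pi^{-1}(U_p))$). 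Since $p\leq q$ gives $U_q\subseteq U_p$, hence $\pi^{-1}(U_q)\subseteq\pi^{-1}(U_p)$, an inclusion of affine opens corresponding to the ring map $\OO_p\to\OO_q$; for a quasi-coherent sheaf on an affine scheme $\Spec\OO_p$, its sections over the affine open $\Spec\OO_q$ are exactly $\Gamma(\Spec\OO_p,\M)\otimes_{\OO_p}\OO_q$ (this is the standard fact that restriction of quasi-coherent sheaves to affine opens is base change, valid here since $\Spec\OO_q\hookrightarrow\Spec\OO_p$ is an open immersion of affine schemes). This is precisely the isomorphism required by Theorem \ref{qc}, so $\pi_*\M$ is quasi-coherent.

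For part 2, the strategy is to exhibit the unit $M\to\pi_*\pi^*M$ and counit $\pi^*\pi_*\M\to\M$ as isomorphisms for quasi-coherent $\M$ (on $S$) and quasi-coherent $M$ (on $X$). Both are again local questions. For a quasi-coherent $\OO$-module $M$ on $X$: by Theorem \ref{qc}, $M$ is determined by the data $(M_p, M_p\otimes_{\OO_p}\OO_q\xrightarrow{\sim}M_q)$; the sheaf $\pi^*M$ restricted to $\pi^{-1}(U_p)=\Spec\OO_p$ is the quasi-coherent sheaf $\widetilde{M_p}$ associated to the $\OO_p$-module $M_p$ (using that $\pi^{-1}(U_p)\to U_p$ has a minimum downstairs, so $\pi^*M$ on it is $\pi_p^*$ applied to $\Gamma(U_p,M)=M_p$ in the sense of Example \ref{ejemplos}(1), combined with compatibility of $\pi^*$ with restriction to opens). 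The compatibility of these local descriptions on overlaps $\pi^{-1}(U_q)$ is guaranteed by the isomorphisms $M_p\otimes_{\OO_p}\OO_q\cong M_q$, so $\pi^*M$ is well-defined and $(\pi_*\pi^*M)_p=\Gamma(\Spec\OO_p,\widetilde{M_p})=M_p$ recovers $M$. Conversely, for quasi-coherent $\M$ on $S$, restricting the counit $\pi^*\pi_*\M\to\M$ to each affine open $\pi^{-1}(U_p)=\Spec\OO_p$ gives the canonical map $\widetilde{\Gamma(\pi^{-1}(U_p),\M)}\to\M|_{\pi^{-1}(U_p)}$, which is an isomorphism because $\M$ is quasi-coherent on the affine scheme $\Spec\OO_p$; since the $\pi^{-1}(U_p)$ cover $S$, the counit is a global isomorphism. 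Fully-faithfulness of $\pi^*$ and that $\pi_*$ lands in quasi-coherent modules (part 1) then give the equivalence.

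The last sentence follows by applying parts 1 and 2 to the morphism $\pi^{-1}(U)\to U$: the open subset $U\subseteq X$ is itself a finite space (any open subset of a finite space is a finite space), $U$ carries the covering $\{U\cap U_{[p]}\}$ or equivalently is the finite space associated to the scheme $\pi^{-1}(U)$ with the induced affine covering, and $\pi^{-1}(U)\to U$ is the associated morphism; moreover $\pi^{-1}(U)$ is quasi-compact and quasi-separated since it is covered by finitely many affine opens with affine (hence quasi-compact, separated) pairwise intersections $\pi^{-1}(U_x)$ for $x\in U$ — this is exactly the criterion in Example \ref{ejemplos}(4). So the theorem applies verbatim to $\pi^{-1}(U)\to U$.

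The main obstacle I anticipate is the bookkeeping in part 2 showing that $\pi^*M$, reconstructed from the local pieces $\widetilde{M_p}$ on the affine opens $\pi^{-1}(U_p)$, really is the inverse-image sheaf and glues correctly: one must check that the transition maps of $\pi^*M$ over $\pi^{-1}(U_p)\cap\pi^{-1}(U_q)=\pi^{-1}(U_q)$ (when $p\leq q$) agree with base change of the isomorphism $M_p\otimes_{\OO_p}\OO_q\xrightarrow{\sim}M_q$, and more generally handle the case where $p$ and $q$ are incomparable by passing through a common open $\pi^{-1}(U_r)$ with $r$ above both — i.e. through $U_p\cap U_q=\bigcup_{r\geq p,\,r\geq q}U_r$. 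Everything else is a routine application of Theorem \ref{qc}, Corollary \ref{corqc}, and the affineness of $\pi^{-1}(U_x)$ from Example \ref{ejemplos}(4).
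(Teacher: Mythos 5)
Your proposal is correct and takes essentially the same approach as the paper: part 1 via Theorem \ref{qc} and the base-change isomorphism $\M(U)\otimes_{\OO_S(U)}\OO_S(V)\cong\M(V)$ for affine opens $V\subseteq U$ of $S$, and part 2 by reducing the unit and counit to standard facts about quasi-coherent sheaves on the affine opens $\pi^{-1}(U_p)$. The only difference is stylistic: the paper checks the unit and counit stalkwise (at $s\in S$, resp.\ at $p\in X$ followed by a pointwise reduction over $\Spec\OO_p$) where you identify whole restrictions to $\pi^{-1}(U_p)$ with tilde-sheaves, and your concern about gluing $\pi^*M$ from local pieces is moot since $\pi^*M$ is already a globally defined sheaf whose restrictions merely need to be identified.
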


\begin{proof} 1. We have to prove that $(\pi_*\M)\otimes_{\OO_p}\OO_q\to(\pi_*\M)_q$ is an isomorphism for any $p\leq q$. This is a consequence of the following fact: if $V\subset U$ are open and affine subsets of a scheme $S$ and $\M$ is a quasi-coherent module on $S$, the natural map $\M(U)\otimes_{\OO_S(U)}\OO_S(V)\to\M(V)$ is an isomorphism.

2. Let $\M$ be a quasi-coherent module on $S$. Let us see that the natural map $\pi^*\pi_*\M\to\M$ is an isomorphism. Taking the stalk at $s\in S$, one is reduced to the following fact: if $U$ is an affine open subset of  $S$, then for any $s\in U$ the natural map $\M(U)\otimes_{\OO_S(U)}\OO_{S,s}\to \M_s$ is an isomorphism.

To conclude 2., let $\Nc$ be a quasi-coherent module on $X$ and let us see that the natural map $\Nc\to\pi_*\pi^*\Nc$ is an isomorphism. Taking the stalk at $p\in X$, we have to prove that $\Nc_p\to (\pi^*\Nc)(U)$ is an isomorphism, with $U=\pi^{-1}(U_p)$. Notice that $U$ is an affine subset and $\OO_S(U)=\OO_p$. It suffices to prove that, for any $s\in U$, $\Nc_p\otimes_{\OO_p}\OO_{S,s}\to (\pi^*\Nc)(U)\otimes_{\OO_p}\OO_{S,s}$ is an isomorphism. Denoting $q=\pi(s)$, one has that $(\pi^*\Nc)(U)\otimes_{\OO_p}\OO_{S,s}= (\pi^*\Nc)_s=\Nc_q\otimes_{\OO_q}\OO_{S,s}$. Since $\Nc$ is quasi-coherent, $\Nc_q=\Nc_p\otimes_{\OO_p}\OO_q$. Conclusion follows.

Finally, these same proofs work for $\pi\colon \pi^{-1}(U)\to U$, for any open subset $U$ of $X$.
\end{proof}

\begin{thm}\label{qc-fts} Let $X$ be a finite topological space $(\OO=\ZZ$). A sheaf $\M$ of abelian groups on $X$ is quasi-coherent if and only if it is locally constant, i.e., for each $p\in X$, $\M_{\vert U_p}$ is (isomorphic to) a constant sheaf. If $X$ is connected, this means that there exists an abelian group  $G$ such that $\M_{\vert U_p}=G$ for every $p$. If $X$ is not connected, the latter holds in each connected component. Moreover $\M$ is coherent if and only if $G$ is a finitely generated abelian group.
\end{thm}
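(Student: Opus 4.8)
The plan is to apply Theorem~\ref{qc} with $\OO=\ZZ$ and unravel what it says in the purely topological setting. First I would observe that for a finite topological space all restriction maps $r_{pq}\colon \ZZ\to\ZZ$ are the identity, so for any sheaf $\M$ of abelian groups and any $p\leq q$ the map $\M_p\otimes_{\ZZ}\ZZ\to\M_q$ is just the restriction morphism $\M_p\to\M_q$. By Theorem~\ref{qc}, $\M$ is quasi-coherent if and only if $\M_p\to\M_q$ is an isomorphism for every $p\leq q$. Now I would translate this into a statement about $\M_{\vert U_p}$: recall $U_p=\{q: p\leq q\}$, and $p$ is the minimum of $U_p$, so for $q\in U_p$ the stalk $\M_q$ is the restriction from $\M_p$ via $\M_p\to\M_q$. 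Hence the condition ``$\M_p\to\M_q$ is an isomorphism for all $q\geq p$'' says precisely that $\M_{\vert U_p}$ is isomorphic to the constant sheaf with stalk $\M_p$ (the restriction maps inside $U_p$ are all isomorphisms, and one identifies them all with $\M_p$). Conversely, if $\M_{\vert U_p}$ is constant for every $p$, then in particular $\M_p\to\M_q$ is an isomorphism for every covering relation, hence for every $p\leq q$ by composing along a chain; so $\M$ is quasi-coherent.

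Next I would address the global statement in the connected case. If $X$ is connected, I want a single group $G$ with $\M_{\vert U_p}\cong G$ for all $p$. For this I would use that the relation ``$\M_p\cong\M_q$ canonically'' holds whenever $p\leq q$ or $p\geq q$, and that connectedness of the finite topological space $X$ (equivalently, connectedness of the underlying preorder as a graph via the relations $\leq$) lets one join any two points by a zigzag $p=p_0 \sim p_1\sim\cdots\sim p_n=p'$ where consecutive points are comparable. Along such a zigzag all the stalks are isomorphic, so all $\M_p$ are isomorphic to a common group $G$, and $\M_{\vert U_p}\cong G$ (constant). If $X$ is not connected, one applies this componentwise, since quasi-coherence is a local (indeed stalkwise/covering-relation) condition and connected components of a finite space are open.

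Finally, for the coherence statement I would invoke Theorem~\ref{coherent} (or the remark that $\ZZ$-modules are coherent iff finitely generated). A quasi-coherent $\M$ is of finite type iff each $\M_p=G$ is a finitely generated abelian group and the surjectivity condition on $\M_p\otimes\OO_q\to\M_q$ holds — but that map is already an isomorphism. And for coherence one needs every finitely generated subgroup of $G$ to be finitely presented, which is automatic over $\ZZ$ (subgroups of finitely generated abelian groups are finitely generated, hence finitely presented), plus the injectivity condition $N\otimes_{\ZZ}\ZZ\to\M_q$, again automatic since the maps are isomorphisms. So coherence reduces exactly to $G$ being finitely generated. The only mildly delicate point — the ``main obstacle'', though it is not serious — is being careful about the distinction between an \emph{abstract} isomorphism $\M_p\cong\M_q$ and the \emph{canonical} one given by the restriction map, and checking that ``$\M_{\vert U_p}$ constant'' is exactly captured by the restriction maps (not merely the stalks) being isomorphisms; this is where one uses that $p$ is the minimum of $U_p$ so that every section on $U_p$ is determined by its germ at $p$.
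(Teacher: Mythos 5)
Your proposal is correct and follows essentially the same route as the paper: both reduce quasi-coherence via Theorem~\ref{qc} to the restriction morphisms $\M_p\to\M_q$ being isomorphisms (since $\otimes_{\ZZ}\ZZ$ is trivial), identify this with constancy on each $U_p$, and deduce the coherence statement from Theorem~\ref{coherent} together with $\OO=\ZZ$. Your write-up merely makes explicit the zigzag argument in the connected case and the verification of the hypotheses of Theorem~\ref{coherent} over $\ZZ$, which the paper leaves implicit.
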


\begin{proof} Since $\OO$ is the constant sheaf $\ZZ$, the quasi-coherence condition $$``\M_p\otimes_{\OO_p}\OO_q\to\M_q \text{ is an isomorphism}"$$ is equivalent to say that the restriction morphisms $\M_p\to\M_q$ are isomorphisms, i.e., $\M_{\vert U_p}$ is isomorphic to a constant sheaf. The statement on the coherence is  a consequence of Theorem \ref{coherent} and the hypothesis ($\OO=\ZZ$).
\end{proof}

Now let us prove a topological analog of Theorem \ref{schemes}. First let us recall a basic result about locally constant sheaves and the fundamental group.

\medskip
\noindent{\it Locally constant sheaves and the fundamental group}.
\medskip

Let $S$ be a path connected, locally path connected and locally simply connected topological space and let $\pi_1(S)$ be its fundamental group. Then there is an equivalence between the category of locally constant sheaves on $S$ (with fibre type $G$, an abelian group) and the category of representations of $\pi_1(S)$ on $G$ (i.e., morphisms of groups $\pi_1(S)\to \Aut_{\ZZ-\text{mod.}} G$). In particular, $S$ is simply connected if and only if any locally constant sheaf (of abelian groups) on $S$ is constant.

We can make a variant of this result. Assume that $X$ is connected. First, replace the constant sheaf $\ZZ$ by a locally constant sheaf of rings $\A$, i.e., a sheaf of rings that is locally isomorphic to a constant sheaf $A$ (a commutative ring). This is equivalent to give a ring representation of $\pi_1(X)$ in $A$, i.e.  a morphism of groups $\rho\colon \pi_1(X)\to \Aut_{\text{rings}}A$. For each $\sigma\in\pi_1(X)$,  $a\in A$, one denotes $\sigma(a)=\rho(\sigma)(a)$.

\begin{defn} A $\rho$-representation of $\pi_1(X)$ is an $A$-module  $M$ endowed with a representation  $\phi\colon \pi_1(X)\to \Aut_{\text{groups}}M$, which is compatible with $\rho$, that is:
\[ \sigma (am)= \sigma(a)\sigma(m)\]
where $\sigma(m)=\phi(\sigma)(m)$, with $m\in M$.
\end{defn}

Then, a $\rho$-representation of $\pi_1(X)$ is equivalent to a quasi-coherent $\A$-module. If $\rho$ is the trivial representation, then a $\rho$-representation is just a representation of $A$-modules, i.e. a morphism of groups $\pi_1(X)\to \Aut_{A-\text{mod.}}M$, and it is equivalent to a quasi-coherent $A$-module on $X$ (here $A$ denotes the constant sheaf of rings $A$).
\medskip

Now, the topological analog of Theorem \ref{schemes} is:

\begin{thm}\label{fin-sp-assoc-top} Let $S$ be a path connected, locally path connected and locally simply connected topological space and  let $\U=\{ U_1,\dots,U_n\}$ be a (locally simply connected) finite covering of $S$, i.e., for each $s\in S$, the intersection $\underset{s\in U_i}\cap U_i$ is simply connected. Let $X$ be the associated finite topological space and $\pi\colon S\to X$ the natural continous map. Then

1. For any locally constant sheaf $\Lc$ on $S$, $\pi_*\Lc$ is a locally constant sheaf on $X$.

2. The functors $\pi^*$ and $\pi_*$ establish an equivalence between the category of locally constant sheaves on $S$ and the category of locally constant sheaves on   $X$. In other words, $\pi_1(S)\to\pi_1(X)$ is an isomorphism.
\end{thm}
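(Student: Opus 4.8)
The plan is to reduce everything to a statement about the minimal open sets $U_p$ of $X$ and their preimages in $S$, mimicking the scheme-theoretic proof of Theorem \ref{schemes} but with ``affine'' replaced by ``simply connected'' and ``quasi-coherent module'' replaced by ``locally constant sheaf''. The key local fact I would isolate at the outset is the topological analogue of ``$\M(U)\otimes_{\OO_S(U)}\OO_S(V)\to\M(V)$ is an isomorphism for $V\subset U$ affine'': namely, \emph{if $W$ is a simply connected (path connected, locally path connected) open subset of $S$ and $\Lc$ is locally constant on $S$, then $\Lc|_W$ is constant, so $\Gamma(W,\Lc)\to\Lc_s$ is an isomorphism for every $s\in W$}, and moreover for an inclusion $W'\subset W$ of such simply connected opens the restriction $\Gamma(W,\Lc)\to\Gamma(W',\Lc)$ is an isomorphism. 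This is immediate from the recalled equivalence between locally constant sheaves and representations of $\pi_1$, since $\pi_1$ of a simply connected space is trivial.

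Next I would verify part 1. Recall from Example \ref{covering} that for $p=[s]\in X$ one has $\pi^{-1}(U_p)=U_s=\bigcap_{s\in U_i}U_i$, which by hypothesis is simply connected (and is path connected, locally path connected, being an open subset of $S$ with those properties — here one should be slightly careful and, if necessary, pass to connected components, exactly as in the statement of Theorem \ref{qc-fts}). Then $(\pi_*\Lc)_p=\Gamma(U_p,\pi_*\Lc)=\Gamma(\pi^{-1}(U_p),\Lc)=\Gamma(U_s,\Lc)$. For $p\leq q$, writing $q=[t]$ with $t\in U_s$ we get $U_t\subseteq U_s$, both simply connected, so the restriction $\Gamma(U_s,\Lc)\to\Gamma(U_t,\Lc)$ is an isomorphism by the local fact above; hence all restriction morphisms of $\pi_*\Lc$ are isomorphisms, i.e.\ $\pi_*\Lc$ is locally constant on $X$ by Theorem \ref{qc-fts}.

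For part 2 I would show the unit and counit of the adjunction $(\pi^*,\pi_*)$ are isomorphisms on the relevant subcategories, again stalk by stalk as in the proof of Theorem \ref{schemes}. For $\Lc$ locally constant on $S$: the counit $\pi^*\pi_*\Lc\to\Lc$ at $s\in S$ is $(\pi_*\Lc)_{\pi(s)}=\Gamma(U_s,\Lc)\to\Lc_s$, an isomorphism since $U_s$ is simply connected. For $\Nc$ locally constant on $X$: the unit $\Nc\to\pi_*\pi^*\Nc$ at $p=[s]$ is $\Nc_p\to\Gamma(\pi^{-1}(U_p),\pi^*\Nc)=\Gamma(U_s,\pi^{-1}\Nc)$; since $\Nc$ is locally constant, $\pi^{-1}\Nc$ is constant on the connected (simply connected) set $U_s$ with stalk $\Nc_p$, so taking global sections over $U_s$ recovers $\Nc_p$. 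This gives the equivalence of categories; restricting to constant sheaves with a fixed fibre $G$ and using the recalled equivalence with $\pi_1$-representations, the induced map $\pi_1(S)\to\pi_1(X)$ induces an equivalence of representation categories for every $G$, whence it is an isomorphism of groups (e.g.\ take $G$ to be a free abelian group of large rank, or argue via the classifying property). Finally, exactly as in Theorem \ref{schemes}, the same proofs apply verbatim to $\pi\colon\pi^{-1}(U)\to U$ for any open $U\subseteq X$, since open subsets of $X$ are again finite topological spaces and $\pi^{-1}(U)$ still satisfies the simple-connectivity hypotheses on its minimal opens.

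The main obstacle is not any single hard step but the bookkeeping around connectivity: the clean dictionary with $\pi_1$ requires path connected (and locally path/simply connected) spaces, so one must either assume each $U_s$ connected or systematically work component-by-component, and one must check that the $U_s$, as open subsets of $S$, inherit local path-connectedness and local simple-connectedness — these are genuinely local properties, so this is fine, but it needs to be said. The only substantive mathematical input is the equivalence ``locally constant sheaves $\leftrightarrow$ $\pi_1$-representations'' together with ``simply connected $\Rightarrow$ locally constant sheaves are constant'', both of which are recalled in the excerpt and may be used as black boxes.
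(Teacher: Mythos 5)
Your proposal is correct and follows essentially the same route as the paper's own proof: both reduce everything to the fact that a locally constant sheaf is constant on the simply connected sets $U_s=\pi^{-1}(U_p)$, check that the restriction maps $\Gamma(U_s,\Lc)\to\Gamma(U_{s'},\Lc)$ are isomorphisms for part 1, and verify the unit and counit of the adjunction stalk by stalk for part 2. The only differences are presentational — you spell out the connectivity bookkeeping and the final passage to $\pi_1(S)\to\pi_1(X)$ in more detail than the paper does.
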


\begin{proof} Let us recall that, on a simply connected space, every locally constant sheaf is constant. Let $x\leq x'$ in $X$, and put $x=\pi(s)$, $x'=\pi(s')$. Then $(\pi_*\Lc)_x \to(\pi_*\Lc)_{x'}$ is the restriction morphism $\Gamma(U_s,\Lc)\to \Gamma(U_{s'},\Lc)$, which is an isomorphism because $\Lc$ is a constant sheaf on $U_s$.

If $\Lc$ is a locally constant sheaf on $S$, the natural morphism $\pi^*\pi_*\Lc\to\Lc$ is an isomorphism, since taking fibre at $s\in S$ one obtains the morphism $\Gamma(U_s,\Lc)\to \Lc_s$, which is an isomorphism because $\Lc$ is a constant sheaf on $U_s$. Finally, if $\Nc$ is a locally constant sheaf on $X$, the natural map $\Nc\to \pi_*\pi^*\Nc$ is an isomorphism: taking fibre at a point $x=\pi(s)$ one obtains the morphism $\Nc_x\to \Gamma(U_s, \pi^*\Nc)$, which is an isomorphism because $\Nc$ is a constant sheaf on $U_x$ (and then $\pi^*\Nc$ is a constant sheaf on $U_s$).
\end{proof}

\begin{rem} The same theorem holds in a more general situation: Let $S$ and $T$ be  path connected, locally path connected and locally simply connected topological spaces, $f\colon S\to T$ a continuous map  such that there exists a basis like open (and connected) cover $\U$ of $T$ such that $f^{-1}(U)$ is connected and $\pi_1(f^{-1}(U))\to \pi_1(U)$ is an isomorphism for every $U\in \U$. Then $\pi_1(S)\to\pi_1(T)$ is an isomorphism. It is an analogue of McCord's theorem (see \cite{Barmak}, Theorem 1.4.2) for the fundamental group. See also \cite{Quillen}, Proposition 7.6.
\end{rem}

\section{Homotopy}

For this section, we shall follow the lines of \cite{Barmak}, section 1.3, generalizing them to the ringed case.

Let $X$, $Y$ be  finite topological spaces and $\Hom(X,Y)$ the (finite) set of continuous maps. This set has a preorder, the pointwise preorder:
\[ f\leq g \iff f(x)\leq g(x) \text{ for any } x\in X,\]
hence $\Hom(X,Y)$ is a finite topological space.

It is easy to prove that two continous maps $f,g\colon X\to Y$ are homotopic (denoted by $f\sim g$) if and only if they belong to the same connected component of $\Hom(X,Y)$. In other words, if we denote $f\equiv g$ if either $f\leq g$ or $f\geq g$, then  $f\sim g$ if and only if there exists a sequence
\[ f=f_0\equiv f_1 \equiv \cdots \equiv f_n=g,\qquad f_i\in\Hom(X,Y)\]

Assume now that $X$ and $Y$ are ringed finite spaces and $\Hom(X,Y)$ is the set of morphisms of ringed spaces. It is no longer a finite set, however we can define a preorder relation:

\begin{defn} Let $f,g\colon X\to Y$ be two morphisms of ringed spaces. We say that $f\leq g$ if:

(1) $f(x)\leq g(x)$ for any $x\in X$.

(2) For any $x\in X$ the triangle
\[ \xymatrix{\OO_{f(x)}\ar[rr]^{r_{f(x)g(x)}}\ar[rd]_{f^\#_x} & & \OO_{g(x)}\ar[ld]^{g^\#_x}\\ & \OO_x & }\] is commutative. We shall denote by $f\equiv g$ if either $f\leq g$ or $f\geq g$.
\end{defn}

\begin{rem}\label{rem} If $f(x)=g(x)$ for any $x\in X$ (i.e., $f$ and $g$ coincide as continous maps) and $f\leq g$, then $f=g$.
\end{rem}

We can define the homotopy relation by:

\begin{defn} Let $f,g\colon X\to Y$ be two morphisms of ringed spaces. We say that $f$ and $g$ are homotopic,  $f\sim g$, if there exists a sequence:
\[ f=f_0\equiv f_1 \equiv \cdots \equiv f_n=g,\qquad f_i\in\Hom(X,Y) \]
\end{defn}

We can then define the homotopy equivalence between ringed finite spaces:

\begin{defn} Two  ringed finite spaces $X$ and $Y$ are said to be homotopy equivalent, denoted by $X\sim Y$, if there exist morphisms
$f\colon X\to Y$ and $g\colon Y\to X$ such that $g\circ f \sim \Id_X$ and $f\circ g\sim \Id_Y$.
\end{defn}

\begin{rem}\label{contractible} Any ringed finite space $X$ with a minimum $p$ is contractible to $p$, i.e. it is homotopy equivalent to the punctual ringed space
$(p,\OO_p)$. Indeed, one has a natural morphism $i_p\colon (p,\OO_p)\to X$. On the other hand, since $p$ is the minimum, $X=U_p$ and $\OO_p=\Gamma(X,\OO_X)$, and we have the natural morphism (see Examples \ref{ejemplos}, (1)) $\pi\colon X\to (p,\OO_p)$. The composition $\pi\circ i_p$ is the identity and   $i_p\circ \pi \geq \Id_X$.
\end{rem}

Let $f,g\colon X\to Y$ be two morphisms of ringed spaces, $S$ a subespace of $X$. We leave the reader to define the notion of being homotopic relative to $S$ and hence the notion of a strong deformation retract.

\begin{prop} Let $f,g\colon X\to Y$ be two morphisms of ringed spaces. If $f\sim g$, then, for any quasi-coherent sheaf $\M$ on $Y$, one has $f^*\M=g^*\M$.
\end{prop}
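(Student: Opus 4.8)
The plan is to reduce the general statement to the single-step case $f \equiv g$, and then to handle that case by an explicit comparison of stalks using the quasi-coherence criterion of Theorem~\ref{qc}. First I would note that homotopy is, by definition, a chain $f = f_0 \equiv f_1 \equiv \cdots \equiv f_n = g$, and that the conclusion ``$f^*\M \cong g^*\M$'' is transitive: if $f_0^*\M \cong f_1^*\M \cong \cdots \cong f_n^*\M$ then $f^*\M \cong g^*\M$. So it suffices to prove the claim when $f \equiv g$, and by symmetry we may assume $f \leq g$.

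So suppose $f \leq g$. For a point $x \in X$, write $p = f(x)$, $q = g(x)$; since $f \leq g$ we have $p \leq q$ in $Y$, hence a restriction morphism $r_{pq}\colon \OO_q^Y \to \OO_q^Y$—more precisely $\OO_p^Y \to \OO_q^Y$—and the defining condition of $f \leq g$ says that $g^\#_x\colon \OO_q^Y \to \OO_x$ equals $f^\#_x \circ r_{pq}$. Now I would compute the stalks of $f^*\M$ and $g^*\M$ at $x$. One has $(f^*\M)_x = \M_{f(x)} \otimes_{\OO_{f(x)}^Y} \OO_x = \M_p \otimes_{\OO_p^Y} \OO_x$, where $\OO_x$ is viewed as an $\OO_p^Y$-algebra via $f^\#_x$; similarly $(g^*\M)_x = \M_q \otimes_{\OO_q^Y} \OO_x$ with $\OO_x$ an $\OO_q^Y$-algebra via $g^\#_x = f^\#_x \circ r_{pq}$. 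Since $\M$ is quasi-coherent on $Y$ and $p \leq q$, Theorem~\ref{qc} gives $\M_q = \M_p \otimes_{\OO_p^Y} \OO_q^Y$. Substituting and using associativity of tensor product,
\[
(g^*\M)_x = \M_q \otimes_{\OO_q^Y} \OO_x = (\M_p \otimes_{\OO_p^Y} \OO_q^Y) \otimes_{\OO_q^Y} \OO_x = \M_p \otimes_{\OO_p^Y} \OO_x = (f^*\M)_x,
\]
where the third isomorphism uses exactly that the $\OO_p^Y$-algebra structure on $\OO_x$ factors through $r_{pq}$, i.e. the condition $g^\#_x = f^\#_x \circ r_{pq}$. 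This identifies $f^*\M$ and $g^*\M$ stalkwise.

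The remaining point is to check that these stalkwise isomorphisms are compatible with restriction morphisms, so that they assemble into an isomorphism of sheaves on $X$; this is the step I expect to require the most care, though it is not deep. Concretely, for $x \leq x'$ in $X$ one has $f(x) \leq f(x')$ and $g(x) \leq g(x')$, and one must verify that the square formed by the restriction maps of $f^*\M$ and $g^*\M$ together with the two stalkwise identifications commutes; this follows from the functoriality of the isomorphisms in Theorem~\ref{qc} (equivalently, from naturality of $\M_p \otimes_{\OO_p^Y} \OO_q^Y \xrightarrow{\sim} \M_q$ in $p \leq q$) and the compatibility square in the definition of a morphism of ringed finite spaces. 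Alternatively, and perhaps more cleanly, one can avoid the stalkwise bookkeeping: since $f \leq g$, one checks directly that there is a natural transformation $g^* \Rightarrow f^*$ on all $\OO_Y$-modules (coming from the maps $r_{f(x)g(x)}$), and then the computation above shows this natural transformation is an isomorphism on quasi-coherent $\M$. Either way the key input is Theorem~\ref{qc}; everything else is a diagram chase.
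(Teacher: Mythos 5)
Your proposal is correct and follows essentially the same route as the paper: reduce to the case $f\leq g$ and compute stalkwise, using Theorem \ref{qc} to rewrite $\M_{g(x)}$ as $\M_{f(x)}\otimes_{\OO_{f(x)}}\OO_{g(x)}$ and the commutativity condition $g^\#_x=f^\#_x\circ r_{f(x)g(x)}$ to collapse the tensor product. The only difference is that you explicitly address the compatibility of the stalkwise identifications with restriction maps, a point the paper leaves implicit.
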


\begin{proof} We may assume that $f\leq g$. Then, for any $x\in X$, $$(f^*\M)_x=\M_{f(x)}\otimes_{\OO_{f(x)}}\OO_x = \M_{f(x)}\otimes_{\OO_{f(x)}}\OO_{g(x)}\otimes_{\OO_{g(x)}}\OO_x = \M_{g(x)}\otimes_{\OO_{g(x)}}\OO_x =(g^*\M)_x$$ where the second equality is due to the hypothesis $f\leq g$ and the third one to the quasi-coherence of $\M$.
\end{proof}

The following theorem is now straightforward (and it generalizes Corollary \ref{corqc}):

\begin{thm}\label{homotinvariance} If $X$ and $Y$ are homotopically equivalent, then their categories of quasi-coherent modules are equivalent. In other words, the category of quasi-coherent modules on a ringed finite space is a homotopic invariant.
\end{thm}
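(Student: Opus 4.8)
The plan is to deduce this immediately from the preceding Proposition together with the fact that $f^*$ is compatible with composition. Suppose $X$ and $Y$ are homotopy equivalent, witnessed by morphisms $f\colon X\to Y$ and $g\colon Y\to X$ with $g\circ f\sim \Id_X$ and $f\circ g\sim \Id_Y$. First I would recall that for any composable morphisms of ringed spaces there is a canonical natural isomorphism $(g\circ f)^*\simeq f^*\circ g^*$, and that $\Id_X^*$ is the identity functor. Both $f^*$ and $g^*$ preserve quasi-coherence (stated in the excerpt: inverse images of quasi-coherent modules are quasi-coherent), so they restrict to functors between the categories of quasi-coherent modules
\[ f^*\colon \mathrm{QCoh}(Y)\to \mathrm{QCoh}(X),\qquad g^*\colon \mathrm{QCoh}(X)\to \mathrm{QCoh}(Y).\]

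The key step is then: by the preceding Proposition, since $g\circ f\sim\Id_X$, for every quasi-coherent $\M$ on $X$ one has $(g\circ f)^*\M=\M$, hence $f^*g^*\M\simeq\M$; similarly $g^*f^*\Nc\simeq\Nc$ for every quasi-coherent $\Nc$ on $Y$. This shows $f^*$ and $g^*$ are mutually inverse equivalences between $\mathrm{QCoh}(X)$ and $\mathrm{QCoh}(Y)$, which is exactly the assertion. Finally I would note that Corollary \ref{corqc} is recovered by applying this to a ringed finite space with a minimum, which by Remark \ref{contractible} is homotopy equivalent to a punctual space $(p,\OO_p)$, whose category of quasi-coherent modules is the category of $\OO_p$-modules.

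The main subtlety — really the only place one must be slightly careful — is that the preceding Proposition as stated gives an equality $(g\circ f)^*\M=g^*f^*\M$ only after one has fixed the canonical identification $(g\circ f)^*\simeq f^*\circ g^*$; so strictly speaking one gets a natural isomorphism $f^*\circ g^*\simeq \Id_{\mathrm{QCoh}(X)}$ rather than a literal equality of functors. This is harmless for the statement "the categories are equivalent", but if one wanted the cleaner formulation it is worth remarking that the homotopy $g\circ f\sim\Id_X$ can be taken through a chain $g\circ f=h_0\equiv h_1\equiv\cdots\equiv h_n=\Id_X$, and that for $h_i\le h_{i+1}$ the Proposition's proof produces a specific isomorphism $h_i^*\M\simeq h_{i+1}^*\M$ natural in $\M$; composing these gives the desired natural isomorphism. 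No essential obstacle arises beyond this bookkeeping.
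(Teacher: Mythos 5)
Your proposal is correct and follows exactly the route the paper intends: the paper states the theorem as ``now straightforward'' from the preceding Proposition (homotopic morphisms induce the same pullback on quasi-coherent modules), which is precisely your argument that $f^*$ and $g^*$ are mutually inverse equivalences. Your extra remark on naturality of the isomorphisms is careful bookkeeping that the paper omits but does not change the substance.
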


\subsection{Homotopical classification: minimal spaces}

Here we see that Stone's homotopical classification of finite topological spaces via minimal topological spaces can be reproduced in the ringed context.

First of all, let us prove that any ringed finite space is homotopically equivalent to its $T_0$-associated space. Let $X$ be a ringed finite space, $X_0$ its associated $T_0$-space and $\pi\colon X\to X_0$ the quotient map. Let us denote $\OO_0=\pi_*\OO$. Then $(X,\OO)\to (X_0,\OO_0)$ is a morphism of ringed spaces.  The preimage $\pi^{-1}$ gives a bijection between the open subsets of $X_0$ and the open subsets of $X$. Hence, for any $x\in X$, $\OO_x={\OO_0}_{\pi(x)}$, and any section $s\colon X_0\to X$ of $\pi$ is continuous and a morphism of ringed spaces. The composition $\pi\circ s$ is the identity and the composition $s\circ \pi$ is homotopic to the identity, because $\OO_x=\OO_{s(\pi(x))}$. We have then proved:

\begin{prop}\label{sdr} $(X_0,\OO_0) \hookrightarrow (X,\OO_X)$ is a strong deformation retract.
\end{prop}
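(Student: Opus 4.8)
The plan is to verify the two conditions in the definition of $f\le g$ for the pair $f = s\circ\pi$ and $g = \Id_X$, and then invoke the homotopy relation directly (a single step $s\circ\pi \equiv \Id_X$ suffices). First I would recall what has already been established just before the statement: $\pi\colon X\to X_0$ induces a bijection $\pi^{-1}$ between open subsets of $X_0$ and open subsets of $X$, so that for every $x\in X$ the minimal open sets satisfy $\pi^{-1}(U_{\pi(x)}) = U_x$; consequently $\OO_x = (\OO_0)_{\pi(x)} = \OO_{s(\pi(x))}$ for any section $s$ of $\pi$, and in fact the restriction map $\OO_{s(\pi(x))}\to\OO_x$ is the identity under this identification. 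This last point is the crux: not only are the stalks abstractly isomorphic, but the structural restriction morphism attached to the relation $s(\pi(x)) \le x$ (which holds because $s(\pi(x))$ and $x$ lie in the same fibre of $\pi$, hence have the same minimal open set, hence are $\le$ each other) is literally the identity of $\OO_x$.

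Next I would spell out the two conditions. For condition (1): given $x\in X$, since $\pi(s(\pi(x))) = \pi(x)$, the points $s(\pi(x))$ and $x$ have the same image in $X_0$, so $U_{s(\pi(x))} = \pi^{-1}(U_{\pi(x)}) = U_x$, whence $s(\pi(x)) \le x$ (indeed they are equivalent in the preorder). So $(s\circ\pi)(x) \le \Id_X(x)$ for all $x$, giving $s\circ\pi \le \Id_X$ at the level of continuous maps. For condition (2): at each $x$ the relevant triangle has vertices $\OO_{s(\pi(x))}$, $\OO_x$, and $\OO_x$, with the top edge $r_{s(\pi(x))\,x}$, the left edge $(s\circ\pi)^\#_x$, and the right edge $(\Id_X)^\#_x = \Id_{\OO_x}$. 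By the remark that for a section $s$ the morphism $(s\circ\pi)^\#_x$ is exactly the restriction $(\OO_0)_{\pi(x)}\to\OO_x$, which is the identity under $\OO_x = (\OO_0)_{\pi(x)}$, and $r_{s(\pi(x))\,x}$ is likewise the identity (same fibre, same minimal open), commutativity of the triangle is the tautology $\Id = \Id\circ\Id$.

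Having checked (1) and (2), I conclude $s\circ\pi \le \Id_X$, hence $s\circ\pi \equiv \Id_X$, hence $s\circ\pi \sim \Id_X$; combined with $\pi\circ s = \Id_{X_0}$ (which holds on the nose since $s$ is a section), this shows $(X_0,\OO_0)\hookrightarrow (X,\OO_X)$ is a homotopy equivalence, and moreover the homotopy $s\circ\pi \sim \Id_X$ fixes $X_0$ pointwise — on points of $X_0$ (identified with a chosen transversal), $s\circ\pi$ acts as the identity — so it is a homotopy relative to $X_0$, which is precisely the assertion that $(X_0,\OO_0)$ is a strong deformation retract of $(X,\OO_X)$. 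The only mildly delicate point, and the one I would be most careful about, is the identification $\OO_x = (\OO_0)_{\pi(x)}$ being compatible with all the structure maps: one must check that under this identification the restriction morphism $r_{pq}$ in $X$ corresponds to the restriction morphism $r_{\pi(p)\,\pi(q)}$ in $X_0$ (so that, in particular, within a fibre these are identities), but this is immediate from the definition $\OO_0 = \pi_*\OO$ together with the fact that $\pi^{-1}$ matches minimal open sets $U_{\pi(x)}\leftrightarrow U_x$ and hence matches the sections $(\OO_0)_{\pi(x)} = \OO_0(U_{\pi(x)}) = \OO(\pi^{-1}(U_{\pi(x)})) = \OO(U_x) = \OO_x$ and their restrictions.
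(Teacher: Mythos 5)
Your proof is correct and follows essentially the same route as the paper: the paper's (very terse) argument in the paragraph preceding the proposition likewise picks a section $s$ of the quotient map $\pi$, observes $\pi\circ s=\Id_{X_0}$ and $\OO_x=(\OO_0)_{\pi(x)}=\OO_{s(\pi(x))}$, and concludes $s\circ\pi\sim\Id_X$ relative to $X_0$. Your write-up simply makes explicit the verification of conditions (1) and (2) in the definition of $\leq$ that the paper leaves implicit.
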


Let $X$ be a ringed finite $T_0$-space. Let us generalize the notions of up beat point and down beat point to the ringed case.

\begin{defn} A point $p\in X$ is called a {\it down beat point} if  $\bar p -\{ p\}$ has a maximum. A point $p$ is called an {\it up beat point} if $U_p- \{ p\}$ has a minimum $q$ and $r_{pq}\colon \OO_p\to\OO_q$ is an isomorphism. In any of these cases we say that $p$ is a {\it beat point} of $X$.
\end{defn}

\begin{prop}\label{beating} Let $X$ be a ringed finite $T_0$-space and $p\in X$ a beat point. Then $X- \{ p\}$ is a strong deformation retract of $X$.
\end{prop}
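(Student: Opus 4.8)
The plan is to imitate the topological proof (removing a beat point is a strong deformation retract) while keeping track of the structure sheaf via the order-compatibility condition in the definition of $f\leq g$. Let $p$ be a beat point of the $T_0$-space $X$, and write $i\colon X-\{p\}\hookrightarrow X$ for the inclusion, which is a morphism of ringed spaces (the structure sheaf on $X-\{p\}$ being the restriction of $\OO_X$, and the restriction morphisms along $q\leq q'$ in $X-\{p\}$ being the same as in $X$). I need to produce a retraction $r\colon X\to X-\{p\}$ with $r\circ i=\Id$ and $i\circ r\sim \Id_X$, in fact $i\circ r\equiv\Id_X$, so that the retract is strong.

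First I would handle the continuous (topological) map underlying $r$. If $p$ is an up beat point, let $q$ be the minimum of $U_p-\{p\}$; if $p$ is a down beat point, let $q$ be the maximum of $\bar p-\{p\}$. Define $r\colon X\to X-\{p\}$ to be the identity off $p$ and $r(p)=q$. One checks $r$ is monotone exactly as in the topological case: the only comparabilities involving $p$ are with points of $U_p$ (where $x\geq p$, and then $x\geq q$ since $q$ is the minimum of $U_p-\{p\}$ in the up case, or $x>p$ forces nothing extra but $r(x)=x$ and we need $r(x)\ge r(p)=q$, which holds because $x\in U_p\setminus\{p\}$) and with points of $\bar p$ (treated symmetrically in the down case). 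So $r$ is continuous and $r\circ i=\Id_{X-\{p\}}$.

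Next I would equip $r$ with the sheaf data $r^\#_x\colon \OO_{r(x)}\to\OO_x$. For $x\neq p$ take $r^\#_x=\Id_{\OO_x}$; for $x=p$ take $r^\#_p=r_{pq}\colon\OO_p\to\OO_q$ — but here is the asymmetry: in the up beat case $r_{pq}$ is by hypothesis an isomorphism, and I would really take $r^\#_p=(r_{pq})^{-1}\colon\OO_q\to\OO_p$ (since $r(p)=q$ and we need a map $\OO_q\to\OO_p$); in the down beat case $q>p$ so the restriction already goes the right way, $r^\#_p=r_{qp}\colon\OO_q\to\OO_p$. Then I must verify the compatibility squares of a morphism of ringed spaces for every comparable pair, which reduces (since $r^\#$ is the identity away from $p$) to pairs involving $p$: this is a short diagram chase using $r_{pp'}\circ r_{pq}=r_{pp'}$-type cocycle relations, using in the up case that $q$ is the minimum of $U_p\setminus\{p\}$ so every $p'>p$ satisfies $p'\geq q$, and in the down case the dual statement. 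This is the step where the up/down asymmetry and the isomorphism hypothesis genuinely get used, so I expect it to be the main (though still routine) obstacle.

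Finally I would show $i\circ r\equiv\Id_X$, which gives the homotopy $i\circ r\sim\Id_X$ relative to $X-\{p\}$ (so the retract is strong). As continuous maps, $i\circ r$ and $\Id_X$ differ only at $p$, where $(i\circ r)(p)=q$. In the down beat case $q>p=\Id_X(p)$, so $i\circ r\leq\Id_X$ pointwise; in the up beat case $q>p$ as well since $q\in U_p$, so again $(i\circ r)(p)\ge p$, giving $i\circ r\ge\Id_X$ — wait, I should be careful: $q\in U_p$ means $p\le q$, i.e. $q\ge p$, so pointwise $(i\circ r)\ge\Id_X$ in the up case and $(i\circ r)\le\Id_X$ in the down case. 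Either way, at every $x$ one of the two inequalities of Definition 3.? holds; it remains to check the commuting triangle of ring maps at $x=p$, i.e. that $r_{p,q}$ composed appropriately with the sheaf data of $i\circ r$ equals the sheaf data of $\Id_X$ at $p$ (which is $\Id_{\OO_p}$). In the up case this is precisely the statement that $r_{pq}$ and $(r_{pq})^{-1}$ are mutually inverse; in the down case it is the cocycle identity $r_{pq}\circ r_{qp}$-type relation together with $r^\#$ of $i\circ r$ being $\Id$ on $\OO_p$ via the restriction — again a one-line check. Since the homotopy is constant on $X-\{p\}$, the inclusion $X-\{p\}\hookrightarrow X$ is a strong deformation retract. $\qquad\blacksquare$
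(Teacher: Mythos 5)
Your proposal is correct and follows essentially the same route as the paper: retract $p$ to the maximum of $\bar p-\{p\}$ in the down case (using the restriction $\OO_q\to\OO_p$) or to the minimum of $U_p-\{p\}$ in the up case (using the inverse of the isomorphism $r_{pq}$), and conclude via $i\circ r\leq \Id_X$ resp.\ $i\circ r\geq \Id_X$. The only blemish is the passing claim that $q>p$ in the down beat case --- there $q\in\bar p-\{p\}$, so in this paper's convention $q<p$ --- but the map $r_{qp}\colon\OO_q\to\OO_p$ and the inequality $i\circ r\leq\Id_X$ that you actually use are the correct ones, as your own self-correction at the end confirms.
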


\begin{proof} Assume that $p$ is a down beat point and let $q$ be the maximum of $\bar p- \{ p\}$. Define the retraction $r\colon X\to X-\{ p\}$ by $r(p)=q$. It is clearly continuous (order preserving). It is a ringed morphism because one has the restriction morphism $\OO_q\to\OO_p$. If $i\colon X-\{ p\}\hookrightarrow X$ is the inclusion, then $i\circ r\leq \Id_X$ and we are done.

Assume now that $p$ is an up beat point and let $q$ be the minimum of $U_p- \{ p\}$. Define the retraction $r\colon X\to X-\{ p\}$ by $r(p)=q$. It is order preserving, hence continuous. By hypothesis the restriction morphism $\OO_p\to\OO_q$ is an isomorphism, so that $r$ is a morphism of ringed spaces. Finally, $i\circ r\geq \Id_X$ and we are done.
\end{proof}

\begin{defn}  A ringed finite $T_0$-space is a {\it minimal} ringed finite space if it has no beat points. A core of a ringed finite space $X$ is a strong deformation retract which is a minimal ringed finite space.
\end{defn}

By Propositions \ref{sdr} and \ref{beating} we deduce that every ringed finite space
has a core. Given a ringed finite space $X$, one can find a $T_0$-strong deformation
retract $X_0\subseteq X$ and then remove beat points one by one to obtain a minimal
ringed finite space. As in the topological case, the notable property about this construction is that in fact the
core of a ringed finite space is unique up to isomorphism, moreover: two ringed finite spaces are homotopy equivalent if and only if their cores are isomorphic.

\begin{thm}\label{minimal} Let $ X$ be a minimal ringed finite space. A map $f \colon X \to X$ is
homotopic to the identity if and only if $f = \Id_X$.
\end{thm}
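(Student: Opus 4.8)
The plan is to mimic the classical topological argument from Stong/Barmak, adapting it to keep track of the ring homomorphisms. One implication is trivial: if $f=\Id_X$, then $f$ is of course homotopic to the identity. For the converse, suppose $f\colon X\to X$ is homotopic to $\Id_X$; since homotopy is generated by the relation $\equiv$, it suffices to treat the case $f\equiv \Id_X$, say $f\leq \Id_X$ (the case $f\geq\Id_X$ being symmetric, exchanging the roles of up beat points and down beat points). So assume $f(x)\leq x$ for all $x\in X$, and for each $x$ the triangle with vertices $\OO_{f(x)},\OO_x,\OO_x$ commutes, i.e. $r_{f(x)x}=f^\#_x$ after identifying the edge $\OO_x\to\OO_x$ with the identity; concretely the composite $\OO_{f(x)}\xrightarrow{r_{f(x)x}}\OO_x$ equals $f^\#_x$.

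The key step is to prove $f(x)=x$ for every $x$, by induction on the height of $x$ (the length of the longest chain in $U_x-\{x\}$, or equivalently a Noetherian induction using that $X$ is finite and $T_0$). If $x$ is minimal, then $f(x)\leq x$ forces $f(x)=x$. For the inductive step, take $x$ and suppose $f(y)=y$ for all $y<x$. First I would check that $f$ maps $U_x$ into $U_x$: if $y\geq x$ then $f(y)\leq y$, but I actually want the reverse containment, so instead I work with $\bar x$. Let me reorganize: consider $q\in \bar x$ with $q\neq x$, i.e. $q<x$; by the inductive hypothesis applied downward (all points strictly below $x$ are fixed — here I use induction on the dimension/height counted downward, treating minimal points as the base case) we get $f(q)=q$. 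Since $f(x)\leq x$, either $f(x)=x$ (done) or $f(x)<x$, i.e. $f(x)\in \bar x-\{x\}$. I claim this second case makes $f(x)$ a maximum of $\bar x-\{x\}$: indeed for any $q\in\bar x-\{x\}$ we have $f(q)=q$ by induction, and applying $f$ to $q\leq x$ gives $q=f(q)\leq f(x)$, so $f(x)$ is the maximum of $\bar x-\{x\}$. But then $x$ is a down beat point, contradicting minimality of $X$. Hence $f(x)=x$ for all $x$, so $f$ and $\Id_X$ agree as continuous maps.

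Finally, once $f$ coincides with $\Id_X$ as a map, the hypothesis $f\leq \Id_X$ together with Remark \ref{rem} gives $f=\Id_X$ outright — that is exactly the content of Remark \ref{rem}, which says that a morphism $\leq$ another and equal to it on points must be equal to it. (Unwinding: the condition $f\leq\Id_X$ forces $f^\#_x = r_{f(x)x}=r_{xx}=\Id$, which matches $\Id_X^\#$, so the ring data agrees as well.) The main obstacle is purely bookkeeping: making sure that in the down-beat-point argument one never needs the ring-isomorphism condition (which is only required for \emph{up} beat points), and conversely that in the $f\geq\Id_X$ case the up-beat-point definition — including the clause ``$r_{pq}\colon\OO_p\to\OO_q$ is an isomorphism'' — is genuinely produced by the hypothesis: there $f(x)\geq x$, and if $f(x)\neq x$ one shows $f(x)$ is the minimum $q$ of $U_x-\{x\}$ and that the commuting triangle forces $f^\#_x\colon\OO_q\to\OO_x$ to be inverse to $r_{xq}$, so $r_{xq}$ is an isomorphism, making $x$ an up beat point and contradicting minimality. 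That compatibility between the ringed hypothesis and the precise definition of up beat point is the one place where care is needed.
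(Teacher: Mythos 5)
Your proposal is correct and follows essentially the same argument as the paper: reduce to the case $f\equiv\Id_X$, take an extremal point where $f$ moves something (your height induction is just the paper's ``let $p$ be minimal/maximal with $f(p)\neq p$''), show that point is a down beat point when $f\leq\Id_X$ and an up beat point when $f\geq\Id_X$, and in the latter case extract the isomorphism $r_{pq}$ from the commuting triangles at $p$ and $q$ together with the naturality square of $f$ (as you note, the single triangle at $p$ only gives a one-sided inverse, so both pieces are needed). The concluding appeal to Remark \ref{rem} is exactly what the paper does.
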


\begin{proof}  We may suppose that $ f \leq \Id_X$ or $f \geq \Id_X$. Assume
$ f \leq \Id_X$. By Remark \ref{rem} it suffices to prove that $f(x)=x$ for any $x\in X$. On the contrary,  let $p\in X$ be minimal with the condition $f(x)\neq x$. Hence $f(p)<p$ and $f(x)=x$ for any $x<p$. Then $f(p)$ is the maximum of $\bar p-\{ p\}$, which contradicts that $X$ has no down beat points.

Assume now that $f\geq \Id_X$. Again, it suffices to prove that $f(x)=x$ for any $x\in X$. On the contrary, let  $p\in X$ be maximal with the condition $f(x)\neq x$. Then $f(p)>p$ and $f(x)=x$ for any $x>p$. Hence $q=f(p)$ is the minimum of $U_p-\{ p\}$. Moreover  $f$ is a morphism of ringed spaces, hence it gives a commutative diagram
\[ \xymatrix {\OO_q=\OO_{f(p)} \ar[r]^{\quad f^\#_p}\ar[d]_{\Id} &\OO_p \ar[d]^{r_{pq}}\\ \OO_q=\OO_{f(q)}\ar[r]^{\quad f^\#_q} & \OO_q.}\]  Moreover, since $f\geq \Id_X$, the triangles
\[ \xymatrix{\OO_{p}\ar[rr]^{r_{pq}}\ar[rd]_{\Id^\#_p} & & \OO_{q}\ar[ld]^{f^\#_p}\\ & \OO_p & }\quad
\xymatrix{\OO_{q}\ar[rr]^{r_{qq}}\ar[rd]_{\Id^\#_q} & & \OO_{q}\ar[ld]^{f^\#_q}\\ & \OO_q & }\] are commutative. One concludes that   $r_{pq}$ is an isomorphism and $p$ is an up beat point of $X$.
\end{proof}

\begin{thm}\label{homotopic-classification} (Classification Theorem). A homotopy equivalence between
minimal ringed finite spaces is an isomorphism. In particular the core of a ringed
finite space is unique up to isomorphism and two ringed finite spaces are homotopy
equivalent if and only if they have isomorphic cores.
\end{thm}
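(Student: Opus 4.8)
The plan is to derive the whole statement from Theorem \ref{minimal} together with Propositions \ref{sdr} and \ref{beating}; almost everything will be formal. First I would show that a homotopy equivalence $f\colon X\to Y$ between minimal ringed finite spaces is an isomorphism. Let $g\colon Y\to X$ be a homotopy inverse, so $g\circ f\sim\Id_X$ and $f\circ g\sim\Id_Y$. Since $g\circ f$ is a self-morphism of the minimal space $X$ homotopic to $\Id_X$, Theorem \ref{minimal} forces $g\circ f=\Id_X$; symmetrically $f\circ g=\Id_Y$. Hence $f$ and $g$ are mutually inverse morphisms of ringed finite spaces, i.e. $f$ is an isomorphism in our category.

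For the remaining assertions I need that homotopy equivalence is an equivalence relation on objects, which in turn follows once $\sim$ is shown to be compatible with composition. So I would check: if $f\equiv f'$ in $\Hom(X,Y)$, then $h\circ f\equiv h\circ f'$ for every $h\colon Y\to Z$ and $f\circ k\equiv f'\circ k$ for every $k\colon W\to X$. The order condition is obviously preserved by composing with monotone maps; the commutativity of the defining triangle for $h\circ f\le h\circ f'$ (resp. $f\circ k\le f'\circ k$) is obtained by pasting the triangle witnessing $f\le f'$ with the square expressing that $h$ (resp. $k$) is a morphism of ringed spaces, i.e. that its sheaf maps commute with the restriction morphisms $r_{pq}$. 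Granting this, $\sim$ is an equivalence relation on morphisms stable under composition, so ``homotopy equivalent'' is an equivalence relation on ringed finite spaces; note also that a strong deformation retract inclusion, and an isomorphism, are in particular homotopy equivalences.

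Next I would establish existence of a core. By Proposition \ref{sdr}, the $T_0$-associated space $X_0\hookrightarrow X$ is a strong deformation retract. By Proposition \ref{beating}, a $T_0$-space with a beat point strong-deformation-retracts onto the space obtained by deleting that point. Starting from $X_0$ and deleting beat points one at a time, the process terminates after finitely many steps (the underlying set is finite) at a $T_0$-space $X_c$ with no beat points, i.e. a minimal ringed finite space, which is a strong deformation retract of $X$; composing these retractions and using the previous paragraph, $X_c\sim X$, so $X_c$ is a core of $X$.

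Finally, uniqueness of the core and the classification: if $X_c$ and $X_c'$ are cores of $X$, then $X_c\sim X\sim X_c'$, hence $X_c\sim X_c'$, and since both are minimal, $X_c\cong X_c'$ by the first paragraph; if $X\sim Y$ with cores $X_c,Y_c$, then $X_c\sim X\sim Y\sim Y_c$ gives $X_c\cong Y_c$, and conversely $X_c\cong Y_c$ gives $X\sim X_c\cong Y_c\sim Y$, so $X\sim Y$. The argument is essentially formal once Theorem \ref{minimal} is in hand; the only point needing a genuine (though routine) verification is the compatibility of the homotopy relation with composition in the second step, so I do not anticipate a real obstacle.
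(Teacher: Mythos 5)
Your proposal is correct and follows essentially the same route as the paper: the key step is identical (apply Theorem \ref{minimal} to $g\circ f$ and $f\circ g$ to get equalities, hence an isomorphism), and the core-existence and uniqueness arguments match what the paper does in the surrounding text and proof. The extra verification that the homotopy relation is compatible with composition is a detail the paper leaves implicit, and your pasting argument for it is sound.
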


\begin{proof}  Let $f \colon X \to Y$ be a homotopy equivalence between minimal ringed finite spaces and let $g \colon Y \to X$ be a homotopy inverse. Then $gf = \Id_X$ and $fg = \Id_Y$ by
Theorem \ref{minimal}. Thus, f is an isomorphism. If $X_1$ and $X_2$ are two cores of
a ringed finite space $X$, then they are homotopy equivalent minimal ringed finite spaces, and therefore, isomorphic. Two ringed finite spaces $X$ and $Y$ have the same
homotopy type if and only if their cores are homotopy equivalent, but this is
the case only if they are isomorphic.
\end{proof}

\section{Cohomology}

Let  $X$ be a finite topological space and  $F$ a sheaf of abelian groups on  $X$.

\begin{prop}\label{aciclicity} If $X$ is a finite topological space with a minimum, then $H^i(X,F)=0$ for any sheaf $F$ and any $i>0$. In particular, for any finite topological space one has
\[ H^i(U_p,F)=0\]
for any $p\in X$, any sheaf $F$ and any $i>0$.
\end{prop}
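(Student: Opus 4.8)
The plan is to reduce everything to the exactness of the global sections functor. Sheaves of abelian groups on any topological space form an abelian category with enough injectives, so the derived functors $H^i(X,-)=R^i\Gamma(X,-)$ are defined; and an exact functor has vanishing higher derived functors. Hence it suffices to prove that $\Gamma(X,-)\colon\{\text{sheaves of abelian groups on }X\}\to\{\text{abelian groups}\}$ is exact.

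To do this, let $p$ be the minimum of $X$. Then the minimal open subset $U_p$ containing $p$ equals all of $X$, so for every sheaf $F$ one has
\[ \Gamma(X,F)=F(U_p)=F_p,\]
the stalk of $F$ at $p$ (recall from the beginning of Section 2 that on a finite topological space the sections of $F$ over $U_p$ coincide with the stalk $F_p$). Thus under the minimum hypothesis $\Gamma(X,-)$ is naturally isomorphic to the stalk functor $F\mapsto F_p$, which is exact because exactness of a sequence of sheaves is tested stalkwise. This gives $H^i(X,F)=0$ for all $i>0$ and all $F$.

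For the final ``in particular'' statement, observe that for any $p\in X$ the open subset $U_p$, with the induced topology, is again a finite topological space, and it has a minimum, namely $p$ itself, since $U_p=\{q\in X:p\leq q\}$. Applying the first part to $U_p$ and to the restriction $F_{\vert U_p}$ of an arbitrary sheaf $F$ yields $H^i(U_p,F)=H^i(U_p,F_{\vert U_p})=0$ for every $i>0$.

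There is essentially no obstacle to overcome: the only place the finiteness hypothesis is used is the identification $\Gamma(U_p,F)=F_p$, which is precisely the dictionary between sheaves on finite spaces and their stalk-plus-restriction data recalled at the start of Section 2.
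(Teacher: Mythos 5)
Your proof is correct and follows exactly the paper's argument: since $p$ is the minimum, $U_p=X$, so $\Gamma(X,-)$ coincides with the exact stalk functor $F\mapsto F_p$, and hence all higher derived functors vanish. The added details (enough injectives, and the observation that $U_p$ has minimum $p$ for the ``in particular'' clause) are just explicit versions of what the paper leaves implicit.
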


\begin{proof} Let $p$ be the minimum of $X$. Then $U_p=X$ and, for any sheaf $F$, one has $\Gamma(X,F)=F_p$; thus, taking global sections is the same as taking the stalk at $p$, which is an exact functor.
\end{proof}

Let $f\colon X\to Y$ a continuous map between finite topological spaces and $F$ a sheaf on $X$. The i-th higher direct image $R^if_*F$ is the sheaf on $Y$ given by:
\[ [R^if_*F]_y=H^i(f^{-1}(U_y),F)\]

\begin{rem} Let $X,Y$ be two finite topological spaces and $\pi\colon X\times Y\to Y$ the natural projection. If $X$ has a minimum ($X=U_x$), then, for any sheaf $F$ on $X\times Y$, $R^i\pi_*F=0$ for $i>0$, since $(R^i\pi_*F)_y=H^i(U_x\times U_y,F)=0$ by Proposition \ref{aciclicity}. In particular, $H^i(X\times Y, F)=H^i(Y,\pi_*F)$.
\end{rem}

\subsection{Standard resolution} Let $F$ be a sheaf on a finite topological space $X$. We define $C^nF$ as the sheaf on $X$  whose sections on an open subset $U$ are
\[ (C^nF)(U)=\proda{U \ni x_0<\cdots <x_n } F_{x_n}\] and whose  restriction morphisms $(C^nF)(U)\to (C^nF)(V)$ for any $V\subseteq U$ are the natural projections.

One has morphisms $d\colon C^nF \to C^{n+1}F$ defined in each open subset $U$ by the formula
\[ (\di a) (x_0<\cdots < x_{n+1})= \suma{0\leq i\leq n} (-1)^i a(x_0<\cdots \wh{x_i}\cdots <x_{n+1}) + (-1)^{n+1} \bar a (x_0<\cdots <x_n)   \] where $\bar a (x_0<\cdots <x_n)$ denotes the image of  $ a (x_0<\cdots <x_n)$ under the morphism $F_{x_n}\to F_{x_{n+1}}$. There is also a natural morphism   $\di\colon F\to C^0F$. One easily checks that $\di^2=0$.

\begin{thm} $C^\punto F$ is a finite and flasque resolution of  $F$.
\end{thm}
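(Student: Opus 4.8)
The plan is to check the three assertions separately; only exactness of the augmented complex requires real work. \emph{Finiteness} is immediate: a strictly increasing chain $x_0<\cdots<x_n$ in the finite preordered set $X$ has $n\leq\dim X$, so for $n>\dim X$ there is no such chain and $(C^nF)(U)$ is an empty product, whence $C^nF=0$; thus $C^\punto F$ has only finitely many nonzero terms. \emph{Flasqueness} is almost as easy: for $V\subseteq U$ the restriction morphism $(C^nF)(U)\to(C^nF)(V)$ is, by construction, the canonical projection of $\proda{U\ni x_0<\cdots<x_n}F_{x_n}$ onto the subproduct indexed by the chains contained in $V$, and a projection of a product onto a subproduct is surjective; hence each $C^nF$ is flasque.

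It remains to prove that $0\to F\to C^0F\to C^1F\to\cdots$ (with differentials the maps $\di$) is exact. Exactness of a complex of sheaves may be tested on stalks, and the stalk of a sheaf $G$ at a point $p$ is $G(U_p)$; so it suffices to show that for each $p\in X$ the complex of abelian groups
\[ 0\longrightarrow F_p\longrightarrow (C^0F)(U_p)\longrightarrow (C^1F)(U_p)\longrightarrow\cdots,\qquad (C^nF)(U_p)=\proda{p\leq x_0<\cdots<x_n}F_{x_n}, \]
is exact (recall $U_p=\{x:p\leq x\}$, so every chain over $U_p$ starts with some $x_0\geq p$). I would prove this by exhibiting an explicit contracting homotopy. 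Put $h^0\colon (C^0F)(U_p)\to F_p$, $h^0(a)=a(p)$, the component of $a$ at the length-zero chain $\{p\}$, and for $n\geq 1$ define $h^n\colon (C^nF)(U_p)\to (C^{n-1}F)(U_p)$ by
\[ (h^na)(x_0<\cdots<x_{n-1})=\begin{cases} a(p<x_0<\cdots<x_{n-1}) & \text{if } p<x_0,\\ 0 & \text{if } p=x_0.\end{cases}\]
Expanding $\di h^n a+h^{n+1}\di a$ and evaluating it on a chain $x_0<\cdots<x_n$, one treats separately the cases $p<x_0$ and $p=x_0$; in both cases the terms in which the vertex $p$ has been inserted cancel pairwise according to the standard alternating-sign pattern (including the term coming from the image map $F_{x_{n-1}}\to F_{x_n}$), and the surviving term is exactly $a(x_0<\cdots<x_n)$. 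This gives $\di h^n+h^{n+1}\di=\Id$ for all $n\geq 0$, and one checks directly that $h^0\circ\di=\Id_{F_p}$; hence the augmented stalk complex is chain-homotopic to zero, so exact. Together with $\di^2=0$ (already noted), this shows $C^\punto F$ is a resolution of $F$.

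I expect the homotopy identity $\di h^n+h^{n+1}\di=\Id$ to be the only genuine obstacle: there is no conceptual difficulty, but one must keep careful track of the alternating signs and, above all, of the boundary case $p=x_0$ built into the definition of $h^n$ — it is precisely that case which accounts for the augmentation $F_p\to (C^0F)(U_p)$.
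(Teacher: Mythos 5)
Your proof is correct and follows essentially the same route as the paper: both reduce exactness to the complex of sections over $U_p$ and exploit the presence of the minimum $p$. The paper packages the argument as the decomposition $(C^nF)(U_p)=(C^{n-1}F)(U_p^*)\times(C^nF)(U_p^*)$, exhibiting the complex as a cone of an identity, while your contracting homotopy $h^n$ (insertion of the vertex $p$ at the head of a chain) is precisely the standard null-homotopy of that cone written out in coordinates --- the identity $\di h^n+h^{n+1}\di=\Id$, including the cancellation of the two terms coming from the image map $F_{x_{n-1}}\to F_{x_n}$, does check out.
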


\begin{proof} By definition, $C^nF=0$ for $n>\dim X$. It is also clear that  $C^nF$ are flasque. Let us see that
\[ 0\to F\to C^0F \to \cdots\to C^{\dim X}F\to 0\] is an exact sequence.  We have to prove that  $(C^\punto F)(U_p)$ is a resolution of $F(U_p)$. One has a decomposition
\[ (C^nF)(U_p)= \proda{p=x_0<\cdots <x_n } F_{x_n}\times \proda{p<x_0<\cdots <x_n } F_{x_n} = (C^{n-1}F)(U^*_p)\times (C^nF)(U^*_p)\] with $U_p^*:=U_p-\{ p\}$; via this decomposition, the differential  $\di \colon (C^nF)(U_p) \to (C^{n+1}F)(U_p)$ becomes:
\[ \di(a,b)=(b-\di^*a,\di^*b)\] with $\di^*$ the differential of   $(C^\punto F)(U_p^*)$. It is immediate now that every cycle is a boundary.
\end{proof}

This theorem, together with De Rham's theorem (\cite{Godement}, Thm. 4.7.1), yields that the cohomology groups of a sheaf can be computed with the standard resolution, i.e., $H^i_\phi(U,F)=H^i\Gamma_\phi(U,C^\punto F)$, for any open subset $U$ of $X$, any family of supports $\phi$ and any sheaf $F$ of abelian groups on $X$.

\begin{cor} For any finite topological space $X$, any sheaf $F$ of abelian groups on $X$ and any family of supports $\phi$, one has
\[ H^n_\phi(X,F)=0,\quad \text{for any } n>\text{\rm dim} X.\] Moreover, if $F_p$ is a finitely generated $\ZZ$-module for any $p\in X$, then
$H^i_\phi(X,F)$ is a finitely generated $\ZZ$-module for any $i\geq 0$.
\end{cor}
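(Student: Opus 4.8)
The plan is to read off both assertions from the preceding theorem (that $C^\punto F$ is a finite flasque resolution of $F$) together with the De~Rham–type comparison recalled just above it, namely $H^i_\phi(U,F)=H^i\Gamma_\phi(U,C^\punto F)$ for any open $U$, any family of supports $\phi$, and any sheaf $F$ of abelian groups on $X$.

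For the vanishing I would take $U=X$. By construction $C^nF=0$ for $n>\dim X$, so the complex $\Gamma_\phi(X,C^\punto F)$ is concentrated in degrees $0,\dots,\dim X$; hence its cohomology vanishes in all degrees $n>\dim X$, which is the first statement. The comparison is applicable because the sheaves $C^nF$ are flasque, and flasque sheaves are $\Gamma_\phi$-acyclic for every family of supports $\phi$.

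For the finiteness statement the key point is that $X$ is finite, so for each $n$ there are only finitely many chains $x_0<\cdots<x_n$ in $X$, and therefore
\[ \Gamma(X,C^nF)=\proda{x_0<\cdots<x_n}F_{x_n} \]
is a \emph{finite} product of the $\ZZ$-modules $F_{x_n}$. Under the hypothesis that each $F_p$ is a finitely generated $\ZZ$-module, this product is a finitely generated $\ZZ$-module. Now $\Gamma_\phi(X,C^nF)$ is a $\ZZ$-submodule of $\Gamma(X,C^nF)$, hence finitely generated since $\ZZ$ is noetherian. Thus $\Gamma_\phi(X,C^\punto F)$ is a bounded complex of finitely generated $\ZZ$-modules, and each $H^i_\phi(X,F)=H^i\Gamma_\phi(X,C^\punto F)$, being a subquotient of a finitely generated module over the noetherian ring $\ZZ$, is finitely generated.

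I do not anticipate a real obstacle: the corollary is essentially a bookkeeping consequence of the finiteness, flasqueness, and explicit shape of the standard resolution, together with the noetherianity of $\ZZ$. The only mild subtlety is that the comparison theorem is invoked for an arbitrary family of supports $\phi$, not just the family of all closed subsets; this is covered by the cited version of De~Rham's theorem combined with the $\Gamma_\phi$-acyclicity of flasque sheaves.
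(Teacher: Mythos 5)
Your proof is correct and follows exactly the route the paper intends: the corollary is stated as an immediate consequence of the standard resolution being bounded in degrees $\leq\dim X$ and flasque, together with the De Rham comparison $H^i_\phi(X,F)=H^i\Gamma_\phi(X,C^\punto F)$, and your bookkeeping (finite products of finitely generated $\ZZ$-modules, noetherianity of $\ZZ$ for submodules and subquotients) fills in the details the paper leaves implicit.
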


Let $\M$ be an $\OO$-module, $U$ an open subset. For each $x\in U$ there is a natural map $\OO_x\otimes_{\OO(U)}\M(U)\to\M_x$. This induces a morphism $(C^n\OO)(U)\otimes_{\OO(U)}\M(U)\to (C^n\M)(U)$ and then a morphism of complexes of sheaves
$(C^\punto\OO)\otimes_\OO\M\to \C^\punto\M$.

\begin{prop}\label{qc-resolution} If $\M$ is quasi-coherent, then $(C^\punto\OO)\otimes_\OO\M\to \C^\punto\M$ is an isomorphism. Moreover, for any $p\in X$ and any open subset $U\subseteq U_p$, one has that
\[ \Gamma(U,C^\punto\OO)\otimes_{\OO_p}\M_p\to \Gamma(U,C^\punto \M)\] is an isomorphism.
\end{prop}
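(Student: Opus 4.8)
The plan is to reduce both assertions to a single stalkwise/sectionwise computation using the characterization of quasi-coherence given by Theorem \ref{qc}. First I would observe that the two statements are essentially the same: $(C^\punto\OO)\otimes_\OO\M\to C^\punto\M$ is an isomorphism of sheaves if and only if it is an isomorphism on every $U_p$ (since the $U_p$ form a base and stalks are sections over $U_p$), and the sections of $C^n\OO\otimes_\OO\M$ over $U_p$ are $(C^n\OO)(U_p)\otimes_{\OO_p}\M_p$ because tensor product of modules commutes with the stalk functor (the stalk at $p$ being exact, and $\M_p=\M(U_p)$, $\OO_p=\OO(U_p)$). So it suffices to prove the second assertion: for $U\subseteq U_p$ open, $\Gamma(U,C^\punto\OO)\otimes_{\OO_p}\M_p\to\Gamma(U,C^\punto\M)$ is an isomorphism in each degree $n$.

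Next I would write out both sides explicitly. By definition $(C^n\M)(U)=\prod_{U\ni x_0<\cdots<x_n}\M_{x_n}$, and the map in question is, on the factor indexed by a chain $x_0<\cdots<x_n$ in $U$, the natural arrow
\[ (C^n\OO)(U)\otimes_{\OO_p}\M_p \longrightarrow \OO_{x_n}\otimes_{\OO_p}\M_p \longrightarrow \M_{x_n}, \]
where the first map is the projection onto the $(x_0<\cdots<x_n)$-component tensored with $\M_p$, and the second is the canonical map $\OO_{x_n}\otimes_{\OO_p}\M_p\to\M_{x_n}$ (well-defined since $p\leq x_n$, as $x_n\in U\subseteq U_p$). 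The key point is that this last map is an isomorphism precisely because $\M$ is quasi-coherent: by Theorem \ref{qc}, $\M_p\otimes_{\OO_p}\OO_{x_n}\xrightarrow{\sim}\M_{x_n}$ for every $p\leq x_n$.

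The one genuine subtlety, which I expect to be the main (though mild) obstacle, is that $\otimes_{\OO_p}\M_p$ does not commute with the infinite product $\prod_{x_0<\cdots<x_n}$ in general, so I cannot simply say "apply $\otimes_{\OO_p}\M_p$ to each factor." I would handle this by noting that $U$ is finite, hence the index set of chains $x_0<\cdots<x_n$ in $U$ is finite, so $(C^n\OO)(U)=\prod_{x_0<\cdots<x_n}\OO_{x_n}$ is in fact a finite product, i.e. a finite direct sum, and tensoring a finite direct sum with $\M_p$ distributes over the summands. Thus
\[ (C^n\OO)(U)\otimes_{\OO_p}\M_p=\Bigl(\bigoplus_{x_0<\cdots<x_n}\OO_{x_n}\Bigr)\otimes_{\OO_p}\M_p=\bigoplus_{x_0<\cdots<x_n}\bigl(\OO_{x_n}\otimes_{\OO_p}\M_p\bigr)\xrightarrow{\ \sim\ }\bigoplus_{x_0<\cdots<x_n}\M_{x_n}=(C^n\M)(U), \]
the middle isomorphism being the quasi-coherence of $\M$ applied in each summand. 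Finally I would remark that these isomorphisms are compatible with the differentials $\di$ — which is clear from their definition, since $\di$ is built from the restriction maps $\OO_{x_n}\to\OO_{x_{n+1}}$ and $\M_{x_n}\to\M_{x_{n+1}}$ and the comparison maps intertwine them — so $(C^\punto\OO)\otimes_\OO\M\to C^\punto\M$ is an isomorphism of complexes, completing the proof.
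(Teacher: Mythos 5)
Your argument is correct and follows the same route as the paper: both reduce everything to the fact that, for $x\in U\subseteq U_p$, quasi-coherence gives $\M_p\otimes_{\OO_p}\OO_x\overset\sim\to\M_x$, and then pass this isomorphism through the (finite, since $X$ is finite) product defining $(C^n\OO)(U)$, deducing the sheaf statement by taking $U=U_p$. Your explicit remarks on the finiteness of the product and the compatibility with the differentials are points the paper leaves implicit, but the proof is essentially identical.
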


\begin{proof} Since $\M$ is quasi-coherent, for any $x\in U$, the natural map $\OO_x\otimes_{\OO_p}\M_p\to\M_x$ is an isomorphism. Hence, $(C^n\OO)(U)\otimes_{\OO_p}\M_p\to (C^n\M)(U)$ is an isomorphism, so we obtain the second part of the statement. The first part follows from the second, taking $U=U_p$.
\end{proof}

\medskip
\noindent{\it Integral functors.}
\medskip

For any ringed space $(X,\OO_X)$ we shall denote by $D(X)$ the (unbounded) derived category of complexes of $\OO_X$-modules and by $D_{\rm qc}(X)$ the faithful subcategory of complexes with quasi-coherent cohomology. We shall denote by $D(\text{{\rm Qcoh}(X)})$ the derived category of complexes of quasi-coherent $\OO_X$-modules. For a ring $A$, $D(A)$ denotes the derived category of complexes of $A$-modules.

Let $X,X'$ be two ringed finite spaces, and let $\pi\colon X\times X'\to X, \pi'\colon X\times X'\to X'$ be the natural projections. Given an object $\K\in D(X\times X')$, one defines the integral functor of kernel $\K$ by:

\[ \aligned \Phi_\K\colon D(X)&\to D (X')\\ \M &\mapsto \Phi_\K(\M)= \RR \pi'_*( \K\overset\LL\otimes \LL \pi^*\M)\endaligned\]

In general, if we take $\K$ in $D_{\rm qc}(X\times X')$, $\Phi_K$ does not map $D_{\rm qc}(X)$ into $D_{\rm qc}(X')$; the problem is that $\RR \pi'_*$ does not preserve quasi-coherence in general. However, we shall see that, for finite spaces, this holds.

\subsection{Basic cohomological properties of finite spaces}

For this subsection $X$ is a finite space, i.e., a ringed finite space with flat restrictions.

\begin{ejem}\label{coh-schemes} Let $(S,\OO_S)$ be a quasi-compact and quasi-separated scheme and $(X,\OO)$ the finite space associated to a (locally affine) finite affine covering. We already know (Theorem \ref{schemes}) that the morphism $\pi\colon S\to X$ yields an equivalence between the categories of quasi-coherent modules on $S$ and $X$. Moreover, if $\M$ is a quasi-coherent module on $S$, then
\[ H^i(S,\M)=H^i(X,\pi_*\M),\] since, for any $x\in X$, $(R^i\pi_*\M)_x=H^i(\pi^{-1}(U_x),\M)=0$ for $i>0$, because $\pi^{-1}(U_x)$ is an affine scheme. The topological analog is:

Let $S$ be a path connected, locally path connected and locally homotopically trivial topological space and  let $\U=\{ U_1,\dots,U_n\}$ be a (locally homotopically trivial) finite covering of $S$. Let $X$ be the associated finite topological space and $\pi\colon S\to X$ the natural continous map. We already know (Theorem \ref{fin-sp-assoc-top}) that the morphism $\pi\colon S\to X$ yields an equivalence between the categories of locally constant sheaves on $S$ and $X$. Moreover, if $F$ is a locally constant sheaf on $S$, then
\[ H^i(S,F)=H^i(X,\pi_*F),\] since, for any $x\in X$, $(R^i\pi_*F)_x=H^i(\pi^{-1}(U_x),F)=0$ for $i>0$, because $\pi^{-1}(U_x)$ is homotopically trivial.

\end{ejem}

\begin{thm}\label{qc-of-proj} Let $\pi\colon X\times X'\to X$ be the natural projection, with $X$ a  finite space. For any quasi-coherent sheaf $\M$ on $X\times X'$ and any $i\geq 0$,   $R^i\pi_*\M$ is quasi-coherent.
\end{thm}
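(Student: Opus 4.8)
The plan is to reduce the statement to a local computation on $X$ using the criterion of Theorem \ref{qc}, and then to carry out that computation by means of the standard resolution $C^\punto$ together with Proposition \ref{qc-resolution}. First I would recall that for the projection $\pi\colon X\times X'\to X$ and a point $p\in X$, the stalk $(R^i\pi_*\M)_p$ is computed as $H^i(\pi^{-1}(U_p),\M)=H^i(U_p\times X',\M)$, since $\pi^{-1}(U_p)=U_p\times X'$. So the theorem amounts to showing that, for any $p\leq q$ in $X$, the natural map
\[
H^i(U_p\times X',\M)\otimes_{\OO_p}\OO_q \longrightarrow H^i(U_q\times X',\M)
\]
is an isomorphism.

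The key tool is that cohomology on a finite topological space can be computed with the standard flasque resolution $C^\punto\M$ (by the De Rham-type theorem quoted after the resolution theorem), so $H^i(U_p\times X',\M)=H^i\bigl(\Gamma(U_p\times X',C^\punto\M)\bigr)$. Now I would invoke the flatness hypothesis on $X$: the restriction $\OO_p\to\OO_q$ is flat, so tensoring by $-\otimes_{\OO_p}\OO_q$ is exact and commutes with taking cohomology of a complex of $\OO_p$-modules. Hence it suffices to show that
\[
\Gamma(U_p\times X',C^\punto\M)\otimes_{\OO_p}\OO_q \longrightarrow \Gamma(U_q\times X',C^\punto\M)
\]
is an isomorphism of complexes. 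This is where Proposition \ref{qc-resolution} enters: since $\M$ is quasi-coherent on $X\times X'$ and $U_p\times X'$ (resp. $U_q\times X'$) is contained in the minimal open set of the point $(p,x'_0)$ for a suitable minimal $x'_0$—or more directly, working termwise—one checks that each $\Gamma(V,C^n\M)$ is, as a sheaf-section module, a product of stalks $\M_{(x_n,x'_n)}$, and the quasi-coherence of $\M$ identifies the base-changed term with the corresponding term over $U_q\times X'$. Concretely, the degree-$n$ term $\Gamma(U_p\times X',C^n\M)=\prod_{(p,x'_0)\leq\cdots} \M_{(x_n,x'_n)}$ over chains whose first coordinate starts at $p$; base-changing along the flat $\OO_p\to\OO_q$ and using that $\M_{(x_n,x'_n)}$ depends $\OO_q$-linearly (via quasi-coherence) on whether the first index is $p$ or $q$ should match termwise with $\Gamma(U_q\times X',C^n\M)$.

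The main obstacle I anticipate is the bookkeeping of the product-over-chains in the standard resolution: I must verify that base change along $\OO_p\to\OO_q$ commutes with the (possibly infinite) products $\prod_{x_0<\cdots<x_n}$ appearing in $(C^n\M)(U_p\times X')$. Infinite products do not in general commute with tensor products, so the argument must exploit that $X$ and $X'$ are finite—so these products range over a finite index set of chains—which makes $-\otimes_{\OO_p}\OO_q$ commute with them. Once that point is settled, the isomorphism of complexes follows termwise from Proposition \ref{qc-resolution}, flatness gives the isomorphism on cohomology, and Theorem \ref{qc} concludes that $R^i\pi_*\M$ is quasi-coherent. A clean way to organize this is to prove first the auxiliary fact that for any open $V\subseteq U_p$ of $X$ one has $\Gamma(V\times X',C^\punto\M)\otimes_{\OO_p}\OO_q\xrightarrow{\sim}\Gamma((V\cap U_q)\times X',C^\punto\M)$, from which the case $V=U_p$ gives exactly what is needed.
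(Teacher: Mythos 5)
Your overall strategy (reduce to the stalk criterion of Theorem \ref{qc}, compute cohomology with the standard resolution, use flatness of $\OO_p\to\OO_q$ to commute the base change past cohomology) is the right one, but the decisive step fails as you have set it up. You want a termwise isomorphism of complexes
$\Gamma(U_p\times X',C^\punto\M)\otimes_{\OO_p}\OO_q\to \Gamma(U_q\times X',C^\punto\M)$. However $\Gamma(U_p\times X',C^n\M)$ is a product over the chains $(x_0,x'_0)<\cdots<(x_n,x'_n)$ \emph{contained in} $U_p\times X'$ (they need not begin at first coordinate $p$, contrary to what you write), while $\Gamma(U_q\times X',C^n\M)$ is a product over the chains contained in $U_q\times X'$; for $q>p$ the first index set strictly contains the second, so there is no termwise matching. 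The natural map sends to zero every factor indexed by a chain not lying in $U_q\times X'$, and on the surviving factors it is the multiplication $\M_{(x_n,x'_n)}\otimes_{\OO_p}\OO_q\to\M_{(x_n,x'_n)}$, which is not injective in general. Asserting instead that the map is merely a quasi-isomorphism is circular: by flatness that assertion is exactly the statement being proved. Two further slips: $U_p\times X'$ is \emph{not} contained in any minimal open set $U_{(p,x'_0)}$ unless $X'$ has a minimum, so Proposition \ref{qc-resolution} does not apply in the way you suggest; and the infinite-product worry you flag as the main obstacle is a non-issue (all products here are finite) — the real obstacle is the mismatch of chain index sets.

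The missing idea is to move the computation to $X'$ before resolving. Since $U_p$ has a minimum, the projection $\pi'\colon U_p\times X'\to X'$ satisfies $R^i\pi'_*=0$ for $i>0$, hence $H^i(U_p\times X',\M)=H^i(X',\Nc)$ with $\Nc=\pi'_*(\M_{\vert U_p\times X'})$, and likewise $H^i(U_q\times X',\M)=H^i(X',\Nc')$ with $\Nc'=\pi''_*(\M_{\vert U_q\times X'})$. Now both standard complexes live on the \emph{same} space $X'$ and are indexed by the \emph{same} chains $x'_0<\cdots<x'_n$, with $\Nc_{x'}=\M_{(p,x')}$ and $\Nc'_{x'}=\M_{(q,x')}$; quasi-coherence of $\M$ gives $\Nc_{x'}\otimes_{\OO_p}\OO_q=\M_{(p,x')}\otimes_{\OO_{(p,x')}}\OO_{(q,x')}=\M_{(q,x')}=\Nc'_{x'}$, so the termwise identification $\Gamma(X',C^n\Nc)\otimes_{\OO_p}\OO_q=\Gamma(X',C^n\Nc')$ does hold, and flatness of $\OO_p\to\OO_q$ then yields the isomorphism on $H^i$. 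This is how the paper proceeds; without this preliminary pushforward your termwise argument does not go through.
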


\begin{proof} Let $p\in X$ and  $\pi'\colon U_p\times X'\to X'$ the natural projection. One has that $(R^i\pi_*\M)_p = H^i(U_p\times X',\M)= H^i(X', \pi'_*(\M_{\vert U_p\times X'}))$. By Theorem \ref{qc}, we have to prove that $$H^i(X',\pi'_*(\M_{\vert U_p\times X'}))\otimes_{\OO_p}\OO_{q}\to H^i(X',\pi''_*(\M_{\vert U_{q}\times X'}))$$ is an isomorphism for any  $p\leq q$, where $\pi''\colon U_{q}\times X'\to X'$ is the natural projection.

Let us denote   $\Nc = \pi'_*(\M_{\vert U_p\times X'}) $ and $\Nc' =\pi''_*(\M_{\vert U_{q}\times X'})$. Since
 $\OO_p\to\OO_{q}$ is flat, it is enough to prove that  $\Gamma (X', C^n \Nc)\otimes_{\OO_p}\OO_q \to \Gamma (X', C^n \Nc')$ is an  isomorphism. For any $x'\in X'$ one has
\[ \Nc_{x'} = \M_{(p,x')}\quad \text{ and }\quad \Nc'_{x'} = \M_{(q,x')}.\] Since $\M$ is quasi-coherent, $\Nc_{x'}\otimes_{\OO_p}\OO_q = \M_{(p,x')}\otimes_{\OO_{(p,x')}}\OO_{(q,x')} = \M_{(q,x')} =\Nc'_{x'}$. From the definition of $C^n$ it follows that $\Gamma (X', C^n \Nc)\otimes_{\OO_p}\OO_q = \Gamma (X', C^n \Nc')$; indeed,
\[ \Gamma (X', C^n \Nc)\otimes_{\OO_p}\OO_q = \proda{x'_0<\dots <x'_n} \Nc_{x'_n}\otimes_{\OO_p}\OO_q = \proda{x'_0<\dots <x'_n} \Nc'_{x'_n} = \Gamma (X', C^n \Nc')\]
\end{proof}

\begin{thm}\label{aciclico} Let  $X$ be a finite space,  $p\in X$ and $U\subset U_p$ an open subset. If $U $ is acyclic  (i.e., $H^i(U\ ,\OO)=0$ for any $i>0$), then
\begin{enumerate}
\item $\OO_p\to \OO(U )$ is flat.
\item For any quasi-coherent module $\M$  on $U_p$,
\[ H^i(U ,\M)=0,\qquad\text{ for } i>0\] and the natural morphism
\[ \M_p\otimes_{\OO_p}\OO(U )\to \M(U )\] is an isomorphism.

\end{enumerate}
\end{thm}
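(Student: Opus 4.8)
The idea is to use the standard flasque resolution $C^\punto\OO$ and its interaction with quasi-coherence (Proposition \ref{qc-resolution}) to reduce both statements to a single computation about $\OO(U)$. Recall that for an open subset $U\subseteq U_p$, Proposition \ref{qc-resolution} gives that $\Gamma(U,C^\punto\OO)\otimes_{\OO_p}\M_p \to \Gamma(U,C^\punto\M)$ is an isomorphism of complexes, and that $C^\punto\M$ computes the cohomology of $\M$ on $U$. Taking $\M=\OO_{|U_p}$ itself, the complex $\Gamma(U,C^\punto\OO)$ computes $H^i(U,\OO)$. So the acyclicity hypothesis says precisely that the complex of $\OO_p$-modules $\Gamma(U,C^\punto\OO)$ is a resolution of $\OO(U)$ (in degree $0$ its cohomology is $\Gamma(U,\OO)=\OO(U)$, and it vanishes in higher degrees).

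First I would establish (1). Each term $\Gamma(U,C^n\OO)=\prod_{x_0<\cdots<x_n}\OO_{x_n}$ (product over chains in $U$) is a flat $\OO_p$-module: each $\OO_{x_n}$ is flat over $\OO_p$ since $p\leq x_n$ and $X$ is a finite space, and an arbitrary product of flat modules over $\OO_p$ — wait, products of flat modules need not be flat in general, so instead I note the product here is \emph{finite} (the space is finite), and a finite product of flat modules is flat. Hence $\Gamma(U,C^\punto\OO)$ is a finite flat resolution of the $\OO_p$-module $\OO(U)$, which shows $\OO(U)$ has finite flat (indeed projective-dimension-bounded) resolution; but more simply, a module admitting a finite flat resolution such that... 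Actually the cleanest route: since $\Gamma(U,C^\punto\OO)$ is a bounded complex of flat $\OO_p$-modules that is a resolution of $\OO(U)$, it \emph{is} a flat resolution, so for any $\OO_p$-module $N$ and any $i>0$ we get $\mathrm{Tor}^{\OO_p}_i(\OO(U),N)=H^{-i}(\Gamma(U,C^\punto\OO)\otimes_{\OO_p}N)$. The point is now to show this vanishes, which gives flatness of $\OO(U)$ over $\OO_p$; but the vanishing is exactly what part (2) will supply, so I would actually prove (2) first and deduce (1), or intertwine them.

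So the heart is (2): for $\M$ quasi-coherent on $U_p$, by Proposition \ref{qc-resolution} we have $H^i(U,\M)=H^i\big(\Gamma(U,C^\punto\OO)\otimes_{\OO_p}\M_p\big)$. Since $\Gamma(U,C^\punto\OO)$ is a bounded-above complex of flat $\OO_p$-modules quasi-isomorphic to $\OO(U)$, tensoring with $\M_p$ computes $\OO(U)\overset{\LL}{\otimes}_{\OO_p}\M_p$. Thus $H^i(U,\M)=\mathrm{Tor}^{\OO_p}_{-i}(\OO(U),\M_p)$ for all $i$, with $H^0(U,\M)=\OO(U)\otimes_{\OO_p}\M_p$, which already yields the asserted isomorphism $\M_p\otimes_{\OO_p}\OO(U)\xrightarrow{\sim}\M(U)$ once we know the higher Tor's vanish. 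To kill the higher $\mathrm{Tor}$'s, I would use that $\M$ quasi-coherent means $\M_p$ is (as an $\OO_p$-module) arbitrary, so this cannot hold for formal reasons — rather, I must use that $\Gamma(U,C^\punto\OO)$ being \emph{flat in each degree} lets me swap: the acyclicity of $\OO$ on $U$ is equivalent to saying the truncation is split off, and then tensoring a flat resolution of $\OO(U)$ that happens to have flat terms with $\M_p$... The clean statement: a bounded complex $P^\punto$ of flat modules with $H^0=\OO(U)$, $H^{i}=0$ for $i>0$ is a flat resolution; hence $\OO(U)\overset{\LL}{\otimes}\M_p\simeq P^\punto\otimes\M_p$; and since each $P^n$ is flat, $P^\punto\otimes\M_p$ computes the \emph{ordinary} tensor in degree 0 and $\mathrm{Tor}_{>0}(\OO(U),\M_p)$ in negative degrees. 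These Tor groups vanish for \emph{all} $\M_p$ iff $\OO(U)$ is flat over $\OO_p$ — which is exactly (1). So the logical structure must be: prove $\OO(U)$ flat over $\OO_p$ \emph{directly} from the flat resolution $\Gamma(U,C^\punto\OO)$, then (2) follows.

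The main obstacle, then, is proving (1): why is the $0$-th cohomology $\OO(U)$ of a bounded flat complex over $\OO_p$ itself flat? This is \emph{not} automatic. The resolution here is special: I expect the argument goes by descending induction on the complex using the explicit splitting of $C^\punto$ coming from the decomposition $\Gamma(U_p,C^nF)=\Gamma(U_p^*,C^{n-1}F)\oplus\Gamma(U_p^*,C^nF)$ used in the proof that $C^\punto F$ is a resolution — more precisely, one shows that the image and kernel modules appearing in $\Gamma(U,C^\punto\OO)$ are themselves flat, by an inductive dévissage over the (finite) poset $U$, reducing to the sub-open-sets $U_x$ for $x\in U$, where $\OO(U_x)=\OO_x$ is flat over $\OO_p$ by hypothesis. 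I would organize this as an induction on $\#U$: if $U$ has a minimum it equals some $U_x\subseteq U_p$ with $\OO(U)=\OO_x$ flat over $\OO_p$; otherwise pick a minimal point $x\in U$, write $U=U_x\cup(U\setminus\{x\})$ with $U_x\cap(U\setminus\{x\})=U_x\setminus\{x\}=U_x^*$, and use the Mayer–Vietoris sequence together with the acyclicity hypothesis (which passes to $U_x$, $U\setminus\{x\}$, $U_x^*$ since these are opens contained in $U_p$ — here I would need that they are acyclic too, which should follow because $U_x=U_{x,U}$ has a minimum hence is acyclic by Proposition \ref{aciclicity}, and $U\setminus\{x\}$, $U_x^*$ are handled by induction) to get a short exact sequence $0\to\OO(U)\to\OO(U_x)\oplus\OO(U\setminus\{x\})\to\OO(U_x^*)\to 0$ of $\OO_p$-modules with the last two flat; then flatness of $\OO(U)$ follows provided the sequence stays exact after $\otimes_{\OO_p}N$, i.e. provided $\mathrm{Tor}_1^{\OO_p}(\OO(U_x^*),N)=0$, i.e. provided $\OO(U_x^*)$ is flat — which is the inductive hypothesis since $U_x^*\subsetneq U$. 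Chasing the long exact Tor sequence then gives $\mathrm{Tor}_i^{\OO_p}(\OO(U),N)=0$ for $i>0$, proving (1); and feeding flatness back into the Tor computation above proves the vanishing $H^i(U,\M)=0$ and the isomorphism $\M_p\otimes_{\OO_p}\OO(U)\cong\M(U)$ in (2). The delicate points to get right are the acyclicity of the auxiliary opens $U_x^*$ and $U\setminus\{x\}$ (so that Mayer–Vietoris degenerates to a short exact sequence of global sections) and making the double induction — on $\#U$ for flatness, then using it for the Tor-vanishing — logically clean.
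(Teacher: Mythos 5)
Your setup coincides with the paper's: take the standard resolution, note that each $(C^i\OO)(U)$ is a \emph{finite} product of modules $\OO_{x_i}$ with $x_i\geq p$, hence a flat $\OO_p$-module, and use Proposition \ref{qc-resolution} to identify $\Gamma(U,C^\punto\OO)\otimes_{\OO_p}\M_p$ with $\Gamma(U,C^\punto\M)$. Where you go astray is the claim that flatness of $\OO(U)$ ``is not automatic'' from this data. It is automatic. The acyclicity hypothesis says precisely that
\[ 0\to\OO(U)\to (C^0\OO)(U)\to\cdots\to (C^n\OO)(U)\to 0 \]
is a finite exact sequence all of whose terms to the right of $\OO(U)$ are flat over $\OO_p$. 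Splitting it into short exact sequences $0\to Z^i\to (C^i\OO)(U)\to Z^{i+1}\to 0$ (with $Z^0=\OO(U)$ and $Z^n=(C^{n-1}\OO)(U)\to (C^n\OO)(U)$ surjective) and using the standard fact that in a short exact sequence with flat middle term and flat \emph{quotient} the subobject is flat, descending induction from the right end gives that every $Z^i$, in particular $\OO(U)=Z^0$, is flat. (You in fact wrote down the formula $\operatorname{Tor}^{\OO_p}_i(\OO(U),N)=H^{-i}\bigl(\Gamma(U,C^\punto\OO)\otimes_{\OO_p}N\bigr)$; since the complex is concentrated in non-negative degrees, the right-hand side vanishes for $i>0$ for every $N$, which is exactly flatness. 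Your hesitation comes from treating the complex as if it were a left, bounded-above resolution, where the analogous statement would indeed fail; here it is a finite coresolution terminating on the right.) Granting (1), the exact sequence of flat modules stays exact after $\otimes_{\OO_p}\M_p$, and Proposition \ref{qc-resolution} converts the tensored complex into $\Gamma(U,C^\punto\M)$, yielding both assertions of (2). This is the paper's proof verbatim.

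The replacement you propose for (1) --- induction on $\#U$ via Mayer--Vietoris for $U=U_x\cup(U\setminus\{x\})$ --- has a genuine gap: your inductive hypothesis applies only to \emph{acyclic} open subsets of $U_p$, and acyclicity of $U$ gives no control whatsoever over $U\setminus\{x\}$ or $U_x^*=U_x\setminus\{x\}$. Without their acyclicity the Mayer--Vietoris sequence does not degenerate to a short exact sequence of global sections and the induction does not close. You flag this yourself as a ``delicate point to get right'', but it is not a detail to be arranged later; it is where that route breaks, and it is in any case unnecessary given the syzygy argument above.
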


\begin{proof} By hypothesis, $(C^\punto \OO)(U )$ is a finite resolution of  $\OO(U )$. Moreover, $(C^i\OO)(U)$ is a flat $\OO_p$-module because $X$ is a finite space.  Hence $\OO(U )$ is a flat $\OO_p$-module. For the second part, one has an exact sequence of flat  $\OO_p$-modules
\[ 0\to \OO(U )\to (C^0\OO)(U )\to\cdots\to (C^n\OO)(U )\to 0\] Hence, the sequence remains exact after tensoring by  $\otimes_{\OO_p}\M_p$. One concludes by Proposition \ref{qc-resolution}.
\end{proof}

%\begin{rem} The same argument shows that if $\M$ is a flat $\OO$-module and it is acyclic over $U $, then $\M(U )$ is a flat $\OO_p$-module.
%\end{rem}

\begin{thm}\label{graphic} Let $f\colon X\to Y$ be a morphism, $\Gamma\colon X\to X\times Y$ its graphic and $\pi\colon X\times Y\to X$ the natural projection. For any quasi-coherent module $\M$ on $X$ the natural morphism
\[ \LL\pi^*\M\overset \LL\otimes \RR\Gamma_*\OO_X \to \RR\Gamma_*\M\] is an isomorphism (in the derived category).
\end{thm}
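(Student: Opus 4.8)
This is a projection-formula statement. Since $\pi\circ\Gamma=\Id_X$ one has $\LL\Gamma^*\LL\pi^*\M=\LL(\pi\circ\Gamma)^*\M=\M$, and the asserted morphism is the projection-formula map $\LL\pi^*\M\otimes^{\LL}\RR\Gamma_*\OO_X\to\RR\Gamma_*\big(\LL\Gamma^*\LL\pi^*\M\otimes^{\LL}\OO_X\big)=\RR\Gamma_*\M$. The plan is to check it stalkwise on $X\times Y$. Fix $(x,y)\in X\times Y$ and put $W:=\Gamma^{-1}(U_x\times U_y)=U_x\cap f^{-1}(U_y)$, an open subset of $U_x$; if $W=\emptyset$ both stalks vanish, so assume $W\neq\emptyset$. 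I will show that both sides have stalk $\M_x\otimes^{\LL}_{\OO_x}\Gamma(W,C^\punto\OO_X)$ at $(x,y)$, and that the natural morphism realizes this identification.

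For the target: $(\RR\Gamma_*\M)_{(x,y)}=\RR\Gamma(W,\M)$, which is computed by $\Gamma(W,C^\punto\M)$ since $C^\punto\M$ is a finite flasque resolution of $\M$. Because $W\subseteq U_x$ and $\M$ is quasi-coherent, Proposition \ref{qc-resolution} identifies $\Gamma(W,C^\punto\M)$ with $\Gamma(W,C^\punto\OO_X)\otimes_{\OO_x}\M_x$. Moreover each term $(C^n\OO_X)(W)=\proda{W\ni x_0<\cdots<x_n}\OO_{x_n}$ is a \emph{finite} product of $\OO_x$-modules $\OO_{x_n}$, each flat over $\OO_x$ (since $x\leq x_0\leq x_n$ and $X$ is a finite space), hence flat; so $\Gamma(W,C^\punto\OO_X)$ is a bounded complex of flat $\OO_x$-modules and $(-)\otimes_{\OO_x}\M_x$ already computes $\Gamma(W,C^\punto\OO_X)\otimes^{\LL}_{\OO_x}\M_x$. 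Thus the target has stalk $\Gamma(W,C^\punto\OO_X)\otimes^{\LL}_{\OO_x}\M_x$ at $(x,y)$.

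For the source: stalks commute with $\otimes^{\LL}$, so $(\LL\pi^*\M\otimes^{\LL}\RR\Gamma_*\OO_X)_{(x,y)}=(\LL\pi^*\M)_{(x,y)}\otimes^{\LL}_{\OO_{(x,y)}}(\RR\Gamma_*\OO_X)_{(x,y)}$. Since $U_x$ has minimum $x$, Corollary \ref{corqc} gives $\M_{\vert U_x}=\widetilde{\M_x}$; as $X$ is a finite space the functor $N\mapsto\widetilde N$ on $U_x$ is exact, so a free resolution $L_\punto\to\M_x$ of $\OO_x$-modules induces a resolution of $\M_{\vert U_x}$ by free $\OO_{X\vert U_x}$-modules, and hence $(\LL\pi^*\M)_{(x,y)}=L_\punto\otimes_{\OO_x}\OO_{(x,y)}=\M_x\otimes^{\LL}_{\OO_x}\OO_{(x,y)}$. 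On the other hand $(\RR\Gamma_*\OO_X)_{(x,y)}=\Gamma(W,C^\punto\OO_X)$, with $\OO_{(x,y)}=\OO_{X\times Y}(U_x\times U_y)$ acting through $\Gamma^\#\colon\OO_{(x,y)}\to\OO_X(W)$; and since $\pi\circ\Gamma=\Id_X$, the composite $\OO_x\to\OO_{(x,y)}\xrightarrow{\Gamma^\#}\OO_X(W)$ is just restriction along $W\subseteq U_x$. Associativity of the derived tensor product then gives
\[ \big(\LL\pi^*\M\otimes^{\LL}\RR\Gamma_*\OO_X\big)_{(x,y)}=\big(\M_x\otimes^{\LL}_{\OO_x}\OO_{(x,y)}\big)\otimes^{\LL}_{\OO_{(x,y)}}\Gamma(W,C^\punto\OO_X)=\M_x\otimes^{\LL}_{\OO_x}\Gamma(W,C^\punto\OO_X), \]
matching the target.

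What remains --- and what I expect to be the only delicate point --- is to verify that the natural morphism of the statement induces precisely this identification on stalks, not merely an abstract isomorphism. I would do this by unwinding the construction of the projection-formula map recalled above and comparing it, term by term, with Proposition \ref{qc-resolution} and with the free resolution $L_\punto$; everything else is bookkeeping, the finite-space (flatness) hypothesis entering exactly where one must pass between underived and derived tensor products, namely the exactness of $N\mapsto\widetilde N$ and the flatness of the terms $(C^n\OO_X)(W)$ over $\OO_x$.
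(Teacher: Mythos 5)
Your proof is correct and follows essentially the same route as the paper's: both compute stalks at $(x,y)$ over the open set $U_{xy}=U_x\cap f^{-1}(U_y)$, identify $(\RR\Gamma_*\M)_{(x,y)}$ with $\Gamma(U_{xy},C^\punto\M)$ and the left-hand stalk with $\M_x\otimes_{\OO_x}\Gamma(U_{xy},C^\punto\OO_X)$ via flatness of the terms of $\Gamma(U_{xy},C^\punto\OO_X)$ over $\OO_x$, and conclude by Proposition \ref{qc-resolution}. Your write-up only spells out a few intermediate steps (the free resolution computing $\LL\pi^*\M$, associativity of $\otimes^{\LL}$, the compatibility check on the natural map) that the paper leaves implicit.
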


\begin{proof} For any $(x,y)$ in $X\times Y$, let us denote $U_{xy}=U_x\cap f^{-1}(U_y)$. The natural morphism
\[\Gamma(U_{xy},C^\punto\OO_X)\otimes_{\OO_x}\M_x\to \Gamma(U_{xy},C^\punto \M)\] is an isomorphism by Proposition \ref{qc-resolution}.
Now,  $$[\LL\pi^*\M\overset \LL\otimes \RR\Gamma_*\OO_X]_{(x,y)}= M_x\otimes_{\OO_x}\Gamma(U_{xy},C^\punto\OO_X)$$ because $\Gamma(U_{xy},C^\punto\OO_X)$ is a complex of flat $\OO_x$-modules; on the other hand,  $[\RR\Gamma_*\M]_{(x,y)}= \Gamma(U_{xy},C^\punto \M)$. We are done.
\end{proof}

\section{Affine finite spaces}

Let $(X,\OO)$ be an arbitrary ringed space,  $A=\Gamma(X,\OO)$ and $\pi\colon X \to (*,A)$ the natural morphism. Let $\M$ be an $\OO$-module.

\begin{defn} We say that $\M$ is  {\it acyclic} if $H^i(X,\M)=0$ for any $i>0$. We say that $X$ is  {\it acyclic} if $\OO$ is acyclic.
We say that $\M$ is  {\it generated by its global sections} if the natural map $\pi^*\pi_*\M\to \M$ is surjective. In other words, for any $x\in X$, the natural map
\[ M\otimes_A\OO_x\to \M_x,\qquad M=\Gamma(X,\M),\] is surjective.
\end{defn}

If $\M \to \Nc$ is surjective and $\M$ is generated by its global sections, then $\Nc$ too. If $f\colon X\to Y$ is a morphism of ringed spaces and $\M$ is an $\OO_Y$-module generated by its global sections, then $f^*\M$ is generated by its global sections.

\begin{defn} We say that $(X,\OO)$ is an {\it affine ringed space} if it is acyclic and $\pi^*$ (or $\pi_*=\Gamma(X,\quad )$) gives an equivalence between the category of $A$-modules and the category of quasi-coherent $\OO$-modules. We say that $(X,\OO)$ is {\it quasi-affine} if every quasi-coherent $\OO$-module is generated by its global sections.  We say that $(X,\OO)$ is {\it Serre-affine} if every quasi-coherent module is acyclic.
\end{defn}

Obviously, any affine ringed space is quasi-affine. If $X$ is a finite ringed space with a minimum, then it is affine, quasi-affine and Serre-affine (Corollary \ref{corqc} and Proposition \ref{aciclicity}). Hence any finite ringed space is locally affine.

Before we see the basic properties and relations between these concepts on a finite space, let us see some examples for (non-finite) ringed spaces.

\begin{ejems}
\begin{enumerate}
\item (See \cite{NavarroSancho}) Let $(X,\C^\infty_X)$ be a  Haussdorff differentiable manifold (more generally, a differentiable space) with a countable basis. Any $\C^\infty_X$-module is quasi-coherent, acyclic and generated by its global sections. Hence $X$ is quasi-affine and Serre-affine. If we denote $A=\C^\infty(X)$, the functor
    \[ \aligned \{ \C^\infty_X-\text{modules}\}& \to \{ A-\text{modules}\} \\ \M &\mapsto \M(X)\endaligned\] is fully faithful and it has a left inverse $M\mapsto \pi^* M$. If $X$ is {\it compact}, then it is an equivalence, i.e., $X$ is affine  (this result is not in \cite{NavarroSancho}, but it is not difficult to prove). If $X$ is not compact (for example $X=\RR^n$), then it is not an equivalence. So $(\RR^n,\C^\infty_{\RR^n})$ is an example of an acyclic, quasi-affine and Serre-affine ringed space which is not affine.
\item Let $X=\Spec A$ be an affine scheme ($\OO=\widetilde A$ the sheaf of localizations). Then it is affine, quasi-affine and Serre-affine. A scheme $X$ is affine (in the usual sense of schemes) if and only if it is affine (in our sense) or Serre-affine (Serre's criterion for affineness). A quasi-compact scheme $X$ is quasi-affine if and only if it is an open subset of an affine scheme.
\item Let $(S,\OO_S)$ be a quasi-compact quasi-separated scheme and $\pi \colon S\to X$ the finite space associated to a (locally affine) finite  covering of $S$. Then $S$ is an affine scheme if and only if $X$ is an affine finite space. Even more, an open subset $U$ of $X$ is affine if and only if $\pi^{-1}(U)$ is affine.
\end{enumerate}
\end{ejems}

From now on, assume that $(X,\OO)$ is a finite space.

\begin{thm}\label{AffineFinSp} Let $X$ be a finite space. The following conditions are equivalent:
\begin{enumerate}
\item $X$ is affine.
\item $X$ is acyclic and quasi-affine.
\item $X$ is quasi-affine and Serre-affine.

\end{enumerate}
\end{thm}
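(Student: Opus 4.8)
The plan is to establish the cycle of implications $(1)\Rightarrow(2)\Rightarrow(3)\Rightarrow(1)$, or whichever subset of arrows is not already trivial. The implication $(1)\Rightarrow(2)$ is essentially a formality: an affine space is by definition acyclic, and affineness gives an equivalence between $A$-modules and quasi-coherent $\OO$-modules, so every quasi-coherent module is of the form $\widetilde M=\pi^*M$ and hence generated by its global sections (the counit $\pi^*\pi_*\widetilde M\to\widetilde M$ is an isomorphism, in particular surjective); thus $X$ is quasi-affine. The implication $(2)\Rightarrow(3)$ requires showing that under the quasi-affine hypothesis, acyclicity of $\OO$ propagates to acyclicity of every quasi-coherent module. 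I would argue as follows: given a quasi-coherent $\M$, choose a surjection $A^{(I)}\to M:=\Gamma(X,\M)$ of $A$-modules whose kernel is $M'$; applying $\pi^*$ (which is exact here because $X$ is a finite space, so $\otimes_A\OO_x$ need not be exact in general — here is a subtlety, see below) one gets an exact sequence of quasi-coherent modules $\widetilde{M'}\to\OO^{(I)}\to\M\to 0$, actually I would rather resolve $\M$ on the left by \emph{sums of copies of $\OO$}: since $X$ is quasi-affine, $\M$ is a quotient of a free module $\OO^{(I)}$, and the kernel is again quasi-coherent (the category of quasi-coherent modules on a finite space is abelian, by the Proposition following the definition of finite space), and again quasi-affine lets us continue, producing a resolution $\cdots\to\OO^{(I_1)}\to\OO^{(I_0)}\to\M\to 0$ by acyclic modules (arbitrary direct sums of $\OO$ are acyclic because on a finite space cohomology commutes with direct sums — the standard resolution $C^\punto\OO$ is finite and each term, being a product over chains of stalks, commutes with the relevant sums, or one invokes that $X$ is a finite topological space so $H^i(X,-)$ commutes with filtered colimits and finite spaces have finite cohomological dimension). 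Then a dimension-shifting / hypercohomology spectral sequence argument gives $H^i(X,\M)=0$ for $i>0$.

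For the crucial implication $(3)\Rightarrow(1)$: assume $X$ is quasi-affine and Serre-affine. I must show $\pi^*$ and $\pi_*=\Gamma(X,-)$ are mutually inverse equivalences between $A$-modules and quasi-coherent $\OO$-modules (acyclicity of $X$ itself is the special case $\M=\OO$ of Serre-affineness). First, the unit $M\to\Gamma(X,\widetilde M)$: I would show this is an isomorphism by taking a presentation $A^{(J)}\to A^{(I)}\to M\to 0$, applying the exact-on-the-nose functor $\widetilde{(-)}=\pi^*$ followed by $\Gamma(X,-)$, and using that $\Gamma(X,-)$ is exact on quasi-coherent modules (Serre-affine!) together with $\Gamma(X,\OO^{(I)})=A^{(I)}$ — the latter because $\Gamma(X,-)$ commutes with the direct sums involved, again using finiteness of $X$ — so the five lemma finishes it. Second, the counit $\pi^*\Gamma(X,\M)\to\M$ for quasi-coherent $\M$: quasi-affineness says it is surjective; let $\K$ be its kernel, which is quasi-coherent, hence acyclic (Serre-affine), hence $\Gamma(X,-)$ applied to $0\to\K\to\pi^*\Gamma(X,\M)\to\M\to0$ stays exact, and combined with the fact that $\Gamma(X,-)\circ\pi^*=\Id$ on $A$-modules (just proved) one gets $\Gamma(X,\K)=0$; but a quasi-coherent module with vanishing global sections that is a submodule of a module generated by global sections... one needs $\K$ itself to be generated by its global sections — and it is, since $X$ is quasi-affine — so $\K=\pi^*\Gamma(X,\K)=\pi^*0=0$.

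The main obstacle, and the step deserving the most care, is the handling of arbitrary direct sums: throughout I am implicitly using that on a finite space $\Gamma(X,-)$, $H^i(X,-)$, and $\pi^*$ all commute with arbitrary direct sums, and that $\pi^*$ is exact on the resolutions I build. The commutation of cohomology with direct sums is safe because $X$ is a finite topological space (so $H^i$ is computed by the finite flasque standard resolution $C^\punto$, whose terms are products over the finitely many chains of a single stalk, and each such stalk functor $F\mapsto F_x$ commutes with direct sums). The exactness issue for $\pi^*$ is handled by noting I only ever apply $\pi^*$ to \emph{exact sequences of $A$-modules arising from free resolutions} or, dually, only claim surjectivity (never injectivity) when applying it to a presentation — so no flatness of $A\to\OO_x$ is secretly required beyond what the finite-space hypothesis already grants for the abelian-category statement. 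I would state the direct-sum commutation as a one-line lemma or remark before launching into the equivalences, to keep the proof clean.
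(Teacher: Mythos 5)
Your proposal is correct and follows essentially the same route as the paper: $(1)\Rightarrow(2)$ is formal, $(2)\Rightarrow(3)$ resolves a quasi-coherent module by acyclic free modules (using quasi-affineness for the surjections, the abelian-category property of quasi-coherent modules on a finite space for the kernels, commutation of cohomology with direct sums, and the finite cohomological dimension $\dim X$ to kill the higher cohomology — the paper phrases this as a descending induction on the degree rather than a full resolution plus dimension shifting, but it is the same argument), and $(3)\Rightarrow(1)$ checks the unit via a free presentation and exactness of $\Gamma$ on quasi-coherent modules, and the counit via the quasi-coherent kernel $\K$ with $\Gamma(X,\K)=0$ being generated by its global sections. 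The points you flag as delicate (right-exactness only of $\pi^*$, direct sums) are exactly the ones the paper relies on implicitly, and your justifications for them are sound.
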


\begin{proof} (1) $\Rightarrow$ (2) is immediate. (2) $\Rightarrow$ (3). We have to prove that any quasi-coherent module $\M$ is acyclic. By hypothesis, $\pi^*M\to\M$ is surjective, with  $M=\M(X)$. Since $M$ is a quotient of a free  $A$-module, $\M$ is a quotient of a free $\OO$-module $\Lc$.
We have an exact sequence $0\to \K\to \Lc\to \M\to 0$. Since $X$ is acyclic, $H^i(X,\Lc)=0$ for any $i>0$. Then $H^d(X,\M)=0$, for $d=\dim X$. That is, we have proved that $H^d(X,\M)=0$ for {\it any} quasi-coherent module $\M$. Then $H^d(X,\K)=0$, so $H^{d-1}(X,\M)=0$, for {\it any} quasi-coherent module $\M$;  proceeding in this way we obtain that  $H^i(X,\M)=0$ for any $i>0$ and any quasi-coherent $\M$.

(3) $\Rightarrow$ (1). By hypothesis $X$ is acyclic and  $\pi_*$ is an exact functor over the category of quasi-coherent  $\OO$-modules. Let us see that $M\to \pi_*\pi^*M$ is an isomorphism for any  $A$-module $M$. If $M=A$, there is nothing to say. If $M$ is a free module, it is immediate. Since any $M$ is a cokernel of free modules, one concludes (recall the exactness of $\pi_*$). Finally, let us see that $\pi^*\pi_*\M\to \M$ is an isomorphism for any quasi-coherent $\M$. The surjectivity holds by hypothesis. If $K$ is the kernel, taking $\pi_*$ in the exact sequence
\[ 0\to K\to \pi^*\pi_*\M\to\M\to 0\] and taking into account that  $\pi_*\circ \pi^*=\Id$, we obtain that $\pi_*K=0$. Since $K$ is generated by its global sections, it must be  $K=0$.
\end{proof}

Let us see that in the topological case, being affine is equivalent to being homotopically trivial:

\begin{thm} {\rm (\cite{Quillen}, Proposition 7.8)} Let $X$ be a  connected finite topological space $(\OO=\ZZ$). Then $X$ is affine if and only if $X$ is homotopically trivial.
\end{thm}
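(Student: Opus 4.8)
The plan is to re-express ``affine'' in terms of the fundamental group, using the results already at hand, and then to recognise the outcome as homotopical triviality; the only ingredient coming from outside the theory of ringed finite spaces will be the classical characterisation of (weak) contractibility. Put $A=\Gamma(X,\ZZ)=\ZZ$ and let $\pi\colon X\to(*,\ZZ)$ be the canonical morphism.

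First I would apply Theorem \ref{AffineFinSp}: since $X$ is a finite space, $X$ is affine if and only if it is acyclic and quasi-affine, and acyclicity of $X$ means by definition that $H^i(X,\ZZ)=0$ for $i>0$. The remaining point would be to show that, for connected $X$, being quasi-affine is equivalent to $\pi_1(X)=1$. Here I would combine Theorem \ref{qc-fts} (quasi-coherent $\ZZ$-modules $=$ locally constant sheaves of abelian groups) with the monodromy description recalled just before Theorem \ref{fin-sp-assoc-top}, applied to the constant sheaf of rings $\A=\ZZ$: quasi-coherent $\ZZ$-modules on connected $X$ correspond to $\ZZ[\pi_1(X)]$-modules, with $\pi^*M=\widetilde M$ the module $M$ carrying the trivial action and $\Gamma(X,-)$ the functor of monodromy invariants. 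If $\pi_1(X)=1$, every quasi-coherent module is constant, the counit $\pi^*\pi_*\M\to\M$ is the identity, and so $\M$ is generated by its global sections; hence $X$ is quasi-affine. Conversely, if $\pi_1(X)\neq 1$, I would take for $\M$ the quasi-coherent module attached to the left regular representation of $\pi_1(X)$ on $\ZZ[\pi_1(X)]$; then $\M_x\cong\ZZ[\pi_1(X)]$ for every $x$, whereas $\Gamma(X,\M)=\ZZ[\pi_1(X)]^{\pi_1(X)}$ is a proper subgroup ($0$ if $\pi_1(X)$ is infinite, of rank $1$ if $\pi_1(X)$ is finite non-trivial), and the map $\Gamma(X,\M)\otimes_\ZZ\OO_x\to\M_x$ is the inclusion of this subgroup, which is not surjective; so $\M$ is not generated by its global sections and $X$ is not quasi-affine. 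This establishes that $X$ is affine if and only if $H^i(X,\ZZ)=0$ for all $i>0$ and $\pi_1(X)=1$.

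To finish I would invoke the classical fact that a connected finite topological space $X$ is homotopically trivial precisely when $H^i(X,\ZZ)=0$ for $i>0$ and $\pi_1(X)=1$: via McCord's weak homotopy equivalence between $X$ and its order complex $|X|$ (\cite{Barmak}) one reduces to the statement for the finite complex $|X|$, where simple connectivity together with vanishing of integral cohomology in positive degrees forces vanishing of reduced integral homology (universal coefficients, using that $H_*(|X|,\ZZ)$ is finitely generated), hence vanishing of all homotopy groups (Hurewicz), hence contractibility of $|X|$ (Whitehead); the converse implication is immediate. Combined with the previous paragraph this yields the theorem. The manipulations with Theorems \ref{AffineFinSp}, \ref{qc-fts} and the monodromy dictionary are routine; the one step that is not internal to the subject, and hence the main obstacle, is this last paragraph of classical algebraic topology, which however collapses to a short universal-coefficients remark if ``homotopically trivial'' is read as ``weakly contractible'' by definition.
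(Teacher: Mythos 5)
Your proof is correct and follows essentially the same route as the paper's: identify quasi-coherent $\ZZ$-modules with representations of $\pi_1(X)$, detect nontriviality of $\pi_1(X)$ via a faithful (regular) representation so that affineness forces simple connectivity, and pass from ``acyclic and simply connected'' to homotopical triviality by homology--cohomology duality and Hurewicz. The only cosmetic differences are that you factor ``affine'' through Theorem \ref{AffineFinSp} as ``acyclic and quasi-affine'' and make explicit the universal-coefficients and regular-representation steps that the paper leaves implicit.
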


\begin{proof} Quasi-coherent modules are locally constant sheaves, by Theorem \ref{qc-fts}.  Now, for any abelian group $G$,  sheaves locally isomorphic to the constant sheaf $G$ are in bijection with representations of $\pi_1(X)$ in $G$. Hence a connected topological space $X$ is affine if and only if it is acyclic and any locally constant sheaf is constant, i.e., any representation of $\pi_1(X)$ on any abelian group is trivial. This amounts to say that $X$ is acyclic and simply connected. By the duality between homology and cohomology groups, this is equivalent to say that $X$ is simply connected and all its homology groups are zero. One concludes by Hurewicz's Theorem.
\end{proof}

\begin{prop}\label{tens-affine} Let $X$ be an affine finite space, $A=\OO(X)$. For any quasi-coherent modules $\M,\M'$ on $X$, the natural morphism $\M(X)\otimes_A\M'(X) \to(\M\otimes_\OO \M')(X)$ is an isomorphism.
\end{prop}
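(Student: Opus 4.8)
The plan is to reduce everything to the affineness equivalence of Theorem~\ref{AffineFinSp} and the stalkwise description of quasi-coherent modules in Theorem~\ref{qc}. Write $M=\M(X)$ and $M'=\M'(X)$. Since $X$ is affine, $\M\cong\widetilde M=\pi^*M$ and $\M'\cong\widetilde{M'}$, so $\M_x=M\otimes_A\OO_x$ and $\M'_x=M'\otimes_A\OO_x$ for every $x\in X$. Moreover $\M\otimes_\OO\M'$ is again quasi-coherent (tensor products of quasi-coherent modules are quasi-coherent), hence it is determined by its stalks together with the restriction morphisms, and so it is enough to understand the sheaf $\M\otimes_\OO\M'$ and then read off its global sections.

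First I would identify this sheaf. Its stalk at $x$ is $\M_x\otimes_{\OO_x}\M'_x$, so using the descriptions above and associativity of the tensor product one obtains canonical isomorphisms
\[ (\M\otimes_\OO\M')_x=\M_x\otimes_{\OO_x}\M'_x=(M\otimes_A\OO_x)\otimes_{\OO_x}(M'\otimes_A\OO_x)\cong(M\otimes_AM')\otimes_A\OO_x=\bigl(\widetilde{M\otimes_AM'}\bigr)_x. \]
On both sides the restriction morphism from $x$ to $y\geq x$ is $\Id_{M\otimes_AM'}\otimes\,r_{xy}$ under these identifications, so the isomorphisms are compatible with restrictions and glue to an isomorphism of $\OO$-modules $\M\otimes_\OO\M'\cong\widetilde{M\otimes_AM'}$.

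It then remains to take global sections. Since $X$ is affine, $\Gamma(X,\widetilde N)\cong N$ naturally for every $A$-module $N$ (this is part of the equivalence in Theorem~\ref{AffineFinSp}, established in its proof), so applying $\Gamma(X,-)$ to the isomorphism above gives $(\M\otimes_\OO\M')(X)\cong M\otimes_AM'=\M(X)\otimes_A\M'(X)$. The last point is to verify that this composite isomorphism is the natural morphism of the statement; tracing the identifications, the section of $\widetilde{M\otimes_AM'}$ attached to $m\otimes m'\in M\otimes_AM'$ has stalk $(m\otimes1)\otimes(m'\otimes1)=m_x\otimes m'_x$ at each $x$, which is exactly the image of $m\otimes m'$ under the natural map $\M(X)\otimes_A\M'(X)\to(\M\otimes_\OO\M')(X)$. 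This bookkeeping is the only real content; there is no homological or analytic difficulty. (Equivalently, one can skip global sections: $\pi^*$ applied to the natural morphism is the stalkwise isomorphism displayed above, and $\pi^*$ is an equivalence on quasi-coherent modules by affineness, hence reflects isomorphisms.)
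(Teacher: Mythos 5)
Your proof is correct and follows essentially the same route as the paper: the paper's own argument is the one-line observation that $\pi^*M\otimes_\OO\pi^*N\cong\pi^*(M\otimes_AN)$ for $A$-modules $M,N$, concluding by affineness, and your stalkwise computation is precisely the verification of that isomorphism together with the bookkeeping for global sections. Nothing is missing.
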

\begin{proof} For any $A$-modules $M,N$ one has an isomorphism $\pi^*M\otimes_\OO \pi^*N\overset\sim \to \pi^*(M\otimes_AN)$. One concludes because $X$ is affine.
\end{proof}

\begin{prop}\label{derivedcat-affine} Let $X$ be an affine finite space, $A=\OO(X)$. Then
\begin{enumerate}
\item For any $p\in X$, the natural map $A\to \OO_p$ is flat. In other words, the functor $$\pi^*\colon \{ A-\text{modules}\}\to \{\OO-\text{modules}\}$$ is exact.
\item The natural (injective) morphism $A\to \underset{p\in X}\prod \OO_p$ is faithfully flat.
\item The natural functors
\[  \xymatrix{ D({\rm\text{\rm Qcoh}}(X))\ar[rr]\ar[rd]_{\Gamma(X,\underline\quad)} &  & D_{\rm\text{\rm qc}}(X)\ar[ld]^{\RR\Gamma(X,\underline\quad)} \\  &  D(A) & }\] are equivalences.
\end{enumerate}
\end{prop}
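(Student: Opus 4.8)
The plan is to prove the three parts in order, using the previous results (especially Theorem \ref{AffineFinSp}, Theorem \ref{aciclico}, and Theorem \ref{qc}) as the backbone. For part (1), I would start from the observation that the open subset $U_p$ is acyclic: indeed $U_p$ has a minimum, namely $p$, so $H^i(U_p,\OO)=0$ for $i>0$ by Proposition \ref{aciclicity}. But more is needed — I want flatness of $A\to\OO_p$, not of $\OO_p\to\OO(U_p)$. The key is that on an affine finite space $X$, the global sections functor $\pi_*=\Gamma(X,\underline{\ \ })$ is exact on quasi-coherent modules (this was established inside the proof of Theorem \ref{AffineFinSp}, (2)$\Rightarrow$(3)), and $\pi^*$ is its inverse. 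Given an injection of $A$-modules $M'\hookrightarrow M$, apply $\pi^*$ to get $\pi^*M'\to\pi^*M$; I claim this is injective. Taking the stalk at $p$ gives $M'\otimes_A\OO_p\to M\otimes_A\OO_p$; so it suffices to see $\pi^*$ is exact, equivalently that the kernel $\K$ of $\pi^*M'\to\pi^*M$ vanishes. Since $\K$ is a quasi-coherent submodule of $\pi^*M'$ and $\pi_*$ is exact with $\pi_*\pi^* = \Id$, we get $\pi_*\K = \Ker(M'\to M) = 0$; and $\K$, being a submodule of a globally generated module on an affine (hence quasi-affine) space... — actually here one must be slightly careful since submodules of globally generated modules need not be globally generated, but $\K=\Ker(\pi^*M'\to\pi^*M)$ is quasi-coherent (finite space!) and $\pi_*\K=0$ forces $\pi^*\pi_*\K=0$, and the surjection $\pi^*\pi_*\K\to\K$ (no — $\K$ need not be globally generated). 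The cleanest route: use that $\pi^*$ is exact iff $\OO_p$ is flat over $A$ for all $p$, and prove flatness directly via $\mathrm{Tor}$: for any $A$-module $M$, $\mathrm{Tor}^A_i(\OO_p,M)$ is the stalk at $p$ of $L\pi^*M$, and by part (3) (equivalence of derived categories) $L\pi^*$ is exact on $D(A)\simeq D(\mathrm{Qcoh}(X))\hookrightarrow D_{\mathrm{qc}}(X)$, so $L\pi^*M$ has cohomology concentrated in degree $0$, whence all higher $\mathrm{Tor}$ vanish. This makes (1) a consequence of (3); so I would prove (3) first, or at least note the dependency.

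For part (3), the plan is: the functor $\pi^*\colon\{A\text{-modules}\}\to\mathrm{Qcoh}(X)$ is an equivalence by the definition of affine (part of the hypothesis), with quasi-inverse $\Gamma(X,\underline{\ \ })$. It is exact in both directions on the abelian categories (exactness of $\pi^*$ on $A$-modules is part (1), exactness of $\pi_*$ on $\mathrm{Qcoh}(X)$ is the Serre-affine property from Theorem \ref{AffineFinSp}), hence it induces an equivalence of the derived categories $D(A)\simeq D(\mathrm{Qcoh}(X))$. To get the comparison with $D_{\mathrm{qc}}(X)$, I would show that the natural functor $D(\mathrm{Qcoh}(X))\to D_{\mathrm{qc}}(X)$ is an equivalence. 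Here the standard resolution $C^\punto$ is the main tool: for a bounded-below complex of quasi-coherent modules, $\RR\Gamma$ is computed by the flasque complex $\Gamma(X,C^\punto(\underline{\ \ }))$, and since all quasi-coherent modules are acyclic (Serre-affine), $\RR\Gamma$ and $\Gamma$ agree on $\mathrm{Qcoh}(X)$, so $\RR\Gamma(X,\underline{\ \ })$ restricted to $D(\mathrm{Qcoh}(X))$ coincides with the derived functor of the exact functor $\Gamma$, which is an equivalence onto $D(A)$. To handle $D_{\mathrm{qc}}(X)$ (complexes of arbitrary $\OO$-modules with quasi-coherent cohomology, possibly unbounded), I would argue that every object of $D_{\mathrm{qc}}(X)$ is quasi-isomorphic to a complex of quasi-coherent modules — using that $\dim X<\infty$ (so $C^\punto$ has bounded length, controlling unbounded complexes) together with the fact that on a finite space $\mathrm{Qcoh}(X)$ is a (thick) abelian subcategory with enough flats/the standard resolution replacing injectives. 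The finite cohomological dimension is what makes the unbounded case tractable: it lets one truncate/reassemble without convergence issues.

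For part (2), faithful flatness of $A\to\prod_{p\in X}\OO_p$: flatness follows immediately from part (1), since a product of flat modules over... — wait, an arbitrary product of flat modules need not be flat, but a \emph{finite} product is (direct sums = finite products, and finite direct sums of flat modules are flat), and $X$ is finite, so $\prod_{p\in X}\OO_p = \bigoplus_{p\in X}\OO_p$ is flat over $A$. For faithfulness, I must show that if $M$ is an $A$-module with $M\otimes_A\OO_p = 0$ for all $p$, then $M=0$. But $M\otimes_A\OO_p = (\pi^*M)_p = (\widetilde M)_p$, so this says the quasi-coherent module $\pi^*M$ has all zero stalks, hence $\pi^*M = 0$, hence $M = \Gamma(X,\pi^*M) = \pi_*\pi^*M = 0$ since $\pi^*$ is an equivalence (or since $\pi_*\pi^* = \Id$). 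Equivalently: $A\hookrightarrow\prod\OO_p$ is injective because $A\to\OO_p$ factors the identity through... actually injectivity of $A\to\prod_p\OO_p$ holds because if $a\in A$ maps to $0$ in every $\OO_p$ then the global section $a$ of $\OO$ is zero stalkwise, hence $a=0$. Combined with flatness and the stalkwise-vanishing criterion just given, this yields faithful flatness.

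The main obstacle I anticipate is part (3) in the \emph{unbounded} derived category: proving $D(\mathrm{Qcoh}(X))\to D_{\mathrm{qc}}(X)$ is an equivalence requires some care about existence of $K$-injective (or $K$-flat) resolutions and about the interaction of unbounded complexes with the finite-length resolution $C^\punto$; the bounded case is routine from the exactness of $\pi^*$, $\pi_*$ and acyclicity of quasi-coherent modules, but the unbounded statement needs either a cohomological-dimension argument (using $\dim X<\infty$ to reduce to bounded pieces via stupid truncations and a Mittag-Leffler/telescope argument) or an appeal to the general machinery for locally noetherian-type abelian categories. Everything else — parts (1) and (2), and the bounded part of (3) — reduces cleanly to the already-established equivalence of abelian categories and the flasque standard resolution.
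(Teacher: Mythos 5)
There is a genuine gap, and it is the circularity you build into part (1). Your ``cleanest route'' derives the flatness of $A\to\OO_p$ from part (3) (vanishing of higher cohomology of $\LL\pi^*M$), while your plan for part (3) explicitly uses ``exactness of $\pi^*$ on $A$-modules is part (1)'' to get the equivalence $D(A)\simeq D(\mathrm{Qcoh}(X))$. Neither can go first in your scheme. The irony is that your abandoned first argument is essentially the paper's proof, and it works: from $0\to\K\to\pi^*M'\to\pi^*M$ and left-exactness of $\pi_*$ you correctly get $\pi_*\K=0$; then $\K$ is quasi-coherent (kernel of a morphism of quasi-coherent modules on a finite space), and the definition of \emph{affine} says $\pi^*$ and $\pi_*$ are mutually inverse equivalences on $\mathrm{Qcoh}(X)$, so the counit $\pi^*\pi_*\K\to\K$ is an \emph{isomorphism} --- not merely a surjection whose existence would require $\K$ to be globally generated. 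Hence $\K\cong\pi^*\pi_*\K=0$. You talked yourself out of a correct proof by demanding global generation of $\K$ as a separate input when the equivalence already supplies the isomorphism.

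Part (2) is fine and essentially equivalent to the paper's argument (the paper checks surjectivity of $\Spec(\prod_p\OO_p)\to\Spec A$ via residue fields $k(\pp)$; you check the module-theoretic criterion $M\otimes_AB=0\Rightarrow M=0$ via $\pi^*M=0\Rightarrow M=\pi_*\pi^*M=0$; both are standard once flatness is known). For part (3), the bounded part of your plan is correct, but the passage to the unbounded category $D_{\rm qc}(X)$ is left as an acknowledged ``main obstacle'' rather than proved. The paper closes it concretely: it shows the counit $\pi^*\RR\pi_*\M^\punto\to\M^\punto$ is a quasi-isomorphism for any $\M^\punto\in D_{\rm qc}(X)$ by computing cohomology sheaves --- since each $\HH^i(\M^\punto)$ is quasi-coherent and $X$ is affine, $H^j(X,\HH^i(\M^\punto))=0$ for $j>0$, so $H^j(\RR\pi_*\M^\punto)=\pi_*\HH^j(\M^\punto)$, and then exactness of $\pi^*$ gives $\HH^j(\pi^*\RR\pi_*\M^\punto)=\pi^*\pi_*\HH^j(\M^\punto)\cong\HH^j(\M^\punto)$. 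You should supply this (or an equivalent) argument rather than gesture at $K$-injective resolutions; note that it again consumes part (1), which is further reason that (1) must be established independently first.
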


\begin{proof} (1) Lets us see that $\pi^*$ is exact. It suffices to see that it is left exact. Let $M\to N$ be a injective morphism of $A$-modules and let $\K$ be the kernel of $\pi^*M\to\pi^*N$. Since $X$ is affine, $\pi_*\pi^*=\Id$; hence $\pi_*\K=0$ and then $\K=0$ because $\K$ is quasi-coherent and $X$ is affine.

(2) Since $A\to\OO_p$ is flat, it remains to prove that $\Spec( \underset{p\in X}\prod \OO_p)\to \Spec A$ is surjective. Let $\pp$
be a prime ideal of $A$ and $k(\pp)$ its residue field. Since $X$ is affine, $\pi^*k(\pp)$ is a (non-zero) quasi-coherent module on $X$, hence there exists $p\in X$ such that $(\pi^*k(\pp))_{\vert U_p}$ is not zero. This means that $\OO_p\otimes_A k(\pp)$ is not zero, so the fiber of $\pp$ under the morphism $\Spec\OO_p\to \Spec A$ is not empty.

(3) $\pi_*\colon {\rm Qcoh}(X)\to \{ A-{\rm modules}\}$ is exact because $X$ is affine (hence Serre-affine), and $\pi^*\colon \{ A-{\rm modules}\}\to {\rm Qcoh}(X) $ is exact by (1). Since $X$ is affine, one concludes that $$\pi_*\colon D({\rm\text{\rm Qcoh}}(X))\to D(A)$$ is an equivalence (with inverse $\pi^*$). To conclude, it suffices to see that if $\M^\punto$ is a complex of $\OO$-modules with quasi-coherent cohomology, the natural morphism $ \pi^*\RR\pi_*\M^\punto \to \M^\punto$ is a quasi-isomorphism. Since $\HH^i(\M^\punto)$ are quasi-coherent and $X$ is affine, one has $H^j(X,\HH^i(\M^\punto))=0$ for any $j>0$. Then $H^j(X,\M^\punto)=H^0(X,\HH^j(\M^\punto))$; in other words, $H^j(\RR\pi_*\M^\punto)=\pi_*\HH^j(\M^\punto)$. Then
$$\HH^j(\pi^*\RR\pi_*\M^\punto)\overset{(1)}= \pi^* H^j(\RR\pi_*\M^\punto)=\pi^*\pi_*\HH^j(\M^\punto)$$ and $\pi^*\pi_*\HH^j(\M^\punto)\to \HH^j(\M^\punto)$ is an isomorphism because $\HH^j(\M^\punto)$ is quasi-coherent and $X$ is affine.
\end{proof}

\begin{cor} If $V\subseteq U$ is an inclusion between affine open subsets of a finite space $X$, then the restriction morphism $\OO(U)\to\OO(V)$ is flat.
\end{cor}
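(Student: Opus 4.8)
The plan is to reduce the statement to part (1) of Proposition~\ref{derivedcat-affine} applied to the open subset $U$, viewed as a finite space in its own right. First I would observe that $U$ and $V$, being open subsets of the finite space $X$, are themselves finite spaces (open subsets of finite spaces are finite spaces, as noted after the definition), and that they are affine by hypothesis. Let $A=\OO(U)$ and $B=\OO(V)$. The inclusion $V\hookrightarrow U$ gives a ring homomorphism $A\to B$, and the goal is to show this is flat.

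Next I would identify $B$ as a localization-type object built from $A$ via the structure of $U$ as an affine finite space. Concretely, since $V$ is an open subset of $U$, for each point $p\in V$ one has the minimal open $U_p$ (the same in $U$ and in $V$, since $U_p\subseteq V$ when $p\in V$... more precisely $U_p$ is contained in $U$ and, as $V$ is open and $p\in V$, $U_p\subseteq V$). Thus $\OO(V)$ is computed as a limit over the points of $V$ of the rings $\OO_p=\OO_{U,p}$. The key point is that each $\OO_p$ is a flat $A$-module: this is exactly Proposition~\ref{derivedcat-affine}(1) applied to the affine finite space $U$ (whose ring of global sections is $A$), which says $A\to\OO_p$ is flat for every $p\in U$, in particular for every $p\in V$.

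Now I would use the flasque resolution machinery. Apply the standard resolution $C^\punto\OO_U$ restricted to $V$: since $V$ is affine, it is in particular acyclic, so by Theorem~\ref{aciclico}, or directly because $H^i(V,\OO)=0$ for $i>0$, the complex $(C^\punto\OO)(V)$ is a finite resolution of $\OO(V)=B$. Each term $(C^n\OO)(V)=\prod_{V\ni x_0<\cdots<x_n}\OO_{x_n}$ is a product of flat $A$-modules. Here I would need that an arbitrary product of flat $A$-modules is flat; this is not true for general rings, but the products occurring are \emph{finite} (since $V$ is a finite space of finite dimension, there are finitely many chains), so each $(C^n\OO)(V)$ is a finite product of flat $A$-modules, hence flat. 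Therefore $B$ has a finite resolution by flat $A$-modules, which shows $B$ is a flat $A$-module (finite flat dimension together with... actually one argues: a module with a finite flat resolution of a flat module is flat — more carefully, break the resolution into short exact sequences and use that the syzygies are flat by descending induction, bottoming out at the fact that $\Tor$ vanishes).

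The main obstacle I anticipate is the last homological step: deducing flatness of $B$ over $A$ from the existence of a finite flat resolution. The clean way is to note that $(C^\punto\OO)(V)$ is an exact complex of flat $A$-modules $0\to B\to (C^0\OO)(V)\to\cdots\to(C^n\OO)(V)\to 0$, and then for any $A$-module $M$, tensoring with $M$ and computing: the hyper-$\Tor$ of the complex with $M$ vanishes in positive degrees because all terms are flat, and this forces $\Tor_i^A(B,M)=0$ for $i>0$ by the standard spectral-sequence / dimension-shifting argument (splice off the flat terms one at a time from the right). Alternatively, this is precisely the proof pattern already used in Theorem~\ref{aciclico}(1), where $\OO(U)$ was shown flat over $\OO_p$ by exactly this resolution argument; so I would simply invoke that pattern. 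Hence $\OO(U)\to\OO(V)$ is flat, as claimed.
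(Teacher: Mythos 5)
Your proof is correct, but it takes a genuinely different route from the paper's. The paper's argument is a short cancellation: by Proposition \ref{derivedcat-affine} applied to the affine space $U$, the composite $\OO(U)\to\OO(V)\to\prod_{p\in V}\OO_p$ is flat (a finite product of the flat $\OO(U)$-algebras $\OO_p$), while by the same proposition applied to $V$ the second arrow $\OO(V)\to\prod_{p\in V}\OO_p$ is \emph{faithfully} flat; flatness of $\OO(U)\to\OO(V)$ then follows from the standard fact that if $A\to B\to C$ has $A\to C$ flat and $B\to C$ faithfully flat, then $A\to B$ is flat. You instead use only part (1) of Proposition \ref{derivedcat-affine} together with the standard resolution: since $V$ is acyclic, $0\to\OO(V)\to(C^0\OO)(V)\to\cdots\to(C^n\OO)(V)\to 0$ is exact, each term is a finite product of the flat $\OO(U)$-modules $\OO_{x_n}$ (finiteness of the product being exactly the point you correctly flag), and splitting the complex into short exact sequences and dimension-shifting on $\mathrm{Tor}$ gives flatness of $\OO(V)$ over $\OO(U)$ --- precisely the pattern of Theorem \ref{aciclico}(1), as you note. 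Both arguments are complete. The paper's is shorter but requires the faithful flatness statement of Proposition \ref{derivedcat-affine}(2), hence the full affineness of $V$; yours only uses that $V$ is acyclic (together with $U$ affine), so it in fact proves the marginally stronger statement that $\OO(U)\to\OO(V)$ is flat for any acyclic open subset $V$ of an affine $U$, generalizing Theorem \ref{aciclico}(1) from $U_p$ to an arbitrary affine open subset.
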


\begin{proof} By Theorem \ref{derivedcat-affine}, the composition $\OO(U)\to \OO(V)\to \underset{p\in V}\prod \OO_p$ is flat   and $\OO(V)\to \underset{p\in V}\prod \OO_p$ is faithfully flat. Conclusion follows.
\end{proof}

\begin{cor}\label{integral-functors} {\rm (1)} Let $f\colon X\to Y$ be a morphism between finite spaces. If $\M^\punto$ is a complex of $\OO_Y$-modules with quasi-coherent cohomology, then $\LL f^*\M^\punto$ is a complex of $\OO_X$-modules with quasi-coherent cohomology. Hence one has a functor
\[\LL f^*\colon D_{\rm\text{\rm qc}}(Y)\to D_{\rm\text{\rm qc}}(X)\]

{\rm (2)} Let $\M^\punto,\Nc^\punto$ be two complexes of $\OO$-modules on a finite space $X$. If $\M$ and $\Nc$ have quasi-coherent cohomology, so does $\M^\punto\overset\LL\otimes\Nc^\punto$. So one has a functor
\[\overset\LL\otimes \colon D_{\rm\text{\rm qc}}(X)\times D_{\rm\text{\rm qc}}(X)\to D_{\rm\text{\rm qc}}(X)\]

{\rm (3)} Let $X$ and $Y$ be two finite spaces and let $\K\in D_{\rm qc}(X\times Y)$. Then, the integral functor $\Phi_\K\colon D(X)\to D(Y)$ maps $D_{\rm qc}(X)$ into $D_{\rm qc}(Y)$.
\end{cor}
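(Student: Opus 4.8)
The plan is to obtain (3) by assembling (1), (2) and Theorem \ref{qc-of-proj}, so the substance lies in (1) and (2), which I would both treat as stalkwise verifications of the criterion in Theorem \ref{qc}.

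For (1), I would first compute $\LL f^*\M^\punto$ at a point: choosing a $K$-flat resolution $P^\punto\to\M^\punto$ over $\OO_Y$, the stalk $(\LL f^*\M^\punto)_p$ equals $P^\punto_{f(p)}\otimes_{\OO_{f(p)}}\OO_p$, and since the stalk functor is exact and $P^\punto_{f(p)}$ is a $K$-flat complex of $\OO_{f(p)}$-modules quasi-isomorphic to $\M^\punto_{f(p)}$, this represents $\M^\punto_{f(p)}\overset\LL\otimes_{\OO_{f(p)}}\OO_p$. Then, fixing $p\leq q$ in $X$ and writing $y=f(p)\leq y'=f(q)$, I would use that $Y$ is a finite space, so $\OO_y\to\OO_{y'}$ is flat; combined with quasi-coherence of the $H^i(\M^\punto)$ and Theorem \ref{qc}, the natural map gives $\M^\punto_y\overset\LL\otimes_{\OO_y}\OO_{y'}\simeq\M^\punto_{y'}$. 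Feeding this into the commutative square that defines the morphism $f$ (which makes $\OO_q$ an $\OO_y$-algebra through $\OO_{y'}$) and using associativity of $\overset\LL\otimes$ yields
\[ (\LL f^*\M^\punto)_q\;\simeq\;\M^\punto_{y}\overset\LL\otimes_{\OO_{y}}\OO_q\;\simeq\;(\LL f^*\M^\punto)_p\overset\LL\otimes_{\OO_p}\OO_q. \]
Since $X$ is a finite space, $\OO_p\to\OO_q$ is flat, so $H^j$ commutes with $\overset\LL\otimes_{\OO_p}\OO_q$, giving $H^j(\LL f^*\M^\punto)_p\otimes_{\OO_p}\OO_q\overset\sim\to H^j(\LL f^*\M^\punto)_q$; Theorem \ref{qc} then finishes (1).

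For (2) I would run the same pattern on $(\M^\punto\overset\LL\otimes\Nc^\punto)_p=\M^\punto_p\overset\LL\otimes_{\OO_p}\Nc^\punto_p$: for $p\leq q$, flatness of $\OO_p\to\OO_q$ together with base change for the derived tensor product gives $(\M^\punto_p\overset\LL\otimes_{\OO_p}\Nc^\punto_p)\overset\LL\otimes_{\OO_p}\OO_q\simeq(\M^\punto_p\overset\LL\otimes_{\OO_p}\OO_q)\overset\LL\otimes_{\OO_q}(\Nc^\punto_p\overset\LL\otimes_{\OO_p}\OO_q)$, and quasi-coherence (again via flatness of $\OO_p\to\OO_q$ and Theorem \ref{qc}) identifies the factors with $\M^\punto_q$ and $\Nc^\punto_q$, hence the whole with $(\M^\punto\overset\LL\otimes\Nc^\punto)_q$; taking $H^j$ and using flatness once more delivers the criterion of Theorem \ref{qc}. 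For (3): given $\M^\punto\in D_{\rm qc}(X)$, part (1) applied to $\pi\colon X\times Y\to X$ (a morphism of finite spaces, since a product of finite spaces is a finite space) puts $\LL\pi^*\M^\punto$ in $D_{\rm qc}(X\times Y)$, then part (2) puts $\K\overset\LL\otimes\LL\pi^*\M^\punto$ in $D_{\rm qc}(X\times Y)$, and finally I would show $\RR\pi'_*$ sends $D_{\rm qc}(X\times Y)$ to $D_{\rm qc}(Y)$ via the hyper-direct-image spectral sequence $R^s\pi'_*(H^t(-))\Rightarrow R^{s+t}\pi'_*(-)$: its $E_2$-terms are quasi-coherent by Theorem \ref{qc-of-proj}, quasi-coherent modules form an abelian subcategory of $\OO_Y$-modules (so every page and the abutment stay quasi-coherent), and $\pi'_*$ has cohomological dimension $\leq\dim(X\times Y)<\infty$, so the spectral sequence is bounded and converges even for unbounded complexes.

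The step I expect to be the main obstacle is the pointwise description of $\LL f^*$ in (1) for an unbounded complex — i.e.\ that a $K$-flat resolution restricts on stalks to a $K$-flat complex and that $\LL f^*$ commutes with the stalk functor — together with keeping straight, throughout, the distinction between the flatness one actually has (the internal restrictions $r_{pq}$ of the finite spaces $X$ and $Y$) and the maps $f^\#_p\colon\OO_{f(p)}\to\OO_p$, which are not assumed flat and which is precisely why the higher functors $L_if^*$ are genuinely present.
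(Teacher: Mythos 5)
Your proof is correct, but it takes a genuinely different route from the paper's. For (1) and (2) the paper does not argue stalkwise: it observes that quasi-coherence is a local question, so one may assume $Y$ (resp.\ $X$) is affine, and then invokes Proposition \ref{derivedcat-affine} to write $\M^\punto\simeq\pi^*M^\punto$ for a complex of $A$-modules $M^\punto$, with $\pi^*$ exact; hence $\LL f^*\M^\punto\simeq\LL\pi_X^*M^\punto$ (where $\pi_X=\pi\circ f$), which after resolving $M^\punto$ by free $A$-modules is a complex of quasi-coherent modules, and similarly $\M^\punto\overset\LL\otimes\Nc^\punto\simeq\pi^*(M^\punto\overset\LL\otimes N^\punto)$. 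Your argument instead verifies the criterion of Theorem \ref{qc} on each cohomology sheaf directly, using that stalks of K-flat complexes are K-flat and that the restrictions $r_{pq}$ of both $X$ and $Y$ are flat. What the paper's reduction buys is brevity and the avoidance of any manipulation of K-flat resolutions at the level of stalks, since the derived structure is absorbed into the equivalence $D_{\rm qc}(U_y)\simeq D(\OO_y)$ of Proposition \ref{derivedcat-affine}; what your computation buys is logical independence from the theory of affine finite spaces of Section 5 (you use only Theorem \ref{qc}, flatness of the $r_{pq}$, and standard base-change for derived tensor products), and it makes explicit that the maps $f^\#_p$ carry no flatness and are the source of the higher $L_if^*$. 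The one technical point you rightly flag --- that a K-flat complex of sheaves has K-flat stalks and that $\LL f^*$ is computed stalkwise --- is a standard fact and holds; on a finite space it follows from the exactness of the stalk functor and the preservation of K-flatness under restriction to the open sets $U_p$. For (3) the paper merely says the claim follows from (1), (2) and Theorem \ref{qc-of-proj}; your hypercohomology spectral sequence argument, with convergence secured by the finite cohomological dimension of $\pi'_*$ and with every page quasi-coherent because quasi-coherent modules form an abelian subcategory closed under extensions, is exactly the detail needed to pass from single quasi-coherent sheaves to unbounded complexes, so on this point you are more complete than the paper.
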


\begin{proof} (1) Since being quasi-coherent is a local question, we may assume that $Y$ is affine. Let us denote $\pi\colon Y\to (*,A)$ the natural morphism, with $A=\OO_Y(Y)$. By Proposition \ref{derivedcat-affine},  $\M^\punto \simeq \pi^* M^\punto$ for some complex of $A$-modules $M^\punto$, and then $\LL f^* \M^\punto \simeq \LL\pi_X^* M^\punto$, with $\pi_X=\pi\circ f$; it is clear that $\LL\pi_X^* M^\punto$ has quasi-coherent cohomology (in fact, it is a complex of quasi-coherent modules).

(2) Again, we may assume that $X$ is affine, and then $\M^\punto\simeq \pi^*M^\punto$, $\Nc^\punto\simeq \pi^*N^\punto$. Then
\[ \M^\punto\overset\LL\otimes\Nc^\punto \simeq  \pi^*M^\punto\overset\LL\otimes \pi^*N^\punto \simeq \pi^*( M^\punto\overset\LL\otimes N^\punto).\]

(3) It follows from (1), (2) and Theorem \ref{qc-of-proj}.
\end{proof}

\begin{rem} In \cite{Ladkani}, it is studied what properties of a finite topological space are invariant under {\it derived equivalence}, when one works with the bounded derived category of sheaves of $k$-vector spaces. For example, the number of points or the Betti numbers are invariant under this equivalence. However, if one works with the derived category of quasi-coherent $\ZZ$-modules (i.e., locally constant sheaves of abelian groups), the number of points is no longer an invariant, since two homotopic posets have the same quasi-coherent derived category but different number of points.  Our feeling is that the derived category of all sheaves (of $k$-vector spaces) is not the natural object to study, in the same way that, in the context of algebraic geometry, one does not usually consider the derived category of all sheaves of $k$-vector spaces on a $k$-scheme, but  the derived category of quasi-coherent or coherent sheaves. If one desires a derived equivalence which is compatible with homotopy (i.e., if two posets are homotopic then they are derived equivalent), then the derived category of locally constant sheaves would be a good candidate.  An interesting question comes here: let $X$ and $Y$ be two posets which are derived equivalent with respect the derived category of locally constant sheaves of abelian groups; what homotopical invariants  are derived invariants?

Even for a small poset, the category of all sheaves is very big; for example, take a (connected) poset (of three elements) with two closed points and one generic point. The category of sheaves of $k$-vector spaces on this poset contains the category of quasi-coherent modules on any irreducible algebraic curve over $k$ (or any separated $k$-scheme which can be covered by two affine open subschemes).
\end{rem}

\noindent{\it Relative notions.}
\medskip

To conclude this section, we give the relative notions of affineness, quasi-affineness, etc, and study the behaviour of these concepts under basic operations as direct products or base change. We do not go further because the good behaviour of these concepts will hold for schematic finite spaces and schematic morphisms, as we shall see in the next section.  An affine space can still have a lot of pathologies; however, affine schematic spaces and affine schematic morphisms share all the basic properties of affine schemes and affine morphisms of schemes. In  Algebraic Geometry, every affine scheme is semi-separated (indeed, it is separated). This does not happen for finite spaces; thus,  finite spaces which are most similar to affine schemes are those affine spaces which are semi-separated (equivalently, schematic). The same can be said of morphisms.

\begin{defn} We say that a morphism  $f\colon X\to Y$ preserves quasi-coherence if for any quasi-coherent module  $\M$ on $X$, $f_*\M$ is quasi-coherent. We say that $R^if$ preserves quasi-coherence if for any quasi-coherent module  $\M$ on $X$, $R^if_*\M$ is quasi-coherent. We say that $\RR f$ preserves quasi-coherence if $R^if_*$ preserves quasi-coherence for any $i$.
\end{defn}

If $Y$ is a punctual space, any morphism $f\colon X\to Y$ preserves quasi-coherence (and $\RR f$ preserves quasi-coherence).

\begin{defn} Let $f\colon X\to Y$ be a morphism,  $\M$ an $\OO_X$-module.
\begin{enumerate}
\item We say that  $\M$ is generated by its global sections over $Y$ if the natural map  $f^*f_*\M\to\M$ is surjective.
\item We say that $\M$ is $f$-acyclic if $R^if_*\M=0$ for any $i>0$. We say that $X$ is $f$-acyclic if $\OO_X$ is $f$-acyclic. In this case we also say that $f$ is acyclic.
\item We say that $f$ is quasi-affine if it preserves quasi-coherence and every quasi-coherent  $\OO_X$-module is generated by its global sections over $Y$.
\item We say that $f$ is Serre-affine if it preserves quasi-coherence and  every quasi-coherent  $\OO_X$-module is $f$-acyclic.
\item We say that   $f$ is affine  if it preserves quasi-coherence and   $f^{-1}(U_y)$ is affine for any $y\in Y$.
\end{enumerate}
\end{defn}

\begin{rem}\label{relative=absolute} If $Y$ is a punctual space, the relative notions coincide with the absolute ones, i.e.: Let $f\colon X\to Y$ be a morphism, $Y$ a punctual space and $\M$ an $\OO_X$-module. Then
\begin{enumerate}
\item   $\M$ is generated by its global sections over  $Y$ if and only if  $\M$ is generated by its global sections.
\item   $\M$ is $f$-acyclic if and only if it is acyclic.
\item   $f$ is quasi-affine if and only if $X$ is quasi-affine.
\item   $f$ is  Serre-affine if and only if $X$ is Serre-affine.
\item   $X$ is $f$-acyclic if and only if it is acyclic.
\item $f$ is affine if and only if $X$ is affine.
\end{enumerate}
\end{rem}

Preserving quasi-coherence, being Serre-affine and being quasi-affine are stable under composition.

\begin{ejem}\label{ejem-affine} Let $(X,\OO)$ be a finite space and $\Bc$ a quasi-coherent $\OO$-algebra; that is, $\Bc$ is a sheaf of rings on $X$ endowed with a morphism of sheaves of rings $\OO\to\Bc$, such that $\Bc$ is quasi-coherent as an $\OO$-module. Then $(X,\Bc)$ is a finite space (it has flat restrictions). Moreover, the identity on $X$ and the morphism $\OO\to \Bc$ give a morphism of ringed finite spaces $(X,\Bc)\to (X,\OO)$. This morphism is affine.
\end{ejem}

\begin{thm}\label{projection-formula} Let $f\colon X\to Y$ be an affine morphism.

(1) For any quasi-coherent module $\M$ on $X$ and any quasi-coherent module $\Nc$ on $Y$, the natural morphism $\Nc\otimes f_*\M\to f_*(f^*\Nc \otimes \M)$ is an isomorphism.

(2) A morphism $\M\to\M'$ between quasi-coherent modules on $X$ is an isomorphism if and only if the induced morphism $f_*\M\to f_*\M'$ is an isomorphism.

(3) If $f_*\OO_X=\OO_Y$, then $f_*$ and $f^*$ yield an equivalence between the categories of quasi-coherent modules on $X$ an $Y$.

\end{thm}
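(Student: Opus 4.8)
The plan is to reduce everything to the affine case by passing to stalks on $Y$. Fix a point $y\in Y$ and put $U:=f^{-1}(U_y)$. This is an open subspace of $X$, hence a finite space, and by the very definition of an affine morphism it is an \emph{affine} finite space; write $B:=\OO_X(U)$. Since $U_y$ is the minimal open neighbourhood of $y$ one has $\OO_Y(U_y)=\OO_y$, and $f$ induces a ring homomorphism $\OO_y\to B$. For any $\OO_X$-module $\F$ one has $(f_*\F)_y=\Gamma(U_y,f_*\F)=\Gamma(U,\F_{\vert U})$, and pullback commutes with restriction to open subsets, so $(f^*\Nc)_{\vert U}=(f_{\vert U})^*(\Nc_{\vert U_y})$. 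The first step I would carry out is the identification of this last module: because $U_y$ has the minimum $y$, Corollary~\ref{corqc} gives $\Nc_{\vert U_y}=\widetilde{\Nc_y}$, and therefore, applying the base-change formula of Examples~\ref{ejemplos}, (1) to $f_{\vert U}\colon U\to U_y$, $(f^*\Nc)_{\vert U}=\widetilde{\Nc_y\otimes_{\OO_y}B}$ as a (quasi-coherent) $\OO_{X\vert U}$-module; since $U$ is affine, $\Gamma\big(U,(f^*\Nc)_{\vert U}\big)=\Nc_y\otimes_{\OO_y}B$, naturally in $\Nc$.

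For (1) I would then compute both sides of the projection morphism stalkwise at $y$. On one hand $(\Nc\otimes f_*\M)_y=\Nc_y\otimes_{\OO_y}\Gamma(U,\M_{\vert U})$. On the other hand, $\M_{\vert U}$ and $(f^*\Nc)_{\vert U}$ are quasi-coherent on the affine space $U$, so Proposition~\ref{tens-affine} gives
\[
(f_*(f^*\Nc\otimes\M))_y=\Gamma\big(U,(f^*\Nc)_{\vert U}\big)\otimes_B\Gamma(U,\M_{\vert U})=(\Nc_y\otimes_{\OO_y}B)\otimes_B\Gamma(U,\M_{\vert U})=\Nc_y\otimes_{\OO_y}\Gamma(U,\M_{\vert U}).
\]
It remains to check that the canonical projection-formula morphism induces exactly this identification on the stalk at each $y$ (a direct diagram chase), after which (1) follows because a morphism of sheaves that is bijective on all stalks is an isomorphism. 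For (2), note that the open subsets $f^{-1}(U_{f(x)})$, $x\in X$, cover $X$, each is affine, and on an affine finite space $\Gamma$ is an equivalence between quasi-coherent modules and modules over the global functions (Theorem~\ref{AffineFinSp}), hence reflects isomorphisms; so if $f_*\M\to f_*\M'$ is an isomorphism then so is $(f_*\M)_{f(x)}\to(f_*\M')_{f(x)}$, i.e. $\Gamma(U,\M_{\vert U})\to\Gamma(U,\M'_{\vert U})$ with $U=f^{-1}(U_{f(x)})$, hence $\M_{\vert U}\to\M'_{\vert U}$ and in particular $\M_x\to\M'_x$ is an isomorphism; the converse is trivial.

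Finally, (3) should fall out formally from (1) and (2). The adjunction $f^*\dashv f_*$ restricts to the categories of quasi-coherent modules, since $f^*$ preserves quasi-coherence and $f$, being affine, preserves quasi-coherence. Taking $\M=\OO_X$ in (1) and using $f_*\OO_X=\OO_Y$ shows that the unit $\Nc\to f_*f^*\Nc$ is an isomorphism for every quasi-coherent $\Nc$ on $Y$. Applying this to $\Nc=f_*\M$ and combining with the triangle identity, $f_*$ of the counit $f^*f_*\M\to\M$ is an isomorphism, hence by (2) the counit itself is an isomorphism for every quasi-coherent $\M$ on $X$. An adjunction whose unit and counit are isomorphisms is an equivalence, which proves (3).

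The step I expect to be the actual obstacle is the first one: pinning down $(f^*\Nc)_{\vert U}$ as $\widetilde{\Nc_y\otimes_{\OO_y}B}$ and its global sections as $\Nc_y\otimes_{\OO_y}B$ \emph{naturally} in $\Nc$, and then verifying that the canonical projection-formula morphism (for (1)) and the adjunction unit (for (3)) actually realize the computed isomorphisms rather than merely some abstract ones. Once that bookkeeping is settled, the rest is the standard reduction-to-affine argument using the already established description of quasi-coherent sheaves on affine finite spaces.
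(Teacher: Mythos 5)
Your proposal is correct and follows essentially the same route as the paper: reduce to the affine space $f^{-1}(U_y)$, compute stalks at $y$ via Proposition \ref{tens-affine} and the identification $\Gamma(f^{-1}(U_y),f^*\Nc)=\Nc_y\otimes_{\OO_y}\OO_X(f^{-1}(U_y))$ for (1), use that global sections on an affine space reflect isomorphisms for (2), and deduce (3) formally from (1), (2) and $f_*\OO_X=\OO_Y$. Your extra care in justifying $(f^*\Nc)_{\vert U}=\widetilde{\Nc_y\otimes_{\OO_y}B}$ via Corollary \ref{corqc} only makes explicit what the paper asserts directly.
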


\begin{proof} (1) Let $y\in Y$ and let us denote $N=\Nc_y$, $M=\M(f^{-1}(U_y))$, $A=\OO_y$, $B=\OO_X(f^{-1}(U_y))$. Taking the stalk at $y$, we obtain the morphism $N\otimes_A M \to \Gamma(f^{-1}(U_y), f^*\Nc\otimes\M)$. By Proposition \ref{tens-affine}, $\Gamma(f^{-1}(U_y), f^*\Nc\otimes\M)= \Gamma(f^{-1}(U_y), f^*\Nc)\otimes_B  M$. Finally, $\Gamma(f^{-1}(U_y), f^*\Nc)=N\otimes_AB$, because $f^{-1}(U_y)$ is affine. Conclusion follows.

(2) For each $y\in Y$, let us denote $X_y=f^{-1}(U_y)$. Now, $\M\to \M'$ is an isomorphism if and only if $\M_{\vert X_y}\to \M'_{\vert X_y}$ is an isomorphism for any $y$. Since $X_y$ is affine, this is an isomorphism if and only if it is an isomorphism after taking global sections, i.e., iff $(f_*\M)_y\to (f_*\M')_y$ is an isomorphism.

(3) For any quasi-coherent module $\Nc$ on $Y$, the natural morphism $\Nc\to f_*f^*\Nc$ is an isomorphism by (1) and the hypothesis $f_*\OO_X=\OO_Y$. For any quasi-coherent module $\M$ on $X$, the natural morphism $f^*f_*\M\to \M$ is an isomorphism by (2), since $f_*(f^*f_*\M)=f_*\M\otimes f_*\OO_X$ and $f_*\OO_X=\OO_Y$.
\end{proof}

\begin{prop}\label{Serre-Quasi-Afin} If $f\colon X\to Y$ is an affine morphism, then it is  Serre-affine and quasi-affine.
\end{prop}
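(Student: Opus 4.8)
The plan is to verify directly the two defining conditions in each case. Since preservation of quasi-coherence is already built into the definition of an affine morphism, it remains only to show that every quasi-coherent $\OO_X$-module is $f$-acyclic (for Serre-affineness) and that it is generated by its global sections over $Y$ (for quasi-affineness). Both statements will be obtained by a fibre-by-fibre reduction to the absolute affine case, using that $f^{-1}(U_y)$ is an affine finite space for every $y\in Y$ (an open subset of a finite space is a finite space), together with Theorem~\ref{AffineFinSp} (an affine finite space is Serre-affine) and the very definition of an affine ringed space.

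For Serre-affineness, let $\M$ be a quasi-coherent $\OO_X$-module and $i>0$. Recall that $(R^if_*\M)_y=H^i(f^{-1}(U_y),\M)$. The restriction of $\M$ to the open subset $f^{-1}(U_y)$ is quasi-coherent, and $f^{-1}(U_y)$ is affine, hence Serre-affine by Theorem~\ref{AffineFinSp}; therefore this cohomology group vanishes. Thus $R^if_*\M=0$ for all $i>0$, i.e.\ $\M$ is $f$-acyclic, so $f$ is Serre-affine.

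For quasi-affineness I would check that the counit $f^*f_*\M\to\M$ is surjective by looking at stalks. Fix $x\in X$, set $y=f(x)$ and $V=f^{-1}(U_y)$; then $x\in V$ (so $U_x\subseteq V$) and $V$ is an affine finite space. Using $(f_*\M)_y=\M(V)$, one identifies $(f^*f_*\M)_x$ with $\M(V)\otimes_{\OO_y}\OO_x$, and the stalk of the counit with the composite
\[ \M(V)\otimes_{\OO_y}\OO_x \longrightarrow \M(V)\otimes_{\OO_X(V)}\OO_x \longrightarrow \M_x, \]
where the first arrow is the canonical surjection attached to the ring maps $\OO_y\to\OO_X(V)\to\OO_x$, and the second is an isomorphism because $V$ is affine: indeed $\M_{\vert V}=\widetilde{\M(V)}$, whose stalk at $x$ is exactly $\M(V)\otimes_{\OO_X(V)}\OO_x$. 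Hence $(f^*f_*\M)_x\to\M_x$ is surjective for every $x$, so $f^*f_*\M\to\M$ is surjective and $\M$ is generated by its global sections over $Y$; therefore $f$ is quasi-affine.

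There is no serious obstacle here; the argument is essentially bookkeeping once the reduction to each affine fibre is made. The only points deserving a little care are the identification of the stalk $(f^*f_*\M)_x$ with $\M(V)\otimes_{\OO_y}\OO_x$ and the verification that the stalk of the counit is precisely the displayed composite, after which surjectivity follows from the elementary surjectivity of base-change maps on tensor products together with the already established fact that a quasi-coherent module on an affine finite space is recovered from its global sections.
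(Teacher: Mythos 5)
Your proof is correct and takes essentially the same route as the paper: the Serre-affineness part is identical (reduce to $H^i(f^{-1}(U_y),\M)=0$ because $f^{-1}(U_y)$ is affine, hence Serre-affine by Theorem~\ref{AffineFinSp}), and for quasi-affineness you make the same reduction to the affine open $V=f^{-1}(U_y)$, merely verifying surjectivity of the counit stalk by stalk instead of invoking the global-sections equivalence on $V$ as the paper does. Both arguments rest on the same two facts, namely that $f^{-1}(U_y)$ is affine and that a quasi-coherent module on an affine finite space is the pullback of its module of global sections.
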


\begin{proof} Since $f$ is affine, it preserves quasi-coherence. Let us see that it is Serre-affine.  Let $\M$ be a quasi-coherent module on  $X$. We have to prove that  $R^if_*\M=0$ for $i>0$. But $(R^if_*\M)_y=H^i(f^{-1}(U_y),\M)=0$ for $i>0$ because $f^{-1}(U_y)$ is affine, hence Serre-affine, by Theorem \ref{AffineFinSp}.

Let us see now that $f$ is quasi-affine. We have to prove that   $f^*f_*\M\to\M$ is surjective. The question is local on   $Y$, so we may assume that   $Y=U_y$  and  $X$ is affine. Since $f_*\M$ is quasi-coherent and $X$ is affine, the morphism $f^*f_*\M\to\M$ is equivalent to the morphism $M\otimes_AB\to M$, with $M$, $A$, $B$ the global sections of  $\M$, $\OO_Y$, $\OO_X$, which is  obviously surjective.
\end{proof}

For the rest of this subsection, $f\colon X\to Y$ is a morphism between finite spaces, $S$ is another finite space and $f\times 1\colon X\times S\to Y\times S$ is the induced morphism. The following technical results will be used in sections 5. and 6.

\begin{prop}\label{preservacion-cuasi}  If $R^if$ preserves quasi-coherence, so does $ R^i(f\times 1)$. Consequently, given morphisms $f\colon X\to Y$ and $f'\colon X'\to Y'$, if $\RR f$ and $\RR f'$ preserve quasi-coherence, then $\RR (f\times f')$ preserves quasi-coherence, with $f\times f'\colon X\times X'\to Y\times Y'$.
\end{prop}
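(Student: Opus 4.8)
The plan is to verify the quasi-coherence criterion of Theorem~\ref{qc}. Fix a quasi-coherent $\OO_{X\times S}$-module $\M$ and set $\Nc:=R^i(f\times 1)_*\M$. Since $U_{(y,s)}=U_y\times U_s$ and $(f\times 1)^{-1}(U_y\times U_s)=f^{-1}(U_y)\times U_s$, the stalk is $\Nc_{(y,s)}=H^i\big(f^{-1}(U_y)\times U_s,\M\big)$, and I must show that for every $(y,s)\leq(y',s')$ the natural map $\Nc_{(y,s)}\otimes_{\OO_{(y,s)}}\OO_{(y',s')}\to\Nc_{(y',s')}$ is an isomorphism. As $\OO_{(y,s)}=\OO_y\otimes_k\OO_s$, one has $\OO_{(y',s)}=\OO_{(y,s)}\otimes_{\OO_y}\OO_{y'}$ and $\OO_{(y',s')}=\OO_{(y',s)}\otimes_{\OO_s}\OO_{s'}$; since the map factors through $\Nc_{(y',s)}$, it is enough to treat separately the cases $s=s'$, $y\leq y'$ and $y=y'$, $s\leq s'$.

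The key device is, for each $s\in S$, the projection $q_s\colon X\times U_s\to X$ and the sheaf $\Lc_s:=(q_s)_*\big(\M_{\vert X\times U_s}\big)$ on $X$, whose stalk at $x\in X$ is $(\Lc_s)_x=\Gamma(U_x\times U_s,\M)=\M_{(x,s)}$ (as $U_x\times U_s$ has a minimum). Using $\OO_{(x',s)}=\OO_{(x,s)}\otimes_{\OO_x}\OO_{x'}$ and quasi-coherence of $\M$, one checks that $(\Lc_s)_x\otimes_{\OO_x}\OO_{x'}\to(\Lc_s)_{x'}$ is an isomorphism, so $\Lc_s$ is quasi-coherent on $X$. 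Moreover, since the fibres $U_x\times U_s$ of $q_s$ have a minimum, $R^j(q_s)_*=0$ for $j>0$ by Proposition~\ref{aciclicity}, hence $H^i\big(f^{-1}(U_y)\times U_s,\M\big)=H^i\big(f^{-1}(U_y),\Lc_s\big)=(R^if_*\Lc_s)_y$. Now the hypothesis applies: $R^if_*\Lc_s$ is quasi-coherent on $Y$, so for $y\leq y'$ the map $(R^if_*\Lc_s)_y\otimes_{\OO_y}\OO_{y'}\to(R^if_*\Lc_s)_{y'}$ is an isomorphism, which settles the first case.

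For the second case ($y=y'$, $s\leq s'$) I would compute $(R^if_*\Lc_s)_y=H^i\big(\Gamma(f^{-1}(U_y),C^\punto\Lc_s)\big)$ using the standard resolution. Each term $\Gamma(f^{-1}(U_y),C^n\Lc_s)=\prod_{x_0<\cdots<x_n}\M_{(x_n,s)}$ is a \emph{finite} product since $X$ is finite, so it commutes with $-\otimes_{\OO_s}\OO_{s'}$; combined with the isomorphisms $\M_{(x_n,s)}\otimes_{\OO_s}\OO_{s'}\overset{\sim}{\to}\M_{(x_n,s')}$ (quasi-coherence of $\M$ together with $\OO_{(x_n,s')}=\OO_{(x_n,s)}\otimes_{\OO_s}\OO_{s'}$) this gives an isomorphism of complexes $\Gamma(f^{-1}(U_y),C^\punto\Lc_s)\otimes_{\OO_s}\OO_{s'}\cong\Gamma(f^{-1}(U_y),C^\punto\Lc_{s'})$. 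As $S$ is a finite space, $\OO_s\to\OO_{s'}$ is flat, so tensoring commutes with cohomology and $(R^if_*\Lc_s)_y\otimes_{\OO_s}\OO_{s'}\overset{\sim}{\to}(R^if_*\Lc_{s'})_y=\Nc_{(y,s')}$. Together with the first case this shows $\Nc$ is quasi-coherent, i.e. $R^i(f\times 1)$ preserves quasi-coherence.

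For the consequence I would factor $f\times f'=(1_Y\times f')\circ(f\times 1_{X'})$. By the part just proved $\RR(f\times 1_{X'})$ preserves quasi-coherence, and by the symmetric statement (exchanging the two factors) so does $\RR(1_Y\times f')$; hence $\RR(f\times f')$ does too, by stability under composition: in the Leray spectral sequence $R^pg_*R^qh_*\M\Rightarrow R^{p+q}(gh)_*\M$ all $E_2$-terms are quasi-coherent, and on a finite space quasi-coherent modules form an abelian subcategory closed under extensions, so the abutment is quasi-coherent. I expect the only real work to be the bookkeeping with the base-change identities and the verification that $-\otimes_{\OO_y}\OO_{y'}$ and $-\otimes_{\OO_s}\OO_{s'}$ commute with the functors in play; this succeeds precisely because $X$ is finite (the products defining $C^\punto$ are finite) and $S$ is a finite space ($\OO_s\to\OO_{s'}$ is flat), while the hypothesis on $R^if$ enters exactly once, to know that $R^if_*\Lc_s$ is quasi-coherent.
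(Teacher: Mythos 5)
Your proof is correct and follows essentially the same route as the paper: the same splitting of the quasi-coherence check into the two directions, with the direct image $\Lc_s=(q_s)_*\M$ (the paper's $\pi_*\M$) feeding the hypothesis on $R^if_*$ for the first direction. The only cosmetic differences are that for the second direction you inline the standard-resolution computation that the paper obtains by citing Theorem \ref{qc-of-proj} for the projection onto $U_s$, and you make explicit (via the Leray spectral sequence) the composition argument that the paper leaves implicit.
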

\begin{proof} Let $\M$ be a quasi-coherent module on $X\times S$. Let us see that $R^i(f\times 1)_*\M$ is quasi-coherent. Since the question is local, we may assume that  $S=U_s$. We have to prove that the natural morphisms
\[\aligned \left[ R^i(f\times 1)_*\M\right]_{(y,s)}\otimes_{\OO_y}\OO_{y'}&\to [R^i(f\times 1)_*\M]_{(y',s)}\\ [R^i(f\times 1)_*\M]_{(y,s)}\otimes_{\OO_s}\OO_{s'}&\to [R^i(f\times 1)_*\M]_{(y,s')}\endaligned\] are isomorphisms for any $y\leq y'$ in $Y$ and $s'\in U_s$. For the first, we have
\[ [R^i(f\times 1)_*\M]_{(y,s)}=H^i (f^{-1}(U_y)\times U_s,\M) = [R^if_* (\pi_*\M)]_y\] with $\pi\colon X\times U_s\to X$ the natural projection. Since $R^if_* (\pi_*\M)$ is quasi-coherent, one concludes the first  isomorphism. The second follows from the fact that for every open subset $U$ of $X$, the natural morphism  $H^i(U\times U_s,\M)\otimes_{\OO_s}\OO_{s'}\to H^i(U\times U_{s'},\M)$ is an isomorphism, because $R^i\pi'_*(\M_{\vert U\times U_s})$ is quasi-coherent, with $\pi'\colon U\times U_s\to U_s$ the natural projection.

For the consequence, put $f\times f'$ as the composition of $f\times 1$ and $1\times f'$.
\end{proof}

\begin{prop}\label{fx1}  If $f$ is Serre-affine (resp. quasi-affine), so is $f\times 1$.
\end{prop}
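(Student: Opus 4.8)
The plan is to verify, for each of the two assertions, the relevant defining conditions for $f\times 1\colon X\times S\to Y\times S$ by reducing along the ``slices'' $U_{s'}$ of $S$ to the corresponding hypothesis on $f$ together with Theorem \ref{qc-of-proj}. In both cases $f$ preserves quasi-coherence (this is part of the definition of Serre-affine, resp.\ quasi-affine), so Proposition \ref{preservacion-cuasi} applied to $R^0f=f_*$ already gives that $(f\times 1)_*$ preserves quasi-coherence. Thus, for a quasi-coherent module $\M$ on $X\times S$, what remains to prove is: in the Serre-affine case, that $R^i(f\times 1)_*\M=0$ for $i>0$; and in the quasi-affine case, that the counit $(f\times 1)^*(f\times 1)_*\M\to\M$ is surjective.

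The key device is the following. For each $s'\in S$ let $p_{s'}\colon X\times U_{s'}\to X$ be the projection and set $\Nc_{s'}:=(p_{s'})_*(\M_{\vert X\times U_{s'}})$. Since $U_{s'}$ has a minimum, $U_x\times U_{s'}=U_{(x,s')}$ has a minimum for every $x$, so $R^j(p_{s'})_*=0$ for $j>0$ by Proposition \ref{aciclicity}; and by Theorem \ref{qc-of-proj}, applied with the finite space $X$ as base and $U_{s'}$ as the second factor, $\Nc_{s'}$ is a quasi-coherent $\OO_X$-module. Directly from the definition of $(p_{s'})_*$ one reads off $\Gamma(V,\Nc_{s'})=\Gamma(V\times U_{s'},\M)$ for every open $V\subseteq X$, and in particular $(\Nc_{s'})_x=\M_{(x,s')}$.

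For the Serre-affine case I would compute $[R^i(f\times 1)_*\M]_{(y,s')}=H^i(f^{-1}(U_y)\times U_{s'},\M)$ and then, using $R^j(p_{s'})_*=0$ for $j>0$, rewrite this as $H^i(f^{-1}(U_y),\Nc_{s'})=[R^if_*\Nc_{s'}]_y$, which vanishes for $i>0$ because $\Nc_{s'}$ is quasi-coherent and $f$ is Serre-affine. For the quasi-affine case I would check surjectivity of the counit stalkwise at $(x,s')$: the stalk of $(f\times 1)_*\M$ there is $\Gamma(f^{-1}(U_{f(x)})\times U_{s'},\M)=\Gamma(f^{-1}(U_{f(x)}),\Nc_{s'})=(f_*\Nc_{s'})_{f(x)}$, and via the base-change identity $\OO_{(x,s')}=\OO_{(f(x),s')}\otimes_{\OO_{f(x)}}\OO_x$ I would identify the stalk map
\[ (f_*\Nc_{s'})_{f(x)}\otimes_{\OO_{(f(x),s')}}\OO_{(x,s')}\longrightarrow\M_{(x,s')} \]
with the stalk at $x$ of $f^*f_*\Nc_{s'}\to\Nc_{s'}$; the latter is surjective since $f$ is quasi-affine and $\Nc_{s'}$ is quasi-coherent on $X$, so we are done.

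The only part that requires care is bookkeeping: one must check that the $\OO_{f(x)}$-module structure on $(f_*\Nc_{s'})_{f(x)}$ coming from $\Nc_{s'}$ agrees with the restriction of its $\OO_{(f(x),s')}$-module structure coming from $(f\times 1)_*\M$ — this is where the commutativity $q\circ(f\times 1)=f\circ p$ of the projections $p\colon X\times S\to X$ and $q\colon Y\times S\to Y$ is used — so that the two natural maps down to $\M_{(x,s')}$ coincide under the tensor identification. One should also be mildly careful to invoke Theorem \ref{qc-of-proj} with the correct factor as base. Beyond that I do not expect a deeper obstacle.
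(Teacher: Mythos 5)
Your proposal is correct and follows essentially the same route as the paper: both reduce along the slices $X\times U_{s'}\to X$ via the auxiliary module $\Nc_{s'}=(p_{s'})_*(\M_{\vert X\times U_{s'}})$, which is quasi-coherent by Theorem \ref{qc-of-proj} and has no higher direct images since $U_{s'}$ has a minimum, and then invoke the Serre-affine (resp.\ quasi-affine) hypothesis on $f$ stalkwise, with quasi-coherence of $(f\times 1)_*$ handled by Proposition \ref{preservacion-cuasi}. Your extra remark on matching the $\OO_{(y,s')}$- and $\OO_y$-module structures under the identification $\OO_{(x,s')}=\OO_{(y,s')}\otimes_{\OO_y}\OO_x$ is exactly the bookkeeping the paper leaves implicit.
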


\begin{proof} Assume $f$ is Serre-affine. Then $f\times 1$ preserves quasi-coherence by Proposition \ref{preservacion-cuasi}.
 Let $\M$ be a quasi-coherent module on $X\times S$. We have to prove that  $R^i(f\times 1)_*\M=0$ for any $i>0$. For each $(y,s)\in Y\times S$, one has:
\[ [R^i(f\times 1)_*\M]_{(y,s)}=H^i(f^{-1}(U_y)\times U_s,\M)=H^i(f^{-1}(U_y), \pi_*(\M_{\vert X\times U_s}))\] with $\pi\colon X\times U_s\to X$, because $R^i\pi_*=0$. Now, $$H^i(f^{-1}(U_y), \pi_*(\M_{\vert X\times U_s}))= [R^i f_*(\pi_*(\M_{\vert X\times U_s}))]_y=0$$ because $\pi_*(\M_{\vert X\times U_s})$ is quasi-coherent and $f$ is Serre-affine.

Assume now that $f$ is quasi-affine We have to prove that
$ (f\times 1)^*(f\times 1)_*\M\to \M$ is surjective, i.e., for each point $(x,s)\in X\times S$ the map (denote  $y=f(x)$)
\[ [(f\times 1)_*\M]_{(y,s)}\otimes_{\OO_{(y,s)}}\OO_{(x,s)}\to\M_{(x,s)}\] is surjective.  This is equivalent to prove that  the map
\[ \Gamma(f^{-1}(U_y)\times U_s,\M)\otimes_{\OO_y}\OO_x\to \Gamma(U_x\times U_s,\M)\] is surjective. If we denote  $\pi\colon X\times U_s\to X$ and $\Nc=\pi_*(\M_{\vert X\times U_s})$, the latter morphism is
\[ (f_*\Nc)_y\otimes_{\OO_y}\OO_x\to \Nc_x,\] which is surjective because  $\Nc$ is quasi-coherent and $f$ is quasi-affine.
\end{proof}

\begin{cor}\label{product-affine} If $X$ and $Y$ are two   affine (resp. quasi-affine, Serre-affine) spaces, then   $X\times Y$ is affine (resp. quasi-affine, Serre-affine). Moreover, if $A=\OO_X(X)$ and $B=\OO_Y(Y)$ are flat $k$-algebras, and $X$, $Y$ are affine, then $\Gamma(X\times_kY,\OO_{X\times_kY})=A\otimes_kB$.
\end{cor}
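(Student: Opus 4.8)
The plan is to reduce all three assertions about $X\times Y$ to statements about the structure morphisms, via Propositions \ref{preservacion-cuasi} and \ref{fx1}, and then to compute $\Gamma(X\times_kY,\OO)$ by hand. Write $g\colon X\to(*,k)$ and $g'\colon Y\to(*,k)$ for the structure morphisms. Identifying $(*,k)\times_kY$ with $Y$, the projection $\pi_Y\colon X\times Y\to Y$ is exactly $g\times 1$, and the structure morphism of $X\times Y$ is $g'\circ\pi_Y$. By Remark \ref{relative=absolute}, $g$ (resp.\ $g'$) is quasi-affine / Serre-affine if and only if $X$ (resp.\ $Y$) is; by Proposition \ref{fx1} the same property passes from $g$ to $\pi_Y=g\times 1$; and both properties are stable under composition. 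Hence, if $X$ and $Y$ are quasi-affine (resp.\ Serre-affine), so is $g'\circ\pi_Y$, i.e.\ $X\times Y$. For the affine case I would invoke Theorem \ref{AffineFinSp}: $X$ and $Y$ affine $\Rightarrow$ both quasi-affine and Serre-affine $\Rightarrow$ $X\times Y$ quasi-affine and Serre-affine $\Rightarrow$ $X\times Y$ affine.

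For the identity $\Gamma(X\times_kY,\OO)=A\otimes_kB$ (assuming now $X,Y$ affine), I would use the other projection $\pi_X\colon X\times Y\to X$, which preserves quasi-coherence by Proposition \ref{preservacion-cuasi}; thus $\pi_{X*}\OO_{X\times Y}$ is a quasi-coherent $\OO_X$-module, and the claim is that it is $\widetilde{A\otimes_kB}$ (the quasi-coherent module attached to the $A$-module $A\otimes_kB$). Its stalk at $x$ is $\Gamma(U_x\times Y,\OO)=\Gamma(Y,q_*\OO_{U_x\times Y})$ with $q\colon U_x\times Y\to Y$ the projection. Since $U_x\times U_y$ has minimum $(x,y)$, one has $(q_*\OO_{U_x\times Y})_y=\Gamma(U_x\times U_y,\OO)=\OO_x\otimes_k\OO_y$, so $q_*\OO_{U_x\times Y}$ is the base change $\OO_x\otimes_k\OO_Y$ of $\OO_Y$ along $k\to\OO_x$. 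Now $A\to\OO_x$ is flat (Proposition \ref{derivedcat-affine}(1)) and $A$ is $k$-flat, hence $\OO_x$ is $k$-flat, so $\Gamma(Y,\OO_x\otimes_k\OO_Y)=\OO_x\otimes_k\Gamma(Y,\OO_Y)=\OO_x\otimes_kB=\OO_x\otimes_A(A\otimes_kB)$, with the evident restriction morphisms. Therefore $\pi_{X*}\OO_{X\times Y}=\widetilde{A\otimes_kB}$, and since $X$ is affine, $\Gamma(X\times Y,\OO)=\Gamma(X,\widetilde{A\otimes_kB})=A\otimes_kB$; a final check shows this identification is the map induced by $\pi_X^\#$ and $\pi_Y^\#$.

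The only step that is not routine bookkeeping is the flat base change $\Gamma(Y,\OO_x\otimes_k\OO_Y)=\OO_x\otimes_k\Gamma(Y,\OO_Y)$: this is where the hypothesis that $A$ is a flat $k$-algebra is used, via the $k$-flatness of $\OO_x$, which lets the exact functor $\OO_x\otimes_k-$ be moved past the finite-limit (kernel of a map between finite products) description of $\Gamma(Y,-)$. I expect this, together with verifying that the resulting isomorphism is the natural one, to be the only points requiring genuine care; everything else is an unwinding of the fibered-product construction and of the definitions of $\pi_*$ and $M\mapsto\widetilde M$.
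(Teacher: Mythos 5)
Your proof is correct and follows essentially the same route as the paper: the Serre-affine and quasi-affine cases via Proposition \ref{fx1}, Remark \ref{relative=absolute} and stability under composition (the paper factors through $\pi_X$ rather than $\pi_Y$, which is immaterial), the affine case via Theorem \ref{AffineFinSp}, and the computation of global sections by identifying $\pi_{X*}\OO_{X\times_kY}$ stalkwise with $\OO_x\otimes_kB$ using the $k$-flatness of $\OO_x$ coming from Proposition \ref{derivedcat-affine}. The only cosmetic difference is that you package the result as $\widetilde{A\otimes_kB}$ and invoke affineness of $X$, whereas the paper writes the same sheaf as $\OO_X\otimes_kB$ and reads off its global sections directly.
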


\begin{proof} Assume that $X$ and $Y$ are Serre-affine. The projection $\pi\colon X\times Y\to X$ is Serre-affine by the preceding theorem and the Serre-affineness of  $Y$. Then, the composition $X\times Y\to X\to (*,A)$ is Serre-affine, i.e., $X\times Y$ is Serre-affine (Remark \ref{relative=absolute}). The same argument proves that $X\times Y$ is quasi-affine if $X$ and $Y$ are so.

If $X$ and $Y$ are affine, then they are Serre-affine and quasi-affine. Hence $X\times Y$ is Serre-affine and quasi-affine, so it is  affine. Assume also that $A=\OO_X(X)$ and $B=\OO_Y(Y)$ are flat $k$-algebras, and let us prove that $\Gamma(X\times_kY,\OO_{X\times_kY})=A\otimes_kB$. Let $\pi\colon X\times_k Y\to X$ be the natural projection. If suffices to see that the natural map $\OO_X\otimes_k B\to {\pi}_*\OO_{X\times_kY}$ is an isomorphism, where $\OO_X\otimes_k B$ is the sheaf on $X$ defined by $(\OO_X\otimes_k B)(U)=\OO_X(U)\otimes_k B$ (which is a sheaf because $k\to\OO_X(U)$ is flat). The question is local on $X$, hence we may assume that $X=U_x$ (notice that $\OO_x$ is a flat $k$-algebra, by Proposition \ref{derivedcat-affine}), and we have to prove that $\Gamma(U_x\times_kY,\OO_{U_x\times_kY})=\OO_x\otimes_k B$. If $\pi'\colon U_x\times_k Y\to Y$ is the natural projection, it suffices to see that the natural morphism $\OO_x\otimes_k \OO_Y\to \pi'_*\OO_{U_x\times_kY}$ is an isomorphism, which is immediate by taking the stalk at any $y\in Y$.
\end{proof}

\begin{cor} If $f\colon X\to Y$ is affine, then $f\times 1\colon X\times S\to Y\times S$ is affine,  for any finite space  $S$.
\end{cor}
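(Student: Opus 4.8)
The plan is to verify directly the two conditions in the definition of an affine morphism for $f\times 1\colon X\times S\to Y\times S$: that it preserves quasi-coherence, and that $(f\times 1)^{-1}(U_z)$ is affine for every $z\in Y\times S$.

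For the first condition, I would use that $f$ affine implies, by Proposition \ref{Serre-Quasi-Afin}, that $f$ is Serre-affine; hence $R^if$ preserves quasi-coherence for every $i\geq 0$ (for $i=0$ this is part of the definition of a Serre-affine morphism, and for $i>0$ one has $R^if_*\M=0$ on quasi-coherent $\M$, and $0$ is quasi-coherent). Proposition \ref{preservacion-cuasi} then yields that $R^i(f\times 1)$ preserves quasi-coherence for every $i$, in particular that $(f\times 1)_*$ does. Equivalently, one may invoke Proposition \ref{fx1} directly: $f$ Serre-affine forces $f\times 1$ Serre-affine, and a Serre-affine morphism preserves quasi-coherence by definition.

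For the second condition, fix $z=(y,s)\in Y\times S$. Recall from the discussion of fibered products that $U_{(x',y')}=U_{x'}\times_{U_{s'}}U_{y'}$; in the present situation the base is a point, so $U_z=U_y\times U_s$ and therefore $(f\times 1)^{-1}(U_z)=f^{-1}(U_y)\times U_s$. Now $f^{-1}(U_y)$ is affine because $f$ is an affine morphism, and $U_s$ is affine because it has a minimum, namely $s$ (every finite ringed space with a minimum is affine, by Corollary \ref{corqc} and Proposition \ref{aciclicity}). By Corollary \ref{product-affine} the product of two affine finite spaces is affine, so $f^{-1}(U_y)\times U_s$ is affine, as required. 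Combining the two parts, $f\times 1$ preserves quasi-coherence and has affine preimages of all the minimal open subsets of $Y\times S$, hence is affine.

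I do not expect a genuine obstacle in this argument: everything reduces to direct appeals to the propositions already proved. The only points that need care are the identification $(f\times 1)^{-1}(U_{(y,s)})=f^{-1}(U_y)\times U_s$ (which is immediate from $U_{(y,s)}=U_y\times U_s$) and the observation that $U_s$, having a minimum, is itself an affine finite space.
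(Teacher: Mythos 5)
Your proposal is correct and follows essentially the same route as the paper: quasi-coherence is preserved via Proposition \ref{preservacion-cuasi} (the paper applies it with $i=0$ directly from the definition of an affine morphism, whereas you pass through Serre-affineness, a harmless detour), and the preimage of $U_{(y,s)}$ is identified with $f^{-1}(U_y)\times U_s$, which is affine by Corollary \ref{product-affine}. The extra details you supply (that $U_s$ is affine because it has a minimum) are exactly what the paper leaves implicit.
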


\begin{proof} $f\times 1$ preserves quasi-coherence by Proposition \ref{preservacion-cuasi}. Moreover, $(f\times 1)^{-1}(U_y\times U_s)= f^{-1}(U_y)\times U_s$ is a product of affine spaces, hence it is affine.
\end{proof}

\section{Schematic finite spaces}

Let $X$ be a finite space, $\delta\colon X\to X\times_k X$ the diagonal morphism (we shall  see now that $k$ is irrelevant). 

\begin{defn} We say that a finite space $X$ is  {\it schematic} if  $R^i\delta_*\OO$ is quasi-coherent for any $i$ (for the sake of brevity, we shall say that $\RR\delta_*\OO$ is quasi-coherent).

For any $p,q\in X$, let us denote $U_{pq}=U_p\cap U_q$, and $\OO_{pq}=\OO (U_{pq})$. Notice that $\OO_{pq}=\OO_{qp}$ and

\begin{equation} \label{delta-qc} (\delta_*\OO)_{(p,q)}=\OO_{pq}\end{equation}

One has natural morphisms $\OO_p\to \OO_{pq}$ and $\OO_q\to \OO_{pq}$. From the equality $(R^i\delta_*\OO)_{(p,q)}=H^i(U_{pq},\OO)$, it follows that being  schematic   means that for any $p,q\in X$,  any $p'\geq p$  and any $i\geq 0$, the natural morphism
\[ H^i(U_{pq},\OO)\otimes_{\OO_p}\OO_{p'}\to H^i(U_{p'q},\OO) \] is an isomorphism. In particular, $k$ is irrelevant.

\end{defn}

\begin{ejems}
\begin{enumerate} \item If $X$ is the finite space associated to a quasi-compact and quasi-separated scheme $S$ and a locally affine covering $\U$, then $X$ is schematic. It is a consequence of the following fact: if $U$ is an affine scheme and $V\subset U$ is a quasi-compact open subset, then for any affine open subset $U'\subset U$, the natural morphism
\[ H^i(V,\OO_S )\otimes_{\OO_S (U)}\OO_S (U')\to H^i(V\cap U',\OO_S)\] is an isomorphism.

%Indeed, taking the covering $\U\times \U$ on $S\times S$, the associated finite space is $X\times X$ and one has a commutative diagram
%\[ \xymatrix{ S\ar[r]^\delta \ar[d]  &  S\times S \ar[d]\\ X \ar[r]^\delta & X\times X
%}
%\]

\item A finite topological space $X$ (i.e. $\OO=\ZZ$) is schematic if and only if each connected component is irreducible.
\end{enumerate}
\end{ejems}

We shall first study the implications of a weaker  condition,  the quasi-coherence of $\delta$.

\subsection{Quasi-coherent finite spaces}

\begin{defn} A morphism $f\colon X\to Y$ is called {\it quasi-coherent} if $f_*\OO_X$ is quasi-coherent. A finite space $X$ is called {\it quasi-coherent} if $\delta\colon X\to X\times X$ is quasi-coherent.
\end {defn}

A morphism $f\colon X\to Y$ is quasi-coherent if and only if for any open subset $V$ of $Y$ the morphism $f^{-1}(V)\to V$ is quasi-coherent.

\begin{prop} A module $\M$ on $X\times X$ is quasi-coherent if and only if for any $p\leq p'$, $q\leq q'$, the natural morphisms
\[ \M_{(p,q)}\otimes_{\OO_p} {\OO_{p'}}\to \M_{(p',q)} \quad \text{and}\quad \M_{(p,q)}\otimes_{\OO_q} {\OO_{q'}}\to \M_{(p,q')}\] are isomorphisms.
\end{prop}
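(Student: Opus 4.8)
The statement is a two-variable analogue of Theorem~\ref{qc}, so the plan is to reduce to that theorem by restricting to the ``slices'' $U_p \times X$ and $X \times U_q$ and applying the one-variable criterion in each factor separately. Concretely: for a point $(p,q) \in X \times X$, its minimal open neighborhood is $U_{(p,q)} = U_p \times U_q$; a module $\M$ on $X \times X$ is quasi-coherent if and only if for every $(p,q) \le (p',q')$ the natural map $\M_{(p,q)} \otimes_{\OO_{(p,q)}} \OO_{(p',q')} \to \M_{(p',q')}$ is an isomorphism, where $\OO_{(p,q)} = \OO_p \otimes_k \OO_q$. The first observation is that any relation $(p,q) \le (p',q')$ factors as $(p,q) \le (p',q) \le (p',q')$, so it suffices to check the isomorphism condition for the two ``elementary'' types of inequalities: those increasing only the first coordinate and those increasing only the second. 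This gives the two displayed morphisms in the statement, once one checks that $\OO_{(p,q)} \otimes_{\OO_{(p,q)}} \OO_{(p',q)} = \OO_{(p,q)} \otimes_{\OO_p} \OO_{p'}$ (tensoring $\OO_p \otimes_k \OO_q$ up along $\OO_p \to \OO_{p'}$ only changes the first factor), and similarly for the second coordinate.

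For the forward implication, I would simply note that if $\M$ is quasi-coherent then the general criterion above applies to every comparable pair, in particular to the pairs $(p,q) \le (p',q)$ and $(p,q) \le (p,q')$, which are exactly the two morphisms asserted to be isomorphisms. For the converse, suppose the two displayed conditions hold. Given an arbitrary $(p,q) \le (p',q')$, factor it through $(p',q)$ and compose: the map $\M_{(p,q)} \otimes_{\OO_{(p,q)}} \OO_{(p',q')} \to \M_{(p',q')}$ is the composite of $\M_{(p,q)} \otimes \OO_{(p',q)} \to \M_{(p',q)}$ (tensored further up to $\OO_{(p',q')}$, which preserves isomorphism after applying $\otimes_{\OO_{(p',q)}} \OO_{(p',q')}$) with $\M_{(p',q)} \otimes \OO_{(p',q')} \to \M_{(p',q')}$. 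Both are isomorphisms by hypothesis, so the composite is too, and then the general criterion (the two-variable version of Theorem~\ref{qc}) shows $\M$ is quasi-coherent. Alternatively, and perhaps more cleanly, one restricts $\M$ to $U_p \times X$: the first condition says this restriction, viewed as a sheaf of modules over the finite space $U_p \times X$ in the first variable, is ``quasi-coherent in the $X$-direction'', and the second condition handles the other direction; combining via the slice argument recovers full quasi-coherence.

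The main point to get right is the bookkeeping with the base ring: one must verify that the relative tensor products $\M_{(p,q)} \otimes_{\OO_p} \OO_{p'}$ appearing in the statement genuinely coincide with the intrinsic expression $\M_{(p,q)} \otimes_{\OO_{(p,q)}} \OO_{(p',q)}$ from the general quasi-coherence criterion, which is where the product structure $\OO_{(p,q)} = \OO_p \otimes_k \OO_q$ is used. This is a routine associativity/base-change identity for tensor products, so I expect no real obstacle; the only mild subtlety is making sure the factorization of an arbitrary inequality into two elementary ones is compatible with these identifications, i.e. that composing the two elementary isomorphisms actually yields the general one (rather than something that only agrees up to a canonical identification one still has to name). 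Everything else is a direct transcription of the proof of Theorem~\ref{qc} into two variables.
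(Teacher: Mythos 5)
Your proposal is correct and is essentially the paper's own argument: the paper likewise factors $(p,q)\leq (p',q')$ through $(p',q)$, writes $\M_{(p,q)}\otimes_{\OO_{(p,q)}}\OO_{(p',q')}=\M_{(p,q)}\otimes_{\OO_{(p,q)}}\OO_{(p',q)}\otimes_{\OO_{(p',q)}}\OO_{(p',q')}$, identifies the elementary tensor products with the displayed ones using $\OO_{(p,q)}=\OO_p\otimes\OO_q$, and concludes by Theorem \ref{qc}. No differences worth noting.
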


\begin{proof} For any module $\M$ on $X\times X$ and any $(p,q)\leq (p',q')$ one has:
\[ \M_{(p,q)}\otimes_{\OO_{(p,q)}}\OO_{(p',q')} =  \M_{(p,q)}\otimes_{\OO_{(p,q)}}\OO_{(p',q)}\otimes_{\OO_{(p',q)}}\OO_{(p',q')}\] and $\M_{(p,q)}\otimes_{\OO_{(p,q)}}\OO_{(p',q)}= \M_{(p,q)}\otimes_{\OO_p} {\OO_{p'}}$, $\M_{(p,q)}\otimes_{\OO_{(p,q)}}\OO_{(p,q')}= \M_{(p,q)}\otimes_{\OO_q} {\OO_{q'}}$, because $\OO_{(p,q)}=\OO_p\otimes\OO_q$. One concludes by  Theorem \ref{qc}.
\end{proof}

From this Proposition and  formula (\ref{delta-qc}), one obtains:

\begin{prop}\label{fqc1} $X$ is  quasi-coherent if and only if for any $p,q\in X$ and any $p'\geq p$ the natural morphism
\[ \OO_{pq}\otimes_{\OO_p}\OO_{p'}\to \OO_{p'q}\] is an isomorphism.
\end{prop}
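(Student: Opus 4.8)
The plan is to deduce this directly from the preceding proposition together with formula (\ref{delta-qc}). By definition, $X$ is quasi-coherent precisely when $\delta_*\OO$ is a quasi-coherent $\OO_{X\times X}$-module, so I would simply apply the characterization of quasi-coherent modules on $X\times X$ to the module $\M=\delta_*\OO$.

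First I would record what that characterization says in this case. Using $(\delta_*\OO)_{(p,q)}=\OO_{pq}$ from (\ref{delta-qc}), the module $\delta_*\OO$ is quasi-coherent if and only if, for all $p\leq p'$ and all $q\leq q'$, both natural morphisms
\[ \OO_{pq}\otimes_{\OO_p}\OO_{p'}\to \OO_{p'q}\qquad\text{and}\qquad \OO_{pq}\otimes_{\OO_q}\OO_{q'}\to \OO_{pq'}\]
are isomorphisms. At this point one has to check, by unwinding the definitions, that under the identification (\ref{delta-qc}) the restriction morphism of $\delta_*\OO$ from $(p,q)$ to $(p',q)$ is the restriction $\OO(U_{pq})\to\OO(U_{p'q})$ attached to the inclusion $U_{p'}\cap U_q\subseteq U_p\cap U_q$, so that the displayed maps are indeed the ``natural'' ones appearing in the statement; this is routine.

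Next I would observe that, since the choice $q'=q$ is allowed, the first family of conditions is equivalent to: for all $p\leq p'$ and all $q\in X$, the morphism $\OO_{pq}\otimes_{\OO_p}\OO_{p'}\to\OO_{p'q}$ is an isomorphism, which is exactly the condition in the statement. It then remains to see that the second family imposes nothing new, and this follows from the symmetry $\OO_{pq}=\OO_{qp}$: relabelling $p\leftrightarrow q$ and $p'\leftrightarrow q'$ turns the second morphism $\OO_{pq}\otimes_{\OO_q}\OO_{q'}\to\OO_{pq'}$ into $\OO_{qp}\otimes_{\OO_p}\OO_{p'}\to\OO_{qp'}$, i.e.\ into a member of the first family. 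Hence the two conditions coincide, and $\delta_*\OO$ is quasi-coherent if and only if $\OO_{pq}\otimes_{\OO_p}\OO_{p'}\to\OO_{p'q}$ is an isomorphism for all $p,q\in X$ and all $p'\geq p$.

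There is no genuinely hard step here; the only point requiring care is the bookkeeping that identifies the abstract ``natural morphisms'' of the preceding proposition, after applying (\ref{delta-qc}), with the concrete restriction-and-base-change maps $\OO_{pq}\otimes_{\OO_p}\OO_{p'}\to\OO_{p'q}$, together with the observation that the symmetry of $\OO_{pq}$ in $p$ and $q$ makes the ``$q$-direction'' half of the condition redundant.
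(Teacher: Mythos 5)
Your argument is correct and is exactly the paper's proof: the paper derives Proposition \ref{fqc1} in one line from the characterization of quasi-coherent modules on $X\times X$ together with the identification $(\delta_*\OO)_{(p,q)}=\OO_{pq}$. Your additional remark that the symmetry $\OO_{pq}=\OO_{qp}$ makes the second family of conditions redundant is the (implicit) bookkeeping the paper leaves to the reader, and you have carried it out correctly.
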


\begin{prop}\label{finite-qc-char1} The following conditions are equivalent (all maps considered are the natural inclusions):
\begin{enumerate}
\item $X$ is quasi-coherent.
\item For any $q\in X$, $U_q\hookrightarrow X$ is quasi-coherent.
\item For any open subset $U$ of $X$, $U \hookrightarrow X$ is quasi-coherent.
\item For any open subsets $U\subseteq V$ of $ X$, $U \hookrightarrow V$ is quasi-coherent.
\item For any  $p\leq q$, $U_q \hookrightarrow U_p$ is quasi-coherent.
\end{enumerate}
\end{prop}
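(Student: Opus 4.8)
The plan is to prove the chain of implications $(3)\Rightarrow(4)\Rightarrow(5)\Rightarrow(1)\Rightarrow(2)\Rightarrow(3)$, or rather to notice that the natural cycle is $(3)\Rightarrow(2)\Rightarrow(1)$, $(1)\Rightarrow(5)$, and then $(1)\Rightarrow(3)$ via $(4)$, with $(4)\Rightarrow(5)$ and $(4)\Rightarrow(3)$ essentially formal. The key observation throughout is that quasi-coherence of an inclusion $U\hookrightarrow V$ is, by Theorem \ref{qc} applied to the pushforward sheaf on $V$, a pointwise condition indexed by pairs of points, and that for a point $q$ of a finite space the minimal open $U_q$ has a minimum, so that $U_q\hookrightarrow V$ is quasi-coherent iff for every $p\in V$ and every $p'\geq p$ in $V$ the morphism $\OO_X(U_q\cap U_p)\otimes_{\OO_p}\OO_{p'}\to\OO_X(U_q\cap U_{p'})$ is an isomorphism. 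Comparing this with Proposition \ref{fqc1} shows $(5)$ (together with $(1)$) is just a reindexing of the defining condition.

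First I would record the equivalence $(1)\Leftrightarrow(2)$: a sheaf on $X\times X$ is quasi-coherent iff all its stalk-restriction maps are isomorphisms, and by the proof of the preceding proposition this splits into the two families of conditions ``$\OO_{pq}\otimes_{\OO_p}\OO_{p'}\to\OO_{p'q}$ is an isomorphism for $p\leq p'$'' and its mirror in the second variable; but $(\delta_*\OO)_{(p,q)}=\OO_{pq}$ and $(U_q\hookrightarrow X)_*\OO$ has stalk at $p$ equal to $\OO_X(U_p\cap U_q)=\OO_{pq}$, so the quasi-coherence of $U_q\hookrightarrow X$ for all $q$ is literally the same list of isomorphisms as in Proposition \ref{fqc1}. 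Then $(3)\Rightarrow(2)$ is trivial (the $U_q$ are open subsets), and $(2)\Rightarrow(1)$ is Proposition \ref{fqc1}. For $(1)\Rightarrow(3)$: given an arbitrary open $U$ and the inclusion $j\colon U\hookrightarrow X$, I would compute $(j_*\OO)_p=\OO_X(U\cap U_p)$ and need $\OO_X(U\cap U_p)\otimes_{\OO_p}\OO_{p'}\to\OO_X(U\cap U_{p'})$ to be an isomorphism for $p\leq p'$; writing $U\cap U_p=\bigcup_{q\in U,\,q\leq p}U_q$ — wait, more precisely $U\cap U_p$ is the union of the $U_q$ over $q\in U$ with $q\in U_p$, i.e.\ $p\leq q$ — one reduces to the case $U=U_q$ by a Mayer–Vietoris / limit argument over this finite cover, using flatness of $\OO_p\to\OO_{p'}$ to commute the tensor product past the finite limit computing sections over a union, and invoking $(5)$ (equivalently $(1)$) on each piece $U_q\cap U_p$.

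Next, $(3)\Rightarrow(4)$: if $U\subseteq V$ are open in $X$, then for $p\in V$ the stalk of $(U\hookrightarrow V)_*\OO$ at $p$ is $\OO_X(U\cap U_p)$, the same as for $U\hookrightarrow X$, and the condition to be checked only involves pairs $p\leq p'$ with $p,p'\in V$, a subset of the pairs already handled, so $(4)$ follows from $(3)$; conversely $(4)\Rightarrow(3)$ by taking $V=X$. Similarly $(4)\Rightarrow(5)$ by specializing $V=U_p$, $U=U_q$ with $q\geq p$, and $(5)\Rightarrow(1)$ because the isomorphisms asserted in $(5)$, ranging over all $p\leq q$ and all points of $U_p$, again exhaust the list in Proposition \ref{fqc1} (every pair $(p,q)$ with a specified $p'\geq p$ that appears there can be read off inside $U_p\cap U_q\subseteq U_p$ with the roles suitably assigned). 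So the logical skeleton is short; the one genuinely non-formal step is $(1)\Rightarrow(3)$, reducing an arbitrary open subset to the basic opens $U_q$, and the main obstacle there is justifying that $\OO_X(U\cap U_p)\otimes_{\OO_p}\OO_{p'}\to\OO_X(U\cap U_{p'})$ is an isomorphism — this is where one must use that tensoring by the flat $\OO_p$-module $\OO_{p'}$ commutes with the finite inverse limit expressing sections of $\OO$ over the union $U\cap U_p$ of the basic opens $U_q$ ($p\leq q\in U$), together with the hypothesis handling each intersection $U_{q}\cap U_{q'}$; once set up, it is a routine diagram chase, but it is the step that requires the finite space structure and flatness in an essential way.
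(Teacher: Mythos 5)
Your cycle $(1)\Leftrightarrow(2)$, $(2)\Rightarrow(3)\Rightarrow(4)\Rightarrow(5)$ matches the paper: the identification of the stalks of $(U_q\hookrightarrow X)_*\OO$ with $\OO_{pq}$ reduces $(2)$ to Proposition \ref{fqc1}, and your Mayer--Vietoris reduction of an arbitrary open $U$ to its cover by minimal opens is exactly the paper's exact sequence $0\to\OO_U\to\oplus\OO_{U_i}\to\oplus\OO_{U_{ijk}}$ (note that the paper refines each $U_i\cap U_j$ by minimal opens $U_{ijk}$ precisely because the hypothesis only controls pushforwards from \emph{minimal} opens, and then quotes the fact that kernels of morphisms of quasi-coherent modules on a finite space are quasi-coherent; your ``commute the flat tensor product past the finite limit'' is the stalkwise version of the same thing, but as written it invokes the hypothesis on the raw intersections $U_{q}\cap U_{q'}$, which are not minimal opens, so you need that extra refinement).

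The genuine gap is in $(5)\Rightarrow(1)$, which you present as a reindexing of the list in Proposition \ref{fqc1}. It is not. Condition $(5)$ only yields isomorphisms $\OO(U_q\cap U_t)\otimes_{\OO_t}\OO_{t'}\to\OO(U_q\cap U_{t'})$ for points $q,t,t'$ all lying in a common $U_r$ with $r\leq q$ and $r\leq t\leq t'$, i.e.\ when $p$ and $q$ admit a common lower bound; Proposition \ref{fqc1} demands $\OO_{pq}\otimes_{\OO_p}\OO_{p'}\to\OO_{p'q}$ for \emph{arbitrary} pairs $p,q$. Take two minimal points $a,b$ both lying under two maximal points $c,d$: then $U_a\cap U_b=U_c\cup U_d$ is nonempty but not a minimal open, $a$ and $b$ have no common lower bound, and the required isomorphism $\OO(U_c\cup U_d)\otimes_{\OO_a}\OO_c\to\OO_c$ is not an instance of $(5)$ under any assignment of roles. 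The implication is of course true, but the paper proves it by first observing that $(5)$ says each $U_p$ satisfies condition $(2)$ \emph{as a finite space in its own right}, hence (by the already established $(2)\Rightarrow(1)\Rightarrow(3)$ applied to $U_p$) every open subset of $U_p$, in particular $U_p\cap U_q$ for arbitrary $q\in X$, includes quasi-coherently into $U_p$; since quasi-coherence of $(U_q\hookrightarrow X)_*\OO$ is local on $X$ and restricts on $U_p$ to $(U_p\cap U_q\hookrightarrow U_p)_*\OO$, this gives $(2)$. So closing the loop at $(5)$ genuinely requires rerunning the non-formal step $(2)\Rightarrow(3)$ locally on each $U_p$; you should replace the ``reindexing'' claim by that argument.
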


\begin{proof} 1. $\Leftrightarrow$ 2. Let us denote $i\colon U_q \hookrightarrow X$ the inclusion. For any $p\in X$, one has $(i_*\OO_{\vert U_q})_p=\OO_{pq}$. One concludes by Proposition \ref{fqc1}.

2. $\Rightarrow$ 3. For any open subset $V\subseteq X$, let us denote $\OO_V=i_*\OO_{\vert V}$, with $i$ the inclusion of $V$ in $X$.
Let $U_1,\dots, U_n$ be a covering of $U$ by minimal open subsets and $\{U_{ijk}\}_k$ a covering of $U_i\cap U_j$ by minimal open subsets. One has an exact sequence
\[ 0\to \OO_U\to \oplus \OO_{U_i}\to \oplus \OO_{U_{ijk}}\] so $\OO_U$ is the kernel of a morphism between quasi-coherent modules, hence it is quasi-coherent.

3. $\Rightarrow$ 4. $\Rightarrow$ 5. are obvious. Finally, let us see that 5. $\Rightarrow$ 2. If $U_q \hookrightarrow U_p$ is quasi-coherent for any $q\geq p$, then $U_p$ is quasi-coherent for any $p$ ($2.\Rightarrow 1.$). Then, for any $p,q\in X$, $U_p\cap U_q\hookrightarrow U_p$ is quasi-coherent (since $U_p$ satisfies 3.). This says that   $U_q\hookrightarrow X$ is quasi-coherent.
\end{proof}

\begin{rem} It follows from this theorem that being quasi-coherent is a local question:    $X$ is quasi-coherent if and only if every minimal open subset  $U_p$ is quasi-coherent.
\end{rem}

\begin{prop}\label{finite-qc-char2} The following conditions are equivalent:
\begin{enumerate}
\item $X$ is quasi-coherent.
\item For any open subset $U$ of $ X$ and any $p\leq p'$, the natural morphism \[\OO(U\cap U_p)\otimes_{\OO_p}\OO_{p'} \to \OO(U\cap U_{p'})\] is an isomorphism.
\item  For any $q_1\geq p \leq q_2$, the natural map $\OO_{q_1}\otimes_{\OO_p}\OO_{q_2}\to \OO_{q_1q_2}$ is an isomorphism.
\end{enumerate}
\end{prop}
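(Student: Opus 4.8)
The plan is to prove the cycle $(1)\Rightarrow(2)\Rightarrow(3)\Rightarrow(1)$, leaning on the already established Propositions \ref{qc}, \ref{fqc1} and \ref{finite-qc-char1}. The first two implications are essentially bookkeeping; the real work is concentrated in $(3)\Rightarrow(1)$, where a \v{C}ech-type argument together with the flatness of the restriction maps will be needed.

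For $(1)\Leftrightarrow(2)$: given an open subset $j\colon U\hookrightarrow X$, write $\OO_U=j_*\OO_{\vert U}$, a sheaf on $X$ with stalk $(\OO_U)_p=\Gamma(U_p,j_*\OO_{\vert U})=\Gamma(U\cap U_p,\OO)=\OO(U\cap U_p)$. By Theorem \ref{qc}, $\OO_U$ is quasi-coherent exactly when $(\OO_U)_p\otimes_{\OO_p}\OO_{p'}\to(\OO_U)_{p'}$ is an isomorphism for every $p\leq p'$, i.e.\ exactly when condition $(2)$ holds for that $U$. So $(2)$ is precisely the assertion that every open inclusion $U\hookrightarrow X$ is quasi-coherent, which by Proposition \ref{finite-qc-char1} (equivalence of its items $(1)$ and $(3)$) is equivalent to $X$ being quasi-coherent. (One can also see $(2)\Rightarrow(1)$ at once by taking $U=U_q$, which recovers the criterion of Proposition \ref{fqc1}.)

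For $(2)\Rightarrow(3)$: given $q_1\geq p\leq q_2$, apply $(2)$ with $U=U_{q_1}$ and the relation $p\leq q_2$. Since $U_{q_1}\subseteq U_p$ we get $U_{q_1}\cap U_p=U_{q_1}$, so $\OO(U\cap U_p)=\OO_{q_1}$; and $U_{q_1}\cap U_{q_2}=U_{q_1q_2}$, so $\OO(U\cap U_{q_2})=\OO_{q_1q_2}$. Thus $(2)$ says that the natural map $\OO_{q_1}\otimes_{\OO_p}\OO_{q_2}\to\OO_{q_1q_2}$ is an isomorphism.

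For $(3)\Rightarrow(1)$: by Proposition \ref{fqc1} it suffices to show $\OO_{pq}\otimes_{\OO_p}\OO_{p'}\to\OO_{p'q}$ is an isomorphism for all $p,q$ and all $p'\geq p$. As $X$ is finite, cover the open set $U_p\cap U_q$ by finitely many minimal opens $U_{r_1},\dots,U_{r_n}$ (each $r_i\geq p$ and $r_i\geq q$), and cover each $U_{r_i}\cap U_{r_j}$ by finitely many minimal opens $U_{r_{ijk}}$ (each $r_{ijk}\geq p$ and $r_{ijk}\geq q$). The sheaf axiom for $\OO$, together with the injectivity of $\OO(U_{r_i}\cap U_{r_j})\hookrightarrow\prod_k\OO(U_{r_{ijk}})$, gives an exact sequence $0\to\OO_{pq}\to\prod_i\OO_{r_i}\to\prod_{i,j,k}\OO_{r_{ijk}}$. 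Since $X$ is a finite space, $\OO_p\to\OO_{p'}$ is flat, and since all products are finite, applying $-\otimes_{\OO_p}\OO_{p'}$ keeps the sequence exact. By $(3)$, for each $r_i\geq p\leq p'$ the natural map $\OO_{r_i}\otimes_{\OO_p}\OO_{p'}\to\OO_{r_ip'}=\OO(U_{r_i}\cap U_{p'})$ is an isomorphism, and likewise $\OO_{r_{ijk}}\otimes_{\OO_p}\OO_{p'}\cong\OO(U_{r_{ijk}}\cap U_{p'})$, compatibly with the restriction maps in the sequence. On the other hand, because $U_{r_i}\subseteq U_q$ we have $U_{r_i}\cap U_{p'}\cap U_q=U_{r_i}\cap U_{p'}$, so the $U_{r_i}\cap U_{p'}$ form a finite cover of $U_p\cap U_q\cap U_{p'}=U_q\cap U_{p'}$, with $(U_{r_i}\cap U_{p'})\cap(U_{r_j}\cap U_{p'})=\bigcup_k(U_{r_{ijk}}\cap U_{p'})$; hence the sheaf axiom yields an exact sequence $0\to\OO_{p'q}\to\prod_i\OO(U_{r_i}\cap U_{p'})\to\prod_{i,j,k}\OO(U_{r_{ijk}}\cap U_{p'})$. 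The two left-exact sequences have identified middle and right terms and matching maps, so their kernels coincide and the natural map $\OO_{pq}\otimes_{\OO_p}\OO_{p'}\to\OO_{p'q}$ is an isomorphism.

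The main obstacle is precisely this last step: one must arrange the two \v{C}ech-type exact sequences so that the induced comparison between them is the natural map, which requires care with the refined sheaf axiom (that $\OO(W)=\ker\bigl(\prod_i\OO(W_i)\to\prod_{i,j,k}\OO(W_{ijk})\bigr)$ for a cover $\{W_i\}$ and refinements $\{W_{ijk}\}$ of the pairwise intersections) and with checking that the isomorphisms supplied by $(3)$ are compatible with every restriction map involved. Once flatness of $\OO_p\to\OO_{p'}$ is invoked to preserve exactness after tensoring, the rest is formal.
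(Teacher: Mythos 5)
Your proof is correct, but the route for $(3)\Rightarrow(1)$ is genuinely different from the paper's. The paper's entire proof is two sentences: condition (2) says that every inclusion $U\hookrightarrow X$ is quasi-coherent, condition (3) says that every inclusion $U_{q_1}\hookrightarrow U_p$ is quasi-coherent (here one implicitly uses that $U_p$ has a minimum, so quasi-coherence of $i_*\OO_{\vert U_{q_1}}$ need only be tested against the pairs $p\leq q_2$ --- the general pairs $r\leq r'$ then follow by transitivity of base change, as in Corollary \ref{corqc}), and then both statements are items (3) and (5) of Proposition \ref{finite-qc-char1}. Your treatment of $(1)\Leftrightarrow(2)$ and $(2)\Rightarrow(3)$ coincides with this. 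For $(3)\Rightarrow(1)$, instead of reducing to item (5) of Proposition \ref{finite-qc-char1}, you rerun by hand the \v{C}ech-plus-flatness argument (the exact sequence $0\to\OO_{pq}\to\prod_i\OO_{r_i}\to\prod_{i,j,k}\OO_{r_{ijk}}$, tensored with the flat $\OO_p$-algebra $\OO_{p'}$, compared with the analogous sequence for $\OO_{p'q}$). This is exactly the mechanism hidden inside the paper's proof of $2.\Rightarrow 3.$ in Proposition \ref{finite-qc-char1}, so you are not using a new idea, but your version is more self-contained and makes explicit the compatibility checks (naturality of the maps $\OO(V)\otimes_{\OO_p}\OO_{p'}\to\OO(V\cap U_{p'})$ with restrictions) that the paper's citation glosses over; the price is length, where the paper buys brevity by reusing the already-catalogued equivalences. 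All the individual steps check out: the $r_i$ and $r_{ijk}$ do lie above $p$, so condition (3) applies to them, and flatness of $\OO_p\to\OO_{p'}$ together with finiteness of the products preserves left-exactness.
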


\begin{proof} Condition 2. says that the inclusion   $U\hookrightarrow X$ is quasi-coherent. Condition 3. says that the inclusion $U_{q_1}\hookrightarrow U_p$ is quasi-coherent. One concludes by Proposition \ref{finite-qc-char1}.
\end{proof}

The following proposition will not be used in the rest of the paper but gives interesting information of quasi-coherent finite spaces.

\begin{prop} If $X$ is quasi-coherent, then
\begin{enumerate}
\item   For any  $ p \leq q$, the natural map $\OO_q\otimes_{\OO_p}\OO_q\to \OO_q, a\otimes b\mapsto ab$, is an isomorphism. In other words, $\Spec\OO_q\to\Spec \OO_q$ is a flat monomorphism of affine schemes.
\item The module of differentials $\Omega_X$ (defined by $(\Omega_X)_p=\Omega_{\OO_p/k})$ is quasi-coherent.
\end{enumerate}
\end{prop}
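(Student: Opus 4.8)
The plan is to reduce both statements to the quasi-coherence criterion of Theorem~\ref{qc}, taking Proposition~\ref{fqc1} as the main input.

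For (1), I would fix $p\leq q$ and apply Proposition~\ref{fqc1} to the pair $p,q$ together with the element $p'=q$ (admissible since $q\geq p$): it gives that the natural map $\OO_{pq}\otimes_{\OO_p}\OO_q\to\OO_{qq}$ is an isomorphism. Since $p\leq q$ forces $U_q\subseteq U_p$, we have $U_{pq}=U_q=U_{qq}$, hence $\OO_{pq}=\OO_q=\OO_{qq}$; and unwinding the definition of the natural map --- it is built from the restriction morphisms $\OO_{pq}\to\OO_{qq}$ and $\OO_{p'}\to\OO_{qq}$, both of which are here the identity of $\OO_q$ --- one sees it is exactly the multiplication $a\otimes b\mapsto ab$. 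This establishes $\OO_q\otimes_{\OO_p}\OO_q\overset\sim\to\OO_q$. The reformulation is then formal: the condition $\OO_q\otimes_{\OO_p}\OO_q\cong\OO_q$ is exactly the statement that $\OO_p\to\OO_q$ is an epimorphism in the category of commutative rings, i.e.\ that $\Spec\OO_q\to\Spec\OO_p$ is a monomorphism of affine schemes; and it is flat because $X$ is a finite space, so $\OO_p\to\OO_q$ is flat by hypothesis.

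For (2), by Theorem~\ref{qc} it is enough to prove, for each $p\leq q$, that the base-change morphism $\Omega_{\OO_p/k}\otimes_{\OO_p}\OO_q\to\Omega_{\OO_q/k}$ is an isomorphism. I would write $A=\OO_p$, $B=\OO_q$; by (1) the multiplication $B\otimes_AB\to B$ is an isomorphism, so $A\to B$ is a flat ring epimorphism, and in particular $\Omega_{B/A}=I/I^2=0$ with $I=\Ker(B\otimes_AB\to B)=0$. I would then get $\Omega_{A/k}\otimes_AB\overset\sim\to\Omega_{B/k}$ by checking, via Yoneda, that restriction along $A\to B$ is a bijection $\mathrm{Der}_k(B,M)\to\mathrm{Der}_k(A,M)$ for every $B$-module $M$ (using the identifications $\Hom_B(\Omega_{B/k},M)=\mathrm{Der}_k(B,M)$ and $\Hom_B(\Omega_{A/k}\otimes_AB,M)=\mathrm{Der}_k(A,M)$). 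Injectivity is immediate, since the difference of two extensions of a $k$-derivation $A\to M$ is an $A$-linear derivation $B\to M$, hence lies in $\Hom_B(\Omega_{B/A},M)=0$. For surjectivity, given a $k$-derivation $D\colon A\to M$, I would form the $k$-algebra map $A\to B\oplus M$, $a\mapsto(\bar a,Da)$, into the square-zero extension and base-change it along $A\to B$; since $B\otimes_AB\cong B$ (whence $B\otimes_AM\cong M$ for every $B$-module $M$) the target becomes $B\otimes_A(B\oplus M)\cong B\oplus M$, producing a $k$-algebra map $B\to B\oplus M$ which is a section of the projection lifting $\Id_B$ and restricting to the given map on $A$; such a section has the form $b\mapsto(b,\widetilde D(b))$ for the desired extension $\widetilde D\colon B\to M$. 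Then $\Omega_X$ is quasi-coherent by Theorem~\ref{qc}.

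The bookkeeping in (1) is routine. The delicate step is the surjectivity part of (2): knowing $\Omega_{B/A}=0$ only gives, via the first fundamental exact sequence, that the base-change map is surjective, and upgrading this to an isomorphism genuinely uses that $A\to B$ is a full flat epimorphism --- in effect the standard fact that a flat ring epimorphism is formally \'etale, which one could alternatively invoke directly in place of the square-zero argument.
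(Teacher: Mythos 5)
Your proof is correct; note that the paper states this proposition \emph{without proof} (it is explicitly flagged as not being used in the rest of the paper), so there is no argument of the author's to compare against, and your reduction is the natural one. Part (1) is exactly Proposition \ref{fqc1} specialized to $p'=q$, using $U_{pq}=U_{qq}=U_q$, and the translation into ``$\OO_p\to\OO_q$ is a flat epimorphism of rings, i.e.\ $\Spec\OO_q\to\Spec\OO_p$ is a flat monomorphism'' is standard (the paper's statement evidently has a typo in the target of that map). In part (2) the derivation-theoretic reduction and the injectivity step via $\Omega_{\OO_q/\OO_p}=0$ are fine; the one place where you should be more explicit is the isomorphism $B\otimes_A(B\oplus M)\cong B\oplus M$. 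The isomorphism you need is the inverse of the canonical ring map $c\mapsto 1\otimes c$, which is bijective by the five lemma applied to $0\to M\to B\oplus M\to B\to 0$ (flatness of $A\to B$ gives left-exactness after tensoring, and the epimorphism property gives the outer isomorphisms $M\overset\sim\to B\otimes_AM$ and $B\overset\sim\to B\otimes_AB$); with this choice the composite $B\to B\otimes_A(B\oplus M)\to B\oplus M$ does restrict to $a\mapsto(\bar a,Da)$, because $\bar a\otimes(1,0)=1\otimes(\bar a,Da)$ in the tensor product taken with respect to the twisted $A$-structure. The choice matters: the a priori equally plausible isomorphism induced by the splitting $b\mapsto b\otimes(1,0)$ would send $1\otimes(\bar a,Da)$ to $(\bar a,0)$ and produce the trivial lift. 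Alternatively, as you note, one may simply invoke that a flat ring epimorphism is formally \'etale.
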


\begin{rem} By definition, $X$ is quasi-coherent if $\delta_*\OO$ is quasi-coherent. We could replace $\OO$ by any other quasi-coherent $\OO$-module $\M$, and study those $\M$ satisfying that $\delta_*\M$ is quasi-coherent. One could easily reproduce formula \ref{delta-qc} and Propositions \ref{finite-qc-char1} and \ref{finite-qc-char2}. We shall not do this  because we shall see that, if $X$ is schematic, then every quasi-coherent module has this property (i.e., $\delta$ preserves quasi-coherence).
\end{rem}

\begin{thm}\label{local-structure-affine} Let $X$ be a an affine and quasi-coherent finite space, $A=\OO(X)$. For any $p,q\in X$, the natural morphism $\OO_p\otimes_A\OO_q\to\OO_{pq}$ is an isomorphism.
\end{thm}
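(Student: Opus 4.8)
The plan is to identify the $\OO$-module $j_*(\OO_{\vert U_p})$, where $j\colon U_p\hookrightarrow X$ denotes the open inclusion, with the quasi-coherent module $\widetilde{\OO_p}$ attached to the $A$-module $\OO_p$, and then to read off the asserted isomorphism by taking the stalk at $q$.

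Concretely, I would first set $\Nc:=j_*(\OO_{\vert U_p})$ and record $\Nc_x=\OO(U_p\cap U_x)=\OO_{px}$ for every $x\in X$, so that $\Gamma(X,\Nc)=\Gamma(U_p,\OO)=\OO_p$ as an $A$-module (the $A=\OO(X)$-module structure being the one given by the restriction $A\to\OO(U_p)=\OO_p$). Since $X$ is quasi-coherent, Proposition \ref{finite-qc-char1} — in the form ``$U_p\hookrightarrow X$ is quasi-coherent'' — guarantees that $\Nc$ is a quasi-coherent $\OO$-module. Now, because $X$ is affine, the counit $\pi^*\pi_*\Nc\to\Nc$ (with $\pi\colon X\to(*,A)$ the canonical morphism) is an isomorphism; as $\pi_*\Nc=\Gamma(X,\Nc)=\OO_p$, this is a canonical isomorphism $\widetilde{\OO_p}\overset{\sim}{\to}\Nc$. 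Taking the stalk at $q$ and using $(\widetilde{\OO_p})_q=\OO_p\otimes_A\OO_q$ and $\Nc_q=\OO_{pq}$ produces an isomorphism $\OO_p\otimes_A\OO_q\overset{\sim}{\to}\OO_{pq}$.

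The remaining step, and the only delicate one, is to check that this is the \emph{natural} morphism and not a twist of it. For that I would unwind the counit on stalks: the map $(\pi^*\pi_*\Nc)_q=\Gamma(X,\Nc)\otimes_A\OO_q\to\Nc_q$ sends $s\otimes b$ to $b\cdot s_q$, where $s_q$ is the germ of $s$ at $q$. For $\Nc=j_*(\OO_{\vert U_p})$ the germ map $\Gamma(X,\Nc)\to\Nc_q$ is the restriction $\OO_p=\OO(U_p)\to\OO(U_p\cap U_q)=\OO_{pq}$, and the $\OO_q$-module structure on $\Nc_q=\OO_{pq}$ is the one induced by the restriction $\OO_q=\OO(U_q)\to\OO_{pq}$; hence the isomorphism is $a\otimes b\mapsto\bar a\,\bar b$ with $\bar a,\bar b$ the images of $a,b$ in $\OO_{pq}$, which is exactly the natural algebra morphism $\OO_p\otimes_A\OO_q\to\OO_{pq}$ built from $\OO_p\to\OO_{pq}$ and $\OO_q\to\OO_{pq}$. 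I do not expect a real obstacle: quasi-coherence of $X$ is precisely what puts $j_*(\OO_{\vert U_p})$ into the range where the affine equivalence applies, affineness furnishes the isomorphism, and the only care required is the bookkeeping in this last paragraph identifying the comparison map on stalks.
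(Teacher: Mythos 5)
Your proof is correct, and it takes a genuinely different route from the paper. The paper's argument goes through the diagonal $\delta\colon X\to X\times_AX$: it observes that the statement is equivalent to $\OO_{X\times_AX}\to\delta_*\OO$ being an isomorphism (stalks at $(p,q)$ being $\OO_p\otimes_A\OO_q$ and $\OO_{pq}$ respectively), then invokes Corollary \ref{product-affine} to get that $X\times_AX$ is affine with $\Gamma(X\times_AX,\OO_{X\times_AX})=A\otimes_AA=A$, so that the isomorphism can be checked on global sections. You instead fix $p$ and work entirely on $X$: you identify $j_*(\OO_{\vert U_p})$ — quasi-coherent by Proposition \ref{finite-qc-char1}, which is exactly the content of ``$X$ is quasi-coherent'' — with $\widetilde{\OO_p}=\pi^*\OO_p$ via the counit of the affine equivalence on $X$ itself, and read off the statement stalk by stalk. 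In effect you replace the paper's two-variable argument on the product space by a one-variable ``slice'' of it (note $(\delta_*\OO)_{(p,q)}=(j_*\OO_{\vert U_p})_q$), which buys you independence from Corollary \ref{product-affine} and the whole product-of-affines machinery, at the modest cost of the final bookkeeping paragraph verifying that the counit induces the natural algebra map $a\otimes b\mapsto\bar a\,\bar b$ on stalks — a check the paper's formulation gets for free since both sheaf morphisms there are the canonical ones. Both arguments are sound; yours is arguably the more elementary.
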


\begin{proof} Let $\delta\colon X\to X\times_A X$ be the diagonal morphism. It suffices to prove that the natural morphism $\OO_{X\times_AX}\to\delta_*\OO$ is an isomorphism. Since $X\times_A X$ is affine (by Corollary \ref{product-affine}) and $\delta_*\OO$ is quasi-coherent (because $X$ is quasi-coherent), it suffices to see that it is an isomorphism after taking global sections. By Corollary \ref{product-affine}, one has $\Gamma(X\times_AX,\OO_{X\times_AX})=A\otimes_A A=A$.
\end{proof}

\subsection{Schematic finite spaces. Semi-separated finite spaces.}

By definition, a schematic finite space is a   finite space such that $\RR\delta_*\OO$ is quasi-coherent. A stronger condition is the following:

\begin{defn} A finite space $X$ is {\it semi-separated} if $\delta\colon X\to X\times X$ is quasi-coherent and acyclic, i.e., $\delta_*\OO$ is quasi-coherent and $R^i\delta_*\OO=0$ for $i>0$.
\end{defn}

It is clear that any open subset of a schematic space is also schematic. We shall see that the converse is also true, i.e., being schematic is a local question. Any semi-separated space is obviously schematic. Moreover, we shall see that schematic spaces are locally semi-separated.

\begin{thm}\label{extension} {\rm (of extension)} Let $X$ be a schematic finite space. For any open subset $j\colon U\hookrightarrow X$ and any quasi-coherent module  $\Nc$ on $U$, $\RR j_*\Nc$ is quasi-coherent. In particular, any quasi-coherent module on $U$ is the restriction to $U$ of a quasi-coherent module on  $X$.
\end{thm}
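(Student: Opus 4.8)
The plan is to reduce the statement about $\RR j_*\Nc$ to the already-known schematic property of $\OO_X$ via the projection-and-graphic machinery of Section 4. First I would observe that, since $\Nc$ is quasi-coherent on the open subset $U$ and $U$ is itself schematic (being open in a schematic space), one may replace $X$ by a suitable ambient product to linearize the situation. Concretely, let $i\colon U\hookrightarrow X$ be the inclusion; consider the graphic $\Gamma_i\colon U\to U\times X$ of $i$ and the projection $\pi\colon U\times X\to X$, so that $j=\pi\circ\Gamma_i$ and hence $\RR j_*\Nc=\RR\pi_*\RR(\Gamma_i)_*\Nc$. By Theorem \ref{qc-of-proj}, $\RR\pi_*$ preserves quasi-coherence (here $U$ is a finite space, so the projection $U\times X\to X$ falls under that theorem). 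Therefore it suffices to prove that $\RR(\Gamma_i)_*\Nc$ is quasi-coherent on $U\times X$.

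Next I would apply Theorem \ref{graphic} to the morphism $i\colon U\to X$ and the quasi-coherent module $\Nc$ on $U$: it gives a canonical isomorphism
\[
\RR(\Gamma_i)_*\Nc \;\simeq\; \LL\pi_U^*\Nc \overset{\LL}\otimes \RR(\Gamma_i)_*\OO_U
\]
in the derived category, where $\pi_U\colon U\times X\to U$ is the first projection. Now $\LL\pi_U^*\Nc$ has quasi-coherent cohomology by Corollary \ref{integral-functors}(1), and the derived tensor product of two objects with quasi-coherent cohomology again has quasi-coherent cohomology by Corollary \ref{integral-functors}(2). So the whole problem collapses to showing that $\RR(\Gamma_i)_*\OO_U$ is quasi-coherent on $U\times X$; that is, that the graphic of the open immersion $i\colon U\hookrightarrow X$ is a "schematic morphism" in the terminology announced for Section 7.

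Finally, I would establish this last point directly from the schematic hypothesis on $X$. The graphic $\Gamma_i\colon U\to U\times X$ is, stalkwise at $(p,q)\in U\times X$, computed on the open set $U_p\cap i^{-1}(U_q)=U_p\cap U_q=U_{pq}$, so $[\RR(\Gamma_i)_*\OO_U]_{(p,q)}=\RR\Gamma(U_{pq},\OO)$, exactly as in the diagonal computation in formula (\ref{delta-qc}) and the paragraph following the definition of schematic. Quasi-coherence on $U\times X$ amounts (by the Proposition characterizing quasi-coherent modules on a product) to the two families of base-change isomorphisms $H^i(U_{pq},\OO)\otimes_{\OO_p}\OO_{p'}\xrightarrow{\sim}H^i(U_{p'q},\OO)$ for $p\le p'$ in $U$ and $H^i(U_{pq},\OO)\otimes_{\OO_q}\OO_{q'}\xrightarrow{\sim}H^i(U_{pq'},\OO)$ for $q\le q'$ in $X$; but these are precisely the isomorphisms that the schematic condition on $X$ asserts (the definition of schematic says $H^i(U_{pq},\OO)\otimes_{\OO_p}\OO_{p'}\to H^i(U_{p'q},\OO)$ is an isomorphism for all $p,q$ and $p'\ge p$, and this is symmetric in the two arguments). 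The main obstacle is bookkeeping: one must be careful that all the restrictions $\OO_p\to\OO_q$ and the tensor identifications $\OO_{(p,q)}=\OO_p\otimes_k\OO_q$ are used consistently when identifying the stalks of $\RR(\Gamma_i)_*\OO_U$, and that Theorem \ref{graphic} is applied over the correct base; once that is set up, every individual step is supplied by an earlier result. The last sentence of the statement ("any quasi-coherent module on $U$ is the restriction of one on $X$") is then immediate: $j^*\RR^0 j_*\Nc=j^*j_*\Nc=\Nc$ since $j$ is an open immersion, and $R^0 j_*\Nc=j_*\Nc$ is quasi-coherent by what we proved, so $j_*\Nc$ is the desired extension.
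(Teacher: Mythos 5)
Your proposal is correct and follows essentially the same route as the paper: factor $j$ as the graphic $U\to U\times X$ followed by the projection to $X$, use Theorem \ref{graphic} to reduce to the quasi-coherence of $\RR(\Gamma_i)_*\OO_U$, and handle the projection with Theorem \ref{qc-of-proj}. Your final stalkwise verification is just an unwinding of the paper's one-line observation that $\RR(\Gamma_i)_*\OO_U$ is the restriction of $\RR\delta_*\OO$ to the open subset $U\times X$ of $X\times X$, hence quasi-coherent by the schematic hypothesis.
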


\begin{proof} Let $\delta_U\colon U\to U\times X$ be the graphic of $j\colon U\hookrightarrow X$ and $\pi_U$, $\pi_X$ the projections of $ U\times X$ onto $U$ and $X$. By Theorem \ref{graphic}, one has an isomorphism $\LL \pi_U^*\Nc \overset\LL \otimes \RR{\delta_U}_*\OO_{\vert U} \overset\sim\to \RR{\delta_U}_*\Nc$. On the other hand, $\RR{\delta_U}_*\OO_{\vert U} = (\RR {\delta}_*\OO)_{\vert U\times X}$, which is quasi-coherent because $X$ is schematic. Hence, $\RR{\delta_U}_*\Nc$ is quasi-coherent. Since $\RR j_*\Nc = \RR {\pi_X}_*\RR{\delta_U}_*\Nc$, we conclude by Theorem \ref{qc-of-proj}.
\end{proof}

\begin{rem} The converse of the theorem also holds; even more: if $\RR j_*\OO_{\vert U}$ is quasi-coherent for any open subset $j\colon U\hookrightarrow X$, then $X$ is schematic.
\end{rem}

\begin{prop} A finite space $X$ is schematic if and only if $U_p$ is schematic  for any $p\in X$.
\end{prop}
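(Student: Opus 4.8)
The plan is to prove the two implications separately, the forward one being formal and the converse carrying the content. If $X$ is schematic then each $U_p$, being an open subset of $X$, is schematic, since open subsets of schematic spaces are schematic; so there is nothing to do in that direction.

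For the converse, assume every $U_p$ is schematic. I want to show that each $R^i\delta_*\OO$ is a quasi-coherent $\OO_{X\times X}$-module. First I would unwind this condition: using $(R^i\delta_*\OO)_{(p,q)}=H^i(U_{pq},\OO)$ with $U_{pq}=U_p\cap U_q$, together with Theorem \ref{qc} and the factorization of restriction maps $\OO_{(p,q)}\to\OO_{(p',q)}\to\OO_{(p',q')}$ (recall $\OO_{(p,q)}=\OO_p\otimes\OO_q$), quasi-coherence of $R^i\delta_*\OO$ is equivalent to the assertion that
\[ H^i(U_{pq},\OO)\otimes_{\OO_p}\OO_{p'}\to H^i(U_{p'q},\OO)\quad\text{and}\quad H^i(U_{pq},\OO)\otimes_{\OO_q}\OO_{q'}\to H^i(U_{pq'},\OO) \]
are isomorphisms for all $p\le p'$ and all $q\le q'$. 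Since $U_{pq}=U_{qp}$ (equivalently, since $\delta$ is equivariant for the factor-exchange involution of $X\times X$), the second family of morphisms is an instance of the first, so it suffices to treat the first one, for all $q\in X$ and all $p\le p'$.

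The key step is then purely local. Fix $p$ and $q$, and inside the schematic finite space $U_p$ consider the open inclusion $j\colon V:=U_p\cap U_q\hookrightarrow U_p$. Applying the extension theorem (Theorem \ref{extension}) to $U_p$ and $j$ shows that $R^ij_*\OO_{\vert V}$ is quasi-coherent on $U_p$. Now for any $a\in U_p$ one has $U_a\subseteq U_p$, hence $U_a\cap V=U_a\cap U_q=U_{aq}$ and therefore $(R^ij_*\OO_{\vert V})_a=H^i(U_{aq},\OO)$; by Theorem \ref{qc}, quasi-coherence of this module on $U_p$ says exactly that $H^i(U_{aq},\OO)\otimes_{\OO_a}\OO_{a'}\to H^i(U_{a'q},\OO)$ is an isomorphism for all $a\le a'$ in $U_p$. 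Specializing to $a=p$ and $a'=p'$ --- legitimate because $p\le p\le p'$, so $p,p'\in U_p$ --- yields precisely the required isomorphism, and the proof is complete.

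I do not expect a real obstacle. The main point (rather than a difficulty) is the observation that, through the identity $(R^i\delta_*\OO)_{(p,q)}=H^i(U_{pq},\OO)$, the global schematicity of $X$ decomposes into statements each of which takes place entirely inside a single minimal open set $U_p$, and is therefore controlled by the extension theorem applied to $U_p$; the remainder is the routine bookkeeping of identifying $U_a\cap U_p\cap U_q$ with $U_{aq}$ when $a\ge p$ and of reducing the two quasi-coherence conditions on $X\times X$ to one.
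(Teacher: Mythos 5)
Your proof is correct, and it takes a recognizably different route from the paper's. The paper restricts $\RR\delta_*\OO$ to $U_p\times U_q$, identifies this restriction with $\RR(i\times j)_*\RR\delta'_*\OO_{\vert U_{pq}}$ where $\delta'$ is the diagonal of $U_{pq}$ and $i,j$ are the inclusions of $U_{pq}$ into $U_p$ and $U_q$, and then concludes by observing that $U_{pq}$ is schematic and by invoking Proposition \ref{preservacion-cuasi} on products of morphisms preserving quasi-coherence. You instead unwind quasi-coherence on $X\times X$ stalkwise into the two families of comparison maps (which is exactly the paper's own criterion for modules on a product), dispose of one family by the symmetry $U_{pq}=U_{qp}$, and reduce the other to the quasi-coherence of $\RR j_*\OO_{\vert U_{pq}}$ for the single open inclusion $j\colon U_{pq}\hookrightarrow U_p$, which is Theorem \ref{extension} applied to the schematic space $U_p$. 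Both arguments ultimately rest on the extension theorem; yours is somewhat more economical in that it needs neither the product machinery of Proposition \ref{preservacion-cuasi} nor the intermediate observation that $U_{pq}$ is itself schematic, at the cost of a little explicit bookkeeping with stalks ($U_a\cap U_{pq}=U_{aq}$ for $a\in U_p$), all of which you carry out correctly.
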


\begin{proof} If $X$ is schematic, then $U_p$ is schematic. Conversely, assume that $U_p$ is schematic. We have to prove that $\RR\delta_*\OO$ is quasi-coherent. It suffices to see that $(\RR\delta_*\OO)_{\vert U_p\times U_q}$ is quasi-coherent for any $p,q\in X$. Let us denote $\delta'\colon U_{pq}\to U_{pq}\times U_{pq}$ the diagonal morphism of $U_{pq}$ and $i\colon U_{pq}\hookrightarrow U_p$, $j\colon U_{pq}\hookrightarrow U_q$ the inclusions.
Since $U_p$ and $U_q$ are schematic, $U_{pq}$ is schematic (i.e., $\RR\delta'_*\OO_{\vert U_{pq}}$ is quasi-coherent), and $\RR i$, $\RR j$ preserve quasi-coherence. By Proposition \ref{preservacion-cuasi}, $\RR (i\times j)_*\RR\delta'_*\OO_{\vert U_{pq}}$ is quasi-coherent, with $i\times j\colon U_{pq}\times U_{pq}\hookrightarrow U_{p}\times U_{q}$. That is, $(\RR\delta_*\OO)_{\vert U_p\times U_q}$ is quasi-coherent.
\end{proof}

\begin{prop}\label{schematic-affine} An affine finite space  is schematic if and only if it is semi-separated. Consequently, a finite space $X$ is schematic if and only if $U_p$ is semi-separated for any $p\in X$.
\end{prop}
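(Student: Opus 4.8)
The plan is to establish the only substantial implication, namely that an affine schematic finite space $X$ is semi-separated; the converse is trivial, since if $R^i\delta_*\OO=0$ for $i>0$ then every $R^i\delta_*\OO$ is quasi-coherent. Recall that $(R^i\delta_*\OO)_{(p,q)}=H^i(U_{pq},\OO)$ with $U_{pq}=U_p\cap U_q$, and that $X$ being schematic already gives that $\delta_*\OO$ is quasi-coherent; so it suffices to prove the vanishing $H^i(U_{pq},\OO)=0$ for all $p,q\in X$ and all $i>0$.

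To do this I would fix $q\in X$ and work with the open inclusion $j\colon U_q\hookrightarrow X$. First, because $X$ is schematic, the extension Theorem \ref{extension} applies to $\OO_{\vert U_q}$ and shows that $\RR j_*\OO_{\vert U_q}$ is quasi-coherent, i.e. every $R^ij_*\OO_{\vert U_q}$ is a quasi-coherent $\OO$-module. Second, $X$ is affine, hence Serre-affine by Theorem \ref{AffineFinSp}, so $H^p(X,\F)=0$ for all $p>0$ and all quasi-coherent $\F$; feeding this into the Leray spectral sequence $H^p(X,R^qj_*\OO_{\vert U_q})\Rightarrow H^{p+q}(U_q,\OO)$ collapses it and yields $\Gamma(X,R^ij_*\OO_{\vert U_q})=H^i(U_q,\OO)$. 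Third, $U_q$ has a minimum, so $H^i(U_q,\OO)=0$ for $i>0$ by Proposition \ref{aciclicity}; thus, for $i>0$, the quasi-coherent module $R^ij_*\OO_{\vert U_q}$ has vanishing global sections on the affine space $X$ and is therefore $0$. Finally, taking the stalk at an arbitrary $x\in X$,
\[0=(R^ij_*\OO_{\vert U_q})_x=H^i(j^{-1}(U_x),\OO)=H^i(U_q\cap U_x,\OO)=(R^i\delta_*\OO)_{(q,x)}\qquad (i>0),\]
so $R^i\delta_*\OO=0$ for $i>0$, and $X$ is semi-separated.

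For the ``consequently'' part, I would invoke the previously established fact that $X$ is schematic if and only if $U_p$ is schematic for every $p\in X$. Each $U_p$ has a minimum, hence is affine (Corollary \ref{corqc}); so for $U_p$ the conditions ``schematic'' and ``affine and schematic'' coincide, and by the first part of the proposition these are equivalent to ``semi-separated''. Therefore $X$ is schematic if and only if every $U_p$ is semi-separated.

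The step that carries the idea is the middle one: one must apply the extension theorem precisely to the \emph{minimal} opens $U_q$, whose intrinsic cohomology is trivial, so that the quasi-coherent complex $\RR j_*\OO_{\vert U_q}$ is forced by the Serre-affineness of $X$ to have no higher global cohomology and hence to be concentrated in degree $0$; this is what converts the (trivial) vanishing of $H^{>0}(U_q,\OO)$ into the (a priori nontrivial) vanishing of all the groups $H^i(U_q\cap U_x,\OO)$, i.e. of $R^i\delta_*\OO$. Once this is seen, the remaining ingredients — the degeneration of the Leray spectral sequence over an affine space, and the fact that a quasi-coherent sheaf with no global sections on an affine finite space vanishes — are routine.
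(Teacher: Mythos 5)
Your proof is correct, and while it runs on the same engine as the paper's --- a quasi-coherent higher direct image over an affine base, a collapsing Leray spectral sequence, and the fact that a quasi-coherent module with no global sections on an affine finite space vanishes --- it routes the argument through a different morphism. The paper applies this mechanism directly to the diagonal $\delta\colon X\to X\times X$: since $X\times X$ is affine (Corollary \ref{product-affine}) and each $R^i\delta_*\OO$ is quasi-coherent by the schematic hypothesis, the degeneration gives $H^0(X\times X,R^i\delta_*\OO)=H^i(X,\OO)=0$, whence $R^i\delta_*\OO=0$. You instead apply it to the open inclusions $j\colon U_q\hookrightarrow X$, obtaining quasi-coherence of $\RR j_*\OO_{\vert U_q}$ from the extension Theorem \ref{extension} and the vanishing of the abutment from Proposition \ref{aciclicity} ($U_q$ has a minimum), and then observe that the stalks of $R^ij_*\OO_{\vert U_q}$ are exactly the groups $H^i(U_{xq},\OO)$, i.e.\ the stalks of $R^i\delta_*\OO$. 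What each approach buys: yours avoids Corollary \ref{product-affine} entirely and needs only the affineness of $X$ itself, but pays by invoking the heavier Theorem \ref{extension} (which in turn rests on Theorems \ref{graphic} and \ref{qc-of-proj}); the paper's version uses only the raw definition of schematic plus the affineness of the product, and is correspondingly shorter. There is no circularity in your use of Theorem \ref{extension}, since it precedes this proposition and does not depend on it. Your handling of the converse and of the ``consequently'' clause (via the locality of schematicity and Corollary \ref{corqc}) matches what the paper leaves implicit.
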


\begin{proof} Let $X$ be an affine and schematic finite space. We have to prove that $R^i\delta_*\OO=0$ for $i>0$. Since  $X\times X$ is affine (Corollary \ref{product-affine}) and $R^i\delta_*\OO$ is quasi-coherent, it suffices to see that  $H^0(X\times X,R^i\delta_*\OO)=0$, $i>0$. Now,  $H^j(X\times X,R^i\delta_*\OO)=0$ for any $j>0$ and any $i\geq 0$, because $R^i\delta_*\OO$ is quasi-coherent and $X\times X$ is affine. Then $H^0(X\times X,R^i\delta_*\OO)=H^i(X,\OO)=0$ for $i>0$, because $X$ is affine.
\end{proof}

\begin{cor}\label{afinsemiseparado} Let $X$ be a schematic and affine finite space. An open subset $U$ of $X$ is affine if and only if it is acyclic.
\end{cor}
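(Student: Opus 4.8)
*Let $X$ be a schematic and affine finite space. An open subset $U$ of $X$ is affine if and only if it is acyclic.*

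The plan is to obtain the ``only if'' direction directly from the definitions, and to reduce the ``if'' direction, via Theorem \ref{AffineFinSp}, to the verification that $U$ is quasi-affine; the schematic hypothesis on $X$ enters only through the extension theorem.

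If $U$ is affine then it is acyclic by the very definition of an affine ringed space, so that implication is immediate. For the converse, suppose $U$ is acyclic. Since $U$ is again a finite space, by the equivalence of conditions (1) and (2) in Theorem \ref{AffineFinSp} it suffices to show that $U$ is quasi-affine, i.e., that every quasi-coherent module $\Nc$ on $U$ is generated by its global sections. Let $j\colon U\hookrightarrow X$ denote the inclusion. Because $X$ is schematic, Theorem \ref{extension} gives that $j_*\Nc$ is a quasi-coherent $\OO_X$-module; and since $X$ is affine it is in particular quasi-affine, so $j_*\Nc$ is generated by its global sections: writing $A=\OO(X)$, for every $q\in X$ the natural map $\Gamma(X,j_*\Nc)\otimes_A\OO_q\to(j_*\Nc)_q$ is surjective.

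Now restrict to $q\in U$. There $U_q\subseteq U$, so $(j_*\Nc)_q=\Nc(U_q)=\Nc_q$, while $\Gamma(X,j_*\Nc)=\Gamma(U,\Nc)=:M$. Put $A_U=\OO(U)$, which is an $A$-algebra, and note that $\OO_q$ is an $A_U$-algebra for $q\in U$. The surjection just obtained then factors as $M\otimes_A\OO_q\to M\otimes_{A_U}\OO_q\to\Nc_q$, the first map being the canonical (surjective) one; hence $M\otimes_{A_U}\OO_q\to\Nc_q$ is surjective for every $q\in U$, which is precisely the statement that $\Nc$ is generated by its global sections on $U$. Thus $U$ is quasi-affine, and being also acyclic it is affine by Theorem \ref{AffineFinSp}.

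The one point requiring a little care is this last base-change step relating $A=\OO(X)$ to $A_U=\OO(U)$; otherwise the argument is a direct appeal to Theorem \ref{extension} and to the characterization of affine finite spaces. I would avoid routing the proof through cohomology (trying to prove that every quasi-coherent module on $U$ is acyclic by means of the spectral sequence $H^p(X,R^qj_*\Nc)\Rightarrow H^{p+q}(U,\Nc)$ together with the affineness of $X$), since the higher direct images $R^qj_*\Nc$ need not vanish on $X$, making that path more troublesome than establishing quasi-affineness directly.
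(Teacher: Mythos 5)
Your proof is correct and follows essentially the same route as the paper's: reduce the converse direction via Theorem \ref{AffineFinSp} to showing $U$ is quasi-affine, extend a quasi-coherent module on $U$ to $X$ by Theorem \ref{extension}, and use quasi-affineness of $X$. You merely make explicit the base-change step from $\OO(X)$ to $\OO(U)$, which the paper leaves implicit.
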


\begin{proof} Any affine open subset is acyclic by definition. Conversely, if $U$ is acyclic, it is enough to prove that any quasi-coherent module on  $U$ is generated by its global sections. This follows from the fact that any quasi-coherent module on $U$ extends to a quasi-coherent module on $X$ (by Theorem \ref{extension}) and $X$ is affine.
\end{proof}

\begin{thm} Let $X$ be a schematic finite space. For any quasi-coherent module   $\M$ on $X$, $\RR\delta_*\M$ is quasi-coherent.
\end{thm}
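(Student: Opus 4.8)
The plan is to reduce the statement to Theorem \ref{graphic} applied to the identity morphism together with the extension theorem, Theorem \ref{extension}, and the behaviour of $\RR\delta_*$ under restriction to basic open subsets. First I would recall that quasi-coherence of a complex $\RR\delta_*\M$ on $X\times X$ can be checked locally, i.e. after restricting to each $U_p\times U_q$; so it suffices to show $(\RR\delta_*\M)_{\vert U_p\times U_q}$ is quasi-coherent for all $p,q\in X$. Writing $U_{pq}=U_p\cap U_q$, the graphic $\delta_{pq}\colon U_{pq}\to U_{pq}\times U_{pq}$ and the inclusions $i\colon U_{pq}\hookrightarrow U_p$, $j\colon U_{pq}\hookrightarrow U_q$ fit into a commutative diagram whose composite $U_{pq}\to U_p\times U_q$ is the restriction of $\delta$, and the base change/localization computation $(\RR\delta_*\M)_{(x,y)}=\Gamma(U_x\cap U_y,C^\punto\M)$ shows $(\RR\delta_*\M)_{\vert U_p\times U_q}=\RR(i\times j)_*\RR(\delta_{pq})_*(\M_{\vert U_{pq}})$.

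Next I would reduce the problem to $X$ semi-separated (equivalently, by Proposition \ref{schematic-affine}, to the case where $X=U_p$, since each $U_p$ is semi-separated when $X$ is schematic). So assume $X$ is semi-separated. Let $\delta\colon X\to X\times X$ be the diagonal and factor it as the graphic $\delta=\Gamma_{\Id}$ of the identity $\Id\colon X\to X$; note $X\times X=X\times X$ and the first projection $\pi\colon X\times X\to X$ is the relevant one. By Theorem \ref{graphic} applied to $f=\Id$, there is an isomorphism in the derived category
\[ \LL\pi^*\M\overset\LL\otimes\RR\delta_*\OO_X\;\overset\sim\longrightarrow\;\RR\delta_*\M. \]
Now $\RR\delta_*\OO_X$ is quasi-coherent because $X$ is schematic; $\LL\pi^*\M$ is a complex with quasi-coherent cohomology (indeed a complex of quasi-coherent modules) by Corollary \ref{integral-functors}(1); hence by Corollary \ref{integral-functors}(2) the derived tensor product $\LL\pi^*\M\overset\LL\otimes\RR\delta_*\OO_X$ has quasi-coherent cohomology. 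Therefore $\RR\delta_*\M$ has quasi-coherent cohomology, i.e. $R^i\delta_*\M$ is quasi-coherent for every $i$.

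Finally I would assemble the two reductions: for general schematic $X$, the local pieces $(\RR\delta_*\M)_{\vert U_p\times U_q}=\RR(i\times j)_*\RR(\delta_{pq})_*(\M_{\vert U_{pq}})$ are quasi-coherent because $U_{pq}$ is schematic (being an open subset of the schematic space $X$, or of $U_p$) — hence semi-separated, so the previous paragraph gives quasi-coherence of $\RR(\delta_{pq})_*(\M_{\vert U_{pq}})$ — and $\RR(i\times j)_*$ preserves quasi-coherence by Proposition \ref{preservacion-cuasi} together with Theorem \ref{extension} (open inclusions into schematic spaces have $\RR j_*$ preserving quasi-coherence). Since quasi-coherence of $\RR\delta_*\M$ is local, we conclude. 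The main obstacle I anticipate is bookkeeping the local-to-global step cleanly — in particular verifying that $\RR\delta_*$ localizes as $\RR(i\times j)_*\RR(\delta_{pq})_*$ on $U_p\times U_q$ and that the relevant inclusions really do preserve quasi-coherence under $\RR(i\times j)_*$ (this is where Theorem \ref{extension} and Proposition \ref{preservacion-cuasi} must be combined, exactly as in the proof that schematicity is local); the use of Theorem \ref{graphic} itself is then essentially formal.
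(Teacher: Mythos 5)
Your proposal is correct and its core is exactly the paper's proof: apply Theorem \ref{graphic} to the identity morphism (whose graphic is $\delta$) to get $\LL\pi^*\M\overset\LL\otimes\RR\delta_*\OO_X\simeq\RR\delta_*\M$, then conclude from the quasi-coherence of $\RR\delta_*\OO_X$ via Corollary \ref{integral-functors}. The two reduction steps you wrap around it (localizing to $U_p\times U_q$ and passing to the semi-separated case) are harmless but superfluous, since Theorem \ref{graphic} and the schematic hypothesis already apply globally to $X$ as they stand.
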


\begin{proof}  By Theorem \ref{graphic}, one has an isomorphism $\LL\pi_X^*\M\overset\LL \otimes \RR\delta_*\OO \simeq \RR\delta_*\M$. One concludes by the quasi-coherence of $\RR\delta_*\OO$.
\end{proof}

\begin{thm}\label{diagonal-semi-sep} Let $X$ be semi-separated. For any quasi-coherent module   $\M$ on $X$ and any $p,q\in X$, the natural morphism
\[ \M_p\otimes_{\OO_p}\OO_{pq}\to\M_{pq}\qquad (\M_{pq}=\Gamma(U_{pq},\M))\] is an isomorphism. That is, the natural morphism
\[ \pi ^*\M\otimes \delta_*\OO \to \delta_*\M\] is an isomorphism, where $\pi \colon X\times X\to X$ is any of the natural projections.  Moreover, $R^i\delta_*\M=0$ for any $i>0$. \end{thm}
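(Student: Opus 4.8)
The plan is to translate the semi-separatedness hypothesis into the statement that every set $U_{pq}:=U_p\cap U_q$ is acyclic, and then to apply Theorem \ref{aciclico}, which is tailor-made for acyclic (not necessarily affine) open subsets of a minimal open set $U_p$. First I would record the elementary fact that $\delta^{-1}(U_p\times U_q)=U_{pq}$ (using $U_{(p,q)}=U_p\times U_q$ in the product space), so that the definition of higher direct images on finite spaces gives $[R^i\delta_*F]_{(p,q)}=H^i(U_{pq},F)$ for every sheaf $F$ and every $i$. Applied to $F=\OO$, the hypothesis $R^i\delta_*\OO=0$ for $i>0$ thus says precisely that $H^i(U_{pq},\OO)=0$ for $i>0$, i.e.\ that $U_{pq}$ is acyclic for all $p,q$ (only this acyclicity part of semi-separatedness is used; quasi-coherence of $\delta_*\OO$ is not needed).

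Fix $p,q$. Since $U_{pq}\subseteq U_p$ and $U_{pq}$ is acyclic, Theorem \ref{aciclico} applies with the point $p$ and the open subset $U=U_{pq}$: for every quasi-coherent module on $U_p$ its cohomology on $U_{pq}$ vanishes in positive degrees, and the canonical base-change map is an isomorphism. Given a quasi-coherent $\M$ on $X$, I would apply this to $\M_{\vert U_p}$, whose stalk at $p$ is $\M_p$ and whose sections on $U_{pq}$ are $\M_{pq}$; this yields simultaneously the asserted isomorphism $\M_p\otimes_{\OO_p}\OO_{pq}\overset{\sim}{\to}\M_{pq}$ and the vanishing $H^i(U_{pq},\M)=0$ for $i>0$. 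By the stalk formula $[R^i\delta_*\M]_{(p,q)}=H^i(U_{pq},\M)$, the latter gives $R^i\delta_*\M=0$ for all $i>0$, as claimed.

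For the sheaf-theoretic reformulation I would simply compute stalks. With $\pi$ the first projection $X\times X\to X$, one has $(\pi^*\M\otimes_{\OO_{X\times X}}\delta_*\OO)_{(p,q)}=(\M_p\otimes_{\OO_p}\OO_{(p,q)})\otimes_{\OO_{(p,q)}}\OO_{pq}=\M_p\otimes_{\OO_p}\OO_{pq}$, and the natural morphism to $(\delta_*\M)_{(p,q)}=\M_{pq}$ is exactly the map just shown to be an isomorphism; for the second projection one argues identically, applying Theorem \ref{aciclico} instead with the point $q$ and $U=U_{pq}\subseteq U_q$ (still acyclic). Since a morphism of sheaves that is an isomorphism on all stalks is an isomorphism, $\pi^*\M\otimes\delta_*\OO\to\delta_*\M$ is an isomorphism for either projection.

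There is no real obstacle: the argument is bookkeeping once one observes that semi-separatedness is equivalent to ``$U_{pq}$ acyclic for all $p,q$''. The points requiring a little care are that $U_{pq}$ need in general not be affine --- so one must invoke Theorem \ref{aciclico} (which needs only acyclicity) rather than the affine theory of Section~5 --- and that the natural sheaf morphism $\pi^*\M\otimes\delta_*\OO\to\delta_*\M$ does reduce stalkwise to the canonical map $\M_p\otimes_{\OO_p}\OO_{pq}\to\M_{pq}$, which is immediate from the descriptions of $\pi^*$, $\otimes$ and $\delta_*$ on finite spaces. (Alternatively one could deduce $\RR\delta_*\M\simeq\LL\pi^*\M\overset{\LL}{\otimes}\RR\delta_*\OO$ from Theorem \ref{graphic} and use $\RR\delta_*\OO=\delta_*\OO$, but the direct route is shorter and yields both the vanishing and the explicit formula at once.)
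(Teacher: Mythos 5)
Your proof is correct, and it takes a more elementary route than the paper. The paper's proof is a two-line appeal to Theorem \ref{graphic} (applied to $f=\Id$, whose graphic is $\delta$), which gives $\LL\pi^*\M\overset{\LL}{\otimes}\RR\delta_*\OO\simeq\RR\delta_*\M$ in the derived category, followed by the substitution $\RR\delta_*\OO=\delta_*\OO$. You instead unpack the hypothesis into the statement that every $U_{pq}$ is acyclic and then invoke Theorem \ref{aciclico} pointwise, checking the sheaf identity on stalks; this avoids the derived tensor product entirely and, as you note, simultaneously produces the base-change isomorphism and the vanishing $R^i\delta_*\M=0$. The two arguments are cousins rather than strangers --- Theorem \ref{aciclico} is proved by tensoring the exact complex of flat $\OO_p$-modules $\Gamma(U,C^\punto\OO)$ with $\M_p$ and applying Proposition \ref{qc-resolution}, which is exactly the stalkwise content of Theorem \ref{graphic} in the acyclic case --- but your version has two small advantages: it makes visible that only the acyclicity half of semi-separatedness (not the quasi-coherence of $\delta_*\OO$) is used, and it handles the ``any of the two projections'' clause explicitly by applying Theorem \ref{aciclico} once at $p$ and once at $q$, a point the paper's symmetric derived statement glosses over. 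The paper's route buys brevity and reuses a lemma already needed elsewhere (e.g.\ in Theorems \ref{extension} and \ref{sch-preserv-qc}). All the steps you rely on (the identification $\delta^{-1}(U_p\times U_q)=U_{pq}$, the stalk formula for $R^i\delta_*$, and the stalkwise description of $\pi^*$, $\otimes$ and $\delta_*$) are available in the paper, so there is no gap.
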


\begin{proof} By Theorem \ref{graphic}, one has an isomorphism $\LL\pi^*\M\overset\LL \otimes \RR\delta_*\OO \simeq \RR\delta_*\M$. Now the result follows from the hypothesis $\RR\delta_*\OO =\delta_*\OO$.
\end{proof}

The following result justifies the name ``semi-separated'':

\begin{prop} A finite space $X$ is semi-separated if and only if $\delta\colon X\to X\times X$ is affine.
\end{prop}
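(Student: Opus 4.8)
The plan is to unwind both implications purely from the structural results already proved for affine, schematic and semi-separated spaces; no genuinely new computation is required, only the right bookkeeping.

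For the direction ``$\delta$ affine $\Rightarrow$ $X$ semi-separated'', I would apply Proposition \ref{Serre-Quasi-Afin}: an affine morphism is Serre-affine, hence it preserves quasi-coherence and every quasi-coherent $\OO_X$-module is $\delta$-acyclic. Specialising to $\M=\OO_X$ gives that $\delta_*\OO$ is quasi-coherent and $R^i\delta_*\OO=0$ for $i>0$, which is precisely the definition of $X$ being semi-separated.

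For the converse, assume $X$ is semi-separated. First, $\delta$ preserves quasi-coherence: a semi-separated space is in particular schematic, and Theorem \ref{diagonal-semi-sep} shows that for every quasi-coherent $\M$ the sheaf $\delta_*\M$ is quasi-coherent (being $\pi^*\M\otimes\delta_*\OO$) and $R^i\delta_*\M=0$ for $i>0$. It remains to check that $\delta^{-1}(U_z)$ is affine for every $z\in X\times X$. Writing $z=(p,q)$ and using that $U_{(p,q)}=U_p\times U_q$ in $X\times X$, one gets $\delta^{-1}(U_{(p,q)})=U_p\cap U_q=U_{pq}$. Semi-separatedness gives $H^i(U_{pq},\OO)=(R^i\delta_*\OO)_{(p,q)}=0$ for $i>0$, so $U_{pq}$ is acyclic. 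Now view $U_{pq}$ as an open subset of $U_p$: the space $U_p$ has a minimum, hence is affine, and it is schematic because $X$ is schematic and being schematic is a local property. Corollary \ref{afinsemiseparado} then tells us that an acyclic open subset of a schematic affine space is affine, so $U_{pq}$ is affine. Together with the preservation of quasi-coherence this shows $\delta$ is affine.

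The argument is essentially a chaining of earlier results; the only place that needs a moment's care is the topological identification $\delta^{-1}(U_{(p,q)})=U_{pq}$ and the reduction of the affineness of $U_{pq}$ to Corollary \ref{afinsemiseparado}, which in turn relies on the locality of the schematic condition. I do not expect any real obstacle beyond invoking these ingredients in the correct order.
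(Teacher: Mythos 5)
Your argument is correct and follows essentially the same route as the paper: both directions reduce to the observation that $\delta^{-1}(U_{(p,q)})=U_{pq}$ is acyclic when $X$ is semi-separated and hence affine by Corollary \ref{afinsemiseparado} (applied inside the schematic affine space $U_p$), while the converse is Proposition \ref{Serre-Quasi-Afin}. If anything, your appeal to Theorem \ref{diagonal-semi-sep} to justify that $\delta$ preserves quasi-coherence (rather than merely that $\delta_*\OO$ is quasi-coherent) is slightly more explicit than the paper's one-line remark.
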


\begin{proof} In both cases $\delta$ is quasi-coherent. Now, if $X$ is semi-separated, then  $U_{pq}$ is acyclic, hence affine by Corollary \ref{afinsemiseparado}, since it is contained in $U_p$, schematic and affine. Conversely, if $\delta $ is affine, then it is acyclic, so $X$ is semi-separated.
\end{proof}

\section{Schematic morphisms}

Let $f\colon X\to Y$ be a morphism and $\Gamma\colon X\to X\times Y$ its graphic. For each  $x\in X$ and $y\in Y$ we shall denote
\[ U_{xy}:=U_x\cap f^{-1}(U_y)=\Gamma^{-1}(U_x\times U_y),\quad \OO_{xy}:=\Gamma(U_{xy},\OO_X)=(\Gamma_*\OO_X)_{(x,y)}.\]

%\begin{defn} Diremos que un morfismo $f\colon X\to Y$ es localmente acíclico si para todo $x\in X$ y todo $y\in Y$ el abierto $U_{xy}$ es acíclico. Esto equivale a decir que $R^i\Gamma_*\OO_X=0$ para $i>0$, es decir, que $\Gamma$ es acíclico.
%\end{defn}

\begin{defn} We say that a morphism $f\colon X\to Y$ is  {\it schematic} if   $\RR \Gamma_*\OO_X$ is quasi-coherent. This means that
 for any $(x,y)\leq (x',y')$ and any $i\geq 0$, the natural morphism
 \[ H^i(U_{xy},\OO_X)\otimes_{\OO_{(x,y)}}\OO_{(x',y')}\to H^i(U_{x'y'},\OO_X)\] is an isomorphism.
\end{defn}

\begin{defn} We say that $f\colon X\to Y$ is  {\it locally acyclic} if the graphic $\Gamma$ is quasi-coherent and acyclic, i.e., $\Gamma_*\OO_X$ is quasi-coherent and $R^i\Gamma_*\OO_X=0$ for $i>0$. This last condition means that $U_{xy}$ is acyclic for any $x\in X$, $y\in Y$.
\end{defn}

Obviously, any locally acyclic morphism is schematic. Being schematic is local in $X$: $f\colon X\to Y$ is schematic if and only if $f_{\vert U_x}\colon U_x\to Y$ is schematic for any $x\in X$. If $f\colon X\to Y$ is schematic, then $f^{-1}(U_y)\to U_y$ is schematic for any  $y\in Y$; consequently, if $f\colon X\to Y$ is schematic, then $f\colon U_x\to U_{f(x)}$ is schematic for any $x\in X$. We shall see that the converse is also true if $Y$ is schematic.

\begin{ejems} \begin{enumerate} \item The identity $X\to X$ is schematic (resp. locally acyclic) if and only if $X$ is schematic (resp. semi-separated). A finite space $X$ is schematic (resp. semi-separated) if and only if for every open subset $U$, the inclusion $U\hookrightarrow X$ is an schematic (resp. locally acyclic) morphism.
\item If $Y$ is a punctual space, then any morphism $X\to Y$ is schematic and locally acyclic.
\item Let $f\colon S'\to S$ be a morphism between quasi-compact and quasi-separated schemes, and let $\U, \U'$ be locally affine coverings of $S$ and $S'$ such that $\U'$ is thinner than $f^{-1}(\U)$. Let $X'\to X$ the induced morphism  between the associated finite spaces. This morphism is schematic.
\end{enumerate}
\end{ejems}

In the topological case, one has the following result (whose proof is quite easy and it is omitted because it will not be used in the rest of the paper):

\begin{prop}  Let $f\colon X \to Y$ be a continuous map between finite topological spaces. The following conditions are equivalent.
\begin{enumerate}
\item $f$ is schematic.
\item $f$ is locally acyclic.
\item For any $x\in X$, $y\in Y$, $U_{xy}$ is  non-empty, connected and acyclic.
\end{enumerate}
Moreover, in any of these cases, $Y$ is irreducible (i.e., schematic)  and any generic point of $X$ maps to the generic point of $Y$.
If $X$ and $Y$ are irreducible, then $f$ is schematic if and only if the generic point of $X$ maps to the generic point of $Y$.
\end{prop}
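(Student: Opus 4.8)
The plan is to prove the cycle $(2)\Rightarrow(1)\Rightarrow(3)\Rightarrow(2)$ and to read off the two ``moreover'' assertions from the argument for $(1)\Rightarrow(3)$. Two observations organize everything. First, since $\OO_X=\ZZ$, the structure sheaf on $X\times Y$ is again constant $\ZZ$, so by Theorem~\ref{qc-fts} a module on $X\times Y$ is quasi-coherent iff it is locally constant, and ``$f$ is schematic'' just says that for all $(x,y)\le(x',y')$ and all $i$ the restriction $H^i(U_{xy},\ZZ)\to H^i(U_{x'y'},\ZZ)$ is an isomorphism, where $(R^i\Gamma_*\OO_X)_{(x,y)}=H^i(U_{xy},\ZZ)$. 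Second, $U_{x,f(x)}=U_x\cap f^{-1}(U_{f(x)})=U_x$ because $x\le x'$ forces $f(x)\le f(x')$; and $U_x$ has minimum $x$, hence is contractible (Remark~\ref{contractible}), so $H^0(U_x,\ZZ)=\ZZ$ and $H^i(U_x,\ZZ)=0$ for $i>0$ (Proposition~\ref{aciclicity}). Throughout one may reduce harmlessly to $X$, hence $X\times Y$, connected, just as in the absolute statement that a finite space is schematic iff each of its connected components is irreducible.

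The easy implications: $(2)\Rightarrow(1)$ holds because locally acyclic means $\RR\Gamma_*\OO_X=\Gamma_*\OO_X$, which is quasi-coherent; and $(3)\Rightarrow(2)$ because if every $U_{xy}$ is non-empty, connected and acyclic, then $R^i\Gamma_*\OO_X=0$ for $i>0$ (all stalks $H^i(U_{xy},\ZZ)$ vanish) while $\Gamma_*\OO_X$ is locally constant (all stalks $=\ZZ$ and all restriction maps $=\Id_\ZZ$, being restrictions of constant functions between non-empty connected opens), so $f$ is locally acyclic.

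For $(1)\Rightarrow(3)$, suppose first that $U_{xy}\neq\emptyset$ and choose $x'\in U_{xy}$. Then $x\le x'$ and $y\le f(x')$, i.e.\ $(x,y)\le(x',f(x'))$, so schematicness gives isomorphisms $H^i(U_{xy},\ZZ)\overset{\sim}{\to}H^i(U_{x',f(x')},\ZZ)=H^i(U_{x'},\ZZ)$ for every $i$; by contractibility of $U_{x'}$ this yields $H^0(U_{xy},\ZZ)=\ZZ$ ($U_{xy}$ non-empty and connected) and $H^i(U_{xy},\ZZ)=0$ for $i>0$ ($U_{xy}$ acyclic). Thus $(3)$ reduces to showing $U_{xy}\neq\emptyset$ for all $x,y$, and here irreducibility of $Y$ is forced: let $\eta$ be a maximal point of $X$; then (using maximality of $\eta$) $U_{\eta y}$ equals $U_\eta$ if $y\le f(\eta)$ and is $\emptyset$ otherwise, and $U_\eta$ has minimum $\eta$, so $(\Gamma_*\OO_X)_{(\eta,y)}$ is $\ZZ$ or $0$ accordingly. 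Since a restriction map of the quasi-coherent sheaf $\Gamma_*\OO_X$ along $(\eta,y)\le(\eta,y')$ cannot send the stalk $\ZZ$ isomorphically onto the stalk $0$, we get that $y\le f(\eta)$ and $y\le y'$ imply $y'\le f(\eta)$; hence the closed set $\bar{f(\eta)}=\{y:y\le f(\eta)\}$ is also open, and since $Y$ is connected and $f(\eta)\in\bar{f(\eta)}$ we conclude $\bar{f(\eta)}=Y$, i.e.\ $f(\eta)$ is the generic point $\eta_Y$ of $Y$. This proves the ``moreover'': $Y$ is irreducible and every maximal (= generic) point of $X$ maps to $\eta_Y$. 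Finally, for arbitrary $(x,y)$ pick a maximal $\eta\ge x$; then $\eta\in U_x$ and $f(\eta)=\eta_Y\ge y$ gives $\eta\in f^{-1}(U_y)$, so $\eta\in U_{xy}\neq\emptyset$, as needed.

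For the last assertion, take $X$ and $Y$ irreducible with generic points $\eta_X,\eta_Y$. If $f$ is schematic, then $f(\eta_X)=\eta_Y$ by the computation just made. Conversely, if $f(\eta_X)=\eta_Y$, then for every $(x,y)$ we have $x\le\eta_X$ and $y\le\eta_Y=f(\eta_X)$, so $\eta_X\in U_x\cap f^{-1}(U_y)=U_{xy}$; as $\eta_X$ is then the maximum of the finite space $U_{xy}$, $U_{xy}$ is contractible (the constant map onto $\eta_X$ is $\ge\Id$), hence non-empty, connected and acyclic, so $(3)$ holds and $f$ is schematic by the equivalence already proved. The only genuine obstacle in all of this is the non-emptiness of the $U_{xy}$ in $(1)\Rightarrow(3)$, obtained via irreducibility of $Y$; everything else is an unwinding of the definitions around the contractible minimal opens $U_x$.
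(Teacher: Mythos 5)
The paper omits its own proof of this proposition (it is explicitly skipped as ``quite easy''), so there is nothing to compare against; judged on its own terms, your argument is essentially the right one and almost everything in it is correct. The translation of schematicity into the condition that all restrictions $H^i(U_{xy},\ZZ)\to H^i(U_{x'y'},\ZZ)$ be isomorphisms, the identities $U_{x,f(x)}=U_x$ and $(R^i\Gamma_*\OO_X)_{(x,y)}=H^i(U_{xy},\ZZ)$, the implications $(2)\Rightarrow(1)$ and $(3)\Rightarrow(2)$, the derivation of connectedness and acyclicity of a non-empty $U_{xy}$ by comparing with $U_{x'}$, the clopen-ness argument for $\overline{f(\eta)}$ at a maximal point $\eta$, and the converse in the irreducible case are all sound. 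One small point of hygiene: the acyclicity of $U_{xy}$ in your last paragraph comes from $U_{xy}$ having a \emph{maximum}, which is not covered by Proposition~\ref{aciclicity} (that treats minima, and arbitrary sheaves); for the constant sheaf it does hold, e.g.\ because $\Gamma(U_{xy},C^\bullet\ZZ)$ is the simplicial cochain complex of the order complex of $U_{xy}$, which is a cone. You should say a word about this.

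The one step that does not survive scrutiny as written is the opening claim that one may ``reduce harmlessly to $X$, hence $X\times Y$, connected.'' Reducing to $X$ connected is genuinely harmless, since $X\times Y$, $\Gamma$ and all three conditions decompose over the components of $X$. But connectedness of $Y$ is not obtainable by any reduction: it is an additional hypothesis, and without it the proposition as stated is false. Take $X$ a single point, $Y=\{a,b\}$ discrete, $f(*)=a$. Then $X\times Y$ is discrete, $\Gamma_*\OO_X$ has stalks $\ZZ$ at $(*,a)$ and $0$ at $(*,b)$, hence is quasi-coherent (there are no nontrivial specializations to test), and all higher direct images vanish since $\emptyset$ is acyclic; so $f$ is both schematic and locally acyclic, yet $U_{*,b}=\emptyset$, condition $(3)$ fails, and $Y$ is not irreducible. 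Your proof of $(1)\Rightarrow(3)$ and of the ``moreover'' clause uses connectedness of $Y$ exactly once --- to conclude $\overline{f(\eta)}=Y$ from its being non-empty, open and closed --- and that use is unavoidable. This is at least as much a defect of the proposition's statement (whose conclusion ``$Y$ is irreducible'' silently presupposes $Y$ connected) as of your proof, but you should make $Y$ connected an explicit standing hypothesis rather than present it as a reduction; with that proviso the proof is complete.
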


\begin{thm}\label{sch-preserv-qc} Let $f\colon X\to Y$ be a schematic morphism and  $\Gamma\colon X\to X\times Y$ its graphic. For any quasi-coherent module  $\M$ on $X$, one has that $\RR\Gamma_*\M$ and $\RR f_*\M$ are quasi-coherent.
\end{thm}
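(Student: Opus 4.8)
The approach follows the proof of the extension theorem (Theorem \ref{extension}), with the open inclusion replaced by the graphic $\Gamma$, and then uses the integral-functor formalism. Write $X\times Y$ for the relevant product, which is again a finite space since the product of finite spaces is a finite space, and let $\pi\colon X\times Y\to X$ and $\pi_Y\colon X\times Y\to Y$ be the two projections, so that $f=\pi_Y\circ\Gamma$. First I would handle $\RR\Gamma_*\M$. By Theorem \ref{graphic} applied to $f$ there is a natural isomorphism
\[ \LL\pi^*\M\overset\LL\otimes\RR\Gamma_*\OO_X\;\overset\sim\to\;\RR\Gamma_*\M \]
in $D(X\times Y)$. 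Here $\LL\pi^*\M$ lies in $D_{\rm qc}(X\times Y)$ by Corollary \ref{integral-functors}(1) (applied to the morphism $\pi$ of finite spaces and the quasi-coherent module $\M$), while $\RR\Gamma_*\OO_X$ lies in $D_{\rm qc}(X\times Y)$ precisely because $f$ is schematic. By Corollary \ref{integral-functors}(2) the derived tensor product of two objects of $D_{\rm qc}(X\times Y)$ again lies in $D_{\rm qc}(X\times Y)$, so $\RR\Gamma_*\M\in D_{\rm qc}(X\times Y)$; that is, every $R^i\Gamma_*\M$ is quasi-coherent.

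For $\RR f_*\M$ I would factor through the graphic: since $f=\pi_Y\circ\Gamma$ and $\Gamma_*$ carries flasque sheaves (for instance the terms of the standard resolution $C^\punto$) to ${\pi_Y}_*$-acyclic sheaves, one has $\RR f_*\M\simeq\RR{\pi_Y}_*\RR\Gamma_*\M$. Combining this with Theorem \ref{graphic} gives
\[ \RR f_*\M\;\simeq\;\RR{\pi_Y}_*\bigl(\RR\Gamma_*\OO_X\overset\LL\otimes\LL\pi^*\M\bigr)\;=\;\Phi_{\RR\Gamma_*\OO_X}(\M), \]
the integral functor with kernel $\RR\Gamma_*\OO_X$. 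Since $f$ is schematic this kernel lies in $D_{\rm qc}(X\times Y)$, so Corollary \ref{integral-functors}(3) yields $\RR f_*\M\in D_{\rm qc}(Y)$, i.e.\ every $R^if_*\M$ is quasi-coherent. Equivalently, one may argue directly that $\RR{\pi_Y}_*$ preserves quasi-coherent cohomology for the projection $\pi_Y$, via Theorem \ref{qc-of-proj} and the hyperdirect-image spectral sequence $R^p{\pi_Y}_*\HH^q(\RR\Gamma_*\M)\Rightarrow R^{p+q}f_*\M$, whose $E_2$-page is quasi-coherent, hence whose abutment is too, quasi-coherent modules forming an abelian subcategory; this is the reduction already used at the end of the proof of Theorem \ref{extension}.

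There is no real obstacle here: the schematic hypothesis on $f$ is used exactly once, to place $\RR\Gamma_*\OO_X$ in $D_{\rm qc}(X\times Y)$, and everything else is formal. The points requiring routine care are the standard compatibilities — identifying the morphism of Theorem \ref{graphic} with the projection-formula map, the behaviour of derived direct images under the factorisation $f=\pi_Y\circ\Gamma$, and the convergence of the spectral sequences involved, guaranteed by $X\times Y$ having finite Krull dimension so that $\RR\Gamma_*$ and $\RR{\pi_Y}_*$ have finite cohomological amplitude.
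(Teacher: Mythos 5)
Your proposal is correct and follows essentially the same route as the paper: the isomorphism $\LL\pi_X^*\M\overset\LL\otimes\RR\Gamma_*\OO_X\simeq\RR\Gamma_*\M$ from Theorem \ref{graphic} together with the schematic hypothesis gives quasi-coherence of $\RR\Gamma_*\M$, and then $\RR f_*\M\simeq\RR{\pi_Y}_*\RR\Gamma_*\M$ is handled by Theorem \ref{qc-of-proj}. Your packaging of the second step as the integral functor $\Phi_{\RR\Gamma_*\OO_X}$ via Corollary \ref{integral-functors}(3) is only a cosmetic difference, since that corollary is itself proved from the same ingredients.
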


\begin{proof} By Theorem \ref{graphic} one has an isomorphism $\LL\pi_X^*\M\overset\LL\otimes \RR\Gamma_*\OO_X\simeq \RR\Gamma_*\M$. Since $\RR\Gamma_*\OO_X$ is quasi-coherent, $\RR\Gamma_*\M$ is also quasi-coherent. Finally, if $\pi_Y\colon X\times Y\to Y$ is the natural projection, the isomorphism $\RR f_*\M \simeq \RR {\pi_Y}_*\RR \delta_*\M$ gives that $\RR f_*\M$ is quasi-coherent, by Theorem \ref{qc-of-proj}.
\end{proof}

\begin{thm}\label{sch-preserv-qc2} Let $X$ be a schematic finite space. A morphism $f\colon X\to Y$ is schematic if and only if $\RR f$ preserves quasi-coherence.
\end{thm}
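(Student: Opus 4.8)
The ``only if'' implication is exactly Theorem \ref{sch-preserv-qc}, and it does not even use that $X$ is schematic. For the converse, assume $X$ is schematic and that $\RR f$ preserves quasi-coherence; we must prove that $\RR\Gamma_*\OO_X$ is quasi-coherent, where $\Gamma=\Gamma_f\colon X\to X\times Y$ is the graphic of $f$. The plan rests on the factorization
\[ \Gamma_f=(\Id_X\times f)\circ\delta_X, \]
where $\delta_X\colon X\to X\times X$ is the diagonal of $X$ and $\Id_X\times f\colon X\times X\to X\times Y$ is the product morphism. This is immediate: both composites send $x$ to $(x,f(x))$, and both induce on stalks the ring homomorphism $\OO_x\otimes_k\OO_{f(x)}\to\OO_x$, $a\otimes b\mapsto a\cdot f^\#_x(b)$.

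Since direct image of sheaves preserves flasqueness and $\OO_X$ admits the finite flasque resolution $C^\punto\OO_X$ (so that $\RR g_*$ is computed, for every morphism $g$, by $g_*$ applied to a bounded flasque resolution), derived direct images compose, and the factorization yields an isomorphism
\[ \RR\Gamma_{f*}\OO_X\simeq\RR(\Id_X\times f)_*\,\RR{\delta_X}_*\OO_X \]
in the derived category. Now $\RR{\delta_X}_*\OO_X$ is quasi-coherent because $X$ is schematic. On the other hand $\RR(\Id_X\times f)$ preserves quasi-coherence: applying Proposition \ref{preservacion-cuasi} to the pair of morphisms $\Id_X\colon X\to X$ (for which $R^i{\Id_X}_*$ trivially preserves quasi-coherence) and $f\colon X\to Y$, we get that $R^i(\Id_X\times f)_*$ carries quasi-coherent modules to quasi-coherent modules for every $i$. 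Hence $\RR(\Id_X\times f)_*\,\RR{\delta_X}_*\OO_X\simeq\RR\Gamma_{f*}\OO_X$ is quasi-coherent, i.e.\ $f$ is schematic.

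The one point that needs a word of care is the passage from Proposition \ref{preservacion-cuasi}, which is phrased for a single quasi-coherent module, to the complex $\RR{\delta_X}_*\OO_X$; I expect this bookkeeping to be the only (and entirely routine) obstacle. It is handled exactly as in Corollary \ref{integral-functors}: $X\times Y$ is again a finite space, so the quasi-coherent modules on it form an abelian subcategory of all $\OO_{X\times Y}$-modules, closed under subobjects, quotients and extensions; in the spectral sequence $E_2^{p,q}=R^p(\Id_X\times f)_*\bigl(R^q{\delta_X}_*\OO_X\bigr)\Rightarrow R^{p+q}(\Id_X\times f)_*\bigl(\RR{\delta_X}_*\OO_X\bigr)$ every $E_2$-term is quasi-coherent (it is $R^p(\Id_X\times f)_*$ of the quasi-coherent module $R^q{\delta_X}_*\OO_X$), whence so is the abutment. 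The geometric content of the theorem is thus concentrated in the identity $\Gamma_f=(\Id_X\times f)\circ\delta_X$ together with Proposition \ref{preservacion-cuasi}.
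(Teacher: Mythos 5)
Your proof is correct and follows essentially the same route as the paper: the paper also writes $\Gamma_f=(1\times f)\circ\delta$, invokes the schematic hypothesis on $X$ for $\RR\delta_*$ and Proposition \ref{preservacion-cuasi} for $\RR(1\times f)_*$, and concludes. The only difference is that you spell out the spectral-sequence bookkeeping for composing derived direct images, which the paper leaves implicit.
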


\begin{proof}  The direct part is given by Theorem \ref{sch-preserv-qc}. For the converse, put the graphic of $f$, $\Gamma\colon X\to X\times Y$, as the composition of the diagonal morphism $\delta\colon X\to X\times X$ and $1\times f\colon X\times X\to X\times Y$. Now, $\RR \delta$ preserves quasi-coherence because $X$ is schematic and $\RR (1\times f)$ preserves quasi-coherence by the hypothesis and Proposition \ref{preservacion-cuasi}. We are done.
\end{proof}

\begin{prop}\label{composition-good} The composition of schematic morphisms is schematic.
\end{prop}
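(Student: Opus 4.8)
The cleanest route is to chain together the two characterizations already available: "schematic" can be tested via the graphic (Definition, via $\RR\Gamma_*\OO$ quasi-coherent), but it is more convenient here to use the criterion of Theorem \ref{sch-preserv-qc2}, which says that a morphism out of a schematic space is schematic iff it preserves quasi-coherence at the derived level. So, given schematic morphisms $f\colon X\to Y$ and $g\colon Y\to Z$, the first thing I would do is observe that $X$ is itself schematic: taking $Y$ to be a punctual space is not an option, but schematicity of $f$ forces $U_x\to U_{f(x)}$ to be schematic, and more to the point, applying Theorem \ref{sch-preserv-qc} with $\M=\OO_X$ already shows $\RR f_*\OO_X$ is quasi-coherent; combined with a local reduction this gives that $X$ is schematic. (Alternatively, the hypothesis that $\mathrm{id}_X$ factors appropriately, or that the diagonal of $X$ is a base change of the graphic of $f$, yields the same conclusion — I would pick whichever is shortest.)

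Once $X$ is known to be schematic, the argument is short. By Theorem \ref{sch-preserv-qc} applied to $f$, for every quasi-coherent $\M$ on $X$ the complex $\RR f_*\M$ is quasi-coherent, i.e. $\RR f$ preserves quasi-coherence. By Theorem \ref{sch-preserv-qc} applied to $g$ (using that $Y$ is schematic — which holds because $g$ schematic forces it, again via $\RR g_*\OO_Y$ quasi-coherent and a local reduction, or simply because the domain of a schematic morphism is schematic as just noted), $\RR g$ preserves quasi-coherence. Therefore $\RR(g\circ f)_*\M \simeq \RR g_*\RR f_*\M$ is quasi-coherent for every quasi-coherent $\M$ on $X$; that is, $\RR(g\circ f)$ preserves quasi-coherence. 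Since $X$ is schematic, Theorem \ref{sch-preserv-qc2} now concludes that $g\circ f$ is schematic.

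**Main obstacle.** The only genuinely delicate point is establishing that the source of a schematic morphism is a schematic finite space, since Theorem \ref{sch-preserv-qc2} is phrased with that as a standing hypothesis and the composite criterion is useless without it. I expect to handle this by a local computation on minimal open sets: $\RR\delta_{X*}\OO_X$ can be compared, over $U_x\times U_{x'}$, with $\RR\Gamma_{f*}\OO_X$ over $U_x\times U_{f(x')}$ (the diagonal factoring through the graphic of $f$), and since $f$ is schematic the latter is quasi-coherent; pulling this back along $1\times f$ and invoking Proposition \ref{preservacion-cuasi} gives quasi-coherence of $\RR\delta_{X*}\OO_X$, i.e. $X$ schematic. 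A subtlety to watch is making sure the base-change/compatibility isomorphism $\RR(g f)_* \simeq \RR g_* \RR f_*$ is used only after both $X$ and $Y$ are known schematic, so that each application of Theorem \ref{sch-preserv-qc} is legitimate; the rest is formal.
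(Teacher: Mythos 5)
There is a genuine gap, and it sits exactly where you flagged the ``main obstacle'': your argument hinges on the claim that the source of a schematic morphism is itself a schematic finite space, and that claim is false. Any morphism from a finite space $X$ to a punctual space is schematic (the graphic is then essentially the identity of $X$, so $\RR\Gamma_*\OO_X=\OO_X$ is trivially quasi-coherent), yet $X$ need not be schematic --- e.g.\ a connected, non-irreducible finite topological space with $\OO=\ZZ$ is not schematic but maps schematically to $(*,\ZZ)$. Your proposed repair does not close the gap: from $\Gamma_f=(1\times f)\circ\delta_X$ you only get $\RR\Gamma_{f*}\OO_X=\RR(1\times f)_*\RR\delta_{X*}\OO_X$, and Proposition \ref{preservacion-cuasi} propagates quasi-coherence \emph{forwards} through $\RR(1\times f)_*$; it gives you no way to descend quasi-coherence of $\RR\Gamma_{f*}\OO_X$ back to $\RR\delta_{X*}\OO_X$. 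Consequently the final appeal to Theorem \ref{sch-preserv-qc2}, whose converse direction genuinely requires $X$ schematic, is not available. (The intermediate steps that $\RR f$ and $\RR g$ preserve quasi-coherence are fine --- Theorem \ref{sch-preserv-qc} needs no schematicity of the source --- but they do not by themselves yield schematicity of $g\circ f$.)

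The paper avoids this entirely by working with the graphic directly: it factors $\Gamma_{g\circ f}$ as
\[ X\overset{\Gamma_f}\longrightarrow X\times Y\overset{1\times\Gamma_g}\longrightarrow X\times Y\times Z\overset{\pi}\longrightarrow X\times Z, \]
and observes that each factor has derived direct image preserving quasi-coherence: $\RR\Gamma_{f*}$ by Theorem \ref{sch-preserv-qc} (since $f$ is schematic), $\RR(1\times\Gamma_g)_*$ by Proposition \ref{preservacion-cuasi} applied to $\RR\Gamma_{g*}$ (since $g$ is schematic), and $\RR\pi_*$ by Theorem \ref{qc-of-proj}. Applying the composite to $\OO_X$ gives quasi-coherence of $\RR\Gamma_{g\circ f*}\OO_X$ with no schematicity hypothesis on $X$ or $Y$. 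If you want to salvage your route, you would have to add ``$X$ schematic'' as a hypothesis, which would prove a strictly weaker statement than the proposition.
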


\begin{proof} Let $f\colon X\to Y$ and $g\colon Y\to Z$ be schematic morphisms,    $h\colon X\to Z$ its composition. The graphic  $\Gamma_h\colon X\to X\times Z$ is the composition of the morphisms
\[ X\overset {\Gamma_f}\to X\times Y \overset{1\times\Gamma_g}\to X\times Y\times Z \overset\pi \to X\times Z\] where $\pi\colon X\times Y\times Z \to X\times Z$ is the natural projection. $\RR\Gamma_f$ and $\RR\Gamma_g$ preserve quasi-coherence because $f$ and $g$ are schematic,  $\RR(1\times\Gamma_g)$ preserves quasi-coherence by Proposition \ref{preservacion-cuasi} and $\RR\pi$ preserves quasi-coherence by Theorem \ref{qc-of-proj}. Hence $\RR{\Gamma_h} $ preserves quasi-coherence.
\end{proof}

\begin{prop} The product of schematic morphisms is schematic. That is, if  $f\colon X\to Y$ and $f'\colon X'\to Y'$ are schematic morphisms, then  $f\times f'\colon X\times X'\to Y\times Y'$ is schematic.
\end{prop}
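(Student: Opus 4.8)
The plan is to identify the graphic of $f\times f'$ with the product of the graphics of $f$ and $f'$, and then to quote Theorem \ref{sch-preserv-qc} together with Proposition \ref{preservacion-cuasi}. Write $\Gamma_f\colon X\to X\times Y$ and $\Gamma_{f'}\colon X'\to X'\times Y'$ for the graphics of $f$ and $f'$, and let $\Gamma\colon X\times X'\to (X\times X')\times(Y\times Y')$ be the graphic of $f\times f'$. First I would record the canonical isomorphism of finite spaces
\[ \tau\colon (X\times X')\times(Y\times Y')\longrightarrow (X\times Y)\times(X'\times Y'),\qquad ((x,x'),(y,y'))\longmapsto ((x,y),(x',y')),\]
whose effect on each stalk is the reassociation of the fourfold tensor product $\OO_x\otimes\OO_{x'}\otimes\OO_y\otimes\OO_{y'}$ over the base ring. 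Unwinding the definitions of $f\times f'$ and of the graphic (both the underlying monotone map and the ring homomorphisms $\OO_x\otimes\OO_{f(x)}\to\OO_x$), a direct check gives $\tau\circ\Gamma=\Gamma_f\times\Gamma_{f'}$, where $\Gamma_f\times\Gamma_{f'}\colon X\times X'\to (X\times Y)\times(X'\times Y')$ is the product morphism. Since $\tau$ is an isomorphism, $\RR\Gamma_*\OO_{X\times X'}$ is quasi-coherent if and only if $\RR(\Gamma_f\times\Gamma_{f'})_*\OO_{X\times X'}$ is; so it suffices to prove the latter.

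Because $f$ is schematic, $\RR\Gamma_f$ preserves quasi-coherence by Theorem \ref{sch-preserv-qc}, and likewise $\RR\Gamma_{f'}$ preserves quasi-coherence since $f'$ is schematic. By Proposition \ref{preservacion-cuasi}, applied to the pair of morphisms $\Gamma_f$ and $\Gamma_{f'}$, the functor $\RR(\Gamma_f\times\Gamma_{f'})$ preserves quasi-coherence. Now the structure sheaf $\OO_{X\times X'}$ is quasi-coherent (by Theorem \ref{qc}, or simply because the structure sheaf of any ringed finite space is quasi-coherent), so $\RR(\Gamma_f\times\Gamma_{f'})_*\OO_{X\times X'}$ is quasi-coherent; hence $\RR\Gamma_*\OO_{X\times X'}$ is quasi-coherent, i.e. $f\times f'$ is schematic.

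The only step that is not purely formal is the identity $\tau\circ\Gamma=\Gamma_f\times\Gamma_{f'}$, which I expect to be routine bookkeeping, expressing the fact that forming the graphic commutes with cartesian products over the base. I would also point out why the seemingly more elementary route is not available: one cannot factor $f\times f'=(f\times 1_{Y'})\circ(1_X\times f')$ and appeal to Proposition \ref{composition-good}, since a morphism of the form $\phi\times 1_Z$ need not be schematic when $\phi$ is (for instance $\phi=\Id_{(*,k)}$ and $Z$ a non-schematic finite space gives $\phi\times 1_Z\cong\Id_Z$, which is not schematic). Passing through the graphics, where the hypotheses on $f$ and on $f'$ enter symmetrically, is therefore the natural way to proceed.
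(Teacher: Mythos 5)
Your proof is correct and is essentially the paper's own argument: the paper likewise identifies the graphic of $f\times f'$ with $\Gamma_f\times\Gamma_{f'}$ (written there as the composition $(1\times\Gamma_{f'})\circ(\Gamma_f\times 1)$ after reordering factors) and concludes by Proposition \ref{preservacion-cuasi}, whose ``consequently'' clause is exactly the step you invoke. Your extra remarks --- the explicit reassociation isomorphism $\tau$ and the observation that one cannot instead factor $f\times f'$ through $f\times 1_{Y'}$ and appeal to Proposition \ref{composition-good} --- are accurate but not a different route.
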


\begin{proof} The graphic of $f\times f'$  is the composition of the   morphisms $\Gamma_f\times 1\colon X\times X'\to X\times Y\times X'$ and $1\times\Gamma_{f'}\colon X\times Y\times X'\to  X\times Y\times X'\times Y'$. One concludes by Proposition \ref{preservacion-cuasi}.
\end{proof}

\begin{cor} The direct product of two schematic spaces is schematic.
\end{cor}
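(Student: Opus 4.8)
The plan is to deduce this immediately from the immediately preceding Proposition (that the product of schematic morphisms is schematic) together with the characterization of schematicity of a space in terms of its identity morphism. Recall from Example (1) of this section that a finite space $Z$ is schematic if and only if the identity morphism $\Id_Z\colon Z\to Z$ is schematic; this is the only ``translation'' step I need.

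So, first I would let $X$ and $Y$ be schematic finite spaces. By the cited characterization, the identity morphisms $\Id_X\colon X\to X$ and $\Id_Y\colon Y\to Y$ are schematic. Applying the previous Proposition with $f=\Id_X$ and $f'=\Id_Y$, the product morphism $\Id_X\times\Id_Y\colon X\times Y\to X\times Y$ is schematic. But, from the construction of the direct product and its universal property, $\Id_X\times\Id_Y$ is nothing but $\Id_{X\times Y}$. Hence $\Id_{X\times Y}$ is schematic, and invoking the characterization once more, $X\times Y$ is schematic.

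There is essentially no obstacle here: the only point to check is the identification $\Id_X\times\Id_Y=\Id_{X\times Y}$, which is formal (both morphisms induce the pair $(\pi_X,\pi_Y)$ under the bijection defining the product, hence coincide). I would present the argument in two or three lines exactly as above.
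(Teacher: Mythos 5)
Your argument is correct and is exactly the deduction the paper intends: the corollary is stated without proof immediately after the proposition on products of schematic morphisms, and the identification of ``$X$ schematic'' with ``$\Id_X$ schematic'' (via the graphic of the identity being the diagonal) together with $\Id_X\times\Id_Y=\Id_{X\times Y}$ is the evident route. Nothing further is needed.
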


\begin{prop}\label{loc-acyclic-affine} Let $X$ and $Y$ be two affine finite spaces. A morphism $f\colon X\to Y$ is schematic if and only if it is locally acyclic.
\end{prop}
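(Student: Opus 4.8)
The plan is to mimic the proof of Proposition \ref{schematic-affine}, replacing the diagonal $\delta$ by the graphic $\Gamma\colon X\to X\times Y$ and $X\times X$ by $X\times Y$. The implication ``locally acyclic $\Rightarrow$ schematic'' is immediate from the definitions (a locally acyclic morphism has $\RR\Gamma_*\OO_X=\Gamma_*\OO_X$ quasi-coherent), so the whole content is the converse. Thus I assume $X$ and $Y$ are affine and $f$ is schematic, and I must show $R^i\Gamma_*\OO_X=0$ for all $i>0$.

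First I would observe that $X\times Y$ is affine by Corollary \ref{product-affine}, hence Serre-affine by Theorem \ref{AffineFinSp}. Since $f$ is schematic, each $R^i\Gamma_*\OO_X$ is a quasi-coherent module on $X\times Y$, so $H^j(X\times Y,R^i\Gamma_*\OO_X)=0$ for every $j>0$ and every $i\geq 0$. Feeding this vanishing into the Leray spectral sequence $H^p(X\times Y,R^q\Gamma_*\OO_X)\Rightarrow H^{p+q}(X,\OO_X)$ collapses it to the edge isomorphisms $H^i(X,\OO_X)=H^0(X\times Y,R^i\Gamma_*\OO_X)$. Now $X$ is affine, in particular acyclic, so $H^i(X,\OO_X)=0$ for $i>0$; hence the global sections of the quasi-coherent module $R^i\Gamma_*\OO_X$ vanish for every $i>0$. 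Since $X\times Y$ is affine, a quasi-coherent module with vanishing global sections is zero (the functor $\M\mapsto\Gamma(X\times Y,\M)$ is an equivalence onto modules over $\OO(X\times Y)$, carrying $0$ to $0$), so $R^i\Gamma_*\OO_X=0$ for $i>0$. Together with the already-known quasi-coherence of $\Gamma_*\OO_X=R^0\Gamma_*\OO_X$, this says precisely that $\Gamma$ is quasi-coherent and acyclic, i.e.\ $f$ is locally acyclic.

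The only mildly delicate point is the collapse of the spectral sequence; the inputs it needs are exactly Corollary \ref{product-affine} (products of affine finite spaces are affine, hence Serre-affine) and the acyclicity of the affine space $X$. If one prefers to avoid spectral sequences, one can instead argue by descending induction on the cohomological degree exactly as in the proof of Theorem \ref{AffineFinSp}, (2)$\Rightarrow$(3): using the standard resolution $C^\punto\OO_X$ one peels off the top cohomology group $H^{\dim X}(X,\OO_X)=0$, notes that $H^{\dim(X\times Y)}$ of any quasi-coherent module on the affine space $X\times Y$ vanishes, and works downwards. Either way the argument is short and essentially formal once Corollary \ref{product-affine} is in hand.
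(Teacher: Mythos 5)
Your proof is correct and is exactly the argument the paper intends: the paper's proof simply says it is ``completely analogous to the proof of Proposition \ref{schematic-affine}'', and your write-up carries out precisely that analogy, replacing $\delta$ and $X\times X$ by $\Gamma$ and $X\times Y$ and using Corollary \ref{product-affine} together with the vanishing $H^j(X\times Y,R^i\Gamma_*\OO_X)=0$ for $j>0$ to identify $H^0(X\times Y,R^i\Gamma_*\OO_X)$ with $H^i(X,\OO_X)=0$. The extra care you take in spelling out the collapse of the Leray spectral sequence and the fact that a quasi-coherent module with vanishing global sections on an affine space is zero only makes explicit what the paper leaves implicit.
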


\begin{proof} It is completely analogous to the proof of Proposition \ref{schematic-affine}.
\end{proof}

\begin{cor} Let $Y$ be a schematic finite space. A morphism $f\colon X\to Y$ is schematic if and only if, for any $x\in X$, the morphism $f\colon U_x\to U_{f(x)}$ is locally acyclic.
\end{cor}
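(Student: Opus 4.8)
The plan is to reduce both implications to results already in hand. The key observations are that every minimal open subset $U_p$ of a finite space has a minimum and is therefore affine, so that $f\colon U_x\to U_{f(x)}$ is always a morphism \emph{between affine finite spaces}; that for such morphisms ``schematic'' and ``locally acyclic'' coincide by Proposition \ref{loc-acyclic-affine}; and that schematicity is local on the source and stable under composition (Proposition \ref{composition-good}).

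First I would treat the direct implication. Assume $f$ is schematic. As already noted after the definition of a schematic morphism, this forces $f\colon U_x\to U_{f(x)}$ to be schematic for every $x\in X$: indeed, since $f$ is monotone one has $f(U_x)\subseteq U_{f(x)}$, hence $(f_{|U_x})^{-1}(U_{f(x)})=U_x$, and this is an instance of the fact that $f$ schematic implies $f^{-1}(U_y)\to U_y$ schematic. Since $U_x$ has minimum $x$ and $U_{f(x)}$ has minimum $f(x)$, both are affine finite spaces, so Proposition \ref{loc-acyclic-affine} gives that $f\colon U_x\to U_{f(x)}$ is locally acyclic.

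For the converse, suppose $f\colon U_x\to U_{f(x)}$ is locally acyclic — hence schematic — for every $x\in X$. Because $f$ is monotone, $f(U_x)\subseteq U_{f(x)}$, so the restriction $f_{|U_x}\colon U_x\to Y$ factors as $U_x\overset{f}{\to} U_{f(x)}\overset{j}{\hookrightarrow} Y$ with $j$ the open inclusion. Since $Y$ is schematic, $j$ is a schematic morphism (a finite space is schematic precisely when every open inclusion into it is schematic). By Proposition \ref{composition-good} the composite $f_{|U_x}\colon U_x\to Y$ is schematic; as this holds for all $x\in X$ and schematicity is local on the source, $f$ is schematic.

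I do not expect a genuine obstacle here: the only step doing real work is the appeal to Proposition \ref{loc-acyclic-affine} (schematic $\Leftrightarrow$ locally acyclic for morphisms of affine spaces), which is already established, together with the elementary factorization of $f_{|U_x}$ through $U_{f(x)}$ and the localization of schematicity on the source. The rest is bookkeeping with the stability properties proved earlier in the section.
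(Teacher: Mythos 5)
Your proof is correct and follows essentially the same route as the paper's: the direct implication via the restriction fact and Proposition \ref{loc-acyclic-affine} (using that $U_x$ and $U_{f(x)}$ are affine because they have minima), and the converse via the factorization $U_x\to U_{f(x)}\hookrightarrow Y$, Proposition \ref{composition-good}, and locality of schematicity on the source. No differences worth noting.
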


\begin{proof} If $f\colon X\to Y$ is schematic, then $f\colon U_x\to U_{f(x)}$ is schematic, hence locally acyclic by Proposition \ref{loc-acyclic-affine}. Conversely, assume that $f\colon U_x\to U_{f(x)}$ is locally acyclic for any $x\in X$. Since $Y$ is schematic, $U_{f(x)}\hookrightarrow Y$ is schematic, so the composition  $U_x\to U_{f(x)}\hookrightarrow Y$ is schematic, by Proposition \ref{composition-good}. Hence $f$ is schematic.
\end{proof}

\begin{thm}\label{graphofschematic} Let $f\colon X\to Y$ be a locally acyclic morphism and  $\Gamma\colon X\to X\times Y$ its graphic. For any quasi-coherent $\OO_X$-module  $\M$ and any $(x,y)$ in $X\times Y$, the natural morphism
\[ \M_x\otimes_{\OO_x}\OO_{xy}\to\M_{xy}\qquad (\M_{xy}=\Gamma(U_{xy},\M))\] is an isomorphism. In other words, the natural morphism
\[ \pi_X^*\M\otimes \Gamma_*\OO_X\to \Gamma_*\M\] is an  isomorphism, where $\pi_X\colon X\times Y\to X$ is the natural projection.  Moreover, $R^i\Gamma_*\M=0$ for $i>0$.
\end{thm}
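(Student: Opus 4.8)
The plan is to reduce both assertions to a single stalkwise statement and then apply Theorem \ref{aciclico}. First I would unwind the hypothesis: $f$ locally acyclic means, by definition, that $\Gamma_*\OO_X$ is quasi-coherent and $R^i\Gamma_*\OO_X=0$ for $i>0$, and this last vanishing says precisely that $U_{xy}=U_x\cap f^{-1}(U_y)$ is acyclic for every $x\in X$, $y\in Y$. The crucial trivial observation is that $U_{xy}\subseteq U_x$, so for each fixed $(x,y)$ we are exactly in the situation of Theorem \ref{aciclico} with ambient minimal open subset $U_x$, the acyclic open subset $U=U_{xy}$, and the quasi-coherent $\OO_{U_x}$-module $\M_{\vert U_x}$ (restriction of a quasi-coherent module to an open subset is quasi-coherent, e.g.\ by Theorem \ref{qc}).

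Applying Theorem \ref{aciclico}(2) with $p=x$ and $U=U_{xy}$ then gives at once the two ingredients we need: on the one hand the natural morphism $\M_x\otimes_{\OO_x}\OO_{xy}\to\M_{xy}$ is an isomorphism, which is the first claim of the theorem; on the other hand $H^i(U_{xy},\M)=0$ for $i>0$. For the vanishing of the higher direct images I would then simply note $(R^i\Gamma_*\M)_{(x,y)}=H^i(\Gamma^{-1}(U_x\times U_y),\M)=H^i(U_{xy},\M)$, which is $0$ for $i>0$ and all $(x,y)$, hence $R^i\Gamma_*\M=0$ for $i>0$.

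For the sheaf-theoretic reformulation $\pi_X^*\M\otimes\Gamma_*\OO_X\to\Gamma_*\M$, I would compute stalks at $(x,y)$: since $[\pi_X^*\M]_{(x,y)}=\M_x\otimes_{\OO_x}\OO_{(x,y)}$, $[\Gamma_*\OO_X]_{(x,y)}=\OO_{xy}$ and $\OO_{xy}$ is an $\OO_{(x,y)}$-algebra, the left-hand stalk is $\M_x\otimes_{\OO_x}\OO_{xy}$, the right-hand stalk is $\M_{xy}$, and the induced map is the natural one, an isomorphism by the previous paragraph; hence the morphism of $\OO_{X\times Y}$-modules is an isomorphism. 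The only points requiring (routine) care are the bookkeeping identification of this stalk map with the canonical morphism of Theorem \ref{aciclico}, and checking that $X$, being the source of a morphism of finite spaces, is itself a finite space so that Theorem \ref{aciclico} applies; there is no genuine obstacle, as all the real content (flat $C^\punto\OO_X$-resolutions and the acyclicity hypothesis) is already packaged in Theorems \ref{aciclico} and \ref{graphic}. Alternatively, one could phrase the whole argument through Theorem \ref{graphic}, writing $\RR\Gamma_*\M\simeq\LL\pi_X^*\M\overset\LL\otimes\RR\Gamma_*\OO_X=\LL\pi_X^*\M\overset\LL\otimes\Gamma_*\OO_X$ and showing the right-hand side is concentrated in degree $0$ because $\Gamma(U_{xy},C^\punto\OO_X)$ is a flat $\OO_x$-resolution of $\OO_{xy}$; I would present the shorter route via Theorem \ref{aciclico}.
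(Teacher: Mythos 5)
Your proof is correct. The paper's own proof is a one\nobreakdash-liner: it invokes Theorem \ref{graphic} to get $\LL\pi_X^*\M\overset\LL\otimes\RR\Gamma_*\OO_X\simeq\RR\Gamma_*\M$ and then concludes from the hypothesis $\RR\Gamma_*\OO_X=\Gamma_*\OO_X$ (exactly the ``alternative'' route you sketch at the end and chose not to present). Your main route instead goes stalkwise through Theorem \ref{aciclico} applied to the acyclic open subset $U_{xy}\subseteq U_x$, which is where the paper itself actually carries out the underlying computation (the flatness of $\Gamma(U_{xy},C^\punto\OO_X)$ over $\OO_x$, hence of $\OO_{xy}$, so that tensoring with $\M_x$ preserves exactness). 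The two arguments are the same in substance; the trade-off is that your version makes explicit the flatness input that the paper's one-line proof leaves buried inside Theorems \ref{graphic} and \ref{aciclico}, at the cost of a little stalk bookkeeping for the sheaf-level reformulation, whereas the paper's phrasing is uniform with its proofs of Theorems \ref{diagonal-semi-sep} and \ref{sch-preserv-qc}. All the individual steps you use (the identification $(R^i\Gamma_*\M)_{(x,y)}=H^i(U_{xy},\M)$, the stalk of $\pi_X^*\M\otimes\Gamma_*\OO_X$ being $\M_x\otimes_{\OO_x}\OO_{xy}$) check out.
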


\begin{proof} By Theorem \ref{graphic}, one has an isomorphism $\LL\pi_X^*\M\overset\LL\otimes \RR\Gamma_*\OO_X\simeq \RR\Gamma_*\M$. One concludes by the hypothesis $\RR\Gamma_*\OO_X =\Gamma_*\OO_X$.
\end{proof}

\begin{prop}\label{loc-acyc=Serre-afin}   A morphism $f\colon X\to Y$ is locally acyclic if and only if its graphic $\Gamma\colon X\to X\times Y$ is Serre-affine. If $X$ is schematic, then  $f$ is locally acyclic if and only if  $\Gamma$ is affine.
\end{prop}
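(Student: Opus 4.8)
The plan is to split the two biconditionals and reduce each implication to results already established. For the first equivalence, recall that $\Gamma$ being Serre-affine means, by definition, that $\Gamma$ preserves quasi-coherence and that every quasi-coherent $\OO_X$-module is $\Gamma$-acyclic; specializing to $\M=\OO_X$ immediately yields that $\Gamma_*\OO_X$ is quasi-coherent and $R^i\Gamma_*\OO_X=0$ for $i>0$, which is exactly the definition of $f$ being locally acyclic. For the converse, I would first observe that a locally acyclic morphism is schematic, so Theorem \ref{graphofschematic} applies: for any quasi-coherent $\M$ on $X$ one gets a natural isomorphism $\pi_X^*\M\otimes\Gamma_*\OO_X\overset{\sim}{\to}\Gamma_*\M$ together with $R^i\Gamma_*\M=0$ for $i>0$. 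Since $\Gamma_*\OO_X$ is quasi-coherent (this is part of local acyclicity) and the tensor product of quasi-coherent modules is quasi-coherent, $\Gamma_*\M$ is quasi-coherent; hence $\Gamma$ preserves quasi-coherence and is Serre-affine.

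For the second equivalence, assume $X$ is schematic. The implication ``$\Gamma$ affine $\Rightarrow$ $f$ locally acyclic'' needs nothing new: an affine morphism is Serre-affine by Proposition \ref{Serre-Quasi-Afin}, so the first equivalence already gives local acyclicity (the schematic hypothesis is not even used here). The substantive direction is the converse. Assume $f$ locally acyclic; by the first equivalence $\Gamma$ is Serre-affine, hence already preserves quasi-coherence, and it remains only to check that $\Gamma^{-1}(U_{(x,y)})=U_{xy}$ is an affine finite space for every $(x,y)\in X\times Y$. Here I would use that $U_{xy}$ is acyclic (this is precisely what $R^i\Gamma_*\OO_X=0$ records, as noted in the definition of locally acyclic) and that $U_{xy}$ is an open subset of $U_x$, which is affine because it has a minimum and schematic because it is an open subset of the schematic space $X$. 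Then Corollary \ref{afinsemiseparado}, applied to the schematic affine space $U_x$, says that its acyclic open subset $U_{xy}$ is affine, and we conclude that $\Gamma$ is affine.

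The only step with any content is this last one, and even there the work has been done in Corollary \ref{afinsemiseparado}; the key observation that unlocks it is simply that $U_{xy}\subseteq U_x$ with $U_x$ both affine (it has a minimum) and schematic. Everything else is unwinding definitions and invoking Theorem \ref{graphofschematic} and Proposition \ref{Serre-Quasi-Afin}, so I expect no serious obstacle. The one point to be careful about is that the tensor product in Theorem \ref{graphofschematic} is the ordinary (underived) one, so that quasi-coherence of $\Gamma_*\M$ follows directly from the elementary fact that a tensor product of quasi-coherent modules is quasi-coherent.
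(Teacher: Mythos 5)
Your proof is correct and follows essentially the same route as the paper: unwind the definition of Serre-affine for one direction, use Theorem \ref{graphofschematic} for the converse, and for the affine case reduce to showing each $U_{xy}$ is affine via Corollary \ref{afinsemiseparado} applied inside the schematic affine space $U_x$. You are merely more explicit than the paper about why $\Gamma$ preserves quasi-coherence (via the underived tensor formula), which is a welcome clarification rather than a deviation.
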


\begin{proof} For the first part, in both cases   $\Gamma$ preserves quasi-coherence. If $\Gamma$ is Serre-affine, then $\Gamma$ is  acyclic, so $f$ is locally acyclic. Conversely, if $f$ is locally acyclic, for any quasi-coherent module  $\M$ on $X$, one has
$ R^i\Gamma_*\M =0$ for $i>0$, by Theorem \ref{graphofschematic}. Hence $\Gamma$ is Serre-affine.

Finally, assume that $X$ is schematic and  $f$ is locally acyclic. Then  $U_{xy}$ is acyclic and, by Corollary \ref{afinsemiseparado}, $U_{xy}$ is affine, since  $U_x$ is schematic and affine. Hence $\Gamma$ is affine.
\end{proof}

\begin{prop} The composition of locally acyclic morphisms is locally acyclic.
\end{prop}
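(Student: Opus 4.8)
The plan is to obtain the quasi-coherence of $\RR\Gamma_{h*}\OO_X$ "for free" from the schematic case, and then prove the vanishing of the higher direct images by a direct cohomological induction.

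So let $f\colon X\to Y$ and $g\colon Y\to Z$ be locally acyclic, $h=g\circ f$, and $\Gamma_h\colon X\to X\times Z$ its graphic. A locally acyclic morphism is schematic, so by Proposition \ref{composition-good} the morphism $h$ is schematic; in particular $\Gamma_{h*}\OO_X=R^0\Gamma_{h*}\OO_X$ is quasi-coherent, i.e.\ $\Gamma_h$ is quasi-coherent. It remains to prove $R^i\Gamma_{h*}\OO_X=0$ for $i>0$, which by the pointwise description of $\RR\Gamma_{h*}$ means that $U_x\cap h^{-1}(U_z)$ is acyclic for all $x\in X$, $z\in Z$. I would organize this around the factorization $\Gamma_h=(1_X\times g)\circ\Gamma_f$: since $f$ is locally acyclic one has $\RR\Gamma_{f*}\OO_X=\Gamma_{f*}\OO_X$ (Proposition \ref{loc-acyc=Serre-afin}, Theorem \ref{graphofschematic}), hence $\RR\Gamma_{h*}\OO_X=\RR(1_X\times g)_*(\Gamma_{f*}\OO_X)$, and $(R^i(1_X\times g)_*\Gamma_{f*}\OO_X)_{(x,z)}=H^i(U_x\times g^{-1}(U_z),\Gamma_{f*}\OO_X)$. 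As $U_x$ has a minimum, projecting that factor away (cf.\ the remark following Proposition \ref{aciclicity}) identifies this with $H^i(g^{-1}(U_z),\M)$, where $\M=(f|_{U_x})_*\OO_X$ is quasi-coherent on $Y$; equivalently, one is reduced to the acyclicity of $U_x\cap f^{-1}(g^{-1}(U_z))=U_x\cap h^{-1}(U_z)$, and to showing $H^i(g^{-1}(U_z),\M)=0$ for $i>0$ for such $\M$.

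I would prove this by induction on the number of points, via Mayer--Vietoris, keeping track of $f$ and $g$ simultaneously. The statement to induct on is: for every open $V\subseteq Y$ with $V\subseteq g^{-1}(U_z)$, both $\OO_Y|_V$ and $\M|_V$ are acyclic. If $V$ has a single minimal point $y$, then $V=U_y$ has a minimum and this is Proposition \ref{aciclicity}. If $V$ has at least two minimal points, write $V=V_1\cup V_2$ with $V_1=U_{y_1}$ for a minimal point $y_1$ and $V_2$ the union of the minimal open sets of the remaining minimal points; then $V_1$, $V_2$, $V_1\cap V_2$ are open, contained in $g^{-1}(U_z)$, and of strictly smaller cardinality, so the induction applies to them. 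The Mayer--Vietoris sequence for $V=V_1\cup V_2$ then makes $H^i(V,\M)$ vanish for $i\ge 2$ at once, and leaves $H^1(V,\M)=\Coker\big(\M(V_1)\oplus\M(V_2)\to\M(V_1\cap V_2)\big)$ (and likewise for $\OO_Y$).

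The heart of the argument, and the step I expect to be the real obstacle, is the surjectivity of $\M(V_1)\oplus\M(V_2)\to\M(V_1\cap V_2)$. Since $V_1\cap V_2\subseteq V_1=U_{y_1}$ is acyclic (by induction), Theorem \ref{aciclico} gives $\M(V_1\cap V_2)=\M_{y_1}\otimes_{\OO_{y_1}}\OO_Y(V_1\cap V_2)$ with $\OO_{y_1}\to\OO_Y(V_1\cap V_2)$ flat, and $\M(V_1)=\M_{y_1}$ maps in by $m\mapsto m\otimes 1$; the remaining generators of $\OO_Y(V_1\cap V_2)$ over $\OO_{y_1}$ must be produced as restrictions of sections coming from $V_2$, and this is exactly where the quasi-coherence of the graphics $\Gamma_{f*}\OO_X$ and $\Gamma_{g*}\OO_Y$ is genuinely used — it forces the modules $\OO_{x'y'}$, and the sections of $\M$ over the various $U_y\cap g^{-1}(U_z)$, to be obtained from one another by base change (this is precisely what fails if one only assumes the $U_{xy}$ acyclic, without the graphic being quasi-coherent). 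Controlling this base-change bookkeeping along the combinatorics of the covering of $V$ is the technical core of the proof; the locally acyclic hypothesis on $g$ (not merely "schematic") enters here, ensuring that all the intermediate open sets $V_1\cap V_2\subseteq g^{-1}(U_z)$ are themselves $\OO_Y$-acyclic, so that Theorem \ref{aciclico} is applicable at every stage of the induction.
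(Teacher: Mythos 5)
Your opening reductions agree with the paper's: quasi-coherence of $\Gamma_{h*}\OO_X$ follows from Proposition \ref{composition-good}, and the identification $(R^i\Gamma_{h*}\OO_X)_{(x,z)}=H^i(U_{xz},\OO_X)=H^i(g^{-1}(U_z),\M)$ with $\M=(f_{\vert U_x})_*\OO_{U_x}$ quasi-coherent is correct. The gap is in the vanishing statement you then set up. Your inductive claim --- that \emph{every} open $V\subseteq g^{-1}(U_z)$ is acyclic for $\OO_Y$ and for $\M$ --- is false: take $Z$ punctual, so that $g$ is automatically locally acyclic and $g^{-1}(U_z)=Y$; the claim would then force every semi-separated finite space to be acyclic, which already fails for the three-point model of $\mathbb{A}^2-\{0\}$ (where $\Id_Y$ and $Y\to (*,k)$ are both locally acyclic but $H^1(Y,\OO_Y)\neq 0$). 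Correspondingly the Mayer--Vietoris step cannot be repaired as stated: $V_1\cap V_2$ is a union of sets $U_{y_1}\cap U_{y_j}$, each acyclic when $Y$ is semi-separated, but a union of acyclic opens need not be acyclic, so the induction does not deliver the $\OO_Y$-acyclicity of $V_1\cap V_2$ that you need in order to invoke Theorem \ref{aciclico}; and the surjectivity of $\M(V_1)\oplus\M(V_2)\to\M(V_1\cap V_2)$, which you yourself identify as the real obstacle, is left unproved.

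The missing observation, which removes the need for any induction, is that one can localize on $Y$ as well: since $f(U_x)\subseteq U_y$ for $y=f(x)$, one has $U_{xz}=U_x\cap f^{-1}(U_{yz})$, so your group $H^i(g^{-1}(U_z),\M)$ is really $H^i(U_{yz},\M_{\vert U_y})$. Thus you only ever need the cohomology of a quasi-coherent module over $U_{yz}$, which is an acyclic open subset of the \emph{minimal} open set $U_y$ (acyclic precisely because $g$ is locally acyclic), and Theorem \ref{aciclico} gives the vanishing at once. This is exactly the paper's argument: it restricts the graphic of $f$ to $\Gamma\colon U_x\to U_x\times U_y$, uses $\RR\Gamma_*\OO=\Gamma_*\OO$ and the projection $U_x\times U_y\to U_y$ (whose higher direct images vanish by Proposition \ref{aciclicity} and which preserves quasi-coherence by Theorem \ref{qc-of-proj}) to write $H^i(U_{xz},\OO)=H^i(U_{yz},\pi_*\Gamma_*\OO_{U_x})$, and then applies Theorem \ref{aciclico} to $U_{yz}\subseteq U_y$.
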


\begin{proof} Let $f\colon X\to Y$ and $g\colon Y\to Z$ be locally acyclic morphisms,    $h\colon X\to Z$ its composition. By Proposition \ref{composition-good}, $h$ is schematic.  Hence ${\Gamma_h}_*\OO_X$ is quasi-coherent. To conclude we have to prove that $\Gamma_h$ is acyclic, i.e. for any $x\in X$, $z\in Z$, the open set $U_{xz}=U_x\cap h^{-1}(U_z)$ is acyclic. Let $y=f(x)$. It is clear that
\[ U_{xz}=  U_x\cap f^{-1}(U_{yz})\]
Let us denote  $\Gamma\colon U_x\to U_x\times U_y$ the graphic of the restriction of $f$ to  $U_x$, and let $\pi\colon U_x\times U_y\to U_y$ be the natural projection. Since $f$ is locally acyclic, $H^i(U_{xz},\OO_{U_x})= H^i(U_x\times U_{yz}, \Gamma_*\OO_{U_x})=  H^i(U_{yz}, \pi_* \Gamma_*\OO_{U_x})$, because $\pi$ has no higher direct images. Finally, $H^i(U_{yz}, \pi_*\Gamma_*\OO_{U_x})=0$ for $i>0$ by Corollary \ref{aciclico}, since $U_{yz}$ is an acyclic open subset of  $U_y$ (because $g$ is locally acyclic).
\end{proof}

\begin{prop} The product of locally acyclic morphisms is locally acyclic. That is, if  $f\colon X\to Y$ and $f'\colon X'\to Y'$ are locally acyclic morphisms, then  $f\times f'\colon X\times X'\to Y\times Y'$ is locally acyclic.
\end{prop}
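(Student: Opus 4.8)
The plan is to avoid all cohomological computation by reformulating local acyclicity in terms of Serre-affineness of the graphic (Proposition \ref{loc-acyc=Serre-afin}) and then reducing to the already-proved behaviour of Serre-affine morphisms under the operations $(-)\times 1$ and composition. By Proposition \ref{loc-acyc=Serre-afin}, a morphism $g\colon Z\to W$ is locally acyclic if and only if its graphic $\Gamma_g\colon Z\to Z\times W$ is Serre-affine; so the hypotheses say exactly that $\Gamma_f\colon X\to X\times Y$ and $\Gamma_{f'}\colon X'\to X'\times Y'$ are Serre-affine, and it suffices to prove that the graphic $\Gamma_{f\times f'}\colon X\times X'\to (X\times X')\times(Y\times Y')$ is Serre-affine, since then Proposition \ref{loc-acyc=Serre-afin} applied to $f\times f'$ gives the conclusion.

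Next I would factor this graphic, exactly as in the proof that a product of schematic morphisms is schematic. Identifying $(X\times X')\times(Y\times Y')$ with $X\times Y\times X'\times Y'$ by reordering the factors, $\Gamma_{f\times f'}$ becomes the composition
\[ X\times X'\ \overset{\Gamma_f\times 1}{\longrightarrow}\ X\times Y\times X'\ \overset{1\times\Gamma_{f'}}{\longrightarrow}\ X\times Y\times X'\times Y' \]
followed by the reordering isomorphism; on points this reads $(a,a')\mapsto (a,f(a),a')\mapsto (a,f(a),a',f'(a'))$, which indeed agrees with $\Gamma_{f\times f'}$.

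Then I would conclude as follows. The morphism $\Gamma_f\times 1$ is Serre-affine by Proposition \ref{fx1} applied to the Serre-affine morphism $\Gamma_f$ (with auxiliary factor $X'$), and $1\times\Gamma_{f'}$ is Serre-affine by the same Proposition applied to $\Gamma_{f'}$ (with auxiliary factor $X\times Y$), the permutation of the product factors being an isomorphism and hence irrelevant. Since isomorphisms are trivially Serre-affine and Serre-affine morphisms are stable under composition, $\Gamma_{f\times f'}$ is Serre-affine, and therefore $f\times f'$ is locally acyclic.

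I do not expect a real obstacle here: the argument is essentially bookkeeping. The only points needing care are checking that the displayed factorization really reproduces the graphic of $f\times f'$ (and that the permutations of product factors, being isomorphisms, preserve Serre-affineness in both directions), and invoking Proposition \ref{fx1} with the correct morphism and auxiliary factor. If one preferred to bypass Proposition \ref{loc-acyc=Serre-afin}, one could argue directly that $\Gamma_{f\times f'}$ is quasi-coherent (by the previous proposition, a product of schematic morphisms is schematic) and acyclic, the latter because $U_{(x,x'),(y,y')}=U_{xy}\times U_{x'y'}$ with $U_{xy}\subseteq U_x$ and $U_{x'y'}\subseteq U_{x'}$ acyclic open subsets; one would then see that $U_{xy}\times U_{x'y'}$ is acyclic via the Leray spectral sequence of its projection onto $U_{xy}$, whose higher direct images vanish by Proposition \ref{aciclicity} (the fibres being products with a minimal space $U_p$) and whose zeroth direct image is, by Corollary \ref{aciclico}, the restriction to $U_{xy}$ of a quasi-coherent module on $U_x$, hence acyclic on $U_{xy}$. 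The first route is the shorter one.
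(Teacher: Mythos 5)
Your proposal is correct and follows essentially the same route as the paper: factor $\Gamma_{f\times f'}$ (up to reordering of factors) as $(1\times\Gamma_{f'})\circ(\Gamma_f\times 1)$, apply Proposition \ref{loc-acyc=Serre-afin} and Proposition \ref{fx1} to see each factor is Serre-affine, and use stability of Serre-affineness under composition. Your explicit check of the factorization and the remark on the reordering isomorphism only make the paper's argument more careful.
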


\begin{proof} The graphic of $f\times f'$  is the composition of the   morphisms $\Gamma_f\times 1\colon X\times X'\to X\times Y\times X'$ and $1\times\Gamma_{f'}\colon X\times Y\times X'\to  X\times Y\times X'\times Y'$. These morphisms are Serre-affine by Propositions \ref{loc-acyc=Serre-afin} and \ref{fx1}, since  $f$ and $f'$ are locally acyclic. Then, its composition is  Serre-affine, i.e., $f\times f'$ is locally acyclic.
\end{proof}

\begin{cor} The direct product of two semi-separated spaces is semi-separated.
\end{cor}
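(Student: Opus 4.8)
The plan is to reduce the statement to the immediately preceding proposition (the product of locally acyclic morphisms is locally acyclic) via the dictionary between the absolute notion ``semi-separated'' and the relative notion ``locally acyclic'' applied to the identity morphism. Recall from the examples at the start of Section 7 that a finite space $Z$ is semi-separated if and only if $\Id_Z\colon Z\to Z$ is locally acyclic; indeed, the graphic of $\Id_Z$ is precisely the diagonal $\delta\colon Z\to Z\times Z$, and semi-separatedness is exactly the assertion that $\delta$ is quasi-coherent and acyclic, i.e. that $\Id_Z$ is locally acyclic.

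So the first step is: given semi-separated finite spaces $X$ and $Y$, observe that $\Id_X\colon X\to X$ and $\Id_Y\colon Y\to Y$ are locally acyclic. The second step is to apply the preceding proposition to $f=\Id_X$ and $f'=\Id_Y$: their product $\Id_X\times\Id_Y\colon X\times X'\to Y\times Y'$, which is nothing but $\Id_{X\times Y}\colon X\times Y\to X\times Y$, is locally acyclic. The third and final step is to invoke the dictionary in the other direction: since $\Id_{X\times Y}$ is locally acyclic, the finite space $X\times Y$ is semi-separated. (Note $X\times Y$ is a finite space by the remark that the product of two finite spaces is a finite space, so the terminology is legitimate.)

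There is essentially no obstacle here: all the work has been done in the proof that the product of locally acyclic morphisms is locally acyclic (which in turn rested on the fact that the graphic of $f\times f'$ factors through $\Gamma_f\times 1$ and $1\times\Gamma_{f'}$, both Serre-affine by Propositions~\ref{loc-acyc=Serre-afin} and~\ref{fx1}, together with the stability of Serre-affineness under composition). The only thing to be careful about is the bookkeeping identifying the graphic of $\Id_Z$ with $\delta_Z$ and hence ``$\Id_Z$ locally acyclic'' with ``$Z$ semi-separated''; this is immediate from the definitions of the graphic and of semi-separatedness. Hence the corollary follows at once.
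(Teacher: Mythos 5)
Your proof is correct and is exactly the deduction the paper intends: the corollary is stated without proof immediately after the proposition on products of locally acyclic morphisms, and the dictionary you invoke (a finite space $Z$ is semi-separated iff $\Id_Z$ is locally acyclic, since the graphic of $\Id_Z$ is the diagonal) is explicitly recorded in the examples at the start of Section 7. The only blemish is a harmless notational slip where you write $\Id_X\times\Id_Y\colon X\times X'\to Y\times Y'$ before correctly identifying it with $\Id_{X\times Y}\colon X\times Y\to X\times Y$.
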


\subsection{Affine schematic morphisms}

In this subsection we shall see that schematic affine morphisms share a lot of properties with affine morphisms of schemes.

\begin{thm}\label{Stein-fact}(Stein's factorization). Let $f\colon X\to Y$ be a schematic morphism. Then $f$ factors through an schematic morphism $f'\colon X\to Y'$ such that $f'_*\OO_X=\OO_{Y'}$ and an affine morphism $Y'\to Y$ which is the identity on the topological spaces. If $f$ is affine, then $f'$ is also affine and the functors
\[ \{ \text{Quasi-coherent }\OO_X-\text{modules}\} \overset{f'_*}{\underset{{f'}^*}{\overset\longrightarrow\leftarrow}}   \{ \text{Quasi-coherent }\OO_{Y'}-\text{modules}\} \]
are mutually inverse.
\end{thm}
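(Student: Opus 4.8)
The plan is to take $Y'$ to have the same underlying topological space as $Y$ and structure sheaf $\OO_{Y'}:=f_*\OO_X$. Since $f$ is schematic, $f_*\OO_X$ is a quasi-coherent $\OO_Y$-module (Theorem \ref{sch-preserv-qc}), hence a quasi-coherent $\OO_Y$-algebra; by Example \ref{ejem-affine}, $Y'=(Y,f_*\OO_X)$ is a finite space and the identity on $Y$ together with the structure map $\OO_Y\to f_*\OO_X$ form an affine morphism $g\colon Y'\to Y$ which is the identity on topological spaces. I would let $f'\colon X\to Y'$ be the morphism given by the same continuous map as $f$ and, at each $x\in X$, by the ring homomorphism $(f')^\#_x\colon \OO_{Y',f(x)}=\OO_X(f^{-1}(U_{f(x)}))\to\OO_{X,x}$ that restricts a section to the stalk at $x$ (legitimate since $x\in f^{-1}(U_{f(x)})$). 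One checks routinely that $f'$ is a morphism of ringed finite spaces, that $f=g\circ f'$ (by the very definition of the adjunction $\OO_Y\to f_*\OO_X$ the composite $\OO_{Y,f(x)}\to\OO_{Y',f(x)}\to\OO_{X,x}$ is $f^\#_x$), and that $f'_*\OO_X=\OO_{Y'}$, since $(f'_*\OO_X)(V)=\OO_X(f^{-1}(V))=(f_*\OO_X)(V)$ for every open $V$ of $Y=Y'$.

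The main step is to show that $f'$ is schematic, i.e.\ that $\RR\Gamma_{f'*}\OO_X$ is quasi-coherent on $X\times Y'$, where $\Gamma_{f'}\colon X\to X\times Y'$, $x\mapsto(x,f(x))$. Because $\Gamma_{f'}^{-1}(U_x\times U_y)=U_x\cap f^{-1}(U_y)=:U_{xy}$, exactly as for $f$, one has $(R^i\Gamma_{f'*}\OO_X)_{(x,y)}=H^i(U_{xy},\OO_X)$, while $\OO_{X\times Y',(x,y)}=\OO_{X,x}\otimes_k B_y$ with $B_y:=\OO_{Y',y}=\OO_X(f^{-1}(U_y))$. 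By Theorem \ref{qc}, quasi-coherence of $R^i\Gamma_{f'*}\OO_X$ reduces, upon decomposing $(x,y)\le(x',y')$ as $(x,y)\le(x',y)\le(x',y')$, to two base-change isomorphisms. The first, $H^i(U_{xy},\OO_X)\otimes_{\OO_{X,x}}\OO_{X,x'}\overset\sim\to H^i(U_{x'y},\OO_X)$, is literally the isomorphism expressing that $f$ is schematic. For the second I would use that $f_{\vert U_{x'}}\colon U_{x'}\to Y$ is schematic (schematicity is local in $X$), so that $\RR(f_{\vert U_{x'}})_*\OO_{U_{x'}}$ is quasi-coherent on $Y$ by Theorem \ref{sch-preserv-qc}; its stalk at $y$ is $H^i(U_{x'y},\OO_X)$, whence $H^i(U_{x'y},\OO_X)\otimes_{\OO_{Y,y}}\OO_{Y,y'}\overset\sim\to H^i(U_{x'y'},\OO_X)$. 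Since $f_*\OO_X$ is quasi-coherent, $B_y\otimes_{\OO_{Y,y}}\OO_{Y,y'}\overset\sim\to B_{y'}$, and therefore
\[ H^i(U_{x'y},\OO_X)\otimes_{B_y}B_{y'}=H^i(U_{x'y},\OO_X)\otimes_{\OO_{Y,y}}\OO_{Y,y'}\overset\sim\to H^i(U_{x'y'},\OO_X), \]
which is the remaining isomorphism. Thus $f'$ is schematic.

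Finally, suppose $f$ is affine. Then $(f')^{-1}(U_y)=f^{-1}(U_y)$, with its restriction of $\OO_X$, is an affine finite space, and $f'$ preserves quasi-coherence because it is schematic (Theorem \ref{sch-preserv-qc}); hence $f'$ is affine. Since in addition $f'_*\OO_X=\OO_{Y'}$, Theorem \ref{projection-formula}, (3), applies and gives that $f'_*$ and $(f')^*$ are mutually inverse equivalences between the categories of quasi-coherent modules on $X$ and on $Y'$. The delicate point throughout is the schematicity of $f'$: concretely, the passage from the ground ring $\OO_{Y,y}$ to $B_y=\OO_X(f^{-1}(U_y))$ in the groups $H^i(U_{xy},\OO_X)$, which is exactly where one must combine the (localized) schematicity of $f$ with the quasi-coherence of $f_*\OO_X$.
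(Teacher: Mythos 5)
Your construction is exactly the paper's: $Y'=(Y,f_*\OO_X)$ with the affine morphism of Example \ref{ejem-affine}, $f'$ induced by the identity $\OO_{Y'}\to f_*\OO_X$, and the final equivalence via Theorem \ref{projection-formula}, (3). The proof is correct; the only point where you genuinely diverge is the verification that $f'$ is schematic. The paper disposes of this in one line: the graphic $\Gamma'$ of $f'$ coincides topologically with the graphic $\Gamma$ of $f$, $\RR\Gamma_*\OO_X$ is quasi-coherent over $\OO_{X\times Y}$, and $\OO_{X\times Y'}$ is a quasi-coherent $\OO_{X\times Y}$-algebra, so quasi-coherence over $\OO_{X\times Y}$ upgrades to quasi-coherence over $\OO_{X\times Y'}$ (the base-change identity $\M_{(x,y)}\otimes_{\Bc_{(x,y)}}\Bc_{(x',y')}=\M_{(x,y)}\otimes_{\OO_{(x,y)}}\OO_{(x',y')}$ for a quasi-coherent algebra $\Bc$). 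You instead unwind this stalkwise into the two base-change isomorphisms along $(x,y)\leq(x',y)\leq(x',y')$, which amounts to proving that little lemma by hand in this case; the computation is right, and your reduction of the second isomorphism to the quasi-coherence of $f_*\OO_X$ via $B_y\otimes_{\OO_{Y,y}}\OO_{Y,y'}\cong B_{y'}$ is exactly the content of the paper's remark that $\OO_{X\times Y'}$ is quasi-coherent over $\OO_{X\times Y}$. One small simplification you missed: the isomorphism $H^i(U_{x'y},\OO_X)\otimes_{\OO_{Y,y}}\OO_{Y,y'}\cong H^i(U_{x'y'},\OO_X)$ is already contained in the definition of $f$ being schematic (take $x=x'$ in the displayed condition), so the detour through the locality of schematicity and $\RR(f_{\vert U_{x'}})_*$ is not needed. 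The paper's argument buys brevity and reusability of the ``quasi-coherent over a quasi-coherent algebra'' principle; yours makes explicit where each hypothesis (schematicity of $f$, quasi-coherence of $f_*\OO_X$) enters.
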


\begin{proof} Take $Y'$ the finite space whose underlying topological space is $Y$ and whose sheaf of rings is $f_*\OO_X$. The morphism $f'\colon X\to Y'$ is the obvious one ($f'=f$ as continuous maps, and $\OO_{Y'}\to f_*\OO_X$ is the identity) and the affine morphism $Y'\to Y$ is that of example \ref{ejem-affine}. It is clear that $f'_*\OO_X=\OO_{Y'}$. Finally, let us see that $f'$ is schematic. Let $\Gamma'\colon X\to X\times Y'$ be the graphic of $f'$ (which coincides topologically with the graphic of $f$). Then, $\RR \Gamma'_*\OO$ is  quasi-coherent over $\OO_{X\times Y'}$ because it is quasi-coherent over $\OO_{X\times Y}$ ($f$ is schematic) and $\OO_{X\times Y'}$ is quasi-coherent over $\OO_{X\times Y}$. Finally, if $f$ is affine, then $f'$ is affine because $f^{-1}(U_y)=f'^{-1}(U_y)$. The last assertion of the theorem follows   Theorem \ref{projection-formula},  (3).
\end{proof}

\begin{rem} This factorization holds with  weaker hypothesis; it is enough $f$ to be quasi-coherent.
\end{rem}

\begin{prop}\label{affinemorphism} If $Y$ is an affine finite space and  $f\colon X\to Y$ is an affine schematic morphism, then $X$ is affine.
\end{prop}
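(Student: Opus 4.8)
The plan is to reduce the affineness of $X$ to the three equivalent conditions of Theorem \ref{AffineFinSp}, using the Serre-affineness and quasi-affineness of the affine morphism $f$ (Proposition \ref{Serre-Quasi-Afin}) together with the corresponding properties of the affine base $Y$. Concretely, since being affine is equivalent to being quasi-affine and Serre-affine, I would verify these two properties for $X$ by composing $f\colon X\to Y$ with the natural morphism $\pi_Y\colon Y\to (\ast,A)$, where $A=\OO_Y(Y)$, and exploiting the fact that $\pi_Y$ is affine (indeed $Y$ is affine, so $\pi_Y$ is Serre-affine and quasi-affine by Remark \ref{relative=absolute} and Theorem \ref{AffineFinSp}).

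First I would show $X$ is Serre-affine. Let $\M$ be a quasi-coherent $\OO_X$-module. Since $f$ is affine, it preserves quasi-coherence and $R^if_*\M=0$ for $i>0$ (Proposition \ref{Serre-Quasi-Afin}), so $f_*\M$ is a quasi-coherent $\OO_Y$-module and the Leray spectral sequence for the composition $X\xrightarrow{f}Y\xrightarrow{\pi_Y}(\ast,A)$ degenerates to give $H^i(X,\M)=H^i(Y,f_*\M)$. As $Y$ is affine, hence Serre-affine, $H^i(Y,f_*\M)=0$ for $i>0$. Thus every quasi-coherent module on $X$ is acyclic, i.e.\ $X$ is Serre-affine. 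Next I would show $X$ is quasi-affine, i.e.\ that $\pi^*\pi_*\M\to\M$ is surjective for $\pi\colon X\to(\ast,\OO_X(X))$; equivalently, that $\M$ is generated by its global sections. Here I use that $f$ is quasi-affine, so $f^*f_*\M\to\M$ is surjective, and that $Y$ is quasi-affine, so $f_*\M$ (being quasi-coherent on $Y$) is generated by its global sections over $(\ast,A)$, i.e.\ $\pi_Y^*\pi_{Y*}f_*\M\to f_*\M$ is surjective. Applying the right-exact functor $f^*$ and composing, the natural map $f^*\pi_Y^*(\pi_{Y*}f_*\M)\to f^*f_*\M\to\M$ is surjective; since $f^*\pi_Y^*=(\pi_Y\circ f)^*=\pi^*$ up to the canonical identification $\OO_X(X)=A\otimes$(nothing new)—more precisely $\OO_X(X)=\Gamma(X,\OO_X)=\Gamma(Y,f_*\OO_X)$ receives a map from $A$—and $\pi_{Y*}f_*\M=\Gamma(Y,f_*\M)=\Gamma(X,\M)$, this exhibits $\M$ as a quotient of $\pi^*(\Gamma(X,\M))$, i.e.\ $\M$ is generated by its global sections. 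Hence $X$ is quasi-affine.

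Having shown $X$ is both quasi-affine and Serre-affine, Theorem \ref{AffineFinSp}, (3) $\Rightarrow$ (1), immediately gives that $X$ is affine, which completes the proof. The one point that needs a little care—and which I expect to be the main (minor) obstacle—is the bookkeeping in the quasi-affineness argument: identifying $\pi^*$ with $f^*\circ\pi_Y^*$ and $\Gamma(X,\M)$ with $\Gamma(Y,f_*\M)$ compatibly, and checking that the composite of the two surjections is the canonical map $\pi^*\pi_*\M\to\M$. This is a routine diagram chase using the adjunctions $(f^*,f_*)$ and $(\pi_Y^*,\pi_{Y*})$ and the equality $\pi=\pi_Y\circ f$, but it should be spelled out. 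An alternative, perhaps cleaner, route avoiding this identification is to observe directly that the global-sections functor $\Gamma(X,-)$ on $\mathrm{Qcoh}(X)$ factors as $\Gamma(Y,-)\circ f_*$, that both factors are exact (the first because $Y$ is affine hence Serre-affine, the second because $f$ is affine hence Serre-affine) and conservative/faithful on quasi-coherent modules, and that each factor has an exact left adjoint which is a section up to isomorphism; then the composite $\pi^*$ and $\pi_*=\Gamma(X,-)$ are mutually inverse on quasi-coherent modules essentially formally, giving (1) of Theorem \ref{AffineFinSp} directly. Either way the proof is short once Proposition \ref{Serre-Quasi-Afin} and Theorem \ref{AffineFinSp} are invoked.
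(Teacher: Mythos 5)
Your proof is correct and follows essentially the same route as the paper: acyclicity of (quasi-coherent modules on) $X$ via the degeneration of Leray for $X\to Y\to(\ast,A)$ using that $f$ is Serre-affine and $Y$ is affine, and generation by global sections via the surjection $f^*f_*\M\to\M$ combined with the global generation of the quasi-coherent module $f_*\M$ on the affine space $Y$, concluding by Theorem \ref{AffineFinSp}. The only cosmetic difference is that you verify condition (3) of that theorem (Serre-affine and quasi-affine) where the paper verifies condition (2) (acyclic and quasi-affine, checking acyclicity only for $\OO_X$); the bookkeeping you worry about is handled in the paper by the two stated general facts that quotients and inverse images of globally generated modules are globally generated.
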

\begin{proof}  Firstly, let us see that   $X$ is acyclic; indeed,  $H^i(X,\OO_X)=H^i(Y,f_*\OO_X)$ because $f$ is acyclic, and $H^i(Y,f_*\OO_X)=0$ because $Y$ is affine and $f_*\OO_X$ is quasi-coherent (because $f$  is schematic). Let us see that any quasi-coherent module  $\M$ on $X$ is generated by its global sections. Since $f$ is affine, the natural morphism $f^*f_*\M\to\M$ is surjective, so it suffices to see that  $f_*\M$ is generated by its global sections. This follows from the fact that  $f_*\M$ is quasi-coherent and $Y$ is affine.
\end{proof}

\begin{cor} The composition of two affine schematic morphisms is an affine schematic morphism.
\end{cor}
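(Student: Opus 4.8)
The plan is to verify separately, for the composite $h=g\circ f$ of two affine schematic morphisms $f\colon X\to Y$ and $g\colon Y\to Z$, that $h$ is schematic and that $h$ is affine. The schematic part is immediate from Proposition \ref{composition-good}. For the affine part, I would first note that $h$ preserves quasi-coherence: if $\M$ is a quasi-coherent $\OO_X$-module, then $f_*\M$ is quasi-coherent because $f$ is affine, and hence $h_*\M=g_*(f_*\M)$ is quasi-coherent because $g$ is affine (this also follows from Theorem \ref{sch-preserv-qc} and the schematicness of $h$). It then remains only to show that $h^{-1}(U_z)$ is affine for every $z\in Z$.

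To this end, put $W=g^{-1}(U_z)$. Since $g$ is affine, $W$ is an affine open subset of $Y$, and $h^{-1}(U_z)=f^{-1}(W)$. The key step is to check that the restriction $f'\colon f^{-1}(W)\to W$ of $f$ is again an affine schematic morphism. For schematicness, I would observe that, since $f^{-1}(W)$ and $W$ are open, for $x\in f^{-1}(W)$ and $y\in W$ one has $U_x\subseteq f^{-1}(W)$ and $U_y\subseteq W$; hence the open sets $U_{xy}=U_x\cap f^{-1}(U_y)$ — whose cohomologies are the stalks of $\RR(\Gamma_{f'})_*\OO$ for the graphic $\Gamma_{f'}$ of $f'$ — coincide with those used for the graphic $\Gamma_f$ of $f$, so $\RR(\Gamma_{f'})_*\OO=(\RR(\Gamma_f)_*\OO_X)_{\vert f^{-1}(W)\times W}$ is quasi-coherent, i.e.\ $f'$ is schematic. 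Being schematic, $f'$ preserves quasi-coherence by Theorem \ref{sch-preserv-qc}; and $(f')^{-1}(U_y)=f^{-1}(U_y)$ is affine for every $y\in W$ because $f$ is affine. Thus $f'$ is an affine schematic morphism onto the affine finite space $W$, and Proposition \ref{affinemorphism} yields that $f^{-1}(W)=h^{-1}(U_z)$ is affine. Hence $h$ is affine.

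The one point needing a little care is the behaviour of schematic and affine morphisms under restriction over an arbitrary open subset of the target; the text records this explicitly only for the inclusions $f^{-1}(U_y)\to U_y$, whereas here $W=g^{-1}(U_z)$ is a general affine open of $Y$, so the localization of $\RR(\Gamma_f)_*\OO$ to $f^{-1}(W)\times W$ should be spelled out (it is the same argument that makes schematicness a local question on the source). Once that is in hand, the rest is a direct assembly of Proposition \ref{composition-good}, Theorem \ref{sch-preserv-qc} and Proposition \ref{affinemorphism}, with no delicate computations involved.
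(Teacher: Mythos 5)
Your proof is correct and follows essentially the same route as the paper: the schematic part is delegated to Proposition \ref{composition-good}, and affineness is reduced to showing that $f^{-1}(g^{-1}(U_z))$ is affine by applying Proposition \ref{affinemorphism} to the restriction $f^{-1}(W)\to W$ with $W=g^{-1}(U_z)$ affine. The only difference is that you spell out why that restriction is still schematic (localizing $\RR(\Gamma_f)_*\OO_X$ to $f^{-1}(W)\times W$), a detail the paper asserts without comment; your justification is valid.
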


\begin{proof}  Let $f\colon X\to Y$ and $g\colon Y\to Z$ be affine and schematic morphisms. We already know that the composition is schematic. Let us see that it is also affine. For each $z\in Z$, we have to prove that  $(g\circ f)^{-1}(U_z)=f^{-1}(g^{-1}(U_z))$ is affine. Since $g$ is affine, $g^{-1}(U_z)$ is affine. One concludes by  Proposition \ref{affinemorphism}, because the morphism $f^{-1}(g^{-1}(U_z))\overset f\to g^{-1}(U_z)$ is schematic and affine.
\end{proof}

\begin{cor} Let   $f\colon X\to Y$ be a schematic morphism, with $X$ a schematic finite space and  $Y$    affine. Then $f$ is affine if and only if   $X$ is affine.
\end{cor}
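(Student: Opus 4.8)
The plan is to prove the two implications separately; the forward one is immediate from an earlier result, and the reverse one, while the substantive direction, is also largely formal.

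\emph{If $f$ is affine, then $X$ is affine.} This is exactly Proposition \ref{affinemorphism}, applied to the affine schematic morphism $f\colon X\to Y$ with $Y$ affine. (The hypothesis that $X$ is schematic is not needed for this direction.)

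\emph{If $X$ is affine, then $f$ is affine.} By the definition of an affine morphism I must check that $f$ preserves quasi-coherence and that $f^{-1}(U_y)$ is affine for every $y\in Y$. The first point is automatic: $f$ is schematic, so by Theorem \ref{sch-preserv-qc} the complex $\RR f_*\M$ is quasi-coherent for every quasi-coherent $\OO_X$-module $\M$; in particular $f_*\M$ is quasi-coherent, and, taking $\M=\OO_X$, the sheaf $R^if_*\OO_X$ is a quasi-coherent $\OO_Y$-module for every $i\geq 0$. Next I would show that $R^if_*\OO_X=0$ for $i>0$. Since $Y$ is affine it is Serre-affine (Theorem \ref{AffineFinSp}), hence $H^j(Y,R^if_*\OO_X)=0$ for all $j>0$ and all $i\geq 0$; so the Leray spectral sequence $H^j(Y,R^if_*\OO_X)\Rightarrow H^{i+j}(X,\OO_X)$ is concentrated in the column $j=0$, degenerates, and gives $H^i(X,\OO_X)=\Gamma(Y,R^if_*\OO_X)$ for every $i$. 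But $X$ is affine, hence acyclic, so $\Gamma(Y,R^if_*\OO_X)=0$ for $i>0$; and since a quasi-coherent module on the affine space $Y$ is recovered from its global sections (the functors $\pi_*$ and $\pi^*$ in the definition of affineness being mutually inverse), we conclude $R^if_*\OO_X=0$ for $i>0$.

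Finally, fix $y\in Y$. By the description of higher direct images on finite spaces, $H^i(f^{-1}(U_y),\OO_X)=(R^if_*\OO_X)_y=0$ for all $i>0$, i.e., the open subset $f^{-1}(U_y)$ of $X$ is acyclic. Since $X$ is both schematic and affine, Corollary \ref{afinsemiseparado} then shows $f^{-1}(U_y)$ is affine. Hence $f$ is affine, completing the proof. I expect no real obstacle here: the only step where the hypotheses must genuinely be combined is the last one---deducing affineness of the fibres $f^{-1}(U_y)$ from their acyclicity, which needs $X$ to be simultaneously schematic and affine---while everything else is a routine cohomology chase exploiting that $Y$, being affine, annihilates the higher cohomology of quasi-coherent sheaves.
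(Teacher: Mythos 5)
Your proof is correct and follows essentially the same route as the paper: the forward implication is Proposition \ref{affinemorphism}, and the converse reduces to showing $R^if_*\OO_X=0$ for $i>0$ via the degenerate Leray spectral sequence (using that $Y$ is affine and the $R^if_*\OO_X$ are quasi-coherent, and that $X$ is acyclic), after which Corollary \ref{afinsemiseparado} gives affineness of $f^{-1}(U_y)$ from its acyclicity. The only difference is that you spell out the spectral-sequence degeneration that the paper leaves implicit in the identity $H^0(Y,R^if_*\OO_X)=H^i(X,\OO_X)$.
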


\begin{proof} The direct statement is given by  Proposition \ref{affinemorphism}. Let us see the converse.  Since $f$ is schematic, it preserves quasi-coherence. It remains to prove that for any $y\in Y$, $f^{-1}(U_y)$ is affine. By Corollary \ref{afinsemiseparado}, it suffices to prove that $f^{-1}(U_y)$ is acyclic. Since $H^i(f^{-1}(U_y),\OO_X)=[ R^if_*\OO_X]_y$, we have to prove that $R^if_*\OO_X=0$ for $i>0$.

One has that $H^p(Y,R^qf_*\OO_X)=0$ for any $q$ and any $p>0$, because  $Y$ is affine and $R^qf_*\OO_X$ are quasi-coherent. Then
$H^0(Y,R^if_*\OO_X)=H^i(X,\OO_X)=0$, because $X$ is affine. Thus $ R^if_*\OO_X=0$, because $Y$ is affine and $R^if_*\OO_X$ is quasi-coherent.

\end{proof}

The following result is now immediate:

\begin{prop} Let $X$ be a schematic finite space and let $f\colon X\to Y$ be a schematic morphism. The following conditions are equivalent:
\begin{enumerate}
\item $f$ is an affine morphism.
\item For any affine open subset  $V$ of $Y$, the preimage  $f^{-1}(V)$ is affine.
\item There exists a covering of $Y$ by affine open subsets $V_i$ whose preimages by  $f$ are affine.
\item There exists a covering of   $Y$ by minimal open subsets whose preimages by   $f$ are affine.
\end{enumerate}
\end{prop}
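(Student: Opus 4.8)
The plan is to establish the cycle $(1)\Rightarrow(2)\Rightarrow(4)\Rightarrow(3)\Rightarrow(1)$, using as the one substantial ingredient the corollary just proved, namely that a schematic morphism $g\colon Z\to W$ with $Z$ schematic and $W$ affine is affine if and only if $Z$ is affine. Two elementary remarks will be used throughout. First, every minimal open subset $U_p$ has a minimum, hence is an affine finite space, so $\{U_p\}_{p\in Y}$ is an affine open covering of $Y$. Second, the restriction $f^{-1}(V)\to V$ of the schematic morphism $f$ to any open subset $V\subseteq Y$ is again schematic: this is immediate from the stalk formula $(R^i\Gamma_*\OO)_{(x,y)}=H^i(U_{xy},\OO)$ together with the fact that $U_y\subseteq V$ for $y\in V$, so that the relevant stalks and restriction maps are literally those of $f$. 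Finally, since $f$ is schematic it preserves quasi-coherence (Theorem \ref{sch-preserv-qc}), so in checking that a morphism is affine the only point that remains is the affineness of the fibres $f^{-1}(U_y)$.

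For $(1)\Rightarrow(2)$ I would take an affine open $V\subseteq Y$ and consider $h\colon f^{-1}(V)\to V$. It is schematic, hence preserves quasi-coherence, and for every $v\in V$ one has $h^{-1}(U_v)=f^{-1}(U_v)$, which is affine because $f$ is; thus $h$ is an affine schematic morphism onto the affine space $V$, and Proposition \ref{affinemorphism} gives that $f^{-1}(V)$ is affine. For $(2)\Rightarrow(4)$: apply $(2)$ to the affine open covering $\{U_p\}_{p\in Y}$; their preimages are then affine, which is exactly $(4)$. And $(4)\Rightarrow(3)$ holds because a covering by minimal opens is in particular a covering by affine opens.

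The implication $(3)\Rightarrow(1)$ is the heart of the statement. Assume $\{V_i\}$ is a covering of $Y$ by affine open subsets with each $f^{-1}(V_i)$ affine. Since $f$ preserves quasi-coherence, it suffices to show $f^{-1}(U_y)$ is affine for every $y\in Y$. Fix $y$, choose $i$ with $y\in V_i$, so that $U_y\subseteq V_i$, and look at $h\colon f^{-1}(V_i)\to V_i$: it is schematic (restriction to an open of the target), its source $f^{-1}(V_i)$ is both schematic (an open subset of the schematic space $X$) and affine (hypothesis $(3)$), and its target $V_i$ is affine. By the preceding corollary $h$ is an affine morphism, hence $h^{-1}(U_y)$ is affine; but $h^{-1}(U_y)=f^{-1}(U_y)$ since $U_y\subseteq V_i$. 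This closes the cycle.

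I expect no genuine obstacle: all the real content has been packaged into the preceding corollary, which rests in turn on Corollary \ref{afinsemiseparado} and the vanishing of the higher cohomology of quasi-coherent modules on affine finite spaces. The only things to be careful about are the two localization remarks in the first paragraph — that restricting a schematic morphism to an open of the target keeps it schematic, and that the minimal open neighbourhood of a point of $V$ computed inside $V$ agrees with the one computed in $Y$ — and both are routine consequences of the definitions.
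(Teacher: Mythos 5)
Your proof is correct and follows exactly the route the paper intends: the paper states this proposition with no proof beyond the phrase ``now immediate,'' meaning it is to be deduced from Proposition \ref{affinemorphism} and the corollary immediately preceding it, which is precisely how you argue. Your two localization remarks (restriction of a schematic morphism to an open of the target stays schematic, and $U_y$ computed in $V$ agrees with $U_y$ computed in $Y$) are the right points to flag, and both hold as you say.
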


Finally, let us see that, for schematic morphisms, the relative notion of affine morphism is indeed the relativization of the notion of affine space.

\begin{prop} Let $X$ be a schematic finite space and   $f\colon X\to Y$ a schematic morphism. The following conditions are equivalent:
\begin{enumerate}
\item $f$ is an affine morphism.
\item $f$ is acyclic and any quasi-coherent module on  $X$ is generated by its global sections on  $Y$.
\end{enumerate}
\end{prop}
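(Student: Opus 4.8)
The plan is to dispatch $(1)\Rightarrow(2)$ immediately from Proposition~\ref{Serre-Quasi-Afin}: an affine morphism is both Serre-affine and quasi-affine, and on unwinding the definitions, Serre-affineness gives in particular that $\OO_X$ is $f$-acyclic, i.e.\ that $f$ is acyclic, while quasi-affineness is precisely the assertion that every quasi-coherent $\OO_X$-module is generated by its global sections over $Y$.

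For $(2)\Rightarrow(1)$ the strategy is to reduce to the absolute characterisation of affineness in Theorem~\ref{AffineFinSp}. Since $f$ is schematic it already preserves quasi-coherence (Theorem~\ref{sch-preserv-qc}), so it suffices to prove that $W:=f^{-1}(U_y)$ is affine for every $y\in Y$. Fix $y$, put $V:=U_y$ and let $g:=f_{\vert W}\colon W\to V$; then $V$ is affine (it has a minimum), $W$ is schematic (an open subset of the schematic space $X$), and $g$ is schematic (the restriction $f^{-1}(U_y)\to U_y$ of a schematic morphism is schematic). Because $W$ is itself a finite space, by Theorem~\ref{AffineFinSp} it is enough to show that $W$ is acyclic and quasi-affine.

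Acyclicity of $W$ is immediate from the hypothesis that $f$ is acyclic, since $H^i(W,\OO_W)=[R^if_*\OO_X]_y=0$ for all $i>0$. For quasi-affineness, let $\M$ be a quasi-coherent module on $W$. Using that $X$ is schematic, Theorem~\ref{extension} yields a quasi-coherent module $\widetilde\M$ on $X$ with $\widetilde\M_{\vert W}=\M$; by hypothesis the natural map $f^*f_*\widetilde\M\to\widetilde\M$ is surjective, and restricting it to $W$ (here $f^*$ and $f_*$ commute with restriction to open subsets) we obtain a surjection $g^*g_*\M\to\M$, that is, $\M$ is generated by its global sections over $V$. Now $g$ is schematic, hence preserves quasi-coherence, so $g_*\M$ is a quasi-coherent module on the affine space $V$ and is therefore generated by its global sections there; applying the right-exact functor $g^*$ to that surjection and composing with $g^*g_*\M\to\M$ gives a surjection $(\pi_V\circ g)^*\Gamma(W,\M)\to\M$, where $\pi_V\colon V\to(*,\OO(V))$ is the canonical morphism and $\Gamma(W,\M)=\Gamma(V,g_*\M)$ is viewed as an $\OO(V)$-module. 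Comparing this with the canonical map $\pi_W^*\Gamma(W,\M)\to\M$ for $\pi_W\colon W\to(*,\OO(W))$: at each $x\in W$ the former factors through the latter via the base-change surjection $\Gamma(W,\M)\otimes_{\OO(V)}\OO_x\twoheadrightarrow\Gamma(W,\M)\otimes_{\OO(W)}\OO_x$, whence $\pi_W^*\Gamma(W,\M)\to\M$ is onto. So $\M$ is generated by its global sections, $W$ is quasi-affine, and therefore affine, which finishes $(2)\Rightarrow(1)$.

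The only step that is more than bookkeeping is the extension of $\M$ from $W$ to $X$, and this is exactly where the hypothesis that $X$ is schematic is used; everything else is routine — the identity $H^i(W,\OO_W)=[R^if_*\OO_X]_y$, the compatibility of $f_*$ and $f^*$ with restriction to open subsets, right-exactness of the inverse-image functor, and the fact that on an affine finite space every quasi-coherent module is generated by its global sections. I anticipate no serious obstacle, only the care needed to keep track of which base ring appears in each tensor product.
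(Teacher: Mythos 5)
Your proposal is correct and follows essentially the same route as the paper: $(1)\Rightarrow(2)$ via Proposition \ref{Serre-Quasi-Afin}, and for the converse a reduction to $Y=U_y$ followed by the criterion of Theorem \ref{AffineFinSp} (acyclic plus quasi-affine implies affine). The only difference is one of explicitness: where the paper simply says ``we may assume $Y$ is a minimal open subset,'' you correctly identify that this reduction needs Theorem \ref{extension} (hence the schematic hypothesis on $X$) to extend an arbitrary quasi-coherent module on $f^{-1}(U_y)$ to all of $X$, and you also spell out the base-change comparison between generation over $\OO(V)$ and over $\OO(W)$, both of which are left implicit in the paper.
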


\begin{proof} (1) $\Rightarrow$ (2) is a consequence of  Proposition \ref{Serre-Quasi-Afin}. For the converse, we may assume that  $Y$ is a minimal open subset, and then we have to prove that  $X$ is affine. Firstly, $X$ is acyclic because  $f$ is acyclic, $Y$ is affine and  $f$ preserves quasi-coherence. Secondly, any quasi-coherent module $\M$ on $X$ is generated by its global sections because   $f^*f_*\M\to\M$ is surjective and $f_*\M$ is generated by its global sections (because it is quasi-coherent and $Y$ is affine).
\end{proof}

\subsection{Fibered products}

We have seen how the flatness condition on a finite space yields good properties for quasi-coherent modules, which fail  for arbitrary ringed finite spaces. However, an important property is lost. While the category of arbitrary ringed finite spaces has fibered products, the subcategory of finite spaces has not. However, we shall see now that the category of schematic spaces and schematic morphisms has fibered products.

\begin{thm}\label{fiberedproduct} Let $f\colon X\to S$ and $g\colon Y\to S$ be schematic morphisms between schematic finite spaces. Then
\begin{enumerate}
\item The fibered product $X\times_S Y$ is a schematic finite space and the natural morphisms $X\times_SY\to X$, $X\times_SY\to Y$ are schematic.
\item If $f$ and $g$ are affine, then $h\colon X\times_S Y\to S$ is also affine and $h_*\OO_{X\times_SY}=f_*\OO_X\otimes_{\OO_S}g_*\OO_Y$.
\end{enumerate}
\end{thm}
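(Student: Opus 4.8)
The plan is to reduce everything to an affine, local statement about fibered products. Throughout I use the identification $X\times_S Y=(X\times Y)\times_{S\times S}S$, where $S\to S\times S$ is the diagonal $\delta_S$; note that $\delta_S$ is a schematic morphism (this is precisely the definition of $S$ being schematic), that $X\times Y$ and $S\times S$ are schematic, and that $f\times g\colon X\times Y\to S\times S$ is schematic.

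First I would check that $Z:=X\times_S Y$ is a finite space. For $(x,y)\le(x',y')$, put $s=f(x)=g(y)$ and $s'=f(x')=g(y')$, so $s\le s'$. Since $S$ is schematic it is quasi-coherent, so $\OO_{s'}\otimes_{\OO_s}\OO_{s'}\overset{\sim}{\to}\OO_{s'}$, and hence $\OO_{x'}\otimes_{\OO_s}\OO_{y'}=\OO_{x'}\otimes_{\OO_{s'}}(\OO_{s'}\otimes_{\OO_s}\OO_{s'})\otimes_{\OO_{s'}}\OO_{y'}\overset{\sim}{\to}\OO_{x'}\otimes_{\OO_{s'}}\OO_{y'}$. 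Therefore the restriction map factors as
\[\OO_x\otimes_{\OO_s}\OO_y\longrightarrow \OO_{x'}\otimes_{\OO_s}\OO_y\longrightarrow \OO_{x'}\otimes_{\OO_s}\OO_{y'}\overset{\sim}{\longrightarrow}\OO_{x'}\otimes_{\OO_{s'}}\OO_{y'},\]
the first two arrows being base changes of the flat maps $\OO_x\to\OO_{x'}$ and $\OO_y\to\OO_{y'}$ and the last an isomorphism, so it is flat.

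The technical heart is the local lemma: if $R$ is an affine schematic finite space with a minimum and $\phi\colon V\to R$, $\psi\colon W\to R$ are affine morphisms with $V,W$ affine and schematic, then $V\times_R W$ is affine, acyclic and semi-separated, with $\Gamma(V\times_R W,\OO)=\Gamma(V,\OO)\otimes_{\Gamma(R,\OO)}\Gamma(W,\OO)$. I would prove this by induction on $|V|+|W|$. The minimal open of a point of $V\times_R W$ is again of the same shape, with $R$ replaced by the (still minimum-having) minimal open of its image and $V,W$ by smaller pieces; combined with Proposition \ref{schematic-affine}, Corollary \ref{afinsemiseparado} and Corollary \ref{product-affine} this makes the inductive step bookkeeping. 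The base case is where $V$ and $W$ are punctual: since $R$ has a minimum it is affine and behaves like an affine scheme (Proposition \ref{derivedcat-affine}), and the base case reduces to the commutative-algebra identity $\Spec(B\otimes_A C)=\Spec B\times_{\Spec A}\Spec C$.

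Granting the lemma, (1) follows: $Z$ is schematic because each minimal open $U_{(x,y)}=U_x\times_{U_s}U_y$ is semi-separated by the lemma — the restrictions $U_x\to U_s$ and $U_y\to U_s$ of $f$ and $g$ are affine, since $U_x$, $U_y$ are affine and schematic and $U_s$ is affine with a minimum — and a finite space is schematic iff all its minimal opens are semi-separated (Proposition \ref{schematic-affine}). For $\pi_X$, one uses that it factors as $Z\hookrightarrow X\times Y\overset{p}{\to}X$ with $\RR p_*$ preserving quasi-coherence (Theorem \ref{qc-of-proj}) and the inclusion, being the base change of $\delta_S$, schematic by the same local analysis; so $\RR\pi_{X*}$ preserves quasi-coherence and, $Z$ being schematic, $\pi_X$ is schematic by Theorem \ref{sch-preserv-qc2}; likewise $\pi_Y$, and then $h=f\circ\pi_X$ is schematic by Proposition \ref{composition-good}. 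For (2), assume $f,g$ affine: then $h^{-1}(U_s)=f^{-1}(U_s)\times_{U_s}g^{-1}(U_s)$ with $f^{-1}(U_s)$, $g^{-1}(U_s)$ affine and schematic and $U_s$ affine with a minimum, so the lemma gives that $h^{-1}(U_s)$ is affine with $\Gamma(h^{-1}(U_s),\OO)=(f_*\OO_X)_s\otimes_{\OO_s}(g_*\OO_Y)_s$; hence $h$ is affine, and since $h_*\OO_{X\times_S Y}$ and $f_*\OO_X\otimes_{\OO_S}g_*\OO_Y$ are both quasi-coherent $\OO_S$-modules (the second by Theorem \ref{projection-formula} together with quasi-coherence of $f_*\OO_X$ and $g_*\OO_Y$) with the same stalks, they coincide. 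The step I expect to be the main obstacle is the local lemma — in particular the semi-separatedness of $V\times_R W$ and the clean base-change formula for global sections — since the remaining deductions are essentially formal.
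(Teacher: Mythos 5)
Your overall architecture mirrors the paper's: verify flatness of the restrictions by hand, then reduce everything to an affine local statement proved by induction on the number of points. The flatness argument you give is correct and in fact slightly cleaner than the paper's: the paper uses the schematic hypothesis on $f$ and $g$ (via $\OO_x\otimes_{\OO_s}\OO_{s'}=\OO_{xs'}$), whereas you only need quasi-coherence of $S$ (Proposition \ref{finite-qc-char2}, condition 3, with $q_1=q_2=s'$ gives $\OO_{s'}\otimes_{\OO_s}\OO_{s'}\cong\OO_{s'}$). Your local lemma is also essentially what the paper's induction establishes in its first two cases. The problem is that the lemma is exactly where the whole difficulty of the theorem lives, and your proof of it has a real gap: the reduction ``the minimal open of a point of $V\times_RW$ is of the same shape with $V,W$ replaced by smaller pieces'' fails precisely in the main case. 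If $V=U_v$, $W=U_w$ and $\phi(v)=\psi(w)$, the minimal open of $(v,w)$ is all of $V\times_RW$, so nothing shrinks and induction gives nothing there; one must argue that point separately (acyclicity from the existence of a minimum, then quasi-coherence of the diagonal from the inductively known global-sections formula on the strictly smaller $U_{z_1z_2}$). Worse, when $V$ or $W$ has no minimum, affineness of $V\times_RW$ and the formula $\Gamma(V\times_RW,\OO)=\Gamma(V,\OO)\otimes_{\Gamma(R,\OO)}\Gamma(W,\OO)$ cannot be read off the minimal opens at all: the paper gets them by first proving that the projection $V\times_RW\to W$ is schematic and affine and invoking Proposition \ref{affinemorphism}, and that step rests on the base-change computation $H^i(U_x\times_{U_s}Y,\M)\otimes_{\OO_x}\OO_{x'}\cong H^i(U_{x'}\times_{U_s}Y,\M)$ carried out term by term on the standard resolution $C^\punto$ (using Theorem \ref{aciclico} and the affineness of the other projection). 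None of this is supplied by Proposition \ref{schematic-affine}, Corollary \ref{afinsemiseparado} and Corollary \ref{product-affine}; calling it bookkeeping is where the proof stops being a proof.

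There is a second, more localized flaw in your deduction that $\pi_X$ is schematic. Justifying that the inclusion $i\colon Z\hookrightarrow X\times Y$ is schematic because it is ``the base change of $\delta_S$'' is circular: stability of schematic morphisms under base change along schematic morphisms is precisely what part (1) of the theorem asserts. And the fallback ``by the same local analysis'' is not covered by your lemma, because the relevant preimages $i^{-1}(U_x\times U_y)=U_x\times_SU_y$ for arbitrary $(x,y)\in X\times Y$ are fibered products over $U_{f(x)}\cap U_{g(y)}$, which need not have a minimum, so the lemma does not apply to them. A correct deduction from your lemma computes the graphic of $\pi_X$ directly: for $((x,y),x')\in Z\times X$ one has $U_{(x,y)}\cap\pi_X^{-1}(U_{x'})=U_{xx'}\times_{U_s}U_y$ with $s=f(x)=g(y)$, and here the lemma does apply since $U_{xx'}$ is affine (Corollary \ref{afinsemiseparado}) and $U_s$ has a minimum; alternatively one shows, as the paper does, that $\RR\pi_{X*}$ preserves quasi-coherence and applies Theorem \ref{sch-preserv-qc2}. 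With these repairs your plan becomes the paper's proof; as written, the two essential steps are asserted rather than proved.
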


\begin{proof} First of all, let us see that  $X\times_SY$ is a finite space, i.e., it has flat restrictions. Let $(x,y)\leq (x',y')$ in $X\times_S Y$. Let $s,s'$ be their images in $S$. We have to prove that $\OO_x\otimes_{\OO_s}\OO_y \to \OO_{x'}\otimes_{\OO_{s'}}\OO_{y'}$ is flat. Since $\OO_s\to\OO_{s'}$ is flat, the morphism $$\OO_x\otimes_{\OO_s}\OO_y \to (\OO_{x}\otimes_{\OO_{s}}\OO_{y})\otimes_{\OO_s}\OO_{s'}=(\OO_x\otimes_{\OO_s}\OO_{s'})\otimes_{\OO_{s'}} (\OO_y\otimes_{\OO_s}\OO_{s'})$$ is flat. Now, since $f\colon U_x\to U_s$ and $g\colon U_y\to U_s$ are schematic morphisms, one has that
$ \OO_x\otimes_{\OO_s}\OO_{s'} = \OO_{xs'}$ and $  \OO_y\otimes_{\OO_s}\OO_{s'}=\OO_{ys'}$, and then we have a flat morphism
\[ \OO_x\otimes_{\OO_s}\OO_y \to \OO_{xs'}\otimes_{\OO_{s'}}\OO_{ys'}.\]
Now, $\OO_{xs'}\otimes_{\OO_x}\OO_{x'}=\OO_{x's'}=\OO_{x'}$, and $\OO_{ys'}\otimes_{\OO_y}\OO_{y'}=\OO_{y's'}=\OO_{y'}$. Since $\OO_x\to\OO_{x'}$ and $\OO_y\to\OO_{y'}$ are flat, the morphism
\[ \OO_{xs'}\otimes_{\OO_{s'}}\OO_{ys'}\to \OO_{x'}\otimes_{\OO_x}(\OO_{xs'}\otimes_{\OO_{s'}}\OO_{ys'})\otimes_{\OO_y}\OO_{y'}=\OO_{x'}\otimes_{\OO_{s'}}\OO_{y'}\]
is flat. In conclusion, $\OO_x\otimes_{\OO_s}\OO_y \to \OO_{x'}\otimes_{\OO_{s'}}\OO_{y'}$ is flat.

Let us prove the rest of the theorem by induction on $\#(X\times Y)$. If $X$ and $Y$ are punctual, it is immediate. Assume the theorem holds for $\#(X\times Y)<n$, and let us assume now that $\# (X\times Y)=n$.

Let us denote $Z=X\times_SY$, and $\pi\colon Z\to X$, $\pi'\colon Z\to Y$ the natural morphisms. Whenever we take $z_i\in Z$, we shall denote by $x_i$, $y_i$ the image of $z_i$ in $X$ and $Y$.

(1) Assume that $X=U_x$, $Y=U_y$ and $f(x)=g(y)$. Let us denote $s=f(x)=g(y)$, $z=(x,y)\in Z$. Obviously $Z=U_z$. Let $\delta\colon Z\to Z\times Z$ be the diagonal morphism. Let us see that $R^i\delta_*\OO_Z = 0$ for $i>0$. Indeed, $(R^i\delta_*\OO)_{(z,z)}=0$ because $U_z$ is acyclic. Now, if $(z_1,z_2)\in Z\times Z$ is different from $(z,z)$, then, $U_{z_1z_2}=U_{x_1x_2}\times_{U_s}U_{y_1y_2}$, and $U_{x_1x_2}\times U_{y_2y_2}$ has smaller order than $U_x\times U_y$. Moreover, $U_{x_1x_2}$ and $U_{y_1y_2}$ are affine because $U_x$ and $U_y$ are schematic. By induction, $U_{z_1z_2}$ is affine, hence $(R^i\delta_*\OO)_{(z_1,z_2)}=0$. Let us see now that $\delta_*\OO$ is quasi-coherent. We have to prove that $\OO_{z_1}\otimes_{\OO_z}\OO_{z_2}=\OO_{z_1z_2}$ for any $z_1> z< z_2$. By induction, we have that $$\OO_{z_1z_2}=\OO_{x_1x_2}\otimes_{\OO_s}\OO_{y_1y_2},\quad \OO_{z_1}=\OO_{x_1}\otimes_{\OO_s}\OO_{y_1},\quad \OO_{z_2}=\OO_{x_2}\otimes_{\OO_s}\OO_{y_2}$$
One concludes easily because $\OO_{x_1x_2}=\OO_{x_1}\otimes_{\OO_s}\OO_{x_2}$ and $\OO_{y_1y_2}=\OO_{y_1}\otimes_{\OO_s}\OO_{y_2}$.
Hence $Z$ is schematic. Let us see that $\pi\colon Z\to X$ is schematic (hence affine). By Theorem \ref{sch-preserv-qc2} it suffices to see that $\RR \pi_*\M$ is quasi-coherent for any quasi-coherent module $\M$ on $Z$. Now, $R^i\pi_*\M=0$ for $i>0$, since $(R^i\pi_*\M)_x=0$ because $Z$ is acyclic and $(R^i\pi_*\M)_{x'}$ for $x'>x$ because $U_{x'}\times_{U_s}U_y$ is affine by induction. Finally, $\pi_*\M$ is quasi-coherent: indeed, $(\pi_*\M)_x\otimes_{\OO_x}\OO_{x'}=(\pi_*\M)_{x'}$ by Theorem \ref{aciclico}, because $U_{x'}\times_{U_s}U_y$ is, by induction, an acyclic open subset of $U_z$. Analogously, $\pi'\colon Z\to Y$ is schematic and affine. Finally, $\OO_Z(Z)=\OO_X(X)\otimes_{\OO_S(S)}\OO_Y(Y)$ because $\OO_z=\OO_x\otimes_{\OO_s}\OO_y$.

(2) Assume that $X=U_x$ (or $Y=U_y$). Let $s=f(x)$. Then $X\times_SY=U_x\times_{U_s}g^{-1}(U_s)$. If $g^{-1}(U_s)\neq Y$ we conclude by induction. So assume $Y=g^{-1}(U_s)$. If $Y=U_y$, we conclude by (1) if $g(y)=s$ or by induction if $g(y)\neq s$. If $Y$ has not a minimum,  for any $z\in Z$, $U_z$ is schematic by induction, hence $Z$ is schematic. Moreover $\pi'\colon Z\to Y$ is schematic and affine because it is schematic and affine on any $U_y$ by induction. Let us see now that $\pi\colon Z\to U_x$ is schematic. It suffices to see that $\RR \pi_*\M$ is quasi-coherent for any quasi-coherent module $\M$ on $Z$. We have to prove that
\[ (R^i\pi_*\M)_x\otimes_{\OO_x}\OO_{x'}\to (R^i\pi_*\M)_{x'}\] is an isomorphism for any $x'\in U_x$. By induction, for any $y\in Y$ one has \[ \M(U_x\times_{U_s}U_y)\otimes_{\OO_x}\OO_{x'}= \M(U_{x'}\times_{U_s}U_y).\]
Let us denote $Z'=U_{x'}\times_{U_s}U_y$, $\pi''$ the restriction of $\pi'$ to $Z'$ and $\M'$ the restriction of $\M$ to $Z'$; then,
 $\Gamma(Y, C^r\pi'_*\M)\otimes_{\OO_x}\OO_{x'} = \Gamma(Y, C^r\pi''_*\M')$. Now, since $\pi'$ and $\pi''$ are affine,
\[ \aligned H^i(U_x\times_{U_s}Y,\M)\otimes_{\OO_x}\OO_{x'}&= H^i(Y,\pi'_*\M)\otimes_{\OO_x}\OO_{x'} = H^i\Gamma (Y,C^\punto \pi'_*\M)\otimes_{\OO_x}\OO_{x'}\\ &= H^i\Gamma (Y,C^\punto \pi''_*\M')= H^i(Y,\pi''_*\M')=  H^i(U_{x'}\times_{U_s}Y,\M).\endaligned\]
Since $H^i(U_{x'}\times_{U_s}Y,\M)=(R^i\pi_*\M)_{x'}$, we have proved that $R^i\pi_*\M$ is quasi-coherent. If $Y$ is affine, then $U_x\times_{U_s}Y$ is affine because $U_x\times_{U_s}Y\to Y$ is schematic and affine, and $\pi'_*\OO_Z= g^*f_*\OO_X$, since this equality holds taking the stalk at any $y\in Y$. Taking global sections, one obtains $\OO_Z(Z)=\OO_x\otimes_{\OO_s}\OO_Y(Y)$.

(3) For general $X$ and $Y$. For any $z=(x,y)\in Z$, $U_z$ is schematic by (2), hence $Z$ is schematic. The morphisms $Z\to X$ and $Z\to Y$ are schematic because their are so on $U_x$ and $U_y$ respectively (by (2)). If $X$ and $Y$ are affine over $S$, then $Z$ is affine over $X$ and $Y$, because it is so over $U_x$ and $U_y$ respectively. Finally, if $X$ and $Y$ are affine over $S$, then $h_*\OO_Z=f_*\OO_X\otimes_{\OO_S}g_*\OO_Y$; indeed, the question is local on $S$, so we may assume that $S=U_s$. Hence $X$ and $Y$ are affine, and then $\pi'_*\OO_Z=g^*f_*\OO_X$, since this holds taking fibre at any $y\in Y$. Taking global sections, we obtain $\OO_Z(Z)=\OO_X(X)\otimes_{\OO_s}\OO_Y(Y)$.

\end{proof}

An easy consequence of this theorem is the following:

\begin{cor} Let $f\colon X\to S$ and $g\colon S'\to S$ be schematic morphisms between schematic spaces. If $f$ is affine, then $f'\colon X\times_SS'\to S'$ is affine. If in addition $f_*\OO_X=\OO_S$, then $f'_*\OO_{X\times_SS'}=\OO_{S'}$.
\end{cor}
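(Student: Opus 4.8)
\emph{Proof proposal.} The plan is to reduce everything to Theorem~\ref{fiberedproduct} by localizing on $S'$. Write $Z=X\times_S S'$ and let $f'\colon Z\to S'$, $\pi_X\colon Z\to X$ be the projections. By Theorem~\ref{fiberedproduct}(1), $Z$ is a schematic finite space and $f'$ is a schematic morphism; in particular $f'$ preserves quasi-coherence by Theorem~\ref{sch-preserv-qc}. Since $Z$ is schematic and $f'$ is schematic, the criterion proved above (a schematic morphism with schematic source is affine as soon as the preimages of a covering of the target by minimal open subsets are affine) reduces the first assertion to showing that $f'^{-1}(U_y)$ is an affine finite space for every $y\in S'$.

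Fix $y\in S'$ and put $s=g(y)$, so that $g(U_y)\subseteq U_s$. The first step is the identification
\[ f'^{-1}(U_y)\;\cong\;f^{-1}(U_s)\times_{U_s}U_y \]
of ringed finite spaces, where $f^{-1}(U_s)\to U_s$ is the restriction of $f$ and $U_y\to U_s$ the restriction of $g$: a point of $Z$ lying over $y'\in U_y$ lies over $f(x)=g(y')\in U_s$, and the minimal open sets and the stalks of $\OO$ coincide on both sides by the very definition of the fibered product. Now $f^{-1}(U_s)$ and $U_y$ are open in the schematic spaces $X$ and $S'$, hence schematic, and $U_s$ is schematic and affine (it has a minimum). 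Both structural morphisms are schematic: $f^{-1}(U_s)\to U_s$ since $f$ is schematic, and $U_y\to U_s$ because it factors as the open inclusion $U_y\hookrightarrow g^{-1}(U_s)$ followed by the schematic morphism $g^{-1}(U_s)\to U_s$ (Proposition~\ref{composition-good}). Both are also affine: $f^{-1}(U_s)\to U_s$ since $f$ is affine; and $U_y\to U_s$ because for $s'\in U_s$ one has $s=g(y)\le s'$, so $y$ lies in $U_y\cap g^{-1}(U_{s'})$, an open subset of $U_y$ which therefore has $y$ as a minimum and is affine.

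With these facts, Theorem~\ref{fiberedproduct}(2), applied to the restrictions $f_s\colon f^{-1}(U_s)\to U_s$ and $g_y\colon U_y\to U_s$, gives that $h\colon f^{-1}(U_s)\times_{U_s}U_y\to U_s$ is affine and that $h_*\OO=(f_s)_*\OO\otimes_{\OO_{U_s}}(g_y)_*\OO$. Since $f^{-1}(U_s)\times_{U_s}U_y$ is schematic (Theorem~\ref{fiberedproduct}(1)), $h$ is schematic (a composite of schematic morphisms, by Theorem~\ref{fiberedproduct}(1) and Proposition~\ref{composition-good}), and $U_s$ is affine, the corollary above (a schematic space which admits a schematic affine morphism to an affine base is itself affine) shows $f^{-1}(U_s)\times_{U_s}U_y\cong f'^{-1}(U_y)$ is affine. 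Hence $f'^{-1}(U_y)$ is affine for all $y\in S'$, and $f'$ is affine. For the last statement, assume $f_*\OO_X=\OO_S$ and take the stalk of $f'_*\OO_Z$ at $y$, computing global sections over $U_s$ in the formula for $h_*\OO$ (the minimum of $U_s$ being $s$, global sections over $U_s$ are stalks at $s$ and turn the tensor product of sheaves into the tensor product of stalks):
\[ (f'_*\OO_Z)_y=\OO_Z(f'^{-1}(U_y))=\OO_X(f^{-1}(U_s))\otimes_{\OO_s}\OO_{S'}(U_y)=\OO_s\otimes_{\OO_s}\OO_y=\OO_y, \]
using $\OO_X(f^{-1}(U_s))=(f_*\OO_X)_s=\OO_s$ and that $U_y$ has minimum $y$; so the natural morphism $\OO_{S'}\to f'_*\OO_Z$ is an isomorphism on stalks, hence an isomorphism. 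The only delicate points are the identification of $f'^{-1}(U_y)$ with the fibered product over $U_s$ and the verification that the restricted morphism $U_y\to U_s$ is affine (and schematic), so that Theorem~\ref{fiberedproduct}(2) applies; granting these, the rest is a direct application of Theorem~\ref{fiberedproduct} together with the earlier local criterion and the affine-base criterion.
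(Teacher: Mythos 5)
Your overall strategy is sound and is essentially the intended one (the paper gives no written proof, calling the corollary ``an easy consequence'' of Theorem \ref{fiberedproduct}): localize on $S'$, identify $f'^{-1}(U_y)$ with $f^{-1}(U_s)\times_{U_s}U_y$ for $s=g(y)$, and feed the two restricted morphisms into Theorem \ref{fiberedproduct}(2). The identification of $f'^{-1}(U_y)$ with that fibered product is correct, as are the reduction to minimal open subsets and the final stalk computation of $f'_*\OO_{X\times_SS'}$.

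There is, however, a genuine error in your verification that $g_y\colon U_y\to U_s$ is affine. You claim that for $s'\in U_s$ the open set $U_y\cap g^{-1}(U_{s'})$ contains $y$ and has $y$ as a minimum. With the paper's conventions ($U_p=\{q\colon p\leq q\}$), the condition $y\in g^{-1}(U_{s'})$ reads $s'\leq g(y)=s$, whereas $s'\in U_s$ means $s\leq s'$; so for $s'\neq s$ the point $y$ does not lie in $U_y\cap g^{-1}(U_{s'})$ at all, and that set need not have a minimum (it is $\{y'\geq y\colon g(y')\geq s'\}$, which can consist of several incomparable points). So this step fails as written. The conclusion is nevertheless true and available with no work: $g_y\colon U_y\to U_s$ is a schematic morphism from the schematic, affine space $U_y$ (it has a minimum) to the affine space $U_s$, hence it is affine by the second corollary following Proposition \ref{affinemorphism} (``$f$ is affine if and only if $X$ is affine''); equivalently, $g_y$ is locally acyclic by Proposition \ref{loc-acyclic-affine}, so each $U_y\cap g^{-1}(U_{s'})$ is acyclic and therefore affine by Corollary \ref{afinsemiseparado}. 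With that repair the rest of your argument goes through: Theorem \ref{fiberedproduct}(2) applies to $f_s$ and $g_y$, Proposition \ref{affinemorphism} gives that $f'^{-1}(U_y)$ is affine, and taking global sections over $U_s$ in $h_*\OO=(f_s)_*\OO\otimes_{\OO_{U_s}}(g_y)_*\OO$ yields $f'_*\OO_{X\times_SS'}=\OO_{S'}$ when $f_*\OO_X=\OO_S$.
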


\section{Embedding schemes into finite spaces}

For this section all finite spaces are assumed to be schematic.

\begin{defn} An affine schematic morphism $f\colon X\to Y$ such that $f_*\OO_X=\OO_Y$ is called a {\it qc-isomorphism}.
\end{defn}

A qc-isomorphism $f\colon X\to Y$ yields an equivalence between the categories of quasi-coherent modules on $X$ and $Y$, by Theorem \ref{Stein-fact}. A schematic morphism $f\colon X\to Y$ is a qc-isomorphism if and only if for any open subset $V$ of $Y$, $f\colon f^{-1}(V)\to V$ is a qc-isomorphism. If $f\colon X\to Y$ is a qc-isomorphism, then, for any schematic morphism $Y'\to Y$, the induced morphism $X\times_YY'\to Y'$ is a qc-isomorphism. Composition of qc-isomorphisms is a qc-isomorphism. If $X$ is affine and $A=\OO(X)$, the natural morphism $X\to \{ *,A\}$ is a qc-isomorphism.

\begin{ejem}\label{ejem-qc-iso} Let $S$ be a quasi-compact and quasi-separated scheme, $\U=\{ U_i\}$ and $\U'=\{ U'_j\}$ two (locally affine) finite coverings of $S$. Assume that  $\U'$ is thinner than $S$ (this means that, for any $s\in S$, $U'_s\subseteq U_s$). Let $\pi'\colon S\to X'$ and $\pi\colon S\to X$ be the finite spaces associated to $\U'$ and $\U$. Since $\U'$ is thinner that $\U$, one has a morphism $f\colon X'\to X$ such that $f\circ \pi'=\pi$. Then $f$ is a qc-isomorphism.
\end{ejem}

Let us denote by $\bold {Schematic}$ the category of schematic finite spaces and schematic morphisms and $\bold {SchFinSp_{qc-isom}}$ its localization  by qc-isomorphisms:

A morphism $X\to Y$ in $\bold {SchFinSp_{qc-isom}}$ is represented by a diagram $$\xymatrix{ & T\ar[dl]_\phi \ar[rd]^f & \\  X &  & Y  }$$
where $\phi$ is a qc-isomorphism and $f$ a schematic morphism. Two diagrams
$$\xymatrix{ & T\ar[dl]_\phi \ar[rd]^f & \\  X &  & Y  } \qquad \xymatrix{ & T'\ar[dl]_{\phi'} \ar[rd]^{f'} & \\  X &  & Y  }$$ are equivalent (i.e. they represent the same morphism in $\bold {SchFinSp_{qc-isom}}$) if there exist a schematic space $T''$ and qc-isomorphisms $\xi\colon T''\to T$, $\xi'\colon T''\to T'$ such that the diagram
\[ \xymatrix{ & & T''\ar[dl]_\xi \ar[dr]^{\xi'}\\ & T \ar[dl]_\phi \ar[drrr]^f  & & T' \ar[dlll]_{\phi'} \ar[dr]^{f'} &  \\ X & & & & Y
}\] is commutative. We denote by $f/\phi\colon X\to Y$ the morphism in $\bold {SchFinSp_{qc-isom}}$ represented by $f$ and $\phi$. The composition of two morphisms
\[\xymatrix{ & T \ar[dl]_\phi \ar[rd]^f & & T'\ar[dl]_\psi \ar[rd]^g & \\ X & & Y & & Z
}\] is given by $$\xymatrix{ & T\times_YT'\ar[dl]_\xi \ar[rd]^h & \\  X &  & Y  }$$ where $\xi$ (resp. $h$) is the composition of $\phi$ (resp. $g$) with the natural morphism $T\times_YT'\to T$ (resp.  the natural morphism $T\times_YT'\to T$). Notice that $T\times_YT'\to T$ is a qc-isomorphism because $\psi$ is a qc-isomorphism; hence $\xi$ is a qc-isomorphism.

Let us denote by $\bold{Schemes^{qc-qs}}$ the category of quasi-compact and quasi-separated schemes. We define a functor

$$\Phi\colon \bold{Schemes^{qc-qs}}\to \bold {SchFinSp_{qc-isom}}$$

\medskip
\noindent in the following way: Given a quasi-compact and quasi-separated scheme $S$, choose a (locally affine) finite covering $\U$ of $S$ and then define $\Phi(S)$ as the finite space associated to $S$ and $\U$. Let $f\colon S\to \overline S$ be a morphism of schemes, and let $\U$, $\overline \U$ be (locally affine) finite coverings of $S$ and $\overline S$, chosen to define $\Phi(S)$ and $\Phi(\overline S)$. Let $\U'$ be a (locally affine) finite covering of $S$ that is thinner that $\U$ and $f^{-1}(\overline\U)$. If $X'$ is the finite space associated to $S$ and $\U'$, one has natural morphisms
\[ g\colon X'\to \Phi(\overline S)\quad\text{and}\quad \phi\colon X'\to \Phi (S)\] and $\phi$ is a qc-isomorphism (Example \ref{ejem-qc-iso}). We define $\Phi(f)=g/\phi$.

\begin{prop}\label{S(affine)} Let $X$ be an affine and schematic finite space, $A=\OO(X)$. Then, the natural morphism of ringed spaces (see Examples \ref{ejemplos}, (5)) \[ \Sc (X)\to \Spec A\] is an isomorphism.
\end{prop}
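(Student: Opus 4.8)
The plan is to show that the canonical morphism $\Sc(X)\to\Spec A$ of Examples \ref{ejemplos}, (5) is an isomorphism by exhibiting $\Spec A$ as glued from the affine schemes $S_p=\Spec\OO_p$ in exactly the way $\Sc(X)$ is constructed; the essential input is that, $X$ being affine and schematic, the rings $\OO_p$ together with the restrictions $\OO_p\to\OO_{pq}\leftarrow\OO_q$ form a faithfully flat descent datum for $A$. One may assume $X$ has no minimum, since if $X$ has a minimum $m$ then $X=U_m$, $A=\OO_m$, and already $\Sc(X)=S_m=\Spec A$; and one argues by induction on $\#X$, carrying along the auxiliary statement that every affine open subset of an affine schematic finite space maps onto an open subscheme of the $\Spec$ of its ring of global sections. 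Because $X$ has no minimum, every $U_q$ is then a proper open subspace of $X$, and is affine (it has a minimum) and schematic.

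First I would collect the algebraic facts that make $\{S_p\}_{p\in X}$ behave as an affine open cover of $\Spec A$. Since $X$ is affine and schematic it is semi-separated (Proposition \ref{schematic-affine}), so each $U_{pq}=U_p\cap U_q$ is acyclic, hence affine by Corollary \ref{afinsemiseparado}, with ring $\OO_{pq}$. Being schematic it is quasi-coherent, so by Theorem \ref{local-structure-affine} the natural maps $\OO_p\otimes_A\OO_q\to\OO_{pq}$ are isomorphisms; taking $p=q$ shows $A\to\OO_p$ is an epimorphism of rings, and it is flat by Proposition \ref{derivedcat-affine}, so $S_p\to\Spec A$ is flat, is a monomorphism of schemes, and satisfies $S_p\times_{\Spec A}S_q=\Spec(\OO_p\otimes_A\OO_q)=\Spec\OO_{pq}$. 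Moreover $A\to\prod_p\OO_p$ is faithfully flat (Proposition \ref{derivedcat-affine}), so $\coprod_p S_p\to\Spec A$ is surjective; and since $X$ is finite, $\bigl(\prod_p\OO_p\bigr)\otimes_A\bigl(\prod_q\OO_q\bigr)\cong\prod_{p,q}(\OO_p\otimes_A\OO_q)\cong\prod_{p,q}\OO_{pq}$, so the descent data of the faithfully flat cover $\Spec\bigl(\prod_p\OO_p\bigr)\to\Spec A$ is precisely that carried by the $\OO_{pq}$.

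The delicate point — and the reason Grothendieck's faithfully flat descent (\cite{Grothendieck}) is required rather than an elementary argument — is that, without any noetherian or finite-presentation hypothesis, a flat monomorphism need not be an open immersion, so one cannot simply declare $S_p$ an open subscheme of $\Spec A$. Here one uses that being an open immersion can be checked fppf-locally on the target: after base change along the faithfully flat $\Spec\bigl(\prod_q\OO_q\bigr)\to\Spec A$, the morphism $S_p\to\Spec A$ becomes $\coprod_q\Spec\OO_{pq}\to\coprod_q\Spec\OO_q$, whose $q$-th component is the morphism attached to the open inclusion $U_{pq}\hookrightarrow U_q$; since $\#U_q<\#X$, the inductive hypothesis applied to the affine schematic space $U_q$ and its affine open subset $U_{pq}$ shows this is an open immersion, hence so is $S_p\to\Spec A$ (all base changes land on spaces strictly smaller than $X$, so no circularity arises, and the same reasoning delivers the auxiliary statement in the induction). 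With the $S_p$ now known to be open subschemes of $\Spec A$ covering it, with intersections $S_p\cap S_q=\Spec\OO_{pq}$, the description in Examples \ref{ejemplos}, (5) — where for $p\le q$ one has $U_{pq}=U_q$, $\OO_{pq}=\OO_q$, and the two maps $S_{pq}=S_q\rightrightarrows S_p,S_q$ are $\Spec(r_{pq})$ and the identity — identifies the colimit $\Sc(X)$ with $\Spec A$, the two structure sheaves being given on an open $V$ by the kernel of $\prod_p\OO(V_p)\rightrightarrows\prod_{p,q}\OO(V_{pq})$. Thus $\Sc(X)\to\Spec A$ is an isomorphism; the main obstacle, as indicated, is the passage from flat monomorphisms to open immersions, which is exactly what forces the use of faithfully flat descent.
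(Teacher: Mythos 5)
Your first two paragraphs are sound and overlap with what the paper actually uses: the isomorphisms $\OO_p\otimes_A\OO_q\to\OO_{pq}$ from Theorem \ref{local-structure-affine}, the (faithful) flatness of $A\to\OO_p$ and of $A\to\prod_p\OO_p$ from Proposition \ref{derivedcat-affine}, and the resulting identification of $\Spec\bigl(\prod_p\OO_p\otimes_A\prod_q\OO_q\bigr)$ with $\coprod_{p,q}\Spec\OO_{pq}$. The gap is in the third paragraph: your whole strategy rests on each $S_p\to\Spec A$ being an open immersion, and the inductive input you feed into the fppf-descent argument --- the ``auxiliary statement'' that for an affine open subset $V$ of an affine schematic finite space $Y$ the morphism $\Spec\OO(V)\to\Spec\OO(Y)$ is an open immersion --- is false. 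Take $Y=\{q<t\}$ with $\OO_q=\ZZ$, $\OO_t=\ZZ_{(2)}$ and the localization as restriction: this is a finite space (localizations are flat), it is affine (it has a minimum) and schematic (because $\ZZ_{(2)}\otimes_{\ZZ}\ZZ_{(2)}=\ZZ_{(2)}$ and all the $U_{ab}$ have minima, hence are acyclic), and $U_t$ is an affine open subset; yet $\Spec\ZZ_{(2)}\to\Spec\ZZ$ has image $\{(0),(2)\}$, which is not open. This is exactly the kind of pair your induction must handle, since the spaces $U_q$ to which you apply the auxiliary statement all have a minimum, and for such spaces your own base-change trick is circular (one factor of $\prod_{t\in U_q}\OO_t$ is $\OO_q=\OO(U_q)$ itself, so the base change returns the original morphism). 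So ``$S_p$ is an open subscheme of $\Spec A$'' cannot be established this way and is not true in general; tellingly, the paper's proof of Theorem \ref{embedding-schemes} has to \emph{add} the hypothesis that the maps $r_{pq}^*\colon\Spec\OO_q\to\Spec\OO_p$ be open immersions precisely at the point where it needs the $S_p$ to be open in $\Sc(X)$.

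The paper's proof sidesteps open immersions altogether: faithfully flat descent is invoked not for the \emph{property} of being an open immersion but for the fact that, $A\to B=\prod_p\OO_p$ being faithfully flat, the two projections $\Spec(B\otimes_AB)\to\Spec B$ followed by $\Spec B\to\Spec A$ form an exact (coequalizer) diagram of ringed spaces. This exhibits $\Spec A$ as the result of gluing the $S_p$ along the $\Spec\OO_{pq}$ whether or not the pieces are open; a final refinement (replacing each $\Spec\OO_{pq}$ by the epimorphic family $\coprod_{t\in U_{pq}}S_t$, using that $U_{pq}$ is affine, so that only the gluing data indexed by $p\leq q$ survives) matches this colimit with the one defining $\Sc(X)$. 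To salvage your route you would have to assume that every $r_{pq}$ induces an open immersion on spectra, which is a strictly stronger hypothesis than ``affine and schematic''.
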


\begin{proof} Let us denote $B=\underset{p\in X}\prod \OO_p$ and $A\to B$ the natural (injective) morphism. We know that  this morphism is faithfully flat (Proposition \ref{derivedcat-affine}). By faithfully flat descent,  we have an exact sequence (in the category of schemes)
\[ \Spec(B\otimes_AB) \aligned \,_{\longrightarrow} \\ \,^{\longrightarrow} \endaligned \,\Spec B\to \Spec A \] which is also an exact sequence in the category or ringed spaces (it is an easy exercise). Now, following the notations of Examples \ref{ejemplos} (5), one has $\Spec (B)=\underset{p\in X}\coprod \Spec\OO_p= \underset{p\in X}\coprod  S_p$, and $\Spec (B\otimes_AB)=\underset{p,q\in X}\coprod \Spec (\OO_p\otimes_A\OO_q) = \underset{p,q\in X}\coprod \Spec \OO_{pq}$, where the last equality is due to Theorem \ref{local-structure-affine}. That is, $\Spec A$ is obtained by gluing the schemes $S_p$ along the schemes $\Spec\OO_{pq}$.

Now, $U_{pq}$ is affine, because  $X$ is affine and schematic, hence semi-separated. Then, the morphism $\OO_{pq}\to\underset{t\in U_{pq}}\prod \OO_t$ is faithfully flat, so $ \underset{t\in U_{pq}}\coprod S_t \to \Spec\OO_{pq}$ is an epimorphism. We have then an exact sequence
\[ \underset{\underset{t\in U_{pq}} { \,_{p,q\in X}}}\coprod S_t  \aligned \,_{\longrightarrow} \\ \,^{\longrightarrow} \endaligned \, \underset{p\in X}\coprod S_p\to \Spec A\]

Notice that $S_t=S_{pt}=S_{qt}$ for any $t\in U_{pq}$. It is now clear that gluing the schemes $S_p$ along the schemes $S_{pq}=\Spec (\OO_{pq})$ (with arbitrary $p,q$) is the same as gluing the schemes $S_p$ along the schemes $S_{pq}$ (with $p\leq q$). This says that $\Spec A=\Sc (X)$.
\end{proof}

\begin{thm}\label{embedding-schemes} The functor  $\Phi\colon \bold{Schemes^{qc-qs}}\to \bold {SchFinSp_{qc-isom}}$   is fully faithful.
\end{thm}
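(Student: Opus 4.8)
The plan is to verify faithfulness and fullness separately, in both cases reducing to the affine situation via Proposition \ref{S(affine)}, which identifies $\Sc(X)$ with $\Spec A$ for an affine schematic $X$ with $A = \OO(X)$, and more generally identifies $\Sc$ of the finite space associated to $(S,\U)$ with $S$ itself (Examples \ref{ejemplos}, (5)). The key observation is that $\Sc$ should provide a one-sided inverse: for any schematic finite space $X$ one has a morphism $\Sc(X) \to \Spec\OO(X)$, and I expect that $\Sc$ descends to a functor $\bold{SchFinSp_{qc-isom}} \to \bold{Schemes^{qc-qs}}$, because a qc-isomorphism $\phi\colon T \to X$ induces an isomorphism $\Sc(T) \xrightarrow{\sim} \Sc(X)$ (this is the heart of the matter and where Proposition \ref{S(affine)} is used: locally on $X = U_p$, both sides compute $\Spec$ of the common global sections ring, since $\phi_*\OO_T = \OO_X$ and $\phi$ is affine). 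Granting this, $\Sc \circ \Phi \cong \Id$ on $\bold{Schemes^{qc-qs}}$ because $\Sc$ of the finite space attached to $(S,\U)$ is $S$; this immediately gives faithfulness of $\Phi$.

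For fullness, the main step is to describe $\Hom_{\bold{SchFinSp_{qc-isom}}}(\Phi(S), \Phi(\overline S))$ and show every such morphism comes from a scheme morphism $S \to \overline S$. A morphism $\Phi(S) \to \Phi(\overline S)$ is represented by a roof $X \xleftarrow{\phi} T \xrightarrow{g} \overline X$ with $\phi$ a qc-isomorphism and $g$ schematic, where $X, \overline X$ are the chosen finite models. Applying $\Sc$ and using $\Sc(\phi)$ an isomorphism, one obtains a morphism of schemes $\Sc(g)\circ\Sc(\phi)^{-1}\colon S = \Sc(X) \to \Sc(\overline X) = \overline S$. This defines a map $\Hom_{\bold{SchFinSp_{qc-isom}}}(\Phi(S), \Phi(\overline S)) \to \Hom_{\bold{Schemes^{qc-qs}}}(S, \overline S)$; I must check it is well-defined on equivalence classes of roofs (clear, since $\Sc$ sends the defining commutative diagram of an equivalence to a commutative diagram of schemes with the relevant arrows isomorphisms) and is inverse to $\Phi$ on morphisms. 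That it is a left inverse to $\Phi$ follows by unwinding the definition $\Phi(f) = g/\phi$: applying $\Sc$ recovers $f$ because $\Sc$ of the relevant finite spaces and maps are canonically $S$, $\overline S$, $f$. That it is a right inverse — i.e. every roof is equivalent to one arising from $\Phi$ of a scheme morphism — is the genuinely substantive point.

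The main obstacle is precisely this last surjectivity: given a roof $X \xleftarrow{\phi} T \xrightarrow{g} \overline X$, I need to produce a scheme morphism $f\colon S \to \overline S$ and a common refinement exhibiting $g/\phi = \Phi(f)$. The natural candidate is $f = \Sc(g)\circ\Sc(\phi)^{-1}$; one then takes a sufficiently fine locally affine covering $\U'$ of $S$, refining both $\U$ and $f^{-1}(\overline\U)$ and also compatible with $T$ (using that $T \to X$, hence $T \to S$-related data, is controlled on minimal opens), and checks that the finite space $X'$ of $(S,\U')$ fits into a commutative diagram of roofs with vertices $T$ and $X'$ mapping by qc-isomorphisms to a common refinement, identifying the two morphisms in $\bold{SchFinSp_{qc-isom}}$. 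The verification that $X'$ really does map compatibly to both $T$ and to $\overline X$ — i.e. that the finite-space data extracted from $g$ and from $f$ agree after a fine enough refinement — is where one must work, using repeatedly that on an affine schematic space quasi-coherent modules are determined by global sections (Corollary \ref{corqc}, Theorem \ref{AffineFinSp}) and that schematic affine morphisms with $f_*\OO = \OO$ are qc-isomorphisms inducing equivalences on quasi-coherent sheaves (Theorem \ref{Stein-fact}, Theorem \ref{projection-formula}(3)). Functoriality of $\Sc$ (Examples \ref{ejemplos}, (5)) and the exactness of the gluing (direct-limit) description of $\Sc$ are used throughout to transport commutativity between the two categories.
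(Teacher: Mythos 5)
Your overall architecture coincides with the paper's: extend $\Sc$ (Examples \ref{ejemplos}, (5)) to a functor on $\bold{SchFinSp_{qc-isom}}$ by checking, via Proposition \ref{S(affine)} applied over each $U_y$, that a qc-isomorphism induces an isomorphism after applying $\Sc$; then $\Sc\circ\Phi=\Id$ gives faithfulness. Up to that point the proposal is correct and is exactly what the paper does.

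The gap is in fullness, precisely at the step you yourself flag as ``the genuinely substantive point.'' Your plan is to choose a locally affine covering $\U'$ of $S$ refining $\U$ and $f^{-1}(\overline\U)$ and ``also compatible with $T$.'' But the apex $T$ of the roof is an abstract schematic finite space equipped only with a qc-isomorphism $\phi\colon T\to X$; it is not given as the finite space of any covering of $S$, so ``compatible with $T$'' has no meaning yet, and there is no evident morphism from the finite space of $(S,\U')$ to $T$ through which the two roofs could be compared. What is needed --- and what the paper supplies --- is a natural qc-isomorphism $\eta_X\colon \Phi(\Sc(X))\to X$ for every $T_0$-schematic space $X$ whose restrictions $\Spec\OO_q\to\Spec\OO_p$ are open immersions (in particular for $X=\Phi(S)$). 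The paper builds $\eta_X$ explicitly: the opens $S_p\subseteq \Sc(X)$ form a covering with $S_p\cap S_q=\underset{t\in U_{pq}}\cup S_t$ (this rests on Proposition \ref{S(affine)} and semi-separatedness of the $U_p$), the assignment $U\mapsto \underset{t\in U}\cup S_t$ is a morphism of distributive lattices $\T_X\to\T_\U$ and hence yields a continuous map $\Phi(\Sc(X))\to X$, and one verifies it is schematic and affine with direct image of the structure sheaf equal to $\OO_X$ using Theorems \ref{sch-preserv-qc2} and \ref{schemes}. This construction is the missing ingredient that allows an arbitrary roof to be rewritten as one arising from $\Phi$ of the scheme morphism $\Sc(g)\circ\Sc(\phi)^{-1}$; without it (or a substitute), your fullness argument does not close.
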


\begin{proof} We have a functor $$\Sc\colon \bold {SchFinSp}\to \bold {RingedSpaces} $$ from schematic finite spaces to ringed spaces constructed in \ref{ejemplos} (5), such that $\Sc(\Phi (S))=S$ for any quasi-compact and quasi-separated scheme $S$. Let us see that $\Sc$ transforms qc-isomorphisms into isomorphisms. Let $f\colon X\to Y$ be a qc-isomorphism, and let us see that $\Sc (f)\colon \Sc (X)\to \Sc (Y)$ has an inverse $h\colon \Sc (Y)\to \Sc (X)$. For each $y\in Y$, let us denote $X_y=f^{-1}(U_y)$ and $f_y\colon X_y\to U_y$. We have that  $X_y$ is affine (because $f$ is affine)  and $\OO_{X_y}(X_y)=\OO_y$, because $f_*\OO_X=\OO_Y$. By Proposition \ref{S(affine)}, $\Sc (f_y)\colon \Sc (X_y)\to \Sc (U_y)$ is an isomorphism. Hence we have a morphism
\[ h_y\colon S_y=\Sc (U_y)\to \Sc (X)\] defined as the composition of
$\Sc (f_y)^{-1}\colon \Sc (U_y)\to \Sc (X_y)$ with the natural morphism $\Sc (X_y)\to \Sc(X)$. For any $y\leq y'$, let $r_{yy'}^*\colon S_{y'}\to S_y$ be the morphism induced by $r_{yy'}\colon \OO_y\to\OO_{y'}$. From the commutativity of the diagram

$$\xymatrix{ \Sc (X_{y'})\ar[r]\ar[d]  & \Sc (U_{y'})  \ar[d] \\ \Sc (X_{y})\ar[r]\ar[d]  & \Sc (U_y)  \\ \Sc (X) & }$$ it follows that  $h_y\circ r_{yy'}^*=h_{y'}$, hence one has a morphism $h\colon \Sc (Y)\to \Sc (X)$. It is clear that $h$ is an inverse of $\Sc (f)$. We have then proved that $\Sc$ transforms qc-isomorphisms into isomorphisms, hence it induces a functor
$$ \Sc\colon  \bold {SchFinSp_{qc-isom}} \to \bold {RingedSpaces}$$
and it is clear that $\Sc\circ\Phi$ is the identity on quasi-compact and quasi-separated schemes. To conclude, let us see that if $X$ is a $T_0$-schematic space such that $r_{pq}^*\colon \Spec\OO_q\to\Spec\OO_p$ is an open immersion for any $p\leq q$ (this is the case if $X=\Phi(S)$), then there is a natural qc-isomorphism $\Phi(\Sc(X))\to X$. Indeed, under this hypothesis, $S_p\to \Sc (X)$ is an open immersion for any $p\in X$, i.e. $\U=\{ S_p\}_{p\in X}$ is an open covering of $\Sc (X)$ and for any $p,q\in X$, $S_p\cap S_q=\underset{t\in U_{pq}}\cup S_t$. Let $\T_\U$  be the (finite) topology of $\Sc (X)$ generated by $\U$ and $\T_X$ the topology of $X$. Let us consider the map
\[  \begin{aligned}\psi\colon \T_X &\to \T_\U\\ U&\mapsto \psi(U)=\underset{t\in U}\cup S_t\end{aligned}   \]
It is clear that $\psi(U\cup V)=\psi(U)\cup \psi(V)$  and $\psi(U_p)=S_p$. Moreover, $\psi(U\cap V)= \psi(U)\cap \psi( V)$; indeed, since any $U$ is a union of $U_p's$ and $\psi$ preserves unions, we may assume that $U=U_p$ and $V=U_q$, and then the result follows from the equality  $S_p\cap S_q=\underset{t\in U_{pq}}\cup S_t$. In conclusion, $\psi$ is a  morphism of distributive lattices. It induces a continuous map $f\colon \Phi(\Sc (X))\to X$, whose composition with the morphism $ \pi\colon \Sc (X)\to \Phi(\Sc (X))$ gives a continuous map $\pi_X\colon \Sc (X)\to X$  such that $\pi_X^{-1}(U_p)=\psi(U_p)=S_p$. Since $\OO_p=\Gamma (S_p,\OO_{S_p})$,  one has a natural isomorphism $\OO_X\to f_*\OO_{\Phi(\Sc(X))}={\pi_X}_*\OO_{\Sc (X)}$, such that $f$ is a morphism of ringed spaces. Let us see that $f$ is a qc-isomorphism.  Let us see that $f$ is schematic: by Theorem \ref{sch-preserv-qc2}, it suffices to see that $\RR f_*\Nc$ is quasi-coherent for any $\Nc$ quasi-coherent module on $\Phi(\Sc(X))$. By Theorem \ref{schemes}, $\Nc=\pi_*\M$ for some quasi-coherent module $\M$ on $\Sc (X)$. Now, $\RR f_*\Nc=\RR {\pi_X}_*\M$ (because $R^i\pi_*\M=0$ for $i>0$) and $\RR {\pi_X}_*\M={\pi_X}_*\M$ because $\pi_X^{-1}(U_p)=S_p$ is an affine scheme. Finally, the quasi-coherence of ${\pi_X}_*\M$ is also a consequence of the equality $\pi_X^{-1}(U_p)=S_p$ and the hypothesis that $S_q\to S_p$ is an open immersion. Let us conclude that $f$ is a qc-isomorphism; $f$ is affine: $f^{-1}(U_p)$ is affine because $\pi^{-1}(f^{-1}(U_p))=S_p$ is affine. Since  $f_*\OO_{\Phi (\Sc (X))}=  \OO_X$, we are done.
\end{proof}

Let us denote by $\bold{AffSchSp}$ the faithful subcategory of $\bold{SchFinSp_{qc-isom}}$ whose objects are the affine schematic spaces. If $X\to Y$ is a qc-isomorphism and $Y$ is affine, then $X$ is affine. It follows that $\bold{AffSchSp}$ is the localization of the category of affine schematic spaces by qc-isomorphisms. Let us see that $\bold{AffSchSp}$ is equivalent to the category of affine schemes.

\begin{thm}\label{affinescheme-affinespace} The functor $$\aligned \Phi\colon \bold{AffineSchemes}&\to \bold{AffSchSp}\\ \Spec A&\mapsto (*,A)\endaligned$$ is an equivalence.
\end{thm}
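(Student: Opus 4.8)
The plan is to show that $\Phi$ is essentially surjective and fully faithful. Essential surjectivity is the easier half: given an affine schematic space $X$ with $A = \OO(X)$, the natural morphism $X \to (*,A)$ is a qc-isomorphism (it is affine since $X$ is affine, schematic, and $\OO(X) = A$ by construction), so $X$ is isomorphic in $\bold{AffSchSp}$ to $(*,A) = \Phi(\Spec A)$. Thus every object of $\bold{AffSchSp}$ is in the image of $\Phi$ up to isomorphism.

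For full faithfulness, first I would reduce the computation of $\Hom_{\bold{AffSchSp}}((*,A),(*,B))$ to a concrete description. A morphism $(*,A) \to (*,B)$ in the localized category is represented by a roof $(*,A) \xleftarrow{\phi} T \xrightarrow{g} (*,B)$ with $\phi$ a qc-isomorphism and $g$ schematic, and since $T$ is affine (being qc-isomorphic to $(*,A)$), we may replace $T$ by $(*,C)$ with $C = \OO(T)$; the qc-isomorphism $T \to (*,C)$ composed appropriately shows the roof is equivalent to one of the form $(*,A) \xleftarrow{\phi} (*,C) \xrightarrow{g} (*,B)$. Here $\phi$ corresponds to a ring map $A \to C$ which, being a qc-isomorphism between punctual spaces, must be an isomorphism (a qc-isomorphism induces an equivalence on quasi-coherent modules, hence on the global-sections ring; more directly, $f_*\OO_X = \OO_Y$ forces $C = A$ and affineness is automatic for punctual targets). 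Therefore every morphism is equivalent to a genuine roof with the left leg an isomorphism, i.e. to an honest morphism $(*,A) \to (*,B)$, which is a ring map $B \to A$. This shows the map $\Hom_{\bold{AffineSchemes}}(\Spec A, \Spec B) = \Hom_{\text{rings}}(B,A) \to \Hom_{\bold{AffSchSp}}((*,A),(*,B))$ is surjective, and the same normalization shows it is injective: two roofs representing the same morphism can both be normalized to honest ring maps, and the equivalence relation defining morphisms in the localization then forces these ring maps to agree (one dominates them by a common $(*,C)$ with qc-isomorphisms to both, which are isomorphisms, making the two ring maps equal).

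The main obstacle is the second step: controlling morphisms in the localized category well enough to normalize every roof. The crucial input is that a qc-isomorphism with punctual source or target is forced to be an isomorphism, and more generally that an affine schematic space is qc-isomorphic to its ``affinization'' $(*,\OO(X))$ — this uses Theorem \ref{Stein-fact} (so that $f_*$, ${f'}^*$ are mutually inverse on quasi-coherent modules) together with Theorem \ref{AffineFinSp} and Proposition \ref{schematic-affine} to know affine schematic spaces behave like affine schemes. One must also check that the equivalence relation on roofs does not identify distinct ring maps; this follows because any two roofs are dominated by a third whose left leg is still a qc-isomorphism, and after the normalization above all qc-isomorphisms in sight between punctual spaces are isomorphisms, so the domination diagram collapses to an equality of ring homomorphisms. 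Once these normalizations are in place, the bijection $\Hom_{\text{rings}}(B,A) \cong \Hom_{\bold{AffSchSp}}((*,A),(*,B))$ is immediate, and together with essential surjectivity this gives the asserted equivalence.
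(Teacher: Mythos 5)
Your proposal is correct and follows essentially the same route as the paper: essential surjectivity via the canonical qc-isomorphism $X\to(*,\OO(X))$, and full faithfulness by identifying $\Hom_{\bold{AffSchSp}}((*,A),(*,B))$ with $\Hom_{\text{rings}}(B,A)$. The only difference is that you spell out the roof-normalization argument (reducing the intermediate object of a roof to a punctual space and using that a qc-isomorphism onto $(*,A)$ induces an isomorphism $A\to\OO(T)$), which the paper compresses into ``it is clear.''
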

\begin{proof} If $X$ is an affine schematic space and $A=\OO(X)$, then $X\to (*,A)$ is a qc-isomorphism and $\Sc(X)=\Spec A$ (Proposition \ref{S(affine)}). Now, for any affine schematic spaces $X$ and $Y$, with global functions $A$ and $B$, one has:
\[ \Hom_{\bold{AffSchSp}}(X,Y)=\Hom_{\bold{AffSchSp}}((*,A),(*,B)) \]
and it is clear that $\Hom_{\bold{AffSchSp}}((*,A),(*,B))=\Hom_{\text{rings}}(B,A)=\Hom_{\text{schemes}}(\Spec A,\Spec B)$.
One concludes that the functor $\bold{AffSchSp}\to \bold{AffineSchemes}$, $X\mapsto \Spec \OO(X)$, is an inverse of $\Phi$.
\end{proof}

\end{document}